\documentclass[10pt]{amsart}

\usepackage[latin2]{inputenc}
\usepackage{amsmath}
\usepackage{graphicx}
\usepackage{amssymb}
\usepackage{esint}
\usepackage{color}
\usepackage{amsthm}
\usepackage{epsfig}
\usepackage{tikz}
\usepackage{enumitem}
\usepackage{mathtools}
\usepackage{esvect}
\usepackage{framed}
\usepackage[english]{babel}
\allowdisplaybreaks[2]
\usetikzlibrary{patterns}

\newtheorem{theorem}{Theorem}
\newtheorem{proposition}[theorem]{Proposition}
\newtheorem{lemma}[theorem]{Lemma}

\newtheorem{definition}[theorem]{Definition}
\newtheorem{remark}[theorem]{Remark}

\newtheorem*{theorem*}{Theorem}

\def\XXint#1#2#3{{\setbox0=\hbox{$#1{#2#3}{\int}$ }
\vcenter{\hbox{$#2#3$ }}\kern-.6\wd0}}

\definecolor{Yellow}{rgb}{0.95,0.9,0.0} 
\definecolor{Red}{rgb}{0.8,0.1,0.1}
\definecolor{Green}{rgb}{0.1,0.65,0.2}
\definecolor{Blue}{rgb}{0.1,0.1,0.8}
\definecolor{Purple}{rgb}{0.7,0.1,0.7}
\definecolor{Grey}{rgb}{0.6,0.6,0.6}

\newcommand{\supp}{\operatorname{supp}}

\newcommand{\dist}{\operatorname{dist}}

\newcommand{\Id}{\operatorname{Id}}
\newcommand{\sigdist}{\operatorname{dist}^{\pm}} 
\newcommand{\BV}{\operatorname{BV}}
\renewcommand{\vec}{\mathrm}
\newcommand{\R}{\mathbb{R}}
\newcommand{\Rd}{{\mathbb{R}^d}}
\newcommand{\N}{\mathbb{N}}
\newcommand{\Z}{\mathbb{Z}}
\newcommand{\Sb}{\mathbb{S}}
\newcommand{\Sd}{{\mathbb{S}^{d-1}}}
\newcommand{\M}{\mathcal{M}}

\newcommand{\mres}{\mathbin{\vrule height 1.6ex depth 0pt width
0.13ex\vrule height 0.13ex depth 0pt width 1.3ex}}
\newcommand{\dt}{{\mathrm{d}}t}

\newcommand{\dx}{{\mathrm{d}}x}
\newcommand{\dy}{{\mathrm{d}}y}

\newcommand{\dS}{{\mathrm{d}}S}
\newcommand{\Dsym}{{D^{\operatorname{sym}}}}

\newcommand{\Tend}{{T_{vari}}}
\newcommand{\Tmax}{{T_{strong}}}

\definecolor{LightRed}{rgb}{0.8,0.5,0.5}
\definecolor{LightBlue}{rgb}{0.3,0.2,0.7}

\begin{document}

\title[Weak-strong uniqueness for two-phase flow with sharp interface]{Weak-strong uniqueness for the Navier-Stokes equation for two fluids with surface tension}

\author{Julian Fischer}
\address{Institute of Science and Technology Austria (IST Austria), Am Campus 1, 3400 Klosterneuburg, Austria}
\author{Sebastian Hensel}
\address{Institute of Science and Technology Austria (IST Austria), Am Campus 1, 3400 Klosterneuburg, Austria}

\thanks{This project has received funding from the European Union's Horizon 2020 research and 
innovation programme under the Marie Sk\l{}odowska-Curie Grant Agreement No.\ 665385 
\begin{tabular}{@{}c@{}}\includegraphics[width=3ex]{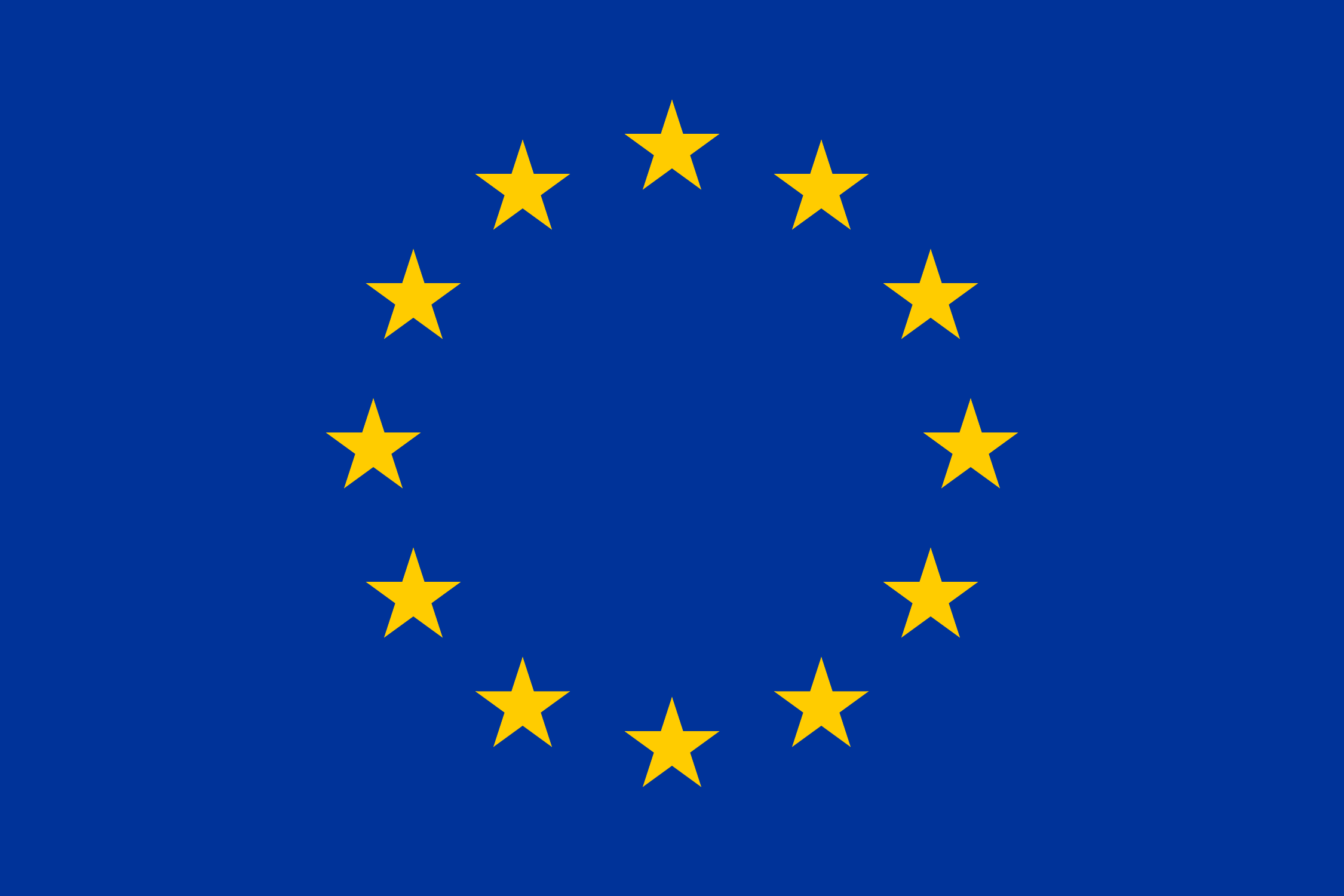}\end{tabular}}
\begin{abstract}
In the present work, we consider the evolution of two fluids separated by a sharp interface in the presence of surface tension -- like, for example, the evolution of oil bubbles in water. Our main result is a weak-strong uniqueness principle for the corresponding free boundary problem for the incompressible Navier-Stokes equation: As long as a strong solution exists, any varifold solution must coincide with it.
In particular, in the absence of physical singularities the concept of varifold solutions -- whose global in time existence has been shown by Abels \cite{Abels} for general initial data -- does not introduce a mechanism for non-uniqueness.
The key ingredient of our approach is the construction of a relative entropy functional capable of controlling the interface error. If the viscosities of the two fluids do not coincide, even for classical (strong) solutions the gradient of the velocity field becomes discontinuous at the interface, introducing the need for a careful additional adaption of the relative entropy.
\end{abstract}

\maketitle

\section{Introduction}

In evolution equations for interfaces, topological changes and geometric singularities often occur naturally, one basic example being the pinchoff of liquid droplets (see Figure~\ref{FigurePinchoff}). As a consequence, strong solution concepts for such PDEs are naturally limited to short-time existence results or particular initial configurations like perturbations of a steady state. At the same time, the transition from strong to weak solution concepts for PDEs is prone to incurring unphysical non-uniqueness of solutions: For example, Brakke's concept of varifold solutions for mean curvature flow admits sudden vanishing of the evolving surface at any time \cite{Brakke}; for the Euler equation, even for vanishing initial data there exist nonvanishing solutions with compact support \cite{Scheffer}, and the notion of mild solutions to the Navier-Stokes equation allows any smooth flow to transition into any other smooth flow \cite{BuckmasterColomboVicol}. In the context of fluid mechanics, the concept of relative entropies has proven successful in ruling out the aforementioned examples of non-uniqueness: Energy-dissipating weak solutions e.\,g.\ to the incompressible Navier-Stokes equation are subject to a \emph{weak-strong uniqueness} principle \cite{Leray,Prodi,Serrin2}, which states that as long as a strong solution exists, any weak solution satisfying the precise form of the energy dissipation inequality must coincide with it. However, in the context of evolution equations for interfaces, to the best of our knowledge the concept of relative entropies has not been applied successfully so far to obtain weak-strong uniqueness results.

In the present work, we are concerned with the most basic model for the evolution of two fluids separated by a sharp interface (like, for instance, the evolution of oil bubbles in water): The flow of each single fluid is described by the incompressible Navier-Stokes equation, while the fluid-fluid interface evolves by pure transport along the fluid flow and a surface tension force acts at the fluid-fluid interface. For this free boundary problem for the flow of two immiscible incompressible fluids with surface tension, Abels \cite{Abels} has established the global existence of varifold solutions for quite general initial data.

The main result of the present work is a weak-strong uniqueness result for this free boundary problem for the Navier-Stokes equation for two fluids with surface tension: In Theorem~\ref{weakStrongUniq} below we prove that as long as a strong solution to this evolution problem exists, any varifold solution in the sense of Abels \cite{Abels2018} must coincide with it.

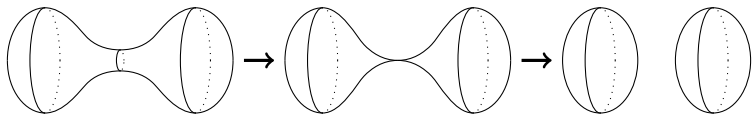
\begin{figure}
\begin{center}
\begin{tikzpicture}[xscale=0.5,yscale=0.7]
\begin{scope}
\clip(-3,-1) rectangle (-2,1);
\draw (-2,0) ellipse (0.4 and 1.0);
\end{scope}
\begin{scope}
\clip(-2,-1) rectangle (-1,1);
\draw[dotted] (-2,0) ellipse (0.4 and 1.0);
\end{scope}
\begin{scope}
\clip(1,-1) rectangle (2,1);
\draw (2,0) ellipse (0.4 and 1.0);
\end{scope}
\begin{scope}
\clip(2,-1) rectangle (3,1);
\draw[dotted] (2,0) ellipse (0.4 and 1.0);
\end{scope}
\begin{scope}
\clip(-1,-1) rectangle (0,1);
\draw (0,0) ellipse (0.1 and 0.2);
\end{scope}
\begin{scope}
\clip(0,-1) rectangle (1,1);
\draw[dotted] (0,0) ellipse (0.1 and 0.2);
\end{scope}
\foreach \x in {-1,1}
\foreach \y in {-1,1}
  \draw (-3*\x,0*\y) .. controls (-3*\x,0.6*\y) and (-2.5*\x,1*\y) .. (-2*\x,1*\y) .. controls (-1.5*\x,1*\y) and (-1.1*\x,0.6*\y) .. (-0.9*\x,0.45*\y) .. controls (-0.7*\x,0.3*\y) and (-0.4*\x,0.2*\y) .. (0*\x,0.2*\y);
\end{tikzpicture}
\begin{tikzpicture}[xscale=0.5,yscale=0.7]
\draw[white] (0,-1) -- (0,1);
\draw[very thick,->] (-0.4,0) -- (0.4,0);
\end{tikzpicture}
\begin{tikzpicture}[xscale=0.5,yscale=0.7]
\begin{scope}
\clip(-3,-1) rectangle (-2,1);
\draw (-2,0) ellipse (0.4 and 1.0);
\end{scope}
\begin{scope}
\clip(-2,-1) rectangle (-1,1);
\draw[dotted] (-2,0) ellipse (0.4 and 1.0);
\end{scope}
\begin{scope}
\clip(1,-1) rectangle (2,1);
\draw (2,0) ellipse (0.4 and 1.0);
\end{scope}
\begin{scope}
\clip(2,-1) rectangle (3,1);
\draw[dotted] (2,0) ellipse (0.4 and 1.0);
\end{scope}
\foreach \x in {-1,1}
\foreach \y in {-1,1}
  \draw (-3*\x,0*\y) .. controls (-3*\x,0.6*\y) and (-2.5*\x,1*\y) .. (-2*\x,1*\y) .. controls (-1.5*\x,1*\y) and (-1.1*\x,0.45*\y) .. (-0.9*\x,0.3*\y) .. controls (-0.7*\x,0.15*\y) and (-0.4*\x,0.0*\y) .. (0*\x,0.0*\y);
\end{tikzpicture}
\begin{tikzpicture}[xscale=0.5,yscale=0.7]
\draw[white] (0,-1) -- (0,1);
\draw[very thick,->] (-0.4,0) -- (0.4,0);
\end{tikzpicture}
\begin{tikzpicture}[xscale=0.5,yscale=0.7]
\draw (-1.5,0) ellipse (1.0 and 1.0);
\draw (1.5,0) ellipse (1.0 and 1.0);
\begin{scope}
\clip(-2.5,-1) rectangle (-1.5,1);
\draw (-1.5,0) ellipse (0.4 and 1.0);
\end{scope}
\begin{scope}
\clip(-1.5,-1) rectangle (-0.5,1);
\draw[dotted] (-1.5,0) ellipse (0.4 and 1.0);
\end{scope}
\begin{scope}
\clip(0.5,-1) rectangle (1.5,1);
\draw (1.5,0) ellipse (0.4 and 1.0);
\end{scope}
\begin{scope}
\clip(1.5,-1) rectangle (2.5,1);
\draw[dotted] (1.5,0) ellipse (0.4 and 1.0);
\end{scope}
\end{tikzpicture}
\caption{Pinchoff of a liquid droplet driven by surface tension.\label{FigurePinchoff}}
\end{center}
\end{figure}

\subsection{Free boundary problems for the Navier-Stokes equation}

The free boundary problem for the Navier-Stokes equation has been studied in mathematical fluid mechanics for several decades. Physically, it describes the evolution of a viscous incompressible fluid surrounded by or bordering on vacuum. The (local-in-time) existence of strong solutions for the free boundary problem for the Navier-Stokes equation has been proven by Solonnikov \cite{Solonnikov0,Solonnikov2,Solonnikov} in the presence of surface tension and by Shibata and Shimizu \cite{ShibataShimizu} in the absence of surface tension; see also Beale \cite{Beale,BealeSurfaceWave}, Abels \cite{AbelsLq}, and Coutand and Shkoller \cite{CoutandShkollerExistence} for related or further results. While the existence theory for global weak solutions for the Navier-Stokes equation in a fixed domain like $\mathbb{R}^d$, $d\leq 3$, has been developed starting with the seminal work of Leray \cite{Leray} in 1934, the question of the global existence of any kind of solution to the free boundary problem for the Navier-Stokes equation has remained an open problem. An important challenge for a global existence theory of weak solutions to the free boundary problem for the Navier-Stokes equation is the possible formation of ``splash singularities'', which are smooth solutions to the Lagrangian formulation of the equations which develop self-interpenetration. Such solutions have been constructed by Castro, Cordoba, Fefferman, Gancedo, and Gomez-Serrano \cite{CastroCordobaFeffermanGancedoGomezSerrano2}, see also \cite{CastroCordobaFeffermanGancedoGomezSerrano,CoutandShkoller,
DiIorioMarcatiSpirito} for splash singularities in related models in fluid mechanics.

In the present work we consider a closely related problem, namely the flow of two incompressible and immiscible fluids with surface tension at the fluid-fluid interface, like for example the flow of oil bubbles immersed in water or vice versa. For this free boundary problem for the Navier-Stokes equation for two fluids -- described by the system of PDEs \eqref{NavierStokesFreeBoundary} below -- , a global existence theory for generalized solutions is in fact available: In a rather recent work, Abels \cite{Abels} has constructed varifold solutions which exist globally in time. In an earlier work, Plotnikov~\cite{Plotnikov} had treated the case of non-Newtonian (shear-thickening) fluids.
The local-in-time existence of strong solutions has been established by Denisova \cite{Denisova}; for an interface close to the half-space, an existence and instant analyticity result has been derived by Pr\"uss and Simonett \cite{PruessSimonett,PruessSimonett3}.
Existence results for the two-phase Stokes and Navier-Stokes equation in the absence of surface tension have been established by Giga and Takahashi \cite{GigaTakahashi} and Nouri and Poupaud \cite{NouriPoupaud}.
Note that in contrast to the case of a single fluid in vacuum, for the flow of two incompressible immiscible inviscid fluids splash singularities cannot occur as shown by Fefferman, Ionescu, and Lie \cite{FeffermanIonescuLie} and Coutand and Shkoller \cite{CoutandShkollerVortexSheet}; one would expect a similar result to hold for viscous fluids. However, solutions may be subject to the Rayleigh-Taylor instability as proven by Pr\"uss and Simonett \cite{PruessSimonett2}.

In terms of a PDE formulation, the flow of two immiscible incompressible fluids with surface tension may be described by the indicator function $\chi=\chi(x,t)$ of the volume occupied by the first fluid, the local fluid velocity $v=v(x,t)$, and the local pressure $p=p(x,t)$. The fluid-fluid interface moves just according to the fluid velocity, the evolution of the velocity of each fluid and the pressure are determined by the Navier-Stokes equation, and the fluid-fluid interface exerts a surface tension force on the fluids proportional to the mean curvature of the interface. Together with the natural no-slip boundary condition and the appropriate boundary conditions for the stress tensor on the fluid-fluid interface, one may assimilate the Navier-Stokes equations for the two fluids into a single one, resulting in the system of equations
\begin{subequations}
\label{NavierStokesFreeBoundary}
\begin{align}
\label{EquationTransport}
\partial_t \chi + (v\cdot \nabla) \chi &=0,
\\
\label{EquationMomentum}
\rho(\chi) \partial_t v
+\rho(\chi) (v \cdot \nabla) v
&=-\nabla p + \nabla \cdot (\mu(\chi)(\nabla v+\nabla v^T)) + \sigma \vec{H} |\nabla \chi|,
\\
\label{EquationIncompressibility}
\nabla \cdot v &=0,
\end{align}
\end{subequations}
where $\vec{H}$ denotes the mean curvature vector of the interface $\partial \{\chi=0\}$ and $|\nabla \chi|$ denotes the surface measure $\mathcal{H}^{d-1}|_{\partial \{\chi=0\}}$. Here, $\mu(0)$ and $\mu(1)$ are the shear viscosities of the two fluids and $\rho(0)$ and $\rho(1)$ are the densities of the two fluids. The constant $\sigma$ is the surface tension coefficient. The total energy of the system is given by the sum of kinetic and surface tension energies
\begin{align*}
E[\chi,v]:=\int_{\Rd} \frac{1}{2} \rho(\chi) |v|^2 \,dx
+\sigma \int_{\Rd} 1 \,d|\nabla \chi|.
\end{align*}
It is at least formally subject to the energy dissipation inequality
\begin{align*}
E[\chi,v](T)
+ \int_0^T \int_\Rd \frac{\mu(\chi)}{2} \big|\nabla v+\nabla v^T\big|^2 \,dx \leq E[\chi,v](0).
\end{align*}
Note that the concept of varifold solutions requires a slight adjustment of the definition of the energy: The surface area $\int_{\Rd} 1 \,d|\nabla \chi|$ is replaced by the corresponding quantity of the varifold, namely its mass.

A widespread numerical approximation method for the free boundary problem \eqref{EquationTransport}-\eqref{EquationIncompressibility} capable of capturing geometric singularities and topological changes in the fluid phases are phase-field models of Navier-Stokes-Cahn-Hilliard type or Navier-Stokes-Allen-Cahn type, see for example the review \cite{AndersonFcFaddenWheeler}, \cite{AbelsGarckeGruen,LiuShen,GurtinPolignoneVinals,HohenbergHalperin,
LowengrubTruskinovsky} for modeling aspects, \cite{AbelsPhaseFieldExistence,AbelsDepnerGarcke} for the existence analysis of the corresponding PDE systems, and \cite{AbelsLiu,AbelsLiuSchoettl,AbelsRoeger} for results on the sharp-interface limit.

\subsection{Weak solution concepts in fluid mechanics and (non-)uniqueness}

In the case of the free boundary problem for the Navier-Stokes equation -- both for a single fluid and for a fluid-fluid interface -- , a concept of weak solutions is expected to play an even more central role in the mathematical theory than in the case of the standard Navier-Stokes equation: In three spatial dimensions $d=3$, even for smooth initial interfaces topological changes may occur naturally in finite time, for example by asymptotically self-similar pinchoff of bubbles \cite{EggersFontelos} (see Figure~\ref{FigurePinchoff}). In contrast, for the incompressible Navier-Stokes equation without free boundary the global existence of strong solutions for any sufficiently regular initial data remains a possibility.

However, in general weakening the solution concept for a PDE may lead to artificial (unphysical) non-uniqueness, even in the absence of physically expected singularities.
A particularly striking instance of this phenomenon is the recent example of non-uniqueness of mild (distributional) solutions to the Navier-Stokes equation by Buckmaster and Vicol \cite{BuckmasterVicol} and Buckmaster, Colombo, and Vicol \cite{BuckmasterColomboVicol}: In the framework of mild solutions to the Navier-Stokes equation, any smooth flow may transition into any other smooth flow \cite{BuckmasterColomboVicol}.
The result of \cite{BuckmasterColomboVicol,BuckmasterVicol} are based on convex integration techniques for the Euler equation, which have been developed starting with the works of De~Lellis and Sz\'ekelyhidi \cite{DeLellisSzekelyhidi,DeLellisSzekelyhidi2} (see also \cite{Buckmaster,BuckmasterDeLellisIsettSzekelyhidi,DaneriSzekelyhidi,Isett}).

In contrast to the case of distributional or mild solutions, for the stronger notion of weak solutions to the Navier-Stokes equation with energy dissipation in the sense of Leray \cite{Leray} a weak-strong uniqueness theorem is available: As long as a strong solution to the Navier-Stokes equation exists, any weak solution with energy dissipation must coincide with it.
Recall that for a weak solution to the Navier-Stokes equation $v$, besides the Ladyzhenskaya-Prodi-Serrin regularity criterion $v\in L^p([0,T];L^q(\R^3;\R^3))$ with $\smash{\frac{2}{p}+\frac{3}{q}}\leq 1$ and $p\geq 2$ \cite{Prodi,Serrin}, both a lower bound on the pressure \cite{SereginSverak} and a geometric assumption on the vorticity \cite{ConstantinFefferman} are known to imply smoothness of $v$. Interestingly, weak-strong uniqueness of energy-dissipating solutions fails if the Laplacian in the Navier-Stokes equation is replaced by a fractional Laplacian $-(-\Delta)^\alpha$ with power $\alpha<\frac{1}{3}$, see Colombo, De~Lellis, and De~Rosa \cite{ColomboDeLellisDeRosa} and De~Rosa \cite{DeRosa}.

Another way of interpreting a weak-strong uniqueness result is that nonuniqueness of weak solutions may only arise as a consequence of physical singularities: Only when the unique strong solution develops a singularity, the continuation of solutions beyond the singularity -- by means of the weak solution concept -- may be nonunique.
The main theorem of our present work provides a corresponding result for the flow of two incompressible immiscible fluids with surface tension: Varifold solutions to the free boundary problem for the Navier-Stokes equation for two fluids are unique until the strong solution for the free boundary problem develops a singularity.

\subsection{(Non)-Uniquenesss in interface evolution problems}
Weak solution concepts for the evolution of interfaces are often subject to nonuniqueness, even in the absence of topology changes. For example, Brakke's concept of varifold solutions for the evolution of surfaces by mean curvature \cite{Brakke} suffers from a particularly drastic failure of uniqueness: The interface is allowed to suddenly vanish at an arbitrary time.
In the context of viscosity solutions to the level-set formulation of two-phase mean curvature flow, the formation of geometric singularities may still cause fattening of level-sets \cite{BarlesSonerSouganidis} and thereby nonuniqueness of the mean-curvature evolution, even for smooth initial surfaces \cite{AngenentIlmanenChopp}.

To the best of our knowledge, the only known uniqueness result for weak or varifold solutions for an evolution problem for interfaces is a consequence of the relation between Brakke solutions and viscosity solutions for two-phase mean-curvature flow, see Ilmanen~\cite{Ilmanen}: As long as a smooth solution to the level-set formulation exists, the support of any Brakke solution must be contained in the corresponding level-set of the viscosity solution. As a consequence, as long as a smooth evolution of the interface by mean curvature exists, the ``maximal'' unit-density Brakke solution corresponds to the smoothly evolving interface. The proof of this inclusion relies on the properties of the distance function to a surface undergoing evolution by mean curvature respectively the comparison principle for mean curvature flow. Both of these properties do not generalize to other interface evolution equations.

Besides Ilmanen's varifold comparison principle, the only uniqueness results in the context of weak solutions to evolution problems for lower-dimensional objects that we are aware of are a weak-strong uniqueness principle for the higher-codimension mean curvature flow by Ambrosio and Soner \cite{AmbrosioSoner} and a weak-strong uniqueness principle for binormal curvature motion of curves in $\mathbb{R}^3$ by Jerrard and Smets \cite{JerrardSmets}. The interface contribution in our relative entropy \eqref{DefinitionRelativeEntropyFunctional} may be regarded as the analogue for surfaces of the relative entropy for curves introduced in \cite{JerrardSmets}.

\subsection{The concept of relative entropies}

The concept of relative entropies in continuum mechanics has been introduced by Dafermos \cite{Dafermos,DafermosBook} and DiPerna \cite{DiPerna} in the study of the uniqueness properties of systems of conservation laws. Proving weak-strong uniqueness results for conservation laws or even the incompressible Navier-Stokes equation typically faces the problem that an error between a weak solution $u$ and a strong solution $v$ must be measured by a quantity $E[u|v]$ which is nonlinear even as a function of $u$ alone, like a norm $||u-v||$. To evaluate the time evolution $\smash{\frac{d}{dt}} E[u|v]$ of such a quantity, one would need to test the evolution equation for $u$ by the nonlinear function $D_u E[u|v]$, which is often not possible due to the limited regularity of $u$. The concept of relative entropies overcomes this issue if the physical system possesses a strictly convex entropy (or energy) $E[u]$ subject to a dissipation estimate $\smash{\frac{d}{dt}} E[u]\leq -\mathcal{D}[u]$: For strictly convex entropies $E[u]$, the relative entropy
\begin{align*}
E[u|v]:=E[u]-DE[v] (u-v)-E[v]
\end{align*}
is a measure for the error between $u$ and $v$, as it is nonzero if and only if $u=v$. At the same time, to evaluate the time evolution $\smash{\frac{d}{dt}} E[u|v]$ of the relative entropy, it is sufficient to exploit the entropy dissipation inequality $\smash{\frac{d}{dt}} E[u]\leq -\mathcal{D}[u]$ for the weak solution $u$ and test the weak formulation of the evolution equation for $u$ by the typically more regular test function $DE[v]$. Having derived an explicit expression for the time derivative $\smash{\frac{d}{dt}} E[u|v]$ of the relative entropy, it is often possible to derive a Gronwall-type estimate like $\smash{\frac{d}{dt}} E[u|v] \leq C E[u|v]$ for the relative entropy and thereby a weak-strong uniqueness result.

Since Dafermos \cite{Dafermos}, the concept of relative entropies has found many applications in the analysis of continuum mechanics, providing weak-strong uniqueness results for the compressible Navier-Stokes equation \cite{FeireislJinNovotny,Germain}, the Navier-Stokes-Fourier system \cite{FeireislNovotny}, fluid-structure interaction problems \cite{ChemetovNecasovaMuha}, renormalized solutions for dissipative reaction-diffusion systems \cite{ChenJuengel,FischerReactionDiffusionUniqueness}, as well as weak-strong uniqueness results for measure-valued solutions for the Euler equation \cite{BrenierDeLellisSzekelyhidi}, compressible fluid models~\cite{GwiazdaWiedemann}, wave equations in nonlinear elastodynamics  \cite{DemouliniStuartTzavaras}, and models for liquid crystals \cite{EmmrichLasarzik}, to name just a few.

The concept of relative entropies has also been employed in the justification of singular limits of PDEs, see for example the work of Yau \cite{Yau} on the hydrodynamic limit of the Ginzburg-Landau lattice model, the works of Bardos, Golse, and Levermore \cite{BardosGolseLevermore}, Saint-Raymond \cite{SaintRaymond,SaintRaymond2}, and Golse and Saint-Raymond \cite{GolseSaintRaymond} on the derivation of the Euler equation and the incompressible Navier-Stokes equation from the Boltzmann equation, the work of Brenier \cite{Brenier} on the Euler limit of the Vlasov-Poissson equation, and the works of Serfaty \cite{Serfaty} and Duerinckx \cite{Duerinckx} on mean-field limits of interacting particles.
In the context of numerical analysis, it may also be used to derive a~posteriori estimates for model simplification errors \cite{FischerModelErrorNSE,GiesselmannPryer}.

Jerrard and Smets \cite{JerrardSmets} have used a relative entropy ansatz to establish a weak-strong uniqueness principle for the evolution of curves in $\mathbb{R}^3$ by binormal curvature flow. Their relative entropy may be regarded as the analogue for curves of the interfacial energy contribution to our relative entropy (i.\,e.\ the terms $\sigma \int_\Rd 1-\xi(\cdot,T) \cdot \smash{\frac{\nabla \chi_u(\cdot,T)}{|\nabla \chi_u(\cdot,T)|}} \,d|\nabla \chi_u(\cdot,T)|+\sigma \int_\Rd 1-\theta_T \,d|V_T|_{\Sb^{d-1}}$ in \eqref{DefinitionRelativeEntropyFunctional} below). It has subsequently been used by Jerrard and Seis \cite{JerrardSeis} to prove that the evolution of solutions to the Euler equation with near-vortex-filament initial data is governed by binormal curvature flow, as long as a strong solution to the latter (without self-intersections) exists and as long as the vorticity remains concentrated along some curve.

One of the key challenges in the derivation of our result is the development of a notion of relative entropy which provides strong enough control of the interfacial error. The key idea to control the error between an interface $\partial \{\chi_u(\cdot,t)=1\}$ and a smoothly evolving interface $I_v(t)=\partial \{\chi_v(\cdot,t)=1\}$ by a relative entropy is to introduce a vector field $\xi$ which is an extension of the unit normal of $I_v(t)$, multiplied with a cutoff. The interfacial contribution $\smash{\sigma \int_\Rd 1-\xi(\cdot,T) \cdot \smash{\frac{\nabla \chi_u(\cdot,T)}{|\nabla \chi_u(\cdot,T)|}} \,d|\nabla \chi_u(\cdot,T)|}$ to the relative entropy then controls the interface error in a sufficiently strong way, see Section~\ref{SectionStrategy} for details.

However, in the case of different viscosities $\mu^+\neq \mu^-$ of the two fluids, the velocity gradient of the strong solution $\nabla v$ at the interface will be discontinuous. This necessitates an additional adaption of our relative entropy: If one were to directly compare the velocity fields $u$ and $v$ of two solutions by the relative entropy, the difference of the viscous stresses $\mu(\chi_u)D^{\operatorname{sym}}u-\mu(\chi_v)D^{\operatorname{sym}}v$ could not be estimated appropriately to derive a Gronwall-type estimate. We rather have to compare the velocity field $u$ to an adapted velocity field $v+w$, where $w$ is constructed in a way that the adapted velocity gradient $\nabla v+\nabla w$ approximately accounts for the shifted location of the interface.

The approximate adaption of the interface of the strong solution to the higher-order approximation for the interface is distantly reminiscent of an ansatz by Leger and Vasseur \cite{LegerVasseur} and Kang, Vasseur, and Wang \cite{KangVasseurWang}, who establish $L^2$ contractions up to a shift for solutions to conservation laws close to a shock profile. However, it differs both in purpose and in the actual construction from \cite{KangVasseurWang,LegerVasseur}: The interfacial shift in \cite{KangVasseurWang,LegerVasseur} essentially serves the purpose of compensating the difference in the propagation speed of the shocks of the two solutions, while we need the higher-order approximation of the interface to compensate for the discontinuity in the velocity gradient at the interface. While the interfacial shift in \cite{KangVasseurWang} is given as the solution to an appropriately defined time-dependent PDE, in our case we obtain the interfacial shift by applying at any fixed time a suitable regularization operator to the interface of the weak solution near the interface of the strong solution.

\subsection{Derivation of the model}
\label{SectionModelDerivation}

Let us briefly comment on the derivation of the system of equations \eqref{NavierStokesFreeBoundary}. We consider the flow of two viscous, immiscible, and incompressible fluids.
Each fluid occupies a domain $\Omega^+_t$ resp.\ $\Omega^-_t$, $t\geq 0$, and the interface separating both phases will be denoted by $I(t)$. In particular, $\R^d=\Omega^+_t\cup\Omega^-_t\cup I(t)$ for every $t\geq 0$.
Within each of these domains $\Omega^\pm_t$, the evolution of the fluid velocity is modeled by means of the incompressible Navier--Stokes equations for a Newtonian fluid 
\begin{subequations}
\begin{align}\label{tpNS}
	\partial_t (\rho_{\pm} v^\pm_t) + \nabla\cdot(\rho_{\pm} v^\pm_t\otimes v^\pm_t)  &= -\nabla p^\pm_t + \mu^\pm \Delta v^\pm_t,
\\
\label{tpNSDiv}
	\nabla\cdot v^\pm_t &= 0,
\end{align}
\end{subequations}
where $v^+_t:\Omega^+_t \rightarrow \Rd$ and $v^-_t:\Omega^-_t \rightarrow \Rd$ denote the velocity fields of the two fluids, $p^+_t:\Omega^+_t\rightarrow \R$ and $p^-_t:\Omega^-_t\rightarrow \R$ the pressure, $\rho_+, \rho_->0$ the densities of the two fluids, and $\mu^+$ and $\mu^-$ the shear viscosities. On the interface of the two fluids $I(t)$ a no-slip boundary condition $v^+_t=v^-_t$ is imposed. As the two velocities $v^+_t$ and $v^-_t$ are defined on complementary domains and coincide on the interface, this enables us to assimilate the two velocity fields into a single velocity field $\smash{v:\Rd\times [0,T) \rightarrow \Rd}$, $\smash{v_t:=v^+_t \chi_{\Omega^+_t} + v^- (1-\chi_{\Omega^+_t})}$. Note that the velocity field $v$ inherits the incompressibility \eqref{EquationIncompressibility} from the incompressibility of $v^+$ and $v^-$ \eqref{tpNSDiv}. We also assimilate the pressures $p^+_t$ and $p^-_t$ into a single pressure $p$, which however may be discontinuous across the interface.

Additionally, we assume that the evolution of the interface $I(t)$ occurs only as a result of the transport of the two fluids along the flow. Denoting by $\vec{n}$ the outward unit normal vector field of the interface $I(t)$ and by $V_{\vec{n}}$ the associated normal speed of the interface, this gives rise to the equation
\begin{align}\label{kinematicInt}
	V_{\vec{n}} = \vec{n}\cdot v
	\quad\quad \text{on }I(t)\text{ for all }t\geq 0.
\end{align}
This condition may equivalently be rewritten as the transport equation for the indicator function $\chi$ of the first fluid phase
\begin{align*}
\partial_t \chi + (v\cdot \nabla)\chi =0,
\end{align*}
see for example Remark~\ref{strongKinematic} below for the (standard) arguments.

In order to assimilate the equations \eqref{tpNS} for the velocities $v^\pm$ of the two fluids into the single equation \eqref{EquationMomentum}, a condition on the jump of the normal component of the stress tensor $T=\mu^\pm \, (\nabla v+\nabla v^T)-\nabla p \Id$ at the interface $I(t)$ is required. In the case of positive surface tension constant $\sigma>0$ at the interface, the balance of forces at the interface reads
\begin{align}\label{balForcesInterface}
	[[T\vec{n}\,]] = \sigma\vec{H},
\end{align}
where the right-hand side $\sigma \vec{H}$ accounts for the surface tension force. Here, $\vec{H}$ denotes the mean curvature vector of the interface and $[[f]]$ denotes the jump in normal direction of a quantity $f$. In combination with \eqref{tpNS} 
and the no-slip boundary condition $v^+=v^-$ on $I(t)$, this yields the equation for 
the momentum balance \eqref{EquationMomentum}.

\section{Main results}

The main result of the present work is the derivation of a weak-strong uniqueness principle 
for varifold solutions to the free boundary problem for the Navier--Stokes equation for two 
immiscible incompressible fluids with surface tension: As long as a strong solution to the 
free boundary problem \eqref{EquationTransport}-\eqref{EquationIncompressibility} exists, 
any varifold solution must coincide with it. In particular, the concept of varifold solutions 
developed by Abels \cite{Abels} (see Definition~\ref{DefinitionVarifoldSolution} below for a 
precise definition) does not introduce an additional mechanism for non-uniqueness, at least as 
long as a classical solution exists.
At the same time, the concept of varifold solutions of Abels 
allows for the construction of globally existing solutions \cite{Abels}, while any concept of 
strong solutions is limited to the absence of geometric singularities and therefore -- at least in three spatial dimensions $d=3$ -- to short-time existence results.

Furthermore, we prove a quantitative stability result \eqref{StabilityEstimate} for varifold solutions with respect to changes in the data: As long as a classical solution exists, any varifold solution with slightly perturbed initial data remains close to it.

\begin{theorem}[Weak-strong uniqueness principle]
\label{weakStrongUniq}
Let $d \in \{2,3\}$. Let $(\chi_u,u,V)$ be a varifold solution to the free boundary problem for 
the incompressible Navier--Stokes equation for two fluids \eqref{EquationTransport}--\eqref{EquationIncompressibility} 
in the sense of Definition~\ref{DefinitionVarifoldSolution} on some time interval $[0,\Tend)$. 
Let $(\chi_v,v)$ be a strong solution to \eqref{EquationTransport}--\eqref{EquationIncompressibility} 
in the sense of Definition~\ref{DefinitionStrongSolution} on some time interval $[0,\Tmax)$ with 
$\Tmax\leq \Tend$. Let the relative entropy $E[\chi_u,u,V|\chi_v,v](t)$ be defined as in Proposition~\ref{PropositionRelativeEntropyInequalityFull}.

Then there exist constants $C,c>0$ such that the stability estimate
\begin{align}
\label{StabilityEstimate}
&E\big[\chi_u,u,V\big|\chi_v,v\big](T) 
\leq C(E[\chi_u,u,V|\chi_v,v](0))^{e^{-CT}}
\end{align}
holds for almost every $T\in [0,\Tmax)$, provided that the initial relative entropy satisfies $E[\chi_u,u,V|\chi_v,v](0) \leq c$. The constants $c>0$ and $C>0$ depend only on the data and the strong solution.

In particular, if the initial data of the varifold solution and the strong solution coincide, the varifold solution must be equal to the strong solution in the sense that
\begin{align*}
\chi_u(\cdot,t)=\chi_v(\cdot,t) &&\text{and}&& u(\cdot,t)=v(\cdot,t)
\end{align*}
hold almost everywhere for almost every $t\in [0,\Tmax)$, and the varifold is given for almost every $t\in [0,\Tmax)$ by
\begin{align*}
\mathrm{d}V_t = \delta_{\frac{\nabla \chi_v}{|\nabla \chi_v|}} \mathrm{d}|\nabla \chi_v|.
\end{align*}
\end{theorem}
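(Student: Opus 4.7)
\emph{The relative entropy.} The functional $E[\chi_u,u,V|\chi_v,v]$ has to penalize three distinct errors simultaneously: the bulk velocity error, the misalignment of the fluid--fluid interfaces, and the varifold mass defect. Following the strategy outlined in the introduction, I would build the interfacial part around an extension $\xi$ of the unit normal $\nv$ of $I_v(t)$ to a tubular neighborhood of $I_v$, cut off so that $|\xi|\le 1$ everywhere and $|\xi|\to 0$ away from $I_v$. This produces the calibration--type term $\sigma\int_\Rd 1-\xi\cdot\frac{\nabla\chi_u}{|\nabla\chi_u|}\,d|\nabla\chi_u|$, which nonlinearly controls both the alignment of normals and, through the cutoff, a squared signed distance from $I_v$; together with the varifold excess $\sigma\int 1-\theta\,d|V|$ this bounds the interfacial error from above and below. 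For the kinetic part, since the viscosity jump forces $\nabla v$ to be discontinuous at $I_v$, the naive $\tfrac12\int\rho(\chi_u)|u-v|^2\,dx$ produces an uncontrollable viscous cross term. I would therefore introduce a corrector $w$ obtained by applying a regularization operator to $\chi_u-\chi_v$ near $I_v$, chosen so that $\mu(\chi_u)\Dsym(v+w)$ is, up to lower-order terms, the viscous stress one would predict if the interface of the strong solution were shifted onto the interface of the weak solution, and replace the naive kinetic error by $\tfrac12\int\rho(\chi_u)|u-v-w|^2\,dx$.

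\emph{The relative entropy inequality.} The next step is to compute $\tfrac{d}{dt}E$. For the kinetic part I would test the weak momentum equation for $u$ against the admissible field $v+w$ and combine with the varifold energy dissipation inequality; this produces the coercive relative dissipation $\int\frac{\mu(\chi_u)}{2}|\Dsym u-\Dsym(v+w)|^2\,dx$, an advective remainder, forcing terms from $\partial_t w$, and a surface tension force term involving $\sigma\vec H\,|\nabla\chi_u|$. For the interfacial part, I would test the transport equation for $\chi_u$ against the time-dependent vector field $\xi(\cdot,t)$, and use the strong--solution momentum equation to rewrite the quantity $\sigma\,\divv\xi$ in such a way that the surface tension force of the strong solution on $I_v$ cancels the corresponding contribution coming from the weak solution. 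The varifold contribution $\sigma\int 1-\theta\,d|V_t|$ is differentiated using the compatibility conditions built into Definition~\ref{DefinitionVarifoldSolution}.

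\emph{Gronwall closure.} After all cancellations, $\tfrac{d}{dt}E$ decomposes into the relative dissipation (kept on the left) plus remainder terms that, thanks to the regularity of the strong solution and the careful construction of $\xi$ and $w$, can each be estimated by $CE(t)$ up to a small multiple of the relative dissipation, which is absorbed. The key coercivity facts are: $1-\xi\cdot\nv\sim\tfrac12|\vec n^{\,u}-\xi|^2$ pointwise $d|\nabla\chi_u|$-almost everywhere; $1-|\xi|^2\gtrsim\min\{1,\dist(\cdot,I_v)^2\}$ so that the interfacial term controls the distance of $\supp|\nabla\chi_u|$ to $I_v$; a Korn--type bound converting $\Dsym(u-v-w)$ into the $H^1$ semi-norm of the velocity error; and an $L^2$ bound on $w$ in terms of $E$. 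Because two of the resulting estimates only close in the form $CE(1+|\log E|)$ --- a phenomenon typical for these geometric penalizations --- the concluding Gronwall step is nonlinear: it requires $E(0)\le c$ and yields exactly the double-exponential decay $(E(0))^{e^{-CT}}$ stated in \eqref{StabilityEstimate}. Setting $E(0)=0$ in this bound finally gives $E(t)\equiv 0$ and, by the coercivity above, $\chi_u=\chi_v$ and $u=v$ almost everywhere and the asserted representation $\mathrm dV_t=\delta_{\nabla\chi_v/|\nabla\chi_v|}\mathrm d|\nabla\chi_v|$.

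\emph{Main obstacle.} The delicate part is not Gronwall but the auxiliary construction. The corrector $w$ must be small enough to be absorbed into the kinetic error, yet structured enough to eliminate the otherwise divergent viscous cross term produced by the jump of $\mu\Dsym v$ at $I_v$; the vector field $\xi$ must simultaneously calibrate the strong interface, be coercive for the geometric error, and be transported approximately by $v$ so that $\partial_t\xi+(v\cdot\nabla)\xi$ contributes only lower-order terms. Threading these three incompatible--looking requirements through a single pair of auxiliary objects, so that every term in $\tfrac{d}{dt}E$ is either killed by a strong-solution identity or controlled by $E$ itself, is what makes the proof substantive rather than a routine relative-entropy computation.
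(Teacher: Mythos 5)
Your plan follows the paper's strategy in its essential structure: the calibrated tilt-excess term, the varifold excess, the corrector $w$ built from a regularization of the interfacial error near $I_v$, and a nonlinear Gronwall closure of the form $\frac{d}{dt}E\lesssim E(1+|\log E|)$ producing the double-exponential bound. However, there is a genuine gap in your choice of the relative entropy functional. You list three contributions --- kinetic, tilt-excess, and varifold mass defect --- and assert that the tilt-excess term ``bounds the interfacial error from above and below.'' It does not. The calibration term $\sigma\int 1-\xi\cdot\frac{\nabla\chi_u}{|\nabla\chi_u|}\,\mathrm{d}|\nabla\chi_u|$ is an integral against the surface measure $|\nabla\chi_u|$, and it therefore loses all coercivity when $|\nabla\chi_u|$ degenerates. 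Concretely: if $\chi_u\equiv 0$, $u\equiv 0$, and $V\equiv 0$ (a legitimate varifold solution with vanishing initial data) while $\chi_v\not\equiv 0$, then all three of your error terms are identically zero while the solutions manifestly disagree. Consequently $E\equiv 0$ would not imply $\chi_u=\chi_v$, and the final step of your argument --- ``setting $E(0)=0$ gives $E(t)\equiv 0$ and hence $\chi_u=\chi_v$'' --- fails. The paper repairs this by adding a fourth, bulk contribution,
\begin{align*}
\int_\Rd |\chi_u-\chi_v|\,\Big|\beta\Big(\frac{\sigdist(\cdot,I_v(t))}{r_c}\Big)\Big|\,\dx,
\end{align*}
a distance-weighted $L^1$ control of the symmetric-difference of the phases. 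This term is not generated by the Dafermos ansatz $E[x|y]=E[x]-DE[y](x-y)-E[y]$; it must be added by hand and propagated through the relative entropy inequality by testing the two transport equations against $\beta(\sigdist(\cdot,I_v)/r_c)$. Once it is included, the coercivity you need does hold, and the rest of your scheme goes through.

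A secondary point worth being aware of: the relative entropy inequality itself only holds under an a priori smallness hypothesis $E(t)\leq e(t)^2$ for a suitable $C^1$ threshold $e(t)$, because the construction of $w$ (in particular the adaptive mollification scale in the regularized height functions $h^\pm_{e(t)}$) relies on it. So the Gronwall step is not a straight integration of a differential inequality but a continuation/bootstrap argument: one defines $e(t)=\exp(\tfrac12 e^{-t/\delta}\log(E(0)+\varepsilon))$, assumes $E<e^2$ up to some maximal time $T^*$, and shows this forces $E(T^*)<e(T^*)^2$, so $T^*=\Tmax$. Your description of the closure as ``routine Gronwall, albeit nonlinear'' glosses over this; it is not fatal to your plan, but it is where the smallness hypothesis $E(0)\leq c$ actually enters.
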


The following notion of varifold solutions for the free boundary problem associated 
with the flow of two immiscible incompressible viscous fluids with surface tension has 
been introduced by Abels~\cite{Abels}. For the notion of an oriented varifold, see the section on notation just prior to Section~\ref{SectionStrategy}.

\begin{definition}[Varifold solution for the two-phase Navier--Stokes equation]
\label{DefinitionVarifoldSolution}
Let a surface tension constant $\sigma>0$, the densities and shear viscosities of the 
two fluids $\rho^\pm$, $\mu^\pm>0$, a finite time $\Tend>0$, a solenoidal initial velocity 
profile $v_0\in L^2(\R^d;\R^d)$, and an indicator function of the volume occupied initially by 
the first fluid $\chi_0\in \BV(\R^d)$ be given.

A triple $(\chi,v,V)$ consisting of a velocity field $v$, an indicator function $\chi$ of the volume 
occupied by the first fluid, and an oriented varifold $V$ with
\begin{align*}
v &\in L^2([0,\Tend];H^1(\R^d;\R^d))\cap L^\infty([0,\Tend];L^2(\R^d;\Rd)),
\\
\chi&\in L^\infty([0,\Tend];\BV(\R^d;\{0,1\})),
\\
V &\in L^\infty_w([0,\Tend];\mathcal{M}(\R^d{\times}\Sb^{d-1})),
\end{align*}
is called a \emph{varifold solution} to the free boundary problem for the Navier-Stokes 
equation for two fluids with initial data $(\chi_0,v_0)$ if the following conditions are satisfied:
\begin{subequations}
\begin{itemize}[leftmargin=0.5cm]
\item[i)] The velocity field $v$ has vanishing divergence $\nabla\cdot v=0$ and the equation for the momentum balance
\begin{align}
\nonumber
\int_\Rd& \rho(\chi(\cdot,T)) v(\cdot,T) \cdot \eta(\cdot,T) \,\dx
- \int_\Rd \rho(\chi_0) v_0 \cdot \eta(\cdot,0) \,\dx 
\\
\label{weakNS}
&= \int_0^T \int_\Rd \rho(\chi) v \cdot \partial_t \eta \,\dx\,\dt
+\int_0^T \int_\Rd \rho(\chi) v \otimes v : \nabla \eta \,\dx\,\dt
\\&~~~~
\nonumber
-\int_0^T\int_\Rd \mu(\chi) (\nabla v+\nabla v^T) : \nabla \eta \,\dx\,\dt
\\&~~~~
\nonumber
-\sigma\int_0^T\int_{\R^d\times\Sb^{d-1}}
(\mathrm{Id}-s\otimes s): \nabla \eta \,\mathrm{d}V_t(x,s)\,\dt
\end{align}
is satisfied for almost every $T\in [0,\Tend)$ and every smooth vector field 
$\eta \in C^\infty_{cpt}(\Rd\times [0,\Tend);\Rd)$ with $\nabla \cdot \eta=0$. 
For the sake of brevity, we have used the abbreviations $\rho(\chi):=\rho^+ \chi + \rho^- (1-\chi)$ and 
$\mu(\chi):=\mu^+ \chi + \mu^- (1-\chi)$.
\item[ii)] The indicator function $\chi$ of the volume occupied by the first fluid satisfies 
the weak formulation of the transport equation
\begin{align}\label{weakTransport}
\int_{\R^d}\chi(\cdot,T) \varphi(\cdot,T) \,\dx
-\int_{\R^d}\chi_0\varphi(\cdot,0)\,\dx
= \int_0^T\int_{\R^d}\chi\,(\partial_t\varphi+(v\cdot\nabla)\varphi)\,\dx\,\dt
\end{align}
for almost every $T\in [0,\Tend)$ and all $\varphi\in C^\infty_{cpt}(\Rd\times [0,\Tend))$.
\item[iii)] The energy dissipation inequality
\begin{align}
\nonumber
&\int_\Rd \frac{1}{2} \rho(\chi(\cdot,T)) |v(\cdot,T)|^2 \,\dx + \sigma |V_T|(\Rd\times \Sd)
\\&~~
\label{genEI}
+\int_0^T \int_\Rd \frac{\mu(\chi)}{2}
\big|\nabla v + \nabla v^T\big|^2 \,\dx\,\dt
\\&
\nonumber
\leq \int_\Rd \frac{1}{2} \rho(\chi_0(\cdot)) |v_0(\cdot)|^2 \,\dx + 
\sigma |\nabla \chi_0(\cdot)|(\Rd)
\end{align}
is satisfied for almost every $T\in [0,\Tend)$, and the energy
\begin{align}
\label{EnergyVarifold}
E[\chi,v,V](t):=
\int_\Rd \frac{1}{2} \rho(\chi(\cdot,t)) |v(\cdot,t)|^2 \,\dx + \sigma |V_t|(\Rd\times \Sd)
\end{align}
is a nonincreasing function of time.
\item[iv)] The phase boundary $\partial \{\chi(\cdot,t)=0\}$ and the varifold $V$ satisfy the compatibility condition
\begin{align}
\label{varifoldComp}
\int_{\R^d\times\Sb^{d-1}}\psi(x)s\,\mathrm{d}V_t(x,s)
=
\int_{\R^d}\psi(x) \,\mathrm{d}\nabla\chi(x)
\end{align}
for almost every $T\in [0,\Tend)$ and every smooth function $\psi \in C^\infty_{cpt}(\Rd)$.
\end{itemize}
\end{subequations}
\end{definition}

Let us continue with a few comments on the relation between the varifold $V_t$ and the interface described by the indicator function $\chi(\cdot,t)$.

\begin{remark}
Let $V_t\in\M(\R^d{\times}\Sb^{d-1})$ denote the non-negative measure
representing (at time $t$) the varifold associated to a varifold solution $(\chi,v,V)$
to the free boundary problem for the incompressible Navier-Stokes equation for two fluids.
The compatibility condition \eqref{varifoldComp} entails that $|\nabla\chi_u(t)|$
is absolutely continuous with respect to $|V_t|_{\Sb^{d-1}}$. Hence, we may define the Radon--Nikodym derivative
\begin{align}\label{DefinitionTheta}
\theta_t := \frac{\mathrm{d}|\nabla\chi_u(t)|}{\mathrm{d} |V_t|_{\Sb^{d-1}}},
\end{align}
which is a $|V_t|_{\Sb^{d-1}}$-measurable function with $|\theta_t(x)|\leq 1$ for $|V_t|_{\Sb^{d-1}}$-almost every $x\in\Rd$.
In particular, we have
\begin{align}\label{RadonNikodym}
    \int_{\R^d}f(x)\,\mathrm{d}|\nabla\chi(\cdot,t)|(x) = \int_{\R^d}\theta_t(x)f(x)\,\mathrm{d}|V_t|_{\Sb^{d-1}}(x)
\end{align}
for every $f\in L^1(\R^d,|\nabla\chi(\cdot,t)|)$ and almost every $t\in [0,\Tend)$.
\end{remark}
The compatibility condition between the varifold $V_t$ and the interface described by the indicator function $\chi(\cdot,t)$ has the following consequence.
\begin{remark}
Consider a varifold solution $(\chi,v,V)$
to the free boundary problem for the incompressible Navier-Stokes equation for two fluids. 
Let $E_t$ be the measurable set $\{x\in\R^d\colon \chi(x,t) = 1\}$. Note that for almost every $t\in [0,\Tend)$ 
this set is then a Caccioppoli set in $\R^d$. Let $\vec{n}(\cdot,t)=\frac{\nabla\chi}{|\nabla\chi|}$ 
denote the measure theoretic unit normal vector field on the reduced boundary $\partial^* E_t$.
By means of the compatibility condition \eqref{varifoldComp} and the definition \eqref{DefinitionTheta}
we obtain
\begin{align}\label{varifoldMom}
\frac{\mathrm{d} \int_{\Sb^{d-1}}s\,\mathrm{d}V_t(\cdot,s)}{\mathrm{d} |V_t|_{\Sb^{d-1}}(\cdot)}=
\begin{cases}																		\theta_t(x) \vec{n}(x,t) &\text{for } x\in\partial^*E_t,
\\
0 & \text{else},
\end{cases}
\end{align}
for almost every $t\in [0,\Tend)$ and $|V_t|_{\Sb^{d-1}}$-almost every $x\in\Rd$.
\end{remark}

In order to define a notion of strong solutions to the free boundary problem for the flow 
of two immiscible fluids, let us first define a notion of smoothly evolving domains.

\begin{definition}[Smoothly evolving domains and surfaces]
\label{definition:domains}
Let $\Omega_0^+$ be a bounded domain of class $C^3$ and consider a family 
$(\Omega_t^+)_{t\in [0,\Tmax)}$ of open sets in $\R^d$. Let $I(t)=\partial\Omega_t^+$ and 
$\Omega_t^-=\R^d\setminus(\Omega_t^+\cup I(t))$ for every $t\in [0,\Tmax]$. 

We say that $\Omega_t^+$, $\Omega_t^-$ are \emph{smoothly evolving domains} and that $I(t)$ are
\emph{smoothly evolving surfaces} if we have $\Omega_t^+=\Psi^t(\Omega_0^+)$, $\Omega_t^-=\Psi^t(\Omega_0^-)$, 
and $I(t)=\Psi^t(I(0))$ for a map $\Psi\colon \R^d\times[0,\Tmax)\to\R^d$, $(t,x)\mapsto\Psi(t,x) = \Psi^t(x)$, 
subject to the following conditions:
\begin{itemize}
\item[i)] We have $\Psi^0 = \mathrm{Id}$.
\item[ii)] For any fixed $t\in [0,\Tmax)$, the map $\Psi^t\colon\R^d\to\R^d$ is a 
$C^3$-diffeomorphism. Moreover, we assume $\|\Psi\|_{L^\infty_tW^{3,\infty}_x}<\infty$.
\item[iii)] We have $\partial_t \Psi \in C^0([0,\Tmax);C^2(\Rd;\Rd))$
and $\|\partial_t\Psi\|_{L^\infty_tW^{2,\infty}_x}<\infty$.
\end{itemize}
Moreover, we assume that there exists $r_c\in (0,\frac{1}{2}]$ with the following property: For all $t\in [0,\Tmax)$
and all $x\in I(t)$ there exists a function $g\colon B_1(0)\subset\R^{d-1}\to \R$ 
with $\nabla g(0)=0$ such that after a rotation and a translation, $I(t)\cap B_{2r_c}(x)$ is given by the graph $\{(x,g(x)):x\in \R^{d-1}\}$.
Furthermore, for any of these functions $g$ the pointwise bounds $|\nabla^m g|\leq  \smash{r_c^{-{(m-1)}}}$ hold for all $1\leq m\leq 3$.
\end{definition}

We have everything in place to give the definition of a strong solution to the free boundary
problem for the Navier--Stokes equation for two fluids.

\begin{definition}[Strong solution for the two-phase Navier--Stokes equation]
\label{DefinitionStrongSolution}
Let a surface tension constant $\sigma>0$, the densities and shear viscosities of the 
two fluids $\rho^\pm,\mu^\pm>0$, a finite time $\Tmax>0$, a solenoidal initial velocity profile 
$v_0\in L^2(\R^d;\Rd)$, and a domain $\Omega_0^+$ occupied initially by the first fluid be 
given. Let the initial interface between the fluids $\partial \Omega_0^+$ be a compact $C^3$-manifold.

A pair $(\chi,v)$ consisting of a velocity field $v$ and an indicator function $\chi$ of the 
volume occupied by the first fluid with
\begin{align*}
&v \in W^{1,\infty}([0,\Tmax];H^1(\R^d;\R^d)) \cap W^{1,\infty}([0,\Tmax];W^{1,\infty}(\Rd;\Rd)),
\\
&\nabla v \in L^1([0,\Tmax];\BV(\Rd;\R^{d\times d})),
\\
&\chi \in L^\infty([0,\Tmax];\BV(\R^d;\{0,1\})),
\end{align*}
is called a \emph{strong solution} to the free boundary problem for the Navier--Stokes equation 
for two fluids with initial data $(\chi_0,v_0)$ if the volume occupied by the first fluid 
$\Omega^+_t:=\{x\in \Rd:\chi(x,t)=1\}$ is a smoothly evolving domain and the interface 
$I_v(t) := \partial\Omega^+_t$ is a smoothly evolving surface in the sense of 
Definition~\ref{definition:domains} and if additionally the following conditions are satisfied:
\begin{subequations}
\begin{itemize}[leftmargin=0.5cm]
\item[i)]  The velocity field $v$ has vanishing divergence $\nabla\cdot v=0$ and the equation for the momentum balance
\begin{align}
\nonumber
\int_\Rd& \rho(\chi(\cdot,T)) v(\cdot,T) \cdot \eta(\cdot,T) \,\dx
- \int_\Rd \rho(\chi_0) v_0 \cdot \eta(\cdot,0) \,\dx 
\\
\label{weakNSb}
&= \int_0^T \int_\Rd \rho(\chi) v \cdot \partial_t \eta \,\dx\,\dt
+ \int_0^T \int_\Rd \rho(\chi) v \otimes v : \nabla \eta \,\dx\,\dt
\\&~~~~
\nonumber
-\int_0^T\int_\Rd \mu(\chi) (\nabla v+\nabla v^T) : \nabla \eta \,\dx\,\dt
\\&~~~~
\nonumber
+\sigma\int_0^T\int_{I_v(t)} \vec{H}\cdot \eta \,dS \,\dt
\end{align}
is satisfied for almost every $T\in [0,\Tmax)$ and every smooth vector field 
$\eta \in C^\infty_{cpt}(\Rd\times [0,\Tmax);\Rd)$ with $\nabla \cdot \eta=0$. Here, $\vec{H}$ denotes the mean curvature vector of the interface $I_v(t)$. 
For the sake of brevity, we have used the abbreviations $\rho(\chi):=\rho^+ \chi + \rho^- (1-\chi)$ and $\mu(\chi):=\mu^+ \chi + \mu^- (1-\chi)$.
\item[ii)] The indicator function $\chi$ of the volume occupied by the first fluid satisfies 
the weak formulation of the transport equation
\begin{align}\label{weakTransportB}
\int_{\R^d}\chi(\cdot,T) \varphi(\cdot,T) \,\dx
-\int_{\R^d}\chi_0\varphi(\cdot,0)\,\dx
= \int_0^T\int_{\R^d}\chi\,(\partial_t\varphi+(v\cdot\nabla)\varphi)\,\dx\,\dt
\end{align}
for almost every $T\in [0,\Tmax)$ and all $\varphi\in C^\infty_{cpt}(\Rd\times [0,\Tmax))$.
\item[iii)] In the domain $\bigcup_{t\in [0,\Tmax)} (\Omega_t^+\cup \Omega_t^-) \times \{t\}$ the third 
spatial derivatives of the velocity field exist and satisfy
\begin{align*}
\sup_{t\in [0,\Tmax)} \sup_{x\in \Omega_t^+\cup \Omega_t^-} |\nabla^3 v(x,t)| <\infty.
\end{align*}
\end{itemize}
\end{subequations}
\end{definition}

Before we state the main ingredient for the proof of Theorem~\ref{weakStrongUniq},
we proceed with two remarks on the notion of strong solutions. The first concerns
the consistency with the notion of varifold solutions due to Abels~\cite{Abels}.

\begin{remark}
Every strong solution $(\chi,v)$ to the free boundary problem for the incompressible Navier--Stokes 
equation for two fluids \eqref{EquationTransport}--\eqref{EquationIncompressibility} in the sense of 
Definition~\ref{DefinitionStrongSolution} canonically defines a varifold solution in the sense of
Definition~\ref{DefinitionVarifoldSolution}. Indeed, we can define the varifold $V$ by means of $\mathrm{d}V_t = \delta_{\frac{\nabla \chi}{|\nabla \chi|}} \mathrm{d}|\nabla \chi|$.
Due to the regularity requirements on the family of smoothly evolving surfaces $I(t)$, see Definition~\ref{definition:domains}, it then follows
\begin{align*}
\int_0^T\int_{I(t)}\vec{H}\cdot\varphi\,\mathrm{d}S\,\mathrm{d}t 
&=-\int_0^T\int_{\R^d}(\mathrm{Id}-n\otimes n):\nabla\varphi\,\mathrm{d}|\nabla\chi(\cdot,t)|\,\mathrm{d}t \\
&=-\int_0^T\int_{\R^d\times\Sb^{d-1}}(\mathrm{Id}-s\otimes s):\nabla\varphi\,\mathrm{d}V_t(x,s)\,\mathrm{d}t,
\end{align*}
for almost every $T\in [0,\Tmax)$ and all $\varphi\in C^\infty_{cpt}(\Rd\times [0,\Tend);\Rd)$, 
see for instance \cite[Lemma 3.4]{Abels2018}. Moreover, it follows from the regularity requirements of a strong solution
that the velocity field $v$ also satisfies the energy dissipation inequality \eqref{genEI}. This proves the claim.
\end{remark}

The second remark concerns the validity of \eqref{kinematicInt} in a strong sense for a strong solution, i.e.,
that the evolution of the interface $I(t)$ occurs only as a result of the transport of the 
two fluids along the flow.

\begin{remark}\label{strongKinematic}
Let $(\chi,v)$ be a strong solution to the free boundary problem for the incompressible Navier--Stokes 
equation for two fluids \eqref{EquationTransport}--\eqref{EquationIncompressibility} in the sense of 
Definition~\ref{DefinitionStrongSolution} on some time interval $[0,\Tmax)$. Let $V_{\vec{n}}(x,t)$ denote 
the normal speed of the interface at $x\in I_v(t)$, i.e., the normal component of $\partial_t\Psi(x,t)$
where $\Psi\colon \R^d\times[0,\Tmax)\to\R^d$ is the family of diffeomorphisms from the definition of a family of smoothly evolving domains (Definition~\ref{definition:domains}). Furthermore, let $\varphi\in C^\infty_{cpt}(\Rd\times (0,\Tmax))$.
Due to the regularity requirements on a family of smoothly evolving domains, 
see Definition~\ref{definition:domains}, we obtain (see for instance \cite[Theorem 2.6]{Abels2018})
\begin{align*}
\int_0^\Tmax\int_{\Rd}\chi\partial_t\varphi\,\mathrm{d}x\,\mathrm{d}t
=-\int_0^\Tmax\int_{I_v(t)}V_{\vec{n}}\varphi\,\mathrm{d}S\,\mathrm{d}t.
\end{align*}
On the other side, subtracting from the former identity the equation \eqref{weakTransportB} satisfied by 
the indicator function $\chi$ and making use of the incompressibility of the velocity 
field $v$ we deduce 
\begin{align*}
\int_0^\Tmax\int_{I_v(t)}(V_{\vec{n}}-\vec{n}\cdot v)\varphi\,\mathrm{d}S\,\mathrm{d}t = 0.
\end{align*}
Since $\varphi\in C^\infty_{cpt}(\Rd\times (0,\Tmax))$ was arbitrary we recover the identity
\begin{align*}
V_{\vec{n}} = \vec{n}\cdot v\quad\text{on}\quad\bigcup_{t\in (0,\Tmax)}\{t\}\times I_v(t),
\end{align*}
that is to say, the kinematic condition \eqref{kinematicInt} of the interface being transported with the flow
is satisfied in its strong formulation.
\end{remark}

Our weak-strong uniqueness result in Theorem~\ref{weakStrongUniq} relies on the following relative 
entropy inequality. The regime of equal shear viscosities $\mu_+=\mu_-$ corresponds to 
the choice of $w=0$ in the statement below. Note also that in this case the viscous stress term $R_{visc}$ disappears due to $\mu(\chi_u)-\mu(\chi_v)=0$. 

\begin{proposition}[Relative entropy inequality]
\label{PropositionRelativeEntropyInequalityFull}
Let $d\leq 3$. Let $(\chi_u,u,V)$ be a varifold solution to the free boundary problem 
for the incompressible Navier--Stokes equation for two fluids 
\eqref{EquationTransport}--\eqref{EquationIncompressibility} in the sense of 
Definition~\ref{DefinitionVarifoldSolution} on some time interval $[0,\Tend)$. 
Let $(\chi_v,v)$ be a strong solution to \eqref{EquationTransport}--\eqref{EquationIncompressibility} 
in the sense of Definition~\ref{DefinitionStrongSolution} on some time interval $[0,\Tmax)$ with 
$\Tmax\leq \Tend$ and let 
\begin{align*}
w\in L^2([0,\Tmax);H^1(\Rd;\Rd))\cap H^1([0,\Tmax);L^{4/3}(\Rd;\Rd)+L^2(\Rd;\Rd))
\end{align*} 
be a solenoidal vector field with bounded spatial derivative $\|\nabla w\|_{L^\infty}<\infty$.
Suppose furthermore that for almost every $t\geq 0$, for every $x\in \Rd$ either $x$ is a Lebesgue point of $\nabla w(\cdot,t)$ or there exists a half-space $H_x$ such that $x$ is a Lebesgue point for both $\nabla w(\cdot,t)|_{H_x}$ and $\nabla w(\cdot,t)|_{\Rd\setminus H_x}$.

For a point $(x,t)$ such that $\dist(x,I_v(t))<r_c$, denote by $P_{I_v(t)}x$ the projection of $x$ onto the interface $I_v(t)$ of the strong solution. Introduce the extension $\xi$ of the unit normal $\vec{n}_v$ of the interface of the strong solution defined by
\begin{align*}
\xi(x,t):=\vec{n}_v(P_{I_v(t)} x) (1-\dist(x,I_v(t))^2) \eta(\dist(x,I_v(t)))
\end{align*}
for some cutoff $\eta$ with $\eta(s)= 1$ for $s\leq \frac{1}{2}r_c$ and $\eta\equiv 0$ for $s\geq r_c$.
Let $\bar{V}_{\vec{n}}(x,t):=(\vec{n}(P_{I_v(t)}x,t)\cdot v(P_{I_v(t)}x,t)) \vec{n}(P_{I_v(t)}x,t)$ be an extension of the normal velocity of the interface of the strong solution $I_v(t)$ to an $r_c$-neighborhood of $I_v(t)$. Let $\theta$ be the density $\theta_t=\frac{\mathrm{d}|\nabla\chi_u(\cdot,t)|}{\mathrm{d}|V_t|_{\Sb^{d-1}}}$ as defined in \eqref{DefinitionTheta} and let $\beta:\R\rightarrow \R$ be a truncation of the identity with $\beta(r)=r$ for $|r|\leq \frac{1}{2}$, $|\beta'|\leq 1$,  $|\beta''|\leq C$, and $\beta'(r)=0$ for $|r|\geq 1$.

Then the relative entropy
\begin{align}\label{DefinitionRelativeEntropyFunctional}
E\big[\chi_u,u,V\big|\chi_v,v\big](T) &:=
\sigma\int_\Rd 1-\xi(\cdot,T)\cdot
\frac{\nabla\chi_u(\cdot,T)}{|\nabla\chi_u(\cdot,T)|} 
\,\mathrm{d}|\nabla \chi_u(\cdot,T)| 
\\&~~~~\nonumber
+ \int_{\Rd} \frac{1}{2} \rho\big(\chi_u(\cdot,T)\big)
\big|u-v-w\big|^2(\cdot,T) \,\dx 
\\&~~~~\nonumber
+\int_\Rd \big|\chi_u(\cdot,T)-\chi_v(\cdot,T)\big|\,\Big|\beta\Big(\frac{\sigdist(\cdot,I_v(T))}{r_c}\Big)\Big|\,\dx
\\&~~~~\nonumber
+\sigma\int_\Rd 1-\theta_T\,\mathrm{d}|V_T|_{\Sb^{d-1}}
\end{align}
is subject to the relative entropy inequality
\begin{align*}
&E\big[\chi_u,u,V\big|\chi_v,v\big](T) 
+ \int_0^T\int_{\mathbb{R}^d} 2\mu(\chi_u)\big|\Dsym (u - v - w)\big|^2 \,\dx\,\dt
\\&
\leq E\big[\chi_u,u,V\big|\chi_v,v\big](0)
+ R_{surTen} + R_{dt} + R_{visc} + R_{adv} + R_{weightVol} 
\\&~~~~
+ A_{visc} + A_{dt} + A_{adv} + A_{surTen} + A_{weightVol}
\end{align*}
for almost every $T\in (0,\Tmax)$, where we made use of the abbreviations
\begin{align*}
&R_{surTen} :=
\\&~~~
-\sigma\int_0^T\int_{\Rd\times\Sd} 
(s-\xi)\cdot\big((s-\xi)\cdot\nabla\big)v\,\mathrm{d}V_t(x,s)\,\dt
\\&~~~
+\sigma\int_0^T\int_\Rd (1-\theta_t)\,\xi\cdot(\xi\cdot\nabla) v
\,\mathrm{d}|V_t|_{\Sb^{d-1}}\,\dt
\\&~~~
+\sigma\int_0^T\int_\Rd(\chi_u-\chi_v)\big((u-v-w)\cdot\nabla\big)(\nabla\cdot\xi)\,\dx\,\dt
\\&~~~
-\sigma\int_0^T\int_{\Rd}
\Big(\xi\cdot\frac{\nabla\chi_u}{|\nabla\chi_u|}\Big)\,
\vec{n}_v(P_{I_v(t)}x)\cdot\big(\vec{n}_v(P_{I_v(t)}x)\cdot\nabla\big)v
-\xi\cdot(\xi\cdot\nabla) v
\,\mathrm{d}|\nabla\chi_u|\,\dt
\\&~~~
+\sigma\int_0^T\int_{\Rd}\frac{\nabla\chi_u}{|\nabla\chi_u|}
\cdot\big((\mathrm{Id}{-}\vec{n}_v(P_{I_v(t)}x)\otimes\vec{n}_v(P_{I_v(t)}x))
(\nabla\bar{V}_{\vec{n}}{-}\nabla v)^T\cdot\xi\big)
\,\mathrm{d}|\nabla\chi_u|\,\dt
\\&~~~
+\sigma\int_0^T\int_{\Rd}\frac{\nabla\chi_u}{|\nabla\chi_u|}
\cdot\big((\bar{V}_{\vec{n}}-v)\cdot\nabla\big)\xi\,\mathrm{d}|\nabla\chi_u|\,\dt
\end{align*}
and
\begin{align*}
R_{dt} :=& -\int_0^T\int_\Rd \big(\rho(\chi_u)-\rho(\chi_v)\big)
(u-v-w)\cdot\partial_t v\,\dx\,\dt,
\\
R_{visc} :=& -\int_0^T \int_{\mathbb{R}^d} 2\big(\mu(\chi_u)-\mu(\chi_v)\big)
\Dsym v:\Dsym (u-v) \,\dx\,\dt,
\\
R_{adv} := &-\int_0^T\int_\Rd\big(\rho(\chi_u)-\rho(\chi_v)\big)
(u-v-w)\cdot(v\cdot\nabla)v\,\dx\,\dt 
\\&
-\int_0^T\int_\Rd \rho(\chi_u)(u-v-w)\cdot
\big((u-v-w)\cdot\nabla\big)v\,\dx\,\dt,
\end{align*}
as well as
\begin{align*}
&R_{weightVol} :=
\\&~~~~
\int_0^T\int_\Rd(\chi_u{-}\chi_v)
\big(\big(\bar{V}_{\vec{n}}{-}(v\cdot\vec{n}_v(P_{I_v(t)}x))\vec{n}_v(P_{I_v(t)}x)\big)\cdot\nabla\big)
\beta\Big(\frac{\sigdist(\cdot,I_v)}{r_c}\Big)\,\dx\,\dt
\\&~~~~
+\int_0^T\int_{\R^d}(\chi_u{-}\chi_v)\big((u{-}v{-}w)\cdot\nabla\big)
\beta\Big(\frac{\sigdist(\cdot,I_v)}{r_c}\Big)\,\dx\,\dt.
\end{align*}
Moreover, we have abbreviated
\begin{align*}
A_{visc}:=&\int_0^T \int_{\mathbb{R}^d} 2\big(\mu(\chi_u)-\mu(\chi_v)\big)\Dsym v:
\Dsym w \,\dx\,\dt
\\&
-\int_0^T \int_{\mathbb{R}^d} 2\mu(\chi_u) \Dsym w: \Dsym(u-v-w) \,\dx\,\dt,
\end{align*}
and
\begin{align*}
A_{dt}:=&-\int_0^T \int_{\mathbb{R}^d} \rho(\chi_u) (u-v-w)\cdot 
\partial_t w \,\dx\,\dt
\\
&-\int_0^T\int_\Rd \rho(\chi_u) (u-v-w) \cdot (v\cdot\nabla) w\,\dx\,\dt,
\\
A_{adv}:=&-\int_0^T\int_\Rd \rho(\chi_u) (u-v-w)\cdot (w\cdot \nabla)(v+w) \,\dx\,\dt
\\
&-\int_0^T\int_\Rd \rho(\chi_u) (u-v-w)\cdot \big((u-v-w)\cdot \nabla\big)w \,\dx\,\dt,
\\
A_{weightVol} :=&
\int_0^T\int_{\R^d}(\chi_u{-}\chi_v)(w\cdot\nabla)
\beta\Big(\frac{\sigdist(\cdot,I_v)}{r_c}\Big)\,\dx\,\dt,
\end{align*}
as well as
\begin{align*}
A_{surTen}:= 
&-\sigma\int_0^T\int_{\R^d\times\Sb^{d-1}}(s{-}\xi)
\cdot\big((s{-}\xi)\cdot\nabla\big) w \,\mathrm{d}V_t(x,s)\,\dt 
\\&
+\sigma\int_0^T\int_{\R^d}(1-\theta_t)\,
\xi\cdot(\xi\cdot\nabla)w\,\mathrm{d}|V_t|_{\Sb^{d-1}}(x)\,\dt
\\&
+\sigma\int_0^T\int_\Rd(\chi_u-\chi_v)(w\cdot\nabla)(\nabla\cdot\xi)\,\dx\,\dt
\\&
+\sigma\int_0^T\int_\Rd (\chi_u-\chi_v) \nabla w :\nabla \xi^T \,\dx\,\dt
\\&
-\sigma\int_0^T\int_\Rd \xi \cdot \big((\vec{n}_u-\xi) \cdot \nabla\big) w \,\mathrm{d}|\nabla \chi_u|\,\dt.
\end{align*}
\end{proposition}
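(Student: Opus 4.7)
My plan is to differentiate $E[\chi_u,u,V|\chi_v,v](T)$ in $T$ piece by piece and identify each contribution via the weak formulations of the varifold solution together with the strong PDE for $(\chi_v,v)$. A convenient preliminary step is to use the compatibility condition \eqref{varifoldComp} with \eqref{RadonNikodym} to rewrite the two surface-type terms as $\sigma|V_T|(\Rd{\times}\Sd) - \sigma\int_{\Rd}\xi(\cdot,T)\cdot d\nabla\chi_u(\cdot,T)$, to expand the kinetic piece as $\frac{1}{2}\rho(\chi_u)|u|^2 - \rho(\chi_u)u\cdot(v+w) + \frac{1}{2}\rho(\chi_u)|v+w|^2$, and to drop the absolute values in the volume piece by noting that $\sign(\chi_u-\chi_v)$ agrees with $\sign(\sigdist(\cdot,I_v))$ on each component of $\Rd\setminus I_v$.

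The ingredients I plan to combine are: (i) the varifold energy inequality \eqref{genEI}, which controls $\frac{d}{dt}\bigl[\frac{1}{2}\int\rho(\chi_u)|u|^2+\sigma|V_t|(\Rd{\times}\Sd)\bigr]$ by $-\int 2\mu(\chi_u)|\Dsym u|^2$; (ii) the weak momentum balance \eqref{weakNS} tested against the admissible solenoidal field $\eta=v+w$, which gives the evolution of the cross term $-\int\rho(\chi_u)u\cdot(v+w)$; (iii) the weak transport equation \eqref{weakTransport} tested against the smooth scalars $\frac{1}{2}(\rho^+-\rho^-)|v+w|^2$ (for the $|v+w|^2$-piece), $\sigma\,\nabla\cdot\xi$ (for the $\xi$-piece), and $\beta(\sigdist(\cdot,I_v)/r_c)$ (for the volume piece, together with the analogous test of \eqref{weakTransportB} against the same function for $\chi_v$); and (iv) the strong PDE of Definition~\ref{DefinitionStrongSolution}, used to substitute for $\partial_t v$ and $\partial_t\chi_v$. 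Summing (i) and (ii) and completing the square on the viscous part produces the dissipation $\int 2\mu(\chi_u)|\Dsym(u-v-w)|^2$ on the left-hand side, together with a leftover $-\int 2\mu(\chi_u)\Dsym(v+w){:}\Dsym(u-v-w)$; the $v$-part of this leftover is integrated by parts and rewritten using the strong equation (after splitting $\mu(\chi_u)=\mu(\chi_v)+(\mu(\chi_u)-\mu(\chi_v))$ and similarly for $\rho$), the pressure drops by $\nabla\cdot(u-v-w)=0$, and the mismatch residuals assemble into $R_{dt}$, $R_{adv}$, $R_{visc}$; the $w$-part yields the adaption residuals $A_{dt}$, $A_{adv}$, $A_{visc}$, $A_{weightVol}$.

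The heart of the computation is the interfacial bookkeeping. The varifold stress in \eqref{weakNS} (tested against $v+w$) produces $-\sigma\int(\mathrm{Id}-s\otimes s){:}\nabla(v+w)\,dV_t$, while the strong curvature force in \eqref{weakNSb} produces $\sigma\int_{I_v}\vec H\cdot(u-v-w)\,dS$ after the strong momentum equation is tested against $u-v-w$. To match these, I use the decomposition $\mathrm{Id}-s\otimes s = (\mathrm{Id}-\xi\otimes\xi) - (s-\xi)\otimes s - \xi\otimes(s-\xi)$ together with $\xi|_{I_v}=\vec n_v$ and $(\mathrm{Id}-\vec n_v\otimes\vec n_v){:}\nabla\vec n_v = -\vec H\cdot\vec n_v$ on $I_v$. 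The time derivative of $-\sigma\int\xi\cdot d\nabla\chi_u$, computed via (iii) and the identity $\partial_t\xi=-(\bar V_{\vec n}\cdot\nabla)\xi$ modulo tangential corrections (which uses the kinematic condition of Remark~\ref{strongKinematic}), supplies the cancelling piece. After cancellation the residual is exactly the tilt-excess term $(s-\xi)\cdot((s-\xi)\cdot\nabla)v$, the multiplicity deficit $(1-\theta_t)\xi\cdot(\xi\cdot\nabla)v$, the diffuse volume term $(\chi_u-\chi_v)((u-v-w)\cdot\nabla)(\nabla\cdot\xi)$, and the normal-velocity corrections involving $\bar V_{\vec n}$ and $\nabla\vec n_v$; this assembles $R_{surTen}$, and the parallel computation with $v$ replaced by $w$ yields $A_{surTen}$.

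I expect the main obstacle to be precisely this surface bookkeeping: ensuring that all leading-order contributions cancel and that the residual is exactly the listed $R_{surTen}$ and $A_{surTen}$ requires careful use of the identities $\xi=\vec n_v$ and $\nabla\xi = (\mathrm{Id}-\vec n_v\otimes\vec n_v)\nabla\vec n_v + O(\sigdist(\cdot,I_v))$ on $I_v$, and of the kinematic condition of Remark~\ref{strongKinematic} for the extension $\bar V_{\vec n}$ off $I_v$. A secondary but structurally essential obstacle is the discontinuity of $\nabla v$ across $I_v$ when $\mu^+\neq\mu^-$: without the adaption $w$, the cross term $(\mu(\chi_u)-\mu(\chi_v))\Dsym v{:}\Dsym(u-v)$ is not of lower order near $I_v$, and only after $w$ is chosen so that $\Dsym v+\Dsym w$ absorbs the jump across $I_v$ can the viscous cross term be absorbed by the dissipation $\int 2\mu(\chi_u)|\Dsym(u-v-w)|^2$. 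The assumed one-sided Lebesgue point regularity of $\nabla w$ is precisely what is needed to integrate by parts against $\chi_u-\chi_v$ across $I_v$. Once these two points are settled, the identification of the remaining residuals is a routine but lengthy manipulation of advection terms and of the time derivative of the signed distance.
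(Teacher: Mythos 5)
Your proposal reproduces the paper's proof strategy essentially verbatim: using the varifold energy inequality together with the weak momentum balance \eqref{weakNS} tested against $v+w$, the transport equations \eqref{weakTransport}/\eqref{weakTransportB} tested against $\tfrac12|v+w|^2$, $\sigma\,\nabla\cdot\xi$, and $\beta(\sigdist(\cdot,I_v)/r_c)$, and the strong momentum balance \eqref{weakNSb} tested against $u-v-w$, with the surface bookkeeping organized around $\xi|_{I_v}=\vec n_v$, the compatibility condition, and the transport identity for $\xi$ from Lemma~\ref{lem:evolExtNormal}. One small point of calibration: the one-sided Lebesgue point hypothesis on $\nabla w$ is not used in the paper for integrating by parts against $\chi_u-\chi_v$; rather, it is what guarantees that the radially symmetric mollifications $\nabla v_n$, $\nabla w_n$ converge pointwise to a well-defined precise representative \emph{everywhere}, which is needed to pass to the limit $n\to\infty$ in the varifold surface-tension term $\sigma\int(\mathrm{Id}-s\otimes s):\nabla(v_n+w_n)\,\mathrm{d}V_t$, since $V_t$ may concentrate on $I_v(t)$ where $\nabla v$ and $\nabla w$ jump.
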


\subsection*{Notation}
We use $a\wedge b$ (respectively $a\vee b$) as a shorthand notation for the minimum (respectively maximum) of two numbers $a,b\in R$.

Let $\Omega\subset\R^d$ be open.
For a function $u:\Omega\times [0,T]\rightarrow \R$, we denote by $\nabla u$ its distributional derivative with respect to space and by $\partial_t u$ its derivative with respect to time.
For $p\in [1,\infty]$ and an integer $k\in\N_0$, we denote by $L^p(\Omega)$
and $W^{k,p}(\Omega)$ the usual Lebesgue and Sobolev spaces. 
In the special case $p=2$ we use as usual $H^k(\Omega):=W^{k,2}(\Omega)$ to denote the Sobolev space.
For integration of a function $f$ with respect to the $d$-dimensional Lebesgue measure respectively the $d-1$-dimensional surface measure, we use the usual notation $\int_\Omega f \mathrm{d}x$ respectively $\int_I f \mathrm{d}S$.
For measures other than the natural measure (the Lebesgue measure in case of domains $\Omega$ and the surface measure in case of surfaces $I$), we denote the corresponding Lebesgue spaces by $L^p(\Omega,\mu)$.
The space of all compactly supported and infinitely differentiable functions on $\Omega$
is denoted by $C_{cpt}^\infty(\Omega)$. The closure of $C_{cpt}^\infty(\Omega)$ with respect to
the Sobolev norm $\|\cdot\|_{W^{k,p}(\Omega)}$ is $W_0^{k,p}(\Omega)$,
and its dual will be denoted by $W^{-1,p'}(\Omega)$ where $p'\in [0,\infty]$
is the conjugated H\"{o}lder exponent of $p$, i.e.\ $1/p+1/p'=1$.
For vector-valued fields, say with range in $\R^d$, we use the notation $L^p(\Omega;\R^d)$, and so on. 
For a Banach space $X$, a finite time $T>0$ and a number $p\in [1,\infty]$ 
we denote by $L^p([0,T];X)$ the usual Bochner--Lebesgue space. If $X$ itself is
a Sobolev space $W^{k,q}$, we denote the norm of $L^p([0,T];X)$  as $\|\cdot\|_{L^p_tW^{k,q}_x}$.
When writing $L^\infty_w([0,T];X')$ we refer to the space of bounded and weak-$^\ast$ measurable
maps $f\colon [0,T]\to X'$, where $X'$ is the dual space of $X$.
By $L^p(\Omega)+L^q(\Omega)$ we denote the normed space of all functions $u:\Omega\rightarrow \R$ which may be written as the sum of two functions $v\in L^p(\Omega)$ and $w\in L^q(\Omega)$.
The space $C^k([0,T];X)$ contains all $k$-times continuously differentiable 
and $X$-valued functions on $[0,T]$.

In order to give a suitable weak description of the evolution of the sharp interface,
we have to recall the concepts of Caccioppoli sets as well as varifolds. To this
end, let $\Omega\subset\R^d$ be open. We denote by $\BV(\Omega)$ the space of
functions with bounded variation in $\Omega$. A measurable subset $E\subset\Omega$
is called a set of finite perimeter in $\Omega$ (or a Caccioppoli subset of $\Omega$)
if its characteristic function $\chi_E$ is of bounded variation in $\Omega$.
We will write $\partial^*E$ when referring to the reduced boundary of a
Caccioppoli subset $E$ of $\Omega$; whereas $\vec{n}$ 
denotes the associated measure theoretic (inward pointing) unit normal
vector field of $\partial^*E$. For detailed definitions of all these concepts
from geometric measure theory, we refer to \cite{Federer1978,Chlebik2005}.
In case $\Omega$ has a $C^2$ boundary, we denote by $\vec{H}(x)$ the mean curvature
vector at $x\in\partial\Omega$.
Recall that for a convex function $g:\Rd\rightarrow \mathbb{R}$ the \emph{recession function} $g^{rec}:\Rd\rightarrow \R$ is defined as $g^{rec}(x):=\lim_{\tau\rightarrow \infty} \tau^{-1} g(\tau x)$.

An oriented varifold is simply a non-negative measure $V\in\mathcal{M}(\Omega{\times}\Sb^{d-1})$,
where $\Omega\subset\R^d$ is open and $\Sb^{d-1}$ denotes the $(d{-}1)$-dimensional sphere. For a varifold $V$, we denote by $|V|_{\Sb^{d-1}}\in \mathcal{M}(\Omega)$ its local mass density given by $|V|_{\Sb^{d-1}}(A):=V(A\times \Sb^{d-1})$ for any Borel set $A\subset \Omega$.
For a locally compact separable metric space $X$ we write $\mathcal{M}(X)$
to refer to the space of (signed) finite Radon-measures on $X$.
If $A\subset X$ is a measurable set
and $\mu\in\mathcal{M}(X)$, we let $\mu\mres A$ be the restriction of $\mu$ on $A$.
The $k$-dimensional Hausdorff measure
on $\R^d$ will be denoted by $\mathcal{H}^k$, whereas we write $\mathcal{L}^d(A)$
for the $d$-dimensional Lebesgue measure of a Lebesgue measurable set $A\subset\R^d$.

Finally, let us fix some tensor notation. First of all, we use $(\nabla v)_{ij} = \partial_j v_i$ 
as well as $\nabla\cdot v = \sum_i\partial_i v_i$
for a Sobolev vector field $v\colon\R^d\to\R^d$. 
The symmetric gradient is denoted by $\Dsym v := \frac{1}{2}(\nabla v + \nabla v^T)$. For time-dependent fields
$v\colon\Rd\times[0,T)\to\R^n$ we denote by $\partial_tv$ the partial derivative with respect to time.
Tensor products of vectors $u,v\in\R^d$ will be given by 
$(u\otimes v)_{ij} = u_iv_j$. For tensors $A=(A_{ij})$ and $B=(B_{ij})$ 
we write $A:B = \sum_{ij} A_{ij}B_{ij}$.    

\section{Outline of the strategy}
\label{SectionStrategy}

\subsection{The relative entropy}

The basic idea of the present work is to measure the ``distance'' between a varifold solution to the two-phase Navier-Stokes equation $(\chi_u,u,V)$ and a strong solution to the two-phase Navier-Stokes equation $(\chi_v,v)$ by means of the relative entropy functional
\begin{align}
\nonumber
E\big[\chi_u,u,V\big|\chi_v,v\big](t) :=&
\sigma \int_\Rd 1-\xi \cdot \frac{\nabla \chi_u}{|\nabla \chi_u|} \,\mathrm{d}|\nabla \chi_u| + \int_\Rd \frac{\rho(\chi_u)}{2}|u-v-w|^2 \,\dx
\\&
\label{DefinitionRelativeEntropy}
+\sigma \int_\Rd 1-\theta_t \,\mathrm{d}|V_t|_{\Sb^{d-1}}
\\&\nonumber
+\int_\Rd |\chi_u-\chi_v| \left|\beta\Big(\frac{\sigdist(x,I_v(t))}{r_c}\Big)\right| \,\dx
\end{align}
where $\xi:\Rd\times [0,\Tmax)\rightarrow \Rd$ is a suitable extension of the unit normal vector field of the interface of the strong solution and where $w$ is a vector field that will be constructed below and that vanishes in case of equal viscosities $\mu^+=\mu^-$. More precisely, we choose $\xi$ as
\begin{align*}
\xi(x,t):=\vec{n}_v(P_{I_v(t)} x) (1-\dist(x,I_v(t))^2) \eta(\dist(x,I_v(t)))
\end{align*}
for some cutoff $\eta$ with $\eta(s)= 1$ for $s\leq \frac{1}{2}r_c$ and $\eta\equiv 0$ for $s\geq r_c$, 
where $P_{I_v(t)}x$ denotes for each $t\geq 0$ the projection of $x$ onto the interface $I_v(t)$ of the 
strong solution and where the unit normal vector field $\vec{n}_v$ of the interface of the strong solution 
is oriented to point towards $\{\chi_v(\cdot,t)=1\}$.

Rewriting the relative entropy functional in the form
\begin{align*}
&E\big[\chi_u,u,V\big|\chi_v,v\big](t)
\\&
=E[\chi_u,u,V](t) + \int_\Rd \chi_u \nabla\cdot \xi \,\dx - \int_\Rd \rho(\chi_u)u\cdot (v+w) \,\dx
\\&~~~~~
+ \int_\Rd \frac{1}{2}\rho(\chi_u)|v+w|^2 \,\dx
+\int_\Rd (\chi_u-\chi_v) \beta\Big(\frac{\sigdist(x,I_v(t))}{r_c}\Big) \,\dx
\end{align*}
with the energy \eqref{EnergyVarifold}, we see that we may estimate the time evolution of the relative entropy $E\big[\chi_u,u,V\big|\chi_v,v\big](t)$ by exploiting the energy dissipation property \eqref{genEI} of the varifold solution and by testing the weak formulation of the two-phase Navier-Stokes equation \eqref{weakNS} and \eqref{weakTransport} against the (sufficiently regular) test functions $v+w$ respectively $\frac{1}{2}|v+w|^2$, $\nabla \cdot \xi$, and $\beta(\frac{\sigdist(x,I_v(t))}{r_c})$.

As usual in the derivation of weak-strong uniqueness results by the relative entropy method of Dafermos \cite{Dafermos} and Di~Perna \cite{DiPerna}, in the case of equal viscosities $\mu^+=\mu^-$ the goal is the derivation of an estimate of the form
\begin{align}
\label{EstimateGronwallType}
&E\big[\chi_u,u,V\big|\chi_v,v\big](T)+ c\int_0^T \int_\Rd |\nabla u-\nabla v|^2 \,\dx\,\dt
\\&\nonumber
\leq C(v,I_v,\text{data}) \int_0^T E\big[\chi_u,u,V\big|\chi_v,v\big](t) \,\dt
\end{align}
which implies uniqueness and stability by means of the Gronwall lemma and by the coercivity properties of the relative entropy functional discussed in the next section.

In the case of different viscosities $\mu^+\neq \mu^-$, we will derive a slightly weaker (but still sufficient) result of roughly speaking the form
\begin{align}
\label{WeakerThanGronwall}
&E\big[\chi_u,u,V\big|\chi_v,v\big](T) + c\int_0^T \int_\Rd |\nabla u-\nabla v-\nabla w|^2 \,\dx\,\dt
\\&
\nonumber
\leq C(v,I_v,\text{data}) \int_0^T E\big[\chi_u,u,V\big|\chi_v,v\big](t) ~ 
\left|\log E\big[\chi_u,u,V\big|\chi_v,v\big](t)\right| \,\dt,
\end{align}
along with estimates on $w$ which include in particular the bound
\begin{align*}
\int_\Rd |w(\cdot,T)|^2 \,dx
\leq C(v,I_v,\text{data}) E\big[\chi_u,u,V\big|\chi_v,v\big](T).
\end{align*}

\subsection{The error control provided by the relative entropy functional}

The relative entropy functional \eqref{DefinitionRelativeEntropy} provides control of the following quantities (up to bounded prefactors):
~\\
\noindent
{\bf Velocity error control.}
The relative entropy $E[\chi_u,u,V|\chi_v,v](t)$ controls the square of the velocity error in the $L^2$ norm
\begin{align*}
\int_\Rd |u(\cdot,t)-v(\cdot,t)|^2 \,\dx
\end{align*}
at any given time $t$. In the case of equal viscosities, this is immediate from \eqref{DefinitionRelativeEntropy} by $w\equiv 0$, while in the case of different viscosities this follows by the estimate $\int_\Rd |w|^2 \,\dx\leq C \|\nabla v\|_{L^\infty} \int_\Rd 1-\xi \cdot \frac{\nabla \chi_u}{|\nabla \chi_u|} \,\mathrm{d}|\nabla \chi_u|$ which is a consequence of the construction of $w$ and the choice of $\xi$, see below. 
\begin{figure}
\begin{center}
\begin{tikzpicture}[scale=2.0]
\draw[white,fill=LightRed!50!white] (0,0) .. controls (0.5,0.15) and (0.8,0.3) .. (1.5,0.4) .. controls (2.2,0.5) and (2.5,0.5) .. (3.0,0.4) .. controls (3.5,0.3) and (4.0,0.1) .. (5.0,0.2) -- (5,1.6) -- (0,1.6) -- cycle;
\draw[white,fill=LightBlue!50!white] (0,0) .. controls (0.5,0.15) and (0.8,0.3) .. (1.5,0.4) .. controls (2.2,0.5) and (2.5,0.5) .. (3.0,0.4) .. controls (3.5,0.3) and (4.0,0.1) .. (5.0,0.2) -- (5,-0.6) -- (0,-0.6) -- cycle;
\draw[thick] (0,0) .. controls (0.5,0.15) and (0.8,0.3) .. (1.5,0.4) .. controls (2.2,0.5) and (2.5,0.5) .. (3.0,0.4) .. controls (3.5,0.3) and (4.0,0.1) .. (5.0,0.2);
\draw[draw=none,pattern=north east lines, pattern color=LightBlue, draw opacity=0.5] (0,0.0+0.1) .. controls (0.5,0.15+0.1) and (0.5,0.25) .. (0.7,0.3) .. controls (0.8,0.35) and (1.6,0.4) .. (1.6,0.6) .. controls (1.6,0.8) and (0.3,0.2) .. (0.3,0.5) .. controls (0.3,0.8) and (1.2,0.65) .. (1.2,0.85) .. controls (1.2,1.05) and (0.5,0.75) .. (0.5,1.0) .. controls (0.5,1.25) and (1.4,0.3+1.0) .. (1.7,0.4+0.5) .. controls (2.0,0.5) and (2.5,0.5+0.3) .. (3.0,0.4+0.3) .. controls (3.5,0.3+0.3) and (4.0,0.1-0.1) .. (5.0,0.2-0.1) -- (5,-0.6) -- (0,-0.6) -- cycle;
\draw[thick,dotted] (0,0.0+0.1) .. controls (0.5,0.15+0.1) and (0.5,0.25) .. (0.7,0.3) .. controls (0.8,0.35) and (1.6,0.4) .. (1.6,0.6) .. controls (1.6,0.8) and (0.3,0.2) .. (0.3,0.5) .. controls (0.3,0.8) and (1.2,0.65) .. (1.2,0.85) .. controls (1.2,1.05) and (0.5,0.75) .. (0.5,1.0) .. controls (0.5,1.25) and (1.4,0.3+1.0) .. (1.7,0.4+0.5) .. controls (2.0,0.5) and (2.5,0.5+0.3) .. (3.0,0.4+0.3) .. controls (3.5,0.3+0.3) and (4.0,0.1-0.1) .. (5.0,0.2-0.1);
\draw[draw=none,pattern=north east lines, pattern color=LightBlue, draw opacity=0.5] (0.28,0.93) circle (0.135);
\draw[thick,dotted] (0.28,0.93) circle (0.135);
\draw[densely dotted,draw=Purple] (3,0.4) -- (3+4.1*0.065,0.4+4.1*0.29);
\draw[thick,draw=Purple] (3,0.4) -- (3.065,0.69);
\draw[color=Purple] (3.4,0.8) node {h$^+$(x)};
\draw[color=LightBlue!45!black] (2.3,0.83) node {$\mathbf{\{\chi_u=1\}}$};
\draw[color=LightBlue!45!black] (2.15,0.57) node {$\mathbf{\{\chi_u=0\}}$};
\draw[color=LightBlue!20!black] (4.56,0.25) node {$\mathbf{\{\chi_v=1\}}$};
\draw[color=LightBlue!20!black] (4.6,-0.03) node {$\mathbf{\{\chi_v=0\}}$};
\draw[densely dotted,draw=Purple] (0.53,0.17) -- (0.53-1.6*0.27,0.17+1.6*0.89);
\draw[thick,draw=Purple] (0.53,0.17) -- (0.53-0.105*0.27,0.17+0.105*0.89);
\draw[thick,draw=Purple] (0.53-0.27*0.27,0.17+0.27*0.89) -- (0.53-0.52*0.27,0.17+0.52*0.89);
\draw[thick,draw=Purple] (0.53-0.71*0.27,0.17+0.71*0.89) -- (0.53-1.01*0.27,0.17+1.01*0.89);
\end{tikzpicture}
\caption{An illustration of the interface error. The red and the blue region (separated by the black solid curve) correspond to the regions occupied by the two fluids in the strong solution. The shaded area corresponds to the region occupied by the blue fluid in the varifold solution, the interface in the varifold solution corresponds to the dotted curve. \label{FigureIntefaceError}}
\end{center}
\end{figure}
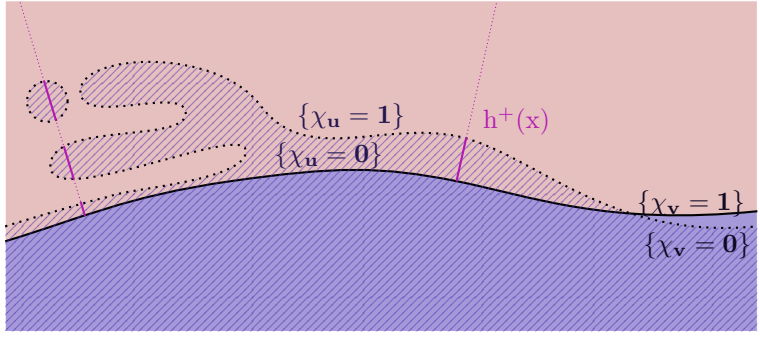

\noindent
{\bf Interface error control.}
The relative entropy provides a tilt-excess type control of the error in the interface normal
\begin{align*}
\int_\Rd 1-\xi \cdot \vec{n}_u \,\mathrm{d}|\nabla \chi_u|.
\end{align*}
In particular, it controls the squared error in the interface normal
\begin{align*}
\int_\Rd |\vec{n}_u-\xi|^2 \,\mathrm{d}|\nabla \chi_u|.
\end{align*}
The term also controls the total length respectively area (for $d=2$ respectively $d=3$) of the part of the interface $I_u$ which is not locally a graph over $I_v$, see Figure~\ref{FigureIntefaceError}. For example, in the region around the left purple half-ray the interface of the weak solution is not a graph over the interface of the weak solution. Furthermore, the term controls the length respectively area (for $d=2$ respectively $d=3$) of the part of the interface with distance to $I_v(t)$ greater than the cutoff length $r_c$, as there we have $\xi\equiv 0$.

Denote the local height of the one-sided interface error by $h^+:I_v(t)\rightarrow \mathbb{R}_0^+$ as measured along orthogonal rays originating on $I_v(t)$ (with some cutoff applied away from the interface $I_v(t)$ of the strong solution); denote by $h^-$ the corresponding height of the interface error as measured in the other direction. For example, in Figure~\ref{FigureIntefaceError} the quantity $h^+(x)$ for some base point $x\in I_v(t)$ would correspond to the accumulated length of the solid segments in each of the purple rays, the dotted segments not being counted. Note that the rays are orthogonal on $I_v(t)$. Then the tilt-excess type term in the relative entropy also controls the gradient of the one-sided interface error heights 
\begin{align*}
\int_{I_v(t)} \min\{|\nabla h^\pm|^2, |\nabla h^\pm|\} \,\dS.
\end{align*}
Note that wherever $I_u(t)$ is locally a graph over $I_v(t)$ and is not too far away from $I_v(t)$, it must be the graph of the function $h^+-h^-$. Here, the graph of a function $g$ over the curved interface $I_v(t)$ is defined by the set of points obtained by shifting the points of $I_v(t)$ by the corresponding multiple of the surface normal, i.\,e.\ $\{x+g(x) \vec{n}_v(x):x\in I_v(t)\}$.

\noindent
{\bf Varifold multiplicity error control.}
For varifold solutions, the relative entropy controls the multiplicity error of the varifold
\begin{align*}
\int_\Rd 1-\theta_t(x) \,\mathrm{d}|V_t|_{\Sb^{d-1}}
\end{align*}
(note that $\smash{\frac{1}{\theta_t(x)}}$ corresponds to the multiplicity of the varifold), which in turn by the compatibility condition \eqref{varifoldComp} and the definition of $\theta_t$ (see \eqref{DefinitionTheta}) controls the squared error in the normal of the varifold
\begin{align*}
\int_\Rd \int_{\mathbb{S}^{d-1}} |s-\vec{n}_u|^2 \,\mathrm{d}V_t(s,x).
\end{align*}

\noindent
{\bf Weighted volume error control.}
Furthermore, the error in the volume occupied by the two fluids weighted with the distance to the interface of the strong solution
\begin{align*}
\int_\Rd |\chi_u-\chi_v| \min\{\dist(x,I_v),1\} \,\dx
\end{align*}
is controlled. Note that this term is the only term in the relative entropy which is not obtained by the usual relative entropy ansatz $E[x|y]=E[x]-DE[y](x-y)-E[y]$. We have added this lower-order term -- as compared to the term $\int_\Rd 1-\xi\cdot \frac{\nabla \chi_u}{|\nabla \chi_u|} \,d|\nabla \chi_u|$ which provides tilt-excess-type control -- to the relative entropy in order to remove the lack of coercivity of the term $\int_\Rd 1-\xi\cdot \frac{\nabla \chi_u}{|\nabla \chi_u|} \,d|\nabla \chi_u|$ in the limit of vanishing interface length of the varifold solution.

\noindent
{\bf Control of velocity gradient error by dissipation.}
By means of Korn's inequality, the dissipation term controls the $L^2$-error in the gradient
\begin{align*}
\int_0^T \int_\Rd |\nabla u-\nabla v-\nabla w|^2 \,\dx\,\dt.
\end{align*}

\subsection{The case of equal viscosities}

For equal viscosities $\mu^+=\mu^-$, one may choose $w\equiv 0$. As a consequence, the right-hand side in the relative entropy inequality -- see Proposition~\ref{PropositionRelativeEntropyInequalityFull} above -- may be post-processed to yield the Gronwall-type estimate \eqref{EstimateGronwallType}. The details are provided in Section~\ref{MainResultEqualViscosities}.

\subsection{Additional challenges in the case of different viscosities}

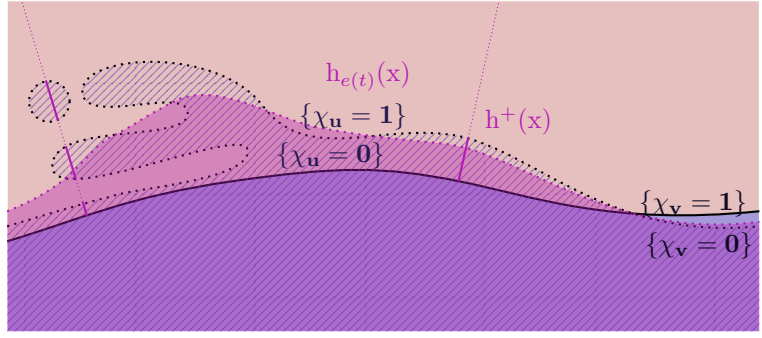
\begin{figure}
\begin{center}
\begin{tikzpicture}[scale=2.0]
\draw[white,fill=LightRed!50!white] (0,0) .. controls (0.5,0.15) and (0.8,0.3) .. (1.5,0.4) .. controls (2.2,0.5) and (2.5,0.5) .. (3.0,0.4) .. controls (3.5,0.3) and (4.0,0.1) .. (5.0,0.2) -- (5,1.6) -- (0,1.6) -- cycle;
\draw[white,fill=LightBlue!50!white] (0,0) .. controls (0.5,0.15) and (0.8,0.3) .. (1.5,0.4) .. controls (2.2,0.5) and (2.5,0.5) .. (3.0,0.4) .. controls (3.5,0.3) and (4.0,0.1) .. (5.0,0.2) -- (5,-0.6) -- (0,-0.6) -- cycle;
\draw[thick] (0,0) .. controls (0.5,0.15) and (0.8,0.3) .. (1.5,0.4) .. controls (2.2,0.5) and (2.5,0.5) .. (3.0,0.4) .. controls (3.5,0.3) and (4.0,0.1) .. (5.0,0.2);
\draw[draw=none,pattern=north east lines, pattern color=LightBlue, draw opacity=0.5] (0,0.0+0.1) .. controls (0.5,0.15+0.1) and (0.5,0.25) .. (0.7,0.3) .. controls (0.8,0.35) and (1.6,0.4) .. (1.6,0.6) .. controls (1.6,0.8) and (0.3,0.2) .. (0.3,0.5) .. controls (0.3,0.8) and (1.2,0.65) .. (1.2,0.85) .. controls (1.2,1.05) and (0.5,0.75) .. (0.5,1.0) .. controls (0.5,1.25) and (1.4,0.3+1.0) .. (1.7,0.4+0.5) .. controls (2.0,0.5) and (2.5,0.5+0.3) .. (3.0,0.4+0.3) .. controls (3.5,0.3+0.3) and (4.0,0.1-0.1) .. (5.0,0.2-0.1) -- (5,-0.6) -- (0,-0.6) -- cycle;
\draw[thick,dotted] (0,0.0+0.1) .. controls (0.5,0.15+0.1) and (0.5,0.25) .. (0.7,0.3) .. controls (0.8,0.35) and (1.6,0.4) .. (1.6,0.6) .. controls (1.6,0.8) and (0.3,0.2) .. (0.3,0.5) .. controls (0.3,0.8) and (1.2,0.65) .. (1.2,0.85) .. controls (1.2,1.05) and (0.5,0.75) .. (0.5,1.0) .. controls (0.5,1.25) and (1.4,0.3+1.0) .. (1.7,0.4+0.5) .. controls (2.0,0.5) and (2.5,0.5+0.3) .. (3.0,0.4+0.3) .. controls (3.5,0.3+0.3) and (4.0,0.1-0.1) .. (5.0,0.2-0.1);
\draw[draw=none,pattern=north east lines, pattern color=LightBlue, draw opacity=0.5] (0.28,0.93) circle (0.135);
\draw[thick,dotted] (0.28,0.93) circle (0.135);
\draw[color=Purple] (2.4,1.1) node {h$_{e(t)}$(x)};
\draw[color=Purple] (3.4,0.8) node {h$^+$(x)};
\draw[densely dotted,draw=Purple] (0.53,0.17) -- (0.53-1.6*0.27,0.17+1.6*0.89);
\draw[thick,draw=Purple] (0.53,0.17) -- (0.53-0.105*0.27,0.17+0.105*0.89);
\draw[thick,draw=Purple] (0.53-0.27*0.27,0.17+0.27*0.89) -- (0.53-0.52*0.27,0.17+0.52*0.89);
\draw[thick,draw=Purple] (0.53-0.71*0.27,0.17+0.71*0.89) -- (0.53-1.01*0.27,0.17+1.01*0.89);
\draw[densely dotted,draw=Purple] (3,0.4) -- (3+4.1*0.065,0.4+4.1*0.29);
\draw[thick,draw=Purple] (3,0.4) -- (3.065,0.69);
\draw[thick,dotted,draw=Purple] (0,0.0+0.2) .. controls (0.5,0.4) and (0.5,0.6) .. (0.7,0.7) .. controls (0.9,0.8) and (0.85,0.80) .. (1.15,0.95) .. controls (1.45,1.05) and (1.8,0.8) .. (2.1,0.75) .. controls (2.4,0.7) and (2.4,0.4+0.3) .. (2.9,0.35+0.3) .. controls (3.4,0.3+0.3) and (4.0,0.1-0.1) .. (5.0,0.23-0.1);
\draw[draw=none,fill=Purple,opacity=0.35] (0,0.0+0.2) .. controls (0.5,0.4) and (0.5,0.6) .. (0.7,0.7) .. controls (0.9,0.8) and (0.85,0.80) .. (1.15,0.95) .. controls (1.45,1.05) and (1.8,0.8) .. (2.1,0.75) .. controls (2.4,0.7) and (2.4,0.4+0.3) .. (2.9,0.35+0.3) .. controls (3.4,0.3+0.3) and (4.0,0.1-0.1) .. (5.0,0.23-0.1) -- (5,-0.6) -- (0,-0.6) -- cycle;
\draw[color=LightBlue!45!black] (2.3,0.83) node {$\mathbf{\{\chi_u=1\}}$};
\draw[color=LightBlue!45!black] (2.15,0.57) node {$\mathbf{\{\chi_u=0\}}$};
\draw[color=LightBlue!20!black] (4.56,0.25) node {$\mathbf{\{\chi_v=1\}}$};
\draw[color=LightBlue!20!black] (4.6,-0.03) node {$\mathbf{\{\chi_v=0\}}$};
\end{tikzpicture}
\caption{An illustration of the approximation of the interface error by the mollified height function $h_{e(t)}^+$. \label{FigureInterfaceMollified}}
\end{center}
\end{figure}

In the case of different viscosities $\mu_1\neq \mu_2$ of the two fluids, even for strong solutions the normal derivative of the tangential velocity features a discontinuity at the interface: By the no-slip boundary condition, the velocity is continuous across the interface $[v]=0$ and the same is true for its tangential derivatives $[(\vec{t}\cdot \nabla) v]=0$. As a consequence of this, the discontinuity of $\mu(\chi_v)$ across the interface and the equilibrium condition for the stresses at the interface
\begin{align*}
[[\mu(\chi) \vec{t}\cdot (\vec{n}\cdot \nabla) v+\mu(\chi) \vec{n}\cdot (\vec{t}\cdot \nabla)v]]=0
\end{align*}
entail for generic data a discontinuity of the normal derivative of the tangential velocity $\vec{t}\cdot (\vec{n}\cdot \nabla) v$ across the interface.

As a consequence, it becomes impossible to establish a Gronwall estimate for the standard relative entropy \eqref{DefinitionRelativeEntropy} with $w\equiv 0$.
To see this, consider in the two-dimensional case $d=2$ two strong solutions $u$ and $v$ with coinciding initial velocities $u(\cdot,0)=v(\cdot,0)=u_0(\cdot)$, but slightly different initial interfaces $\chi_v(\cdot,0)=\chi_{\{|x|\leq 1\}}$ and $\chi_u(\cdot,0)=\chi_{\{|x|\leq 1-\varepsilon\}}$ for some $\varepsilon>0$. The initial relative entropy is then of the order $\sim \varepsilon^2$. Suppose that (in polar coordinates) the initial velocity $u_0$ has a profile near the interface like
\begin{align*}
u_0(x,y)
=
\begin{cases}
\mu^- (r-1) e_\phi &\text{for }r=\sqrt{x^2+y^2}<1,
\\
\mu^+ (r-1) e_\phi &\text{for }r>1.
\end{cases}
\end{align*}
Note that this velocity profile features a kink at the interface. As one verifies readily, as far as the viscosity term is concerned this corresponds to a near-equilibrium profile for the solution $(\chi_v,v)$ (in the sense that the viscosity term is bounded). However, in the solution $(\chi_u,u)$ the interface is shifted by $\varepsilon$ and the profile is no longer an equilibrium profile.
By the scaling of the viscosity term, the timescale within which the profile $u_0$ equilibrates in the annulus of width $\varepsilon$ towards a near-affine profile is of the order of $\varepsilon^2$. After this timescale, the velocity $u$ will have changed by about $\varepsilon$ in a layer of width $\sim \varepsilon$ around the interface; at the same time, due to the mostly parallel transport at the interface the solution will not have changed much otherwise.
As a consequence, the term $\int \frac{1}{2} \rho(\chi_u) |u-v|^2 \,dx$ will be of the order of at least $c\varepsilon^3$ after a time $T\sim\varepsilon^2$, while the other terms in the relative entropy are essentially the same. Thus, the relative entropy has grown by a factor of $1+c\varepsilon$ within a timescale $\varepsilon^2$, which prevents any Gronwall-type estimate.

At the level of the relative entropy inequality (see Proposition~\ref{PropositionRelativeEntropyInequalityFull}), the derivation of the Gronwall inequality is prevented by the viscosity terms, which read for $w\equiv 0$
\begin{align*}
&-\int \frac{\mu(\chi_u)}{2} \big|\nabla u+\nabla u^T-(\nabla v+\nabla v^T)\big|^2\,\dx
\\&
+\int (\mu(\chi_v)-\mu(\chi_u))\nabla v :\big(\nabla u+\nabla u^T-(\nabla v+\nabla v^T)\big) \,\dx.
\end{align*}
The latter term prevents the derivation of a dissipation estimate: While it is formally quadratic in the difference of the two solutions $(\chi_u,u)$ and $(\chi_v,v)$, due to the (expected) jump of the velocity gradients $\nabla v$ and $\nabla u$ at the respective interfaces it is in fact only linear in the interface error.

The key idea for our weak-strong uniqueness result in the case of different viscosities is to construct a vector field $w$ which is small in the $L^2$ norm but whose gradient compensates for most of the problematic term $(\mu(\chi_v)-\mu(\chi_u))(\nabla v+\nabla v^T)$. To be precise, it is only the normal derivative of the tangential component of $v$ which may be discontinuous at the interface; the tangential derivatives are continuous by the no-slip boundary condition, while the normal derivative of the normal component is continuous by the condition $\nabla \cdot v=0$.

Let us explain our construction of the vector field $w$ at the simple two-dimensional example of a planar interface of the strong solution $I_v=\{(x,0):x\in \mathbb{R}\}$. In this setting, we would like to set for $y>0$
\begin{align*}
w^+(x,y,t):=&c(\mu^+,\mu^-) \int_0^{y \wedge h^+(x)} (e_x \cdot \partial_y v) (x,\tilde y) e_x \,\mathrm{d}\tilde y
\end{align*}
(where $e_x$ just denotes the first vector of the standard basis).
Note that due to the bounded integrand, this vector field $w^+(x,y)$ is bounded by $C h^+(x)$, i.\,e.\ it is bounded by the interface error. As we shall see in the proof, the time derivative of $w^+$ is also bounded in terms of other error terms. The tangential spatial derivative of this vector field $\partial_x w^+(x,y,t)$ is given (up to a constant factor) by $\smash{\int_0^{y \wedge h^+(x)}} (e_x \cdot \partial_x \partial_y v) (x,\tilde y) e_x \,\mathrm{d}\tilde y
+\chi_{y\geq h^+(x)} (e_x \cdot \partial_y v) (x,h^+(x)) \partial_x h^+(x) e_x$ which is also a term controlled by $C h^+(x) + C |\partial_x h^+(x)|$. The normal derivative, on the other hand, is given by $\partial_y w^+(x,y,t)=c(\mu^+,\mu^-) \chi_{\{0\leq y\leq h^+(x)\}} (e_x \cdot \partial_y v) (x,y) e_x$ which (upon choosing $c(\mu^+,\mu^-)$) would precisely compensate the discontinuity of $\partial_y (e_x\cdot v)$ in the region in which the interface of the weak solution is a graph of a function over $I_v$. Note that our relative entropy functional provides a higher-order control of the size of the region in which the interface of the weak solution is not a graph over the interface of the strong solution.

However, with this choice of vector field $w^+(x,y,t)$, two problems occur: First, the vector field is not solenoidal. For this reason, we introduce an additional Helmholtz projection. Second -- and constituting a more severe problem -- , the vector field would not necessarily be (spatially) Lipschitz continuous (as the derivative contains a term with $\partial_x h^+(x)$ which is not necessarily bounded), which due to the surface tension terms would be required for the derivation of a Gronwall-type estimate. For this reason, we first regularize the height function $h^+$ by mollification on a scale of the order of the error. See Proposition~\ref{PropositionInterfaceErrorHeight} and Proposition \ref{PropositionInterfaceErrorHeightRegularized} for details of our construction of the regularized height function. The actual construction of our compensation function $w$ is performed in Proposition~\ref{PropositionCompensationFunction}. We then derive an estimate in the spirit of \eqref{WeakerThanGronwall} in Proposition~\ref{PropositionRelativeEntropyInequalityPostProcessed}.

\section{Time evolution of geometric quantities and further coercivity properties
of the relative entropy functional}

\subsection{Time evolution of the signed distance function}
In order to describe the time evolution of various constructions, we need to recall
some well-known properties of the signed distance function. Let us start by introducing
notation. For a family $(\Omega_t^+)_{t\in [0,\Tmax)}$ of smoothly evolving domains
with smoothly evolving interfaces $I(t)$ in the sense of Definition~\ref{definition:domains}, 
the associated signed distance function is given by
\begin{align}\label{defSignedDistance}
\sigdist(x,I(t)) := \begin{cases}
															\mathrm{dist}(x,I(t)), & x\in\Omega^+_t, \\
															-\mathrm{dist}(x,I(t)), & x\notin\Omega^+_t.
													 \end{cases}
\end{align} 
From Definition \ref{definition:domains} of a family of smoothly evolving domains it
follows that the family of maps $\Phi_t\colon I(t)\times (-r_c,r_c) \to \Rd$
given by $\Phi_t(x,y) := x+y\vec{n}(x,t)$ are $C^2$-diffeomorphisms onto their
image $\{x\in\R^d\colon \dist(x,I(t))<r_c\}$ subject to the bounds
\begin{align}
\label{BoundNablaPhiNablaPhi-1}
|\nabla \Phi_t|\leq C,
\quad\quad\quad
|\nabla \Phi_t^{-1}|\leq C.
\end{align}
The signed distance
function (resp.\ its time derivative) to the interface of the strong solution 
is then of class $C^0_tC^3_x$ (resp.\ $C^0_tC^2_x$) in the space-time tubular neighborhood 
$\bigcup_{t\in [0,\Tmax)}\mathrm{im}(\Phi_t)\times \{t\}$ due to the regularity
assumptions in Definition \ref{definition:domains}. We also have the bounds
\begin{align}\label{Bound2ndDerivativeSignedDistance}
|\nabla^{k+1}\sigdist(x,I(t))|\leq Cr_c^{-k},\quad, k=1,2,
\end{align}
and in particular for the mean curvature vector
\begin{align}\label{BoundMeanCurvature}
|\vec{H}| \leq Cr_c^{-1}.
\end{align}
Moreover, the projection $P_{I(t)}x$ of a point $x$ onto the nearest point on the manifold $I(t)$ is well-defined
and of class $C^0_tC^2_x$ in the same tubular neighborhood. 

After having introduced the necessary notation we study the time
evolution of the signed distance function to the interface of the strong solution. Because of the kinematic
condition that the interface is transported with the flow, we obtain the following statement.

\begin{lemma}\label{lem:evolSignedDist}
Let $\chi_v\in L^\infty([0,\Tmax);\BV(\mathbb{R}^d;\{0,1\}))$
be an indicator function such that $\Omega^+_t :=\{x\in\R^d\colon\chi_v(x,t)=1\}$ is a family 
of smoothly evolving domains and $I_v(t) := \partial\Omega^+_t$ is a family of
smoothly evolving surfaces in the sense of Definition~\ref{definition:domains}.
Let $v\in L^2_{loc}([0,\Tmax];H^1_{loc}(\Rd;\Rd))$ be a continuous solenoidal vector field 
such that $\chi_v$ solves the equation $\partial_t \chi_v = -\nabla \cdot(\chi_v v)$.
The time evolution of the signed distance function to the interface $I_v(t)$ is then given by
\begin{equation}\label{transportProblem}
\begin{aligned}
    \partial_t\sigdist(x,I_v(t)) = -\big(\bar{V}_{\vec{n}}(x,t)\cdot\nabla\big)\sigdist(x,I_v(t))
\end{aligned}
\end{equation}
for any $t\in [0,\Tmax]$ and any $x\in \Rd$ with $\dist(x,I_v(t))\leq r_c$, where $\bar{V}_{\vec{n}}$ is the extended normal velocity of the interface given by
\begin{align}\label{projNormalVel}
\bar{V}_{\vec{n}}(x,t) = \big(v(P_{I_v(t)}x,t)\cdot\vec{n}_v(P_{I_v(t)}x,t)\big)
\vec{n}_v(P_{I_v(t)}x,t).
\end{align}
Moreover, the following formulas hold true
\begin{align}
\nabla\sigdist(x,I_v(t)) &= \vec{n}_v(P_{I_v(t)}x,t), \label{sdistAux4}
\\
\nabla\sigdist(x,I_v(t))\cdot\partial_t\nabla\sigdist(x,I_v(t)) &= 0, \label{sdistAux1}
\\
\nabla\sigdist(x,I_v(t))\cdot\partial_j\nabla\sigdist(x,I_v(t)) &= 0, \quad j=1,\ldots,d, \label{sdistAux2}
\\
\partial_t\sigdist(x,I_v(t)) &= \big.\partial_t\sigdist(y,I_v(t))\big|_{y=P_{I_v(t)}x}, \label{sdistAux3}
\end{align}
for all $(x,t)$ such that $\dist(x,I_v(t))\leq r_c$. The gradient of the projection onto the
nearest point on the interface $I_v(t)$ is given by
\begin{align}\label{gradientProjection}
\nabla P_{I_v(t)}x = \mathrm{Id}-\vec{n}_v(P_{I_v(t)}x)\otimes\vec{n}_v(P_{I_v(t)}x)
-\sigdist(x,I_v(t))\nabla^2\sigdist(x,I_v(t)).
\end{align}
In particular, we have the bound
\begin{align}\label{BoundGradientProjection}
|\nabla P_{I_v(t)}x| \leq C
\end{align}
for all $(x,t)$ such that $\dist(x,I_v(t))\leq r_c$.
\end{lemma}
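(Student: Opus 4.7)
The statement splits cleanly into two parts: the pointwise geometric identities \eqref{sdistAux4}--\eqref{sdistAux2} and \eqref{gradientProjection}--\eqref{BoundGradientProjection}, which rest only on the smoothness of $I_v(t)$ and the well-definedness of the nearest-point projection in the tubular neighborhood $\{\dist(\cdot,I_v(t))<r_c\}$; and the evolution identities \eqref{transportProblem} and \eqref{sdistAux3}, which encode the kinematic condition $V_{\vec{n}} = v\cdot \vec{n}_v$.

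For the first part, I would begin from the representation $x = P_{I_v(t)}x + \sigdist(x,I_v(t))\,\vec{n}_v(P_{I_v(t)}x,t)$ valid in the tubular neighborhood, which scalar-multiplied by $\vec{n}_v(P_{I_v(t)}x,t)$ gives $\sigdist(x,I_v(t)) = (x-P_{I_v(t)}x)\cdot \vec{n}_v(P_{I_v(t)}x,t)$. Differentiating this identity in $x$, the contributions from $\partial_j P_{I_v(t)}x$ and $\partial_j \vec{n}_v(P_{I_v(t)}x,t)$ drop out because $x - P_{I_v(t)}x$ is parallel to $\vec{n}_v(P_{I_v(t)}x,t)$ while $\partial_j P_{I_v(t)}x$ is tangent to $I_v(t)$ at $P_{I_v(t)}x$; this yields \eqref{sdistAux4}. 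In particular $|\nabla\sigdist|\equiv 1$ in the tubular neighborhood, and differentiating this identity in space respectively time gives \eqref{sdistAux1} and \eqref{sdistAux2}. Formula \eqref{gradientProjection} then follows by differentiating $P_{I_v(t)}x = x - \sigdist(x,I_v(t))\nabla\sigdist(x,I_v(t))$ and inserting \eqref{sdistAux4}; the bound \eqref{BoundGradientProjection} is immediate from \eqref{Bound2ndDerivativeSignedDistance} together with $r_c\leq \frac{1}{2}$.

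For the evolution, fix $x$ with $\dist(x,I_v(t_0))<r_c$ and set $\gamma(t):=P_{I_v(t)}x$. Differentiating $x - \gamma(t) = \sigdist(x,I_v(t))\,\vec{n}_v(\gamma(t),t)$ in time and taking the scalar product with $\vec{n}_v(\gamma(t),t)$, while using $\vec{n}_v\cdot \tfrac{d}{dt}\vec{n}_v = 0$, I obtain
\begin{align*}
\partial_t\sigdist(x,I_v(t)) = -\dot\gamma(t)\cdot\vec{n}_v(\gamma(t),t).
\end{align*}
The central step is to identify the right-hand side with $-v(\gamma(t),t)\cdot\vec{n}_v(\gamma(t),t)$. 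Writing $\gamma(t) = \Psi^t(\alpha(t))$ with $\alpha(t)\in I_v(0)$ (feasible by the implicit function theorem since $\Psi^t(I_v(0))=I_v(t)$ and $\Psi^t$ is a $C^3$-diffeomorphism), I decompose $\dot\gamma(t) = \partial_t\Psi^t(\alpha(t)) + \nabla\Psi^t(\alpha(t))\dot\alpha(t)$. The second summand is tangent to $I_v(t)$ at $\gamma(t)$ because $\alpha$ takes values in $I_v(0)$, so that $\dot\alpha(t)$ is tangent to $I_v(0)$ and its push-forward under $\nabla\Psi^t$ is tangent to $I_v(t)=\Psi^t(I_v(0))$. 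Hence the normal component of $\dot\gamma(t)$ coincides with that of $\partial_t\Psi^t(\alpha(t))$, which by the argument of Remark~\ref{strongKinematic} — the transport equation $\partial_t\chi_v + v\cdot\nabla\chi_v = 0$ combined with incompressibility of $v$ — equals $v(\gamma(t),t)\cdot\vec{n}_v(\gamma(t),t)$. Using \eqref{sdistAux4} and the definition \eqref{projNormalVel} of $\bar V_{\vec{n}}$, equation \eqref{transportProblem} follows. For \eqref{sdistAux3}, the same computation applied at $y=P_{I_v(t)}x$ (note $P_{I_v(t)}y = y$) produces the identical expression $-v(P_{I_v(t)}x,t)\cdot\vec{n}_v(P_{I_v(t)}x,t)$, so both sides agree.

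The main delicacy lies in the Lagrangian decomposition of $\dot\gamma$: one must check that the curve $\alpha(t)$ selected by $\gamma(t) = \Psi^t(\alpha(t))$ is $C^1$ (ensured by the $C^3$-regularity of $\Psi^t$ together with $\partial_t\Psi\in C^0_t C^2_x$) and that its derivative is tangent to $I_v(0)$, both of which are straightforward consequences of Definition~\ref{definition:domains}. All remaining manipulations reduce to differentiating explicit smooth quantities in the tubular neighborhood where every object involved is of class $C^2$ in space and $C^0$ in time.
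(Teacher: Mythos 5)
Your proof is correct. The static identities \eqref{sdistAux4}, \eqref{sdistAux1}, \eqref{sdistAux2}, \eqref{gradientProjection}, and \eqref{BoundGradientProjection} are obtained much as in the paper, though you derive \eqref{sdistAux4} from the formula $\sigdist(x,I_v(t)) = (x-P_{I_v(t)}x)\cdot\vec{n}_v(P_{I_v(t)}x,t)$ where the paper simply asserts it as a standard property of the signed distance function.

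For the time evolution, however, your route is genuinely different. The paper establishes \eqref{sdistAux3} first via a neat trick: it introduces the auxiliary function $g(x,t):=\sigdist(P_{I_v(t)}x,I_v(t))\equiv 0$, computes its total time derivative, and combines the result with the time derivative of the projection formula $P_{I_v(t)}x = x-\sigdist\nabla\sigdist$. It then invokes — without re-deriving it — the classical fact that on the interface the time derivative of the signed distance equals the negative normal speed, and concludes via Remark~\ref{strongKinematic}. You instead time-differentiate $x - P_{I_v(t)}x = \sigdist(x,I_v(t))\vec{n}_v(P_{I_v(t)}x,t)$ directly to get $\partial_t\sigdist = -\dot\gamma\cdot\vec{n}_v$ with $\gamma(t)=P_{I_v(t)}x$, then identify $\dot\gamma\cdot\vec{n}_v$ with the normal speed through the Lagrangian parametrization $\gamma(t)=\Psi^t(\alpha(t))$, with the tangential part $D\Psi^t\dot\alpha$ dropping out; you obtain \eqref{sdistAux3} afterward as a corollary. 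Your approach is more self-contained — it proves the link between $\partial_t\sigdist$ at the interface and the normal speed from Definition~\ref{definition:domains} rather than citing it — at the cost of having to verify that $\alpha(t)=(\Psi^t)^{-1}(\gamma(t))$ is $C^1$ with $\dot\alpha$ tangent to $I_v(0)$; you correctly flag this as the main technical point and it does go through given the joint $C^1_tC^3_x$ regularity of $\Psi$ and its inverse. The paper's version is shorter by leaning on standard signed-distance lore; yours exposes the mechanism. Both are valid.
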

\begin{proof}
Recall that $\nabla\sigdist(x,I_v(t))$ for a point $x\in I_v(t)$ on the interface equals the inward pointing 
normal vector $\vec{n}_v(x,t)$ of the interface $I_v(t)$. This also extends away from the interface in 
the sense that
\begin{align}\label{gradSigDist}
\big.\nabla\sigdist(y,I_v(t))\big|_{y=P_{I_v(t)}x} = \vec{n}_v(P_{I_v(t)}x,t)
= \big.\nabla\sigdist(y,I_v(t))\big|_{y=x} 
\end{align}
for all $(x,t)$ such that $\dist(x,I_v(t))<r_c$, i.\,e.\ \eqref{sdistAux4} holds. Hence, we also have the formula
$P_{I_v(t)}x = x - \sigdist(x,I_v(t))\nabla\sigdist(x,I_v(t))$. Differentiating
this representation of the projection onto the interface 
and using the fact that $\vec{n}_v$ is a unit vector we obtain using also \eqref{SomeTimeDerivativeFormula}
\begin{align*}
&\big.\nabla\sigdist(y,I_v(t))\big|_{y=P_{I_v(t)}x}\cdot\partial_tP_{I_v(t)}x
\\&
=-\partial_t\sigdist(x,I_v(t))-\sigdist(x,I_v(t))\nabla\sigdist(P_{I_v(t)}x,I_v(t))
\partial_t\sigdist(x,I_v(t))
\\&
=-\partial_t\sigdist(x,I_v(t))-\sigdist(x,I_v(t))\partial_t\Big(\frac{1}{2}|\nabla\sigdist(x,I_v(t))|\Big)
\\&
=-\partial_t\sigdist(x,I_v(t)).
\end{align*}
Hence, we obtain in addition to \eqref{gradSigDist} the formula
\begin{align*}
\partial_t\sigdist(x,I_v(t)) = \big.\partial_t\sigdist(y,I_v(t))\big|_{y=P_{I_v(t)}x}.
\end{align*}
On the other side, on the interface the time derivative of the signed distance function equals 
up to a sign the normal speed. In our case, the latter is given by the normal component of the 
given velocity field $v$ evaluated on the interface, see Remark \ref{strongKinematic}. 
This concludes the proof of \eqref{transportProblem}.
Moreover, \eqref{sdistAux1} as well as \eqref{sdistAux2} follow immediately from differentiating 
$|\nabla\sigdist(x,I_v(t))|^2 = 1$. Finally, \eqref{gradientProjection} and \eqref{BoundGradientProjection}
follow immediately from \eqref{Bound2ndDerivativeSignedDistance} and 
$P_{I_v(t)}x=x-\sigdist(x,I_v(t))\vec{n}_v(P_{I_v(t)}x)$.

In the above considerations, we have made use of the following result:
Consider the auxiliary function $g(x,t)=\sigdist(P_{I_v(t)}x,I_v(t))$ for $(x,t)$ such that $\dist(x,I_v(t))<r_c$. Since this function vanishes on the space-time tubular neighborhood of the interface 
$\bigcup_{t\in (0,\Tmax)}\{x\in\Rd\colon\dist(x,I_v(t))<r_c\}\times\{t\}$ we compute
\begin{align}
\label{SomeTimeDerivativeFormula}
0 = \frac{\mathrm{d}}{\dt}g(x,t)= \big.\partial_t\sigdist(y,I_v(t))\big|_{y=P_{I_v(t)}x}
+ \big.\nabla\sigdist(y,I_v(t))\big|_{y=P_{I_v(t)}x}\cdot\partial_tP_{I_v(t)}x.
\end{align}
\end{proof}

\begin{remark}\label{regVectorField}
Consider the situation of Lemma \ref{lem:evolSignedDist}. We proved that
\begin{align*}
\partial_t\sigdist(x,I_v(t)) = -v(P_{I_v(t)}x,t)\cdot\vec{n}_v(P_{I_v(t)}x,t).
\end{align*}
The right hand side of this identity is of class $L^\infty_t W^{2,\infty}_x$, as the normal component $\vec{n}_v(P_{I_v(t)}) \cdot \nabla v$ of the velocity gradient $\nabla v$ of a strong solution is continuous across the interface $I_v(t)$. To see this, one first observes that the tangential derivatives $((\Id-\vec{n}_v(P_{I_v(t)}\otimes \vec{n}_v(P_{I_v(t)})\nabla) v$ are naturally continuous across the interface; one then uses the incompressibility constraint $\nabla \cdot v=0$ to deduce that $\vec{n}_v(P_{I_v(t)}\cdot (\vec{n}_v(P_{I_v(t)} \cdot \nabla)v$ is also continuous across the interface.
\end{remark}

\subsection{Properties of the vector field $\xi$}
The vector field $\xi$ -- as defined in Proposition~\ref{PropositionRelativeEntropyInequalityFull} -- is an extension of the unit normal
vector field $\vec{n}_v$ associated to the family of smoothly evolving domains 
occupying the first fluid of the strong solution. We now provide a more detailed account of its definition. The construction in fact
consists of two steps. First, we extend the normal vector field $\vec{n}_v$
to a (space-time) tubular neighborhood of the evolving interfaces $I_v(t)$
by projecting onto the interface. Second, we multiply this construction with a cutoff which decreases quadratically in the distance to the interface of the strong solution (see \eqref{controlByTiltExcess2}). 
\begin{definition}\label{def:ExtNormal}
Let $\chi_v\in L^\infty([0,\Tmax);\BV(\mathbb{R}^d;\{0,1\}))$
be an indicator function such that $\Omega^+_t :=\{x\in\R^d\colon\chi_v(x,t)=1\}$ is a family 
of smoothly evolving domains and $I_v(t) := \partial\Omega^+_t$ is a family of
smoothly evolving surfaces in the sense of Definition~\ref{definition:domains}.
Let $\eta$ be a smooth cutoff function with $\eta(s)= 1$ for $s\leq \frac{1}{2}$ and $\eta\equiv 0$ for $s\geq 1$.
Define another smooth cutoff function $\zeta\colon\R\to[0,\infty)$ as follows:
\begin{align}\label{quadCutOff}
			\zeta(r) = (1-r^2)\eta(r),\quad r\in [-1,1],
\end{align}
and $\zeta\equiv 0$ for $|r|>1$. Then, we define a vector field 
$\xi\colon\R^d\times [0,\Tmax)\to\R^d$ by
\begin{align}\label{CutOffNormal}
		\xi(x,t) := \begin{cases}
								\zeta\Big(\frac{\sigdist(x,I_v(t))}{r_c}\Big)\vec{n}_v(P_{I_v(t)}x,t) &
								\text{for }(x,t)\text{ with } \dist(x,I_v(t)) < r_c, \\
								0 & \text{else}.
								\end{cases}
\end{align}
\end{definition} 
The definition of $\xi$ has the following consequences.
\begin{remark}\label{regExtNormal}
Observe that the vector field $\xi$ is indeed well-defined in the space-time domain $\Rd\times [0,\Tmax)$
due to the action of the cut-off function $\zeta$; it also satisfies $|\xi|\leq 1$ or, more precisely, the sharper inequality $|\xi|\leq (1-\dist(x,I_v(t))^2)_+$. Furthermore, the extension $\vec{\xi}$ inherits its regularity
from the regularity of the signed distance function to the interface $I_v(t)$.
More precisely, it follows that the vector field $\xi$ (resp.\ its time derivative) is of class
$L^\infty_t W^{2,\infty}_x$ (resp.\ $W^{1,\infty}_t W^{1,\infty}_x$) globally in $\Rd\times [0,\Tmax)$, and the restrictions to the domains $\{\chi_v=0\}$ and $\{\chi_v=1\}$ are of class $L^\infty_t C^2_x$.
This turns out to be sufficient for our purposes.
\end{remark}
The time derivative of our vector field $\xi$ is given as follows.
\begin{lemma}\label{lem:evolExtNormal}
Let $\chi_v\in L^\infty([0,\Tmax);\BV(\mathbb{R}^d;\{0,1\}))$
be an indicator function such that $\Omega^+_t :=\{x\in\R^d\colon\chi_v(x,t)=1\}$ is a family 
of smoothly evolving domains and $I_v(t) := \partial\Omega^+_t$ is a family of
smoothly evolving surfaces in the sense of Definition~\ref{definition:domains}.
Let $v\in L^2_{loc}([0,\Tmax];H^1_{loc}(\Rd;\Rd))$ be a continuous solenoidal vector field 
such that $\chi_v$ solves the equation $\partial_t \chi_v = -\nabla \cdot(\chi_v v)$.
Let $\bar{V}_{\vec{n}}$ be the extended normal velocity of the interface \eqref{projNormalVel}.
Then the time evolution of the vector field $\xi$ from Definition~\ref{def:ExtNormal}
is given by
\begin{align}\label{evolExtNormal}
\partial_t\xi = -(\bar{V}_{\vec{n}}\cdot\nabla)\xi - 
\big(\mathrm{Id}{-}\vec{n}_v(P_{I_v(t)}x)\otimes\vec{n}_v(P_{I_v(t)}x)\big)(\nabla\bar{V}_{\vec{n}})^T \xi
\end{align}
in the space-time domain $\dist(x,I_v(t))<r_c$. Here, we made use of the abbreviation
$\vec{n}_v(P_{I_v(t)}x)=\vec{n}_v(P_{I_v(t)}x,t)$. 
\end{lemma}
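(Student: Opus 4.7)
The plan is to differentiate the defining formula of $\xi$ directly and route everything through the transport equation for the signed distance supplied by Lemma~\ref{lem:evolSignedDist}. Using formula \eqref{sdistAux4}, on the tubular neighborhood $\{\dist(x,I_v(t))<r_c\}$ the vector field admits the compact representation
\[
\xi(x,t) = \zeta\!\Big(\tfrac{s(x,t)}{r_c}\Big)\nabla s(x,t), \qquad s(x,t) := \sigdist(x,I_v(t)),
\]
so its entire time dependence is routed through $s$ and $\nabla s$. I would apply the product rule to obtain
\[
\partial_t\xi = \tfrac{1}{r_c}\zeta'\!\Big(\tfrac{s}{r_c}\Big)(\partial_t s)\,\nabla s + \zeta\!\Big(\tfrac{s}{r_c}\Big)\partial_t\nabla s,
\]
substitute $\partial_t s = -(\bar{V}_{\vec{n}}\cdot\nabla)s$ from \eqref{transportProblem}, and differentiate that same transport identity in space to obtain $\partial_t\nabla s = -(\nabla \bar{V}_{\vec{n}})^T\nabla s - (\bar{V}_{\vec{n}}\cdot\nabla)\nabla s$.

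Next I would expand the advective term $(\bar{V}_{\vec{n}}\cdot\nabla)\xi$ via the spatial product rule and add it to $\partial_t\xi$. The contribution $\tfrac{1}{r_c}\zeta'(s/r_c)(\bar{V}_{\vec{n}}\cdot\nabla s)\,\nabla s$ appears with opposite signs in the two expressions and cancels, as does the piece $\zeta(s/r_c)(\bar{V}_{\vec{n}}\cdot\nabla)\nabla s$. What remains is
\[
\partial_t\xi + (\bar{V}_{\vec{n}}\cdot\nabla)\xi \,=\, -\zeta\!\Big(\tfrac{s}{r_c}\Big)(\nabla \bar{V}_{\vec{n}})^T\nabla s \,=\, -(\nabla \bar{V}_{\vec{n}})^T\xi,
\]
which is already very close to the target identity.

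The remaining step is to upgrade $-(\nabla \bar{V}_{\vec{n}})^T\xi$ to $-(\mathrm{Id}-\vec{n}_v(P_{I_v(t)}x)\otimes\vec{n}_v(P_{I_v(t)}x))(\nabla \bar{V}_{\vec{n}})^T\xi$; since $\xi\parallel\nabla s = \vec{n}_v(P_{I_v(t)}x)$, this amounts to showing that $(\nabla \bar{V}_{\vec{n}})^T\xi$ has no component along the normal $N := \nabla s$, i.e.\ $N\cdot(N\cdot\nabla)\bar{V}_{\vec{n}} = 0$. Writing $\bar{V}_{\vec{n}} = f\,N$ with $f(x) = (v\cdot \vec{n}_v)(P_{I_v(t)}x)$, one has
\[
(N\cdot\nabla)\bar{V}_{\vec{n}} = (N\cdot\nabla f)\,N + f\,(N\cdot\nabla)N,
\]
and the last summand vanishes since $(N\cdot\nabla)N = \tfrac{1}{2}\nabla|N|^2 = 0$ by \eqref{sdistAux2}. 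For the first summand, I would use the chain rule together with formula \eqref{gradientProjection} for $\nabla P_{I_v(t)}x$: this yields $\nabla f = (\mathrm{Id}-N\otimes N - s\nabla^2 s)^T\nabla(v\cdot \vec{n}_v)|_{P_{I_v(t)}x}$, and both $\mathrm{Id}-N\otimes N$ and $s\nabla^2 s$ annihilate $N$ (the latter because $\nabla s\cdot\nabla^2 s = \tfrac{1}{2}\nabla|\nabla s|^2 = 0$), so $N\cdot\nabla f = 0$. Combined with the previous display this proves \eqref{evolExtNormal}. The only non-routine ingredient here is precisely this orthogonality identity, which reflects the structural fact that $\bar{V}_{\vec{n}}$ is constructed by projecting onto the interface and is therefore essentially constant along normal rays; everything else is bookkeeping.
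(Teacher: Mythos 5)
Your proof is correct and follows essentially the same route as the paper: differentiate $\xi = \zeta(s/r_c)\nabla s$ in time, substitute the transport equation $\partial_t s = -(\bar{V}_{\vec{n}}\cdot\nabla)s$, and reduce the insertion of the tangential projector $\mathrm{Id}-\vec{n}_v\otimes\vec{n}_v$ to the orthogonality $\nabla s \cdot (\nabla\bar{V}_{\vec{n}})^T\nabla s = 0$. The only minor variation is in how you establish this last identity—you write $\bar{V}_{\vec{n}} = f\,\nabla s$ with $f = (v\cdot\vec{n}_v)\circ P_{I_v(t)}$ and invoke $(\nabla P_{I_v(t)})\nabla s = 0$ from \eqref{gradientProjection}, whereas the paper rewrites $\bar{V}_{\vec{n}} = -(\partial_t s)\nabla s$ and uses \eqref{sdistAux1}--\eqref{sdistAux2}; both are short and equivalent in substance, each ultimately resting on $|\nabla s|^2=1$.
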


\begin{proof}
We start by deriving a formula for the time evolution of the normal
vector field $\vec{n}_v(P_{I_v(t)}x,t)$ in the space-time tubular neighborhood
$\dist(x,I_v(t))<r_c$. By \eqref{sdistAux4}, we may use
the formula for the time evolution of the signed distance function
from Lemma~\ref{lem:evolSignedDist}. More precisely, due to the
regularity of the signed distance function to the interface of the strong
solution and the regularity of the vector field $\bar{V}$ (Remark~\ref{regVectorField}),
we can interchange the differentiation in time and space to obtain
\begin{align*}
\partial_t\nabla\sigdist(x,I_v(t)) &= \nabla\partial_t\sigdist(x,I_v(t)) \\
&\stackrel{\eqref{transportProblem}}{=} -\nabla\big((\bar{V}_{\vec{n}}\cdot\nabla)\sigdist(x,I_v(t))\big) \\
&~= -(\bar{V}_{\vec{n}}\cdot\nabla)\vec{n}_v(P_{I_v(t)}x) - (\nabla\bar{V}_{\vec{n}})^T\cdot\vec{n}_v(P_{I_v(t)}x).
\end{align*}
Next, we show that the normal-normal component of $\nabla\bar{V}_{\vec{n}}$ vanishes.
Observe that by Remark~\ref{regVectorField} and \eqref{sdistAux4} it holds
\begin{align*}
\bar{V}_{\vec{n}}(x,t) = -\partial_t\sigdist(x,I_v(t))\nabla\sigdist(x,I_v(t)).
\end{align*}
Hence, by \eqref{sdistAux4}--\eqref{sdistAux3} and this formula we obtain
\begin{align*}
&(\nabla\bar{V}_{\vec{n}})^T(x,t):\vec{n}_v(P_{I_v(t)}x)\otimes\vec{n}_v(P_{I_v(t)}x) 
\\&~~~
=\nabla\bar{V}_{\vec{n}}(x,t)\nabla\sigdist(x,I_v(t))\cdot\nabla\sigdist(x,I_v(t)) 
\\&~~~
= -\nabla\sigdist(x,I_v(t))\cdot\partial_t\nabla\sigdist(x,I_v(t)) 
\\&~~~~~~
+\bar{V}_{\vec{n}}(x,t)\otimes\nabla\sigdist(x,I_v(t)):\nabla^2\sigdist(x,I_v(t)) 
\\&~~~
= 0
\end{align*}
as desired. In summary, we have proved so far that
\begin{align}\label{TimeEvolutionNormalNoCutoff}
\partial_t\vec{n}_v(P_{I_v(t)}x) &= -(\bar{V}_{\vec{n}}\cdot\nabla)\vec{n}_v(P_{I_v(t)}x) 
\\&~~~\nonumber
- \big(\mathrm{Id}{-}\vec{n}_v(P_{I_v(t)}x)\otimes\vec{n}_v(P_{I_v(t)}x)\big)
(\nabla\bar{V}_{\vec{n}})^T\cdot\vec{n}_v(P_{I_v(t)}x),
\end{align}
which holds in the space-time domain $\dist(x,I_v(t))<r_c$. However, applying the 
chain rule to the cut-off function $r\mapsto\zeta(r)$ from \eqref{quadCutOff}
together with the evolution equation \eqref{transportProblem} for the signed distance 
to the interface shows that the cut-off away from the interface is also
subject to a transport equation:
\begin{align*}
\partial_t\zeta\Big(\frac{\sigdist(x,I_v(t))}{r_c}\Big) = -(\bar{V}_{\vec{n}}(x,t)\cdot\nabla)
\zeta\Big(\frac{\sigdist(x,I_v(t))}{r_c}\Big).
\end{align*}
By the definition of the vector field $\xi$, see \eqref{CutOffNormal}, and the product rule, this
concludes the proof.
\end{proof}

\subsection{Properties of the weighted volume term}

We next discuss the weighted volume contribution $\int_\Rd |\chi_u-\chi_v| \dist(x,I_v(t)) \,dx$ to the relative entropy in more detail.

\begin{remark}\label{regWeigthVol}
Let $\beta$ be a truncation of the identity as in Proposition~\ref{PropositionRelativeEntropyInequalityFull}.
Let $\chi_v\in L^\infty([0,\Tmax);\BV(\mathbb{R}^d;\{0,1\}))$
be an indicator function such that $\Omega^+_t :=\{x\in\R^d\colon\chi_v(x,t)=1\}$ is a family 
of smoothly evolving domains, and $I_v(t) := \partial\Omega^+_t$ is a family of
smoothly evolving surfaces, in the sense of Definition~\ref{definition:domains}.
The map $$\Rd\times[0,\Tmax)\ni(x,t)\mapsto\beta\big(\sigdist(x,I_v(t))/r_c\big)$$
inherits the regularity of the signed distance function to the interface 
$I_v(t)$. More precisely, this map (resp.\ its time derivative) is
of class $C^0_tC^3_x$ (resp.\ $C^0_tC^2_x$). 
\end{remark}

\begin{lemma}\label{lem:evolWeightVol}
Let $\chi_v\in L^\infty([0,\Tmax);\BV(\mathbb{R}^d;\{0,1\}))$
be an indicator function such that $\Omega^+_t :=\{x\in\R^d\colon\chi_v(x,t)=1\}$ is a family 
of smoothly evolving domains and $I_v(t) := \partial\Omega^+_t$ is a family of
smoothly evolving surfaces in the sense of Definition~\ref{definition:domains}.
Let $v\in L^2_{loc}([0,\Tmax];H^1_{loc}(\Rd;\Rd))$ be a continuous solenoidal vector field 
such that $\chi_v$ solves the equation $\partial_t \chi_v = -\nabla \cdot(\chi_v v)$.
Let $\bar{V}_{\vec{n}}$ be the extended normal velocity of the interface \eqref{projNormalVel}.
Then the time evolution of the weight function $\beta$ composed with the signed distance
function to the interface $I_v(t)$ is given by the transport equation
\begin{equation}\label{transportWeight}
\begin{aligned}
    \partial_t\beta\Big(\frac{\sigdist(\cdot,I_v)}{r_c}\Big) 
		= -\big(\bar{V}_{\vec{n}}\cdot\nabla\big)\beta\Big(\frac{\sigdist(\cdot,I_v)}{r_c}\Big)
\end{aligned}
\end{equation}
for space-time points $(x,t)$ such that $\dist(x,I_v(t))<r_c$.
\end{lemma}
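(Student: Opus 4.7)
The plan is essentially a direct application of the chain rule combined with the transport equation \eqref{transportProblem} for the signed distance function established in Lemma~\ref{lem:evolSignedDist}.

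First, I would verify that the chain rule applies pointwise in the relevant tubular neighborhood. By the regularity recorded in Remark~\ref{regWeigthVol}, the map $(x,t)\mapsto\sigdist(x,I_v(t))/r_c$ is of class $C^0_tC^3_x$ with time derivative in $C^0_tC^2_x$ on $\{(x,t):\dist(x,I_v(t))<r_c\}$; since $\beta\in C^2(\R)$ with $|\beta'|\leq 1$ and $|\beta''|\leq C$ by the assumptions in Proposition~\ref{PropositionRelativeEntropyInequalityFull}, the composition is differentiable in both space and time on this neighborhood.

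Next, I would apply the chain rule to obtain
\begin{align*}
\partial_t\beta\Big(\frac{\sigdist(x,I_v(t))}{r_c}\Big)
&= \frac{1}{r_c}\,\beta'\Big(\frac{\sigdist(x,I_v(t))}{r_c}\Big)\,\partial_t\sigdist(x,I_v(t)),\\
\nabla\beta\Big(\frac{\sigdist(x,I_v(t))}{r_c}\Big)
&= \frac{1}{r_c}\,\beta'\Big(\frac{\sigdist(x,I_v(t))}{r_c}\Big)\,\nabla\sigdist(x,I_v(t)).
\end{align*}
Substituting the transport identity \eqref{transportProblem}, namely $\partial_t\sigdist(x,I_v(t))=-(\bar V_{\vec n}\cdot\nabla)\sigdist(x,I_v(t))$, into the first line yields
\begin{align*}
\partial_t\beta\Big(\frac{\sigdist(x,I_v(t))}{r_c}\Big)
&= -\frac{1}{r_c}\,\beta'\Big(\frac{\sigdist(x,I_v(t))}{r_c}\Big)\,(\bar V_{\vec n}\cdot\nabla)\sigdist(x,I_v(t))\\
&= -\big(\bar V_{\vec n}\cdot\nabla\big)\beta\Big(\frac{\sigdist(x,I_v(t))}{r_c}\Big),
\end{align*}
which is exactly \eqref{transportWeight}.

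There is no real obstacle here; the statement is a corollary of Lemma~\ref{lem:evolSignedDist} once one observes that $\beta$ is a smooth scalar function, so that the chain rule commutes $\beta'(\sigdist/r_c)$ out of both the time and spatial derivatives and the transport identity for $\sigdist$ transfers verbatim to its composition with $\beta$.
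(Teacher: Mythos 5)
Your proof is correct and follows exactly the same route as the paper: the paper's proof is a one-line remark that the identity is immediate from the chain rule together with the transport equation \eqref{transportProblem} for the signed distance function from Lemma~\ref{lem:evolSignedDist}, and you simply carry out those two steps explicitly.
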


\begin{proof}
This is immediate from the chain rule and the time evolution of
the signed distance function to the interface of the strong solution, 
see Lemma~\ref{lem:evolSignedDist}.
\end{proof}

\subsection{Further coercivity properties of the relative entropy}
We collect some further coercivity properties of the relative entropy functional
$E\big[\chi_u,u,V\big|\chi_v,v\big]$ as defined in \eqref{DefinitionRelativeEntropyFunctional}.
These will be of frequent use in the post-processing of the terms occurring 
on the right hand side of the relative entropy inequality from 
Proposition~\ref{PropositionRelativeEntropyInequalityFull}. We start 
for reference purposes with trivial consequences
of our choices of the vector field $\xi$ and the weight function $\beta$.

\begin{lemma}
Consider the situation of Proposition~\ref{PropositionRelativeEntropyInequalityFull}.
In particular, let $\beta$ be the truncation of the identity from Proposition~\ref{PropositionRelativeEntropyInequalityFull}.
By definition, it holds
\begin{align}\label{coercivityWeight}
\min\Big\{\frac{\dist(x,I_v(t))}{r_c},1\Big\} \leq 
\Big|\beta\Big(\frac{\sigdist(x,I_v(t))}{r_c}\Big)\Big|.
\end{align}
Let $\xi$ be the vector field from Definition~\ref{def:ExtNormal} with
cutoff multiplier $\zeta$ as given in \eqref{quadCutOff}. By the choice of
the cutoff $\zeta$, it holds
\begin{align}\label{controlByTiltExcess2}
\frac{|\sigdist(x,I_v(t))|^2}{r_c^2} \leq 1-\zeta\Big(\frac{\sigdist(x,I_v(t))}{r_c}\Big).
\end{align}
We will also make frequent use of the fact that for any unit vector $\vec{b}\in\Rd$ we have
\begin{align}\label{controlByTiltExcess}
1-\zeta\Big(\frac{\sigdist(x,I_v(t))}{r_c}\Big) \leq 1-\vec{b}\cdot\xi\quad\text{ and } \quad
|\vec{b}-\xi|^2 \leq 2(1-\vec{b}\cdot\xi).
\end{align}
\end{lemma}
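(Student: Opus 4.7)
The lemma collects three pointwise inequalities, each of which follows by direct inspection of the explicit definitions of $\beta$, $\zeta$, and $\xi$; no new analytic ingredient is needed beyond these definitions. My plan is to set $s:=\sigdist(x,I_v(t))/r_c$ and, for each inequality in turn, do a short case analysis on where $s$ lies relative to the transition points $\tfrac12$ and $1$ of the cutoffs.

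For \eqref{coercivityWeight}: I would verify $\min\{|s|,1\}\leq |\beta(s)|$ by cases. On $|s|\leq \tfrac12$ the property $\beta(r)=r$ gives equality. On $\tfrac12\leq |s|\leq 1$ and on $|s|\geq 1$ one uses that $\beta$ has been chosen as a \emph{truncation} of the identity, so $|\beta|$ is monotone up to its saturation value, which is $\geq 1$; the displayed conditions ($\beta(r)=r$ near $0$, $|\beta'|\leq 1$, $|\beta''|\leq C$, $\beta'(r)=0$ for $|r|\geq 1$) are compatible with this, and in the applications $\beta$ is fixed accordingly.

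For \eqref{controlByTiltExcess2}: Since $\zeta(s)=(1-s^2)\eta(s)$ for $|s|\leq 1$ with $0\leq \eta\leq 1$, one reads off $\zeta(s)\leq 1-s^2$ and rearranges to $|s|^2\leq 1-\zeta(s)$. (The inequality is only informative in the tubular neighborhood $\dist(x,I_v(t))<r_c$, i.e.\ on $|s|<1$; this is precisely the region where $\xi\neq 0$ and where the bound is needed.) For \eqref{controlByTiltExcess} I use that $\xi=\zeta(s)\,\vec{n}_v(P_{I_v(t)}x)$ with $|\vec{n}_v|=1$, and that $\zeta\geq 0$ (because both factors $1-s^2$ on $[-1,1]$ and $\eta$ are nonnegative), so $|\xi|=\zeta(s)\in [0,1]$. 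Then Cauchy--Schwarz gives $\vec{b}\cdot \xi\leq |\vec{b}|\,|\xi|=\zeta(s)$ for any unit vector $\vec{b}$, hence $1-\zeta(s)\leq 1-\vec{b}\cdot \xi$. Finally, expanding
\begin{equation*}
|\vec{b}-\xi|^2=|\vec{b}|^2-2\vec{b}\cdot\xi+|\xi|^2=1-2\vec{b}\cdot\xi+|\xi|^2
\end{equation*}
and using $|\xi|^2\leq 1$ yields $|\vec{b}-\xi|^2\leq 2(1-\vec{b}\cdot\xi)$.

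\textbf{Main obstacle.} There is essentially none: every step is bookkeeping once the definitions of $\beta$, $\zeta$, and $\xi$ are unpacked. The only point requiring a tiny bit of care is the first inequality, where the stated conditions on $\beta$ must be supplemented by the (implicit) requirement that $\beta$ actually saturates at $\pm 1$ — this is part of what ``truncation of the identity'' means in context — and the observation that the second inequality is only used on $|s|<1$, so the apparent failure at $|s|\geq 1$ is immaterial.
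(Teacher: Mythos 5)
Your proposal is correct and matches the paper's implicit reasoning — the lemma is stated without a separate proof precisely because each inequality is a direct consequence of the definitions, exactly as you unpack them. Your observations are also worth noting: you correctly identify that \eqref{coercivityWeight} requires $|\beta|$ to saturate near $1$ outside $[-1,1]$ (an intent that the listed conditions on $\beta$ only capture up to a harmless constant), and that \eqref{controlByTiltExcess2} is to be read on the tubular neighborhood $\dist(x,I_v(t))<r_c$, which is where it is actually invoked (outside, $\xi$ vanishes and the relevant integrands are supported there anyway). The Cauchy--Schwarz step and the expansion $|\vec{b}-\xi|^2=1-2\vec{b}\cdot\xi+|\xi|^2$ with $|\xi|\le 1$ are exactly the intended argument for \eqref{controlByTiltExcess}.
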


We also want to emphasize that the relative entropy functional
controls the squared error in the normal of the varifold. 

\begin{lemma}
Consider the situation of Proposition~\ref{PropositionRelativeEntropyInequalityFull}.
We then have
\begin{align}\label{controlSquaredErrorNormalVarifold}
\int_{\Rd\times\Sb^{d-1}}\frac{1}{2}|s-\xi|^2|\,\mathrm{d}V_t(x,s)
\leq E\big[\chi_u,u,V\big|\chi_v,v\big](t)
\end{align}
for almost every $t\in [0,\Tmax)$.
\end{lemma}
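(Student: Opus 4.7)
The plan is to reduce the claim to a pointwise comparison and then read off the right-hand side from the two compatibility properties of the varifold. The starting point is the elementary inequality
\[
\tfrac{1}{2}|s-\xi|^2 \;=\; \tfrac{1}{2}|s|^2 - s\cdot\xi + \tfrac{1}{2}|\xi|^2 \;\leq\; 1 - s\cdot\xi,
\]
valid pointwise because $|s|=1$ for $s\in\Sb^{d-1}$ and $|\xi(\cdot,t)|\leq 1$ by Remark~\ref{regExtNormal}. Integrating against $\mathrm{d}V_t(x,s)$ reduces the claim to controlling $\int_{\Rd\times\Sb^{d-1}}(1-s\cdot\xi)\,\mathrm{d}V_t$.

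Next, I will expand each of the two contributions to $\int(1-s\cdot\xi)\,\mathrm{d}V_t$ in terms that appear explicitly in the relative entropy. For the term $\int s\cdot\xi\,\mathrm{d}V_t$, I invoke the compatibility condition \eqref{varifoldComp} componentwise with the choice $\psi = \xi_i(\cdot,t)$; since $\xi(\cdot,t)$ has compact support in $\{\mathrm{dist}(\cdot,I_v(t))<r_c\}$ and lies in $W^{2,\infty}_x$ by Remark~\ref{regExtNormal}, this is justified by a standard mollification/dominated convergence argument, giving
\[
\int_{\Rd\times\Sb^{d-1}}\xi(x,t)\cdot s\,\mathrm{d}V_t(x,s) \;=\; \int_{\Rd}\xi(x,t)\cdot\,\mathrm{d}\nabla\chi_u(x,t) \;=\; \int_{\Rd}\xi\cdot\vec{n}_u\,\mathrm{d}|\nabla\chi_u|.
\]
For the term $\int 1\,\mathrm{d}V_t = |V_t|_{\Sb^{d-1}}(\Rd)$, I use \eqref{RadonNikodym} with $f\equiv 1$, which yields $\int_{\Rd}\theta_t\,\mathrm{d}|V_t|_{\Sb^{d-1}} = \int_{\Rd}1\,\mathrm{d}|\nabla\chi_u|$, and hence
\[
\int_{\Rd}1\,\mathrm{d}|V_t|_{\Sb^{d-1}} \;=\; \int_{\Rd}(1-\theta_t)\,\mathrm{d}|V_t|_{\Sb^{d-1}} + \int_{\Rd}1\,\mathrm{d}|\nabla\chi_u|.
\]

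Combining the two identities gives
\[
\int_{\Rd\times\Sb^{d-1}}(1-s\cdot\xi)\,\mathrm{d}V_t \;=\; \int_{\Rd}(1-\xi\cdot\vec{n}_u)\,\mathrm{d}|\nabla\chi_u| + \int_{\Rd}(1-\theta_t)\,\mathrm{d}|V_t|_{\Sb^{d-1}},
\]
which is exactly the sum of the first and fourth contributions to $E[\chi_u,u,V|\chi_v,v](t)$ in \eqref{DefinitionRelativeEntropyFunctional} (up to the factor $\sigma$ that is absorbed into the stated inequality). Since the remaining kinetic term $\int\frac{\rho(\chi_u)}{2}|u-v-w|^2\,\mathrm{d}x$ and the weighted volume term in the relative entropy are pointwise non-negative, the desired bound \eqref{controlSquaredErrorNormalVarifold} follows. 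There is no real obstacle here; the only point that warrants care is the use of \eqref{varifoldComp} with a non-$C^\infty_{cpt}$ test function, which is handled by mollifying $\xi(\cdot,t)$, applying the identity to the mollification, and passing to the limit using the compact support and uniform bounds on $\xi$ together with the finiteness of $|V_t|$.
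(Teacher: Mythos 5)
Your proof is correct and follows essentially the same route as the paper: both use the compatibility condition \eqref{varifoldComp} tested against $\xi$ and the Radon--Nikodym relation \eqref{RadonNikodym} to rewrite $\int(1-s\cdot\xi)\,\mathrm{d}V_t$ as the sum of the tilt-excess and multiplicity-error terms, then apply $|s-\xi|^2\leq 2(1-s\cdot\xi)$ (valid since $|s|=1$ and $|\xi|\leq 1$). The only difference is cosmetic (you apply the pointwise inequality first, and you spell out the mollification needed to justify using $\xi$ in \eqref{varifoldComp}); you also correctly flag that the bound technically picks up a factor $\sigma^{-1}$, a point the paper glosses over as well.
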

\begin{proof}
Observe first that by means of the compatibility condition \eqref{varifoldComp} we have
\begin{align*}
\int_{\Rd\times\Sb^{d-1}}(1-s\cdot\vec{\xi}\,)\,\mathrm{d}V_t(x,s) &=
\int_{\Rd\times\Sb^{d-1}}1\,\mathrm{d}V_t(x,s) - \int_{\Rd}\vec{n}_u\cdot\vec{\xi}\,\mathrm{d}|\nabla\chi_u(\cdot,t)| \\
&=\int_{\Rd}1\,\mathrm{d}|V_t|_{\Sb^{d-1}} - \int_{\Rd}\vec{n}_u\cdot\vec{\xi}\,\mathrm{d}|\nabla\chi_u(\cdot,t)|,
\end{align*}
which holds for almost every $t\in [0,\Tmax)$. In addition, due to \eqref{RadonNikodym} one obtains
\begin{align*}
\int_{\Rd}1-\theta_t\,\mathrm{d}|V_t|_{\Sb^{d-1}}
= \int_{\Rd}1\,\mathrm{d}|V_t|_{\Sb^{d-1}} - \int_{\Rd}1\,\mathrm{d}|\nabla\chi_u(\cdot,t)|
\end{align*}
for almost every $t\in [0,\Tmax)$. This in turn entails the following identity
\begin{align*}
&\int_{\Rd}\big(1-\vec{n}_u\cdot\vec{\xi}\big)\,\mathrm{d}|\nabla\chi^u|  +
\int_{\Rd}1-\theta_t\,\mathrm{d}|V_t|_{\Sb^{d-1}} \nonumber\\
&=\int_{\Rd\times\Sb^{d-1}}(1-s\cdot\vec{\xi}\,)\,\mathrm{d}V_t(x,s),
\end{align*}
which holds true for almost every $t\in [0,\Tmax)$. However, the functional on
the right hand side controls the squared error in the normal of the varifold:
$|s-\xi|^2\leq 2(1-s\cdot\xi)$. This proves the claim.
\end{proof}

We will also refer multiple times to the following bound. 
In the regime of equal shear viscosities $\mu_+=\mu_-$ we may
apply this result with the choice $w=0$. In the general case,
we have to include the compensation function $w$ for the
velocity gradient discontinuity at the interface.

\begin{lemma}\label{lemmaBoundRMixed}
Let $(\chi_u,u,V)$ be a varifold solution to \eqref{EquationTransport}--\eqref{EquationIncompressibility} 
in the sense of Definition~\ref{DefinitionVarifoldSolution} on a time interval $[0,\Tend)$
with initial data $(\chi_u^0,u_0)$. Let $(\chi_v,v)$ be a strong solution to 
\eqref{EquationTransport}--\eqref{EquationIncompressibility} in the sense of 
Definition~\ref{DefinitionStrongSolution} on a time interval $[0,\Tmax)$ with $\Tmax\leq \Tend$
and initial data $(\chi_v^0,v_0)$. Let $w\in L^2([0,\Tmax);H^1(\Rd;\Rd))$ be an arbitrary
vector field, and let $F\in L^\infty(\Rd\times[0,\Tmax);\Rd)$ be a bounded vector field. 
Then
\begin{align*}
&\bigg|\int_0^T\int_{\Rd}(\chi_u{-}\chi_v)(u-v-w)\cdot F\,\dx\,\dt\bigg|
\\&
\leq
\delta\int_0^T\int_{\Rd}|\nabla(u-v-w)|^2\,\dx\,\dt
+C \frac{1+\|F\|_{L^\infty}^2}{\delta}\int_0^T\int_{\Rd}\rho(\chi_u)|u{-}v{-}w|^2\,\dx\,\dt
\\&~~~
+ \frac{C\|F\|_{L^\infty}}{\delta}\int_0^T\int_{\Rd}|\chi_u{-}\chi_v|
\Big|\beta\Big(\frac{\sigdist(\cdot,I_v)}{r_c}\Big)\Big|\,\dx\,\dt
\end{align*}
for almost every $T\in [0,\Tmax)$ and all $0<\delta\leq 1$. The absolute constant $C>0$ only depends on the densities $\rho_{\pm}$.
\end{lemma}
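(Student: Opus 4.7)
The plan is to reduce the estimate to bounding
$\|F\|_{L^\infty}\iint|\chi_u{-}\chi_v|\,|u{-}v{-}w|$, and then to handle this integral by splitting the integrand in terms of the weight $|\beta(\sigdist(\cdot,I_v)/r_c)|$. First, the pointwise inequality
$|(\chi_u{-}\chi_v)(u{-}v{-}w)\cdot F|\le\|F\|_{L^\infty}|\chi_u{-}\chi_v|\,|u{-}v{-}w|$
reduces the problem to controlling the latter integrand; the key algebraic fact is that $|\chi_u{-}\chi_v|\in\{0,1\}$ a.e., so $|\chi_u{-}\chi_v|^2=|\chi_u{-}\chi_v|$, which will allow Young's inequality to convert $L^2$-type products back into the $L^1$-type weighted volume term.

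Second, for a threshold $\lambda\in(0,1]$ (to be chosen), I would decompose pointwise
\begin{align*}
|\chi_u{-}\chi_v|\;\le\;\mathbf{1}_{N_\lambda}\,+\,\lambda^{-1}\,|\chi_u{-}\chi_v|\,\bigl|\beta\bigl(\sigdist(\cdot,I_v)/r_c\bigr)\bigr|,
\end{align*}
where $N_\lambda:=\{|\beta(\sigdist(\cdot,I_v)/r_c)|<\lambda\}$ is a tubular neighbourhood of $I_v(t)$ of width proportional to $\lambda r_c$. On the ``far'' set $\{|\beta|\ge\lambda\}$ the second factor dominates and gives access to the weighted volume; on the ``near'' set the geometric volume estimate $|N_\lambda|\le C\lambda$ (which follows from the $r_c$-regularity of the smoothly evolving family $I_v(t)$) together with a Sobolev/$L^2$ control of $u-v-w$ will do the job.

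Third, for the far contribution I would apply the pointwise Young inequality with parameter of order $\delta$ and use $|\chi_u{-}\chi_v|\,|\beta|\le 1$ together with $|u{-}v{-}w|^2\le(\min\rho_\pm)^{-1}\rho(\chi_u)|u{-}v{-}w|^2$ to obtain a bound of the form
\begin{align*}
\|F\|_{L^\infty}\lambda^{-1}\int|\chi_u{-}\chi_v||\beta||u{-}v{-}w|\,\dx
\;\le\;\tfrac{\delta}{2}\!\int\!\rho(\chi_u)|u{-}v{-}w|^2\,\dx\,+\,\tfrac{C\|F\|_{L^\infty}^2}{\delta\lambda^2}\!\int\!|\chi_u{-}\chi_v||\beta|\,\dx.
\end{align*}
For the near contribution I would use Cauchy--Schwarz in the form $\int_{N_\lambda}|u{-}v{-}w|\le|N_\lambda|^{1/2}\|u{-}v{-}w\|_{L^2}$, if necessary sharpened by the Sobolev embedding $H^1(\Rd)\hookrightarrow L^{2d/(d-2)}(\Rd)$ available for $d\in\{2,3\}$, in order to bring in the dissipative term $\delta\int|\nabla(u{-}v{-}w)|^2$. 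A subsequent Young inequality yields an estimate of the form
\begin{align*}
\|F\|_{L^\infty}\int_{N_\lambda}|u{-}v{-}w|\,\dx
\;\le\;\delta\int|\nabla(u{-}v{-}w)|^2\,\dx\,+\,\tfrac{C\|F\|_{L^\infty}^2}{\delta}\int\rho(\chi_u)|u{-}v{-}w|^2\,\dx\,+\,\text{lower-order in }\lambda.
\end{align*}
Finally, summing the two contributions, integrating in time, and making a fixed choice of $\lambda$ (e.g.\ $\lambda=1$, so that the constants depending on the geometry of $I_v$ are absorbed into the single constant $C$ allowed by the statement) produces the claimed estimate.

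\emph{Main obstacle.} The delicate point is the near-interface contribution: a crude $L^2$ bound leaves behind a term of the form $C\|F\|_{L^\infty}^2\lambda/\delta$ which is not of the shape appearing on the right-hand side of the inequality. Resolving this requires either (i)~refining the near-estimate via the Sobolev embedding and a Young inequality so that the resulting stand-alone constant is tied to the dissipation $\delta\int|\nabla(u{-}v{-}w)|^2$, or (ii)~absorbing the constant into the factor $C\,\|F\|_{L^\infty}/\delta$ multiplying the weighted volume term by exploiting that $|\beta|\le 1$ and $|N_\lambda|\lesssim\lambda$ together imply $\lambda\le C\,\int|\chi_u{-}\chi_v||\beta|+\text{controlled remainder}$ after a suitable choice of $\lambda$ depending on $\delta$ and $\|F\|_{L^\infty}$. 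The interplay of the geometric tubular-neighbourhood estimate for $I_v$ with the pointwise Young inequalities is what ultimately produces the three-term bound with the correct $\delta$-scaling.
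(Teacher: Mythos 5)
Your opening reduction to $\|F\|_{L^\infty}\iint|\chi_u{-}\chi_v|\,|u{-}v{-}w|$ and the split into a far region (where $|\beta|$ is of order one and the weighted volume term is immediately accessible) and a near region is the right starting point, and matches the paper's first step. But two issues arise. First, a minor one: in your far estimate you apply an $\varepsilon$-Young inequality which produces a prefactor $\|F\|_{L^\infty}^2/(\delta\lambda^2)$ on the weighted volume term, whereas the statement only allows $C\|F\|_{L^\infty}/\delta$; the paper instead uses the symmetric Young $ab\le\tfrac12 a^2+\tfrac12 b^2$ so that $\|F\|_{L^\infty}$ stays linear on that term, and then uses $\|F\|_{L^\infty}\le C(1{+}\|F\|_{L^\infty}^2)$ and $\delta\le 1$ only on the $L^2$-part.

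The serious gap is in the near-interface contribution. Your plan is to pass to $\int_{N_\lambda}|u{-}v{-}w|$ (dropping $|\chi_u{-}\chi_v|$), apply Cauchy--Schwarz or Sobolev, and then Young. Any such route leaves behind a stand-alone constant of order $|N_\lambda|/\delta$ or $|N_\lambda|^{2-2/p}/\delta$, which does not appear anywhere on the right-hand side and cannot be absorbed; indeed on $N_\lambda=\{|\beta|<\lambda\}$ the weight $|\beta|$ is small by definition, so $\int_{N_\lambda}|\chi_u{-}\chi_v|\,\dx$ \emph{cannot} be controlled by $\int|\chi_u{-}\chi_v||\beta|\,\dx$, and your alternative (ii) of bounding the constant $\lambda$ by an integral that may vanish is not available. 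The paper resolves this by never dropping $|\chi_u{-}\chi_v|$: it changes variables to tubular coordinates $\Phi_t(x,y)=x+y\vec n_v(x,t)$, writes the near-interface integral as $\int_{I_v(t)}\int_{-r_c}^{r_c}|\chi_u{-}\chi_v|(\Phi_t(x,y))|u{-}v{-}w|(\Phi_t(x,y))\,\dy\,\dS(x)$, pulls out the normal supremum $\sup_{y\in[-r_c,r_c]}|u{-}v{-}w|(x+y\vec n_v)$, and controls it with the one-dimensional Gagliardo--Nirenberg--Sobolev interpolation $\|g\|_{L^\infty}\lesssim\|g\|_{L^2}^{1/2}\|g'\|_{L^2}^{1/2}+\|g\|_{L^2}$ applied along normal rays (this is where the dissipation $\delta\int|\nabla(u{-}v{-}w)|^2$ is generated). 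What remains is $\int_{I_v(t)}h(x)^2\,\dS$ with $h(x)=\int_{-r_c}^{r_c}|\chi_u{-}\chi_v|(x+y\vec n_v)\,\dy$ the local interface error height, and the elementary identity $|h|^2=2\int_0^{|h|}y\,\dy\lesssim\int_{-r_c}^{r_c}|\chi_u{-}\chi_v|(\Phi_t(x,y))\tfrac{|y|}{r_c}\,\dy$ converts this into the weighted volume term via \eqref{coercivityWeight}. This anisotropic treatment (sup in the normal direction, $L^2$ in the tangential direction, and the quadratic gain for the height) is the essential idea your proposal is missing.
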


\begin{proof}
We first argue how to control the part away from the interface of the strong solution, i.e.,
outside of $\{(x,t)\colon\dist(x,I_v(t))\geq r_c\}$.
A straightforward estimate using H\"{o}lder's and Young's inequality yields
\begin{align*}
\bigg|\int_0^T&\int_{\{\dist(x,I_v(t))\geq r_c\}}
(\chi_u{-}\chi_v)(u{-}v{-}w)\cdot F\,\dx\,\dt\bigg| \\
&\leq\frac{\|F\|_{L^\infty}}{2}\int_0^T\int_{\{\dist(x,I_v(t))\geq r_c\}}
|\chi_u-\chi_v|\,\dx\,\dt \\ &~~~~+
\frac{\|F\|_{L^\infty}}{2}\int_0^T\int_{\{\dist(x,I_v(t))\geq r_c\}}
|u-v-w|^2\,\dx\,\dt.
\end{align*} 
Note that by the properties of the truncation of the identity $\beta$, see Proposition~\ref{PropositionRelativeEntropyInequalityFull}, 
it follows that $|\beta(\sigdist(x,I_v(t))/r_c)|\equiv 1$ on 
$\{(x,t)\colon\dist(x,I_v(t))\geq r_c\}$. Hence, we obtain
\begin{equation}
\begin{aligned}\label{eq:boundAwayInterface}
\bigg|\int_0^T&\int_{\{\dist(x,I_v(t))\geq r_c\}}
(\chi_u{-}\chi_v)(u{-}v{-}w)\cdot F\,\dx\,\dt\bigg| \\
&\leq\frac{\|F\|_{L^\infty}}{2}\int_0^T\int_{\R^d}
|\chi_u-\chi_v|\cdot\Big|\beta\Big(\frac{\sigdist(\cdot,I_v)}{r_c}\Big)\Big|
\,\dx\,\dt \\ &~~~~+ \frac{\|F\|_{L^\infty}}{2(\rho_+\wedge\rho_-)}
\int_0^T\int_{\R^d}\rho(\chi_u)|u-v-w|^2\,\dx\,\dt,
\end{aligned}
\end{equation}
which is indeed a bound of required order.

We proceed with the bound for the contribution in the vicinity of the interface of
the strong solution. To this end, recall that we are equipped with a family of maps 
$\Phi_t\colon I_v(t)\times (-r_c,r_c) \to \Rd$ given by $\Phi_t(x,y) := x+y\vec{n}_v(x,t)$, 
which are $C^2$-diffeomorphisms onto their image $\{x\in\R^d\colon \dist(x,I_v(t))<r_c\}$. Recall the estimates \eqref{BoundNablaPhiNablaPhi-1}.
We then move on with a change of variables, the one-dimensional Gagliardo-Nirenberg-Sobolev interpolation inequality
\begin{align*}
\|g\|_{L^\infty(-r_c,r_c)}\leq C\|g\|_{L^2(-r_c,r_c)}^\frac{1}{2}\|\nabla g\|_{L^2(-r_c,r_c)}^\frac{1}{2}
+C\|g\|_{L^2(-r_c,r_c)}
\end{align*}
as well as H\"{o}lder's and Young's inequality to obtain the bound
\begin{align*}
&\bigg|\int_0^T\int_{\{\dist(x,I_v(t))<r_c\}}
(\chi_u-\chi_v)(u-v-w)\cdot F\,\dx\,\dt\bigg| 
\\&
\leq C\|F\|_{L^\infty}\int_0^T\int_{I_v(t)}\int_{-r_c}^{r_c}
|(\chi_u{-}\chi_v)|(\Phi_t(x,y))\,|(u{-}v{-}w)|(\Phi_t(x,y))\,\mathrm{d}y\,\mathrm{d}S(x)\,\mathrm{d}t
\\&
\leq C\|F\|_{L^\infty}\int_0^T\int_{I_v(t)}\sup_{y\in [-r_c,r_c]}
|u-v-w|(x+y\vec{n}_v(x,t))
\\&\hspace{3.5cm}\times\bigg(\int_{-r_c}^{r_c}|(\chi_u{-}\chi_v)|(x+y\vec{n}_v(x,t))
\,\mathrm{d}y\bigg)\,\mathrm{d}S(x)\,\mathrm{d}t
\\&
\leq C\frac{\|F\|_{L^\infty}+\|F\|_{L^\infty}^2}{\delta}\int_0^T\int_{\Rd}|u-v-w|^2\,\dx\,\dt
+\delta\int_0^T\int_\Rd |\nabla(u-v-w)|^2\,\dx\,\dt
\\&~~~
+C\|F\|_{L^\infty}\int_0^T\int_{I_v(t)}\bigg(\int_{-r_c}^{r_c}
|(\chi_u{-}\chi_v)|(x+y\vec{n}_v(x,t))\,\mathrm{d}y\bigg)^2\,\mathrm{d}S(x)\,\mathrm{d}t.
\end{align*}
It thus suffices to derive an estimate for the $L^2$-norm of
the local interface error height in normal direction
\begin{align*}
		h(x) = \int_{-r_c}^{r_c}|(\chi_u{-}\chi_v)|(x+y\vec{n}_v(x,t))\,\mathrm{d}y.
\end{align*}
The proof of Proposition~\ref{PropositionInterfaceErrorHeight} below, 
where we establish next to the required $L^2$-bound also several other properties 
of the local interface error height, shows that (see \eqref{L2InterfaceErrorHeightNoCutoff})
\begin{align}\label{L2hNoCutoff}
\int_{I_v(t)} |h(x)|^2\,\dS \leq
C\int_\Rd |\chi_u{-}\chi_v| \min\Big\{\frac{\dist(x,I_v(t))}{r_c},1\Big\} \,\dx.
\end{align}
This then concludes the proof.
\end{proof}

We conclude this section with an $L^2_\mathrm{tan}L^\infty_\mathrm{nor}$-bound for $H^1$-functions
on the tubular neighborhood around the evolving interfaces
as well as a bound for the derivatives of
the normal velocity of the interface of a strong solution in terms of the associated
velocity field $v$, both of which will be used several times in the post-processing
of the terms on the right hand side of the relative entropy inequality of
Proposition~\ref{PropositionRelativeEntropyInequalityFull}.

\begin{lemma}
\label{LemmaEstimateL2SupByH1}
Consider the situation of Proposition~\ref{PropositionRelativeEntropyInequalityFull}.
We have the estimate
\begin{align}\label{BoundL2Linfty}
&\int_{I_v(t)} \sup_{y\in [-r_c,r_c]} |g(x+y\vec{n}_v(x,t))|^2 \,\mathrm{d}S 
\leq C(\|g\|_{L^2}\|\nabla g\|_{L^2}+\|g\|_{L^2}^2)
\end{align}
valid for any $g\in H^1(\Rd)$.
\end{lemma}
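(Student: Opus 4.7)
The plan is to reduce the claimed estimate to a fiberwise one-dimensional interpolation inequality along the normal rays to $I_v(t)$, and then integrate and change variables using the diffeomorphism $\Phi_t$ from Lemma~\ref{lem:evolSignedDist}.

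First, by a standard density argument it suffices to prove the bound for $g\in C^\infty_{cpt}(\R^d)$. For each $x\in I_v(t)$, introduce the fiber function $h_x(y):=g(x+y\vec{n}_v(x,t))$ for $y\in(-r_c,r_c)$. By the one-dimensional Gagliardo--Nirenberg--Sobolev interpolation inequality already invoked in the proof of Lemma~\ref{lemmaBoundRMixed},
\begin{align*}
\|h_x\|_{L^\infty(-r_c,r_c)}
\leq C\|h_x\|_{L^2(-r_c,r_c)}^{1/2}\|h_x'\|_{L^2(-r_c,r_c)}^{1/2}
+ C\|h_x\|_{L^2(-r_c,r_c)}.
\end{align*}
Squaring this inequality and using $(a+b)^2\leq 2a^2+2b^2$ yields, for every $x\in I_v(t)$,
\begin{align*}
\sup_{y\in[-r_c,r_c]}|g(x+y\vec{n}_v(x,t))|^2
\leq C\|h_x\|_{L^2(-r_c,r_c)}\|h_x'\|_{L^2(-r_c,r_c)}
+ C\|h_x\|_{L^2(-r_c,r_c)}^2.
\end{align*}

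Next, integrating over $I_v(t)$ and using the Cauchy--Schwarz inequality gives
\begin{align*}
\int_{I_v(t)}\sup_{y\in[-r_c,r_c]}|g|^2\,\dS
&\leq C\bigg(\int_{I_v(t)}\|h_x\|_{L^2(-r_c,r_c)}^2\,\dS\bigg)^{1/2}
\bigg(\int_{I_v(t)}\|h_x'\|_{L^2(-r_c,r_c)}^2\,\dS\bigg)^{1/2}
\\&~~~
+C\int_{I_v(t)}\|h_x\|_{L^2(-r_c,r_c)}^2\,\dS.
\end{align*}
Since $h_x'(y)=\vec{n}_v(x,t)\cdot\nabla g(\Phi_t(x,y))$, we have $|h_x'(y)|\leq|\nabla g|(\Phi_t(x,y))$. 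Recalling that $\Phi_t\colon I_v(t)\times(-r_c,r_c)\to\{x\in\R^d\colon\dist(x,I_v(t))<r_c\}$ is a $C^2$-diffeomorphism with the bounds \eqref{BoundNablaPhiNablaPhi-1}, the change of variables yields
\begin{align*}
\int_{I_v(t)}\|h_x\|_{L^2(-r_c,r_c)}^2\,\dS
&\leq C\int_{\{\dist(\cdot,I_v(t))<r_c\}}|g|^2\,\dx
\leq C\|g\|_{L^2(\Rd)}^2,
\\
\int_{I_v(t)}\|h_x'\|_{L^2(-r_c,r_c)}^2\,\dS
&\leq C\int_{\{\dist(\cdot,I_v(t))<r_c\}}|\nabla g|^2\,\dx
\leq C\|\nabla g\|_{L^2(\Rd)}^2.
\end{align*}

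Combining the last two displays produces the desired inequality \eqref{BoundL2Linfty}. There is no genuine obstacle here: the only point requiring care is the proper bookkeeping of the Jacobian factors in the change of variables, which is handled by the uniform bounds \eqref{BoundNablaPhiNablaPhi-1} on $\nabla\Phi_t$ and $\nabla\Phi_t^{-1}$ coming from the $C^3$-regularity of the smoothly evolving interface $I_v(t)$ in Definition~\ref{definition:domains}.
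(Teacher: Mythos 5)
Your proof is correct and follows essentially the same route as the paper: the fiberwise one-dimensional Gagliardo--Nirenberg interpolation inequality, squaring, integration over $I_v(t)$ with Cauchy--Schwarz, and a change of variables via $\Phi_t$ controlled by \eqref{BoundNablaPhiNablaPhi-1}. The only cosmetic difference is the order in which you square and integrate, which does not affect the argument.
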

\begin{proof}
Let $f\in H^1(-r_c,r_c)$. The one-dimensional Gagliardo-Nirenberg-Sobolev inerpolation inequality then implies
\begin{align*}
\|f\|_{L^\infty(-r_c,r_c)}\leq C\|f\|_{L^2(-r_c,r_c)}^\frac{1}{2}\| f'\|_{L^2(-r_c,r_c)}^\frac{1}{2}
+C\|f\|_{L^2(-r_c,r_c)}.
\end{align*}
From this we obtain together with H\"older's inequality
\begin{align*}
&\int_{I_v(t)} \sup_{y\in [-r_c,r_c]} |g(x+y\vec{n}_v(x,t))|^2 \,\mathrm{d}S
\\&
\leq C\int_{I_v(t)} \int_{-r_c}^{r_c}|g(x{+}y\vec{n}_v(x,t))|^2\,\dy \,\mathrm{d}S 
\\&~~~
+C\bigg(\int_{I_v(t)} \int_{-r_c}^{r_c}| g(x{+}y\vec{n}_v(x,t))|^2\,\dy \,\mathrm{d}S\bigg)^\frac{1}{2}
\\&~~~~~~~~
\times
\bigg(\int_{I_v(t)} \int_{-r_c}^{r_c}|\nabla g(x{+}y\vec{n}_v(x,t))|^2\,\dy \,\mathrm{d}S\bigg)^\frac{1}{2}.
\end{align*}
This implies \eqref{BoundL2Linfty} by making use of the 
$C^2$-diffeomorphisms $\Phi_t\colon I_v(t)\times (-r_c,r_c) \to \Rd$ 
given by $\Phi_t(x,y) = x+y\vec{n}_v(x,t)$ and the associated change of variables, using also the bound \eqref{BoundNablaPhiNablaPhi-1}.
\end{proof}

\begin{lemma}
Consider the situation of Proposition~\ref{PropositionRelativeEntropyInequalityFull} and
define the vector field
\begin{align*}
V_{\vec{n}}(x,t) &:= \big(v(x,t)\cdot\vec{n}_v(P_{I_v(t)}x,t)\big)\vec{n}_v(P_{I_v(t)}x,t),
\end{align*}
for $(x,t)\in\Rd\times[0,\Tmax)$ such that $\dist(x,I_v(t))<r_c$. Then
\begin{align}\label{Bound1stDerivativeNormalVelocity}
\|\nabla V_{\vec{n}}\|_{L^\infty(\mathcal{O})} &\leq Cr_c^{-1}\|v\|_{L^\infty}
+C\|\nabla v\|_{L^\infty}, \\
\label{BoundDerivativesNormalVelocity}
\|\nabla^2V_{\vec{n}}\|_{L^\infty(\mathcal{O})} &\leq Cr_c^{-2}\|v\|_{L^\infty}
+Cr_c^{-1}\|\nabla v\|_{L^\infty}+C\|\nabla^2 v\|_{L^\infty_tL^\infty_x(\Rd\setminus I_v(t))},
\end{align}
where $\mathcal{O}=\bigcup_{t\in (0,\Tmax)}\{x\in\Rd\colon\dist(x,I_v(t))<r_c\}\times\{t\}$
denotes the space-time tubular neighborhood of width $r_c$ of the evolving interface of 
the strong solution.

In particular, we have for $\bar V_{\vec{n}}(x,t):=V_{\vec{n}}(P_{I_v(t)}x,t)$ the estimate
\begin{align}
\label{EstimateBarVV}
|\bar V_{\vec{n}}(x,t)-V_{\vec{n}}(x,t)| \leq C r_c^{-1} ||v||_{W^{1,\infty}} \dist(x,I_v(t)).
\end{align}
\end{lemma}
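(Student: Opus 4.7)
The plan is to reduce everything to derivatives of the signed distance function, for which the bounds \eqref{Bound2ndDerivativeSignedDistance} and \eqref{sdistAux4} are available. By \eqref{sdistAux4}, the extended unit normal may be rewritten as $\vec{n}_v(P_{I_v(t)}x,t) = \nabla\sigdist(x,I_v(t))$, so that
\begin{align*}
V_{\vec{n}}(x,t) = a(x,t)\,\nabla\sigdist(x,I_v(t)),\qquad a(x,t) := v(x,t)\cdot\nabla\sigdist(x,I_v(t)).
\end{align*}
This representation is valid on $\mathcal{O}$ and, crucially, $v$ is evaluated at $x$ while the rest depends only on the signed distance function, which is $C^3_x$ with bounds $|\nabla\sigdist|=1$, $|\nabla^2\sigdist|\leq Cr_c^{-1}$, and $|\nabla^3\sigdist|\leq Cr_c^{-2}$.

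Next I bound the scalar factor $a$ and its derivatives by the product and chain rule. One gets immediately $|a|\leq \|v\|_{L^\infty}$, $|\nabla a|\leq \|\nabla v\|_{L^\infty}+Cr_c^{-1}\|v\|_{L^\infty}$, and on $\Rd\setminus I_v(t)$ (where $\nabla^2 v$ exists and is essentially bounded)
\begin{align*}
|\nabla^2 a|\leq \|\nabla^2 v\|_{L^\infty_tL^\infty_x(\Rd\setminus I_v(t))}+Cr_c^{-1}\|\nabla v\|_{L^\infty}+Cr_c^{-2}\|v\|_{L^\infty}.
\end{align*}
The product rule applied to $V_{\vec{n}}=a\,\nabla\sigdist$ yields schematically
\begin{align*}
\nabla V_{\vec{n}} &= \nabla\sigdist\otimes\nabla a+a\,\nabla^2\sigdist,\\
\nabla^2 V_{\vec{n}} &= \nabla^2 a\otimes\nabla\sigdist+(\text{sym.\ in }\nabla a,\nabla^2\sigdist)+a\,\nabla^3\sigdist.
\end{align*}
Plugging in the estimates on $a$ and on the derivatives of the signed distance function delivers \eqref{Bound1stDerivativeNormalVelocity} and \eqref{BoundDerivativesNormalVelocity} directly; the jump of $\nabla^2 v$ at $I_v(t)$ is the reason why the norm on the right-hand side of \eqref{BoundDerivativesNormalVelocity} must be taken away from the interface, while the cutoff from projection geometry is already encoded in the $r_c^{-1}$ powers.

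Finally, for \eqref{EstimateBarVV} I use that the projection is idempotent, $P_{I_v(t)}(P_{I_v(t)}x)=P_{I_v(t)}x$, so that the normal factors in $V_{\vec{n}}(P_{I_v(t)}x,t)$ and $V_{\vec{n}}(x,t)$ are identical:
\begin{align*}
\bar V_{\vec{n}}(x,t)-V_{\vec{n}}(x,t)=\bigl((v(P_{I_v(t)}x,t)-v(x,t))\cdot\vec{n}_v(P_{I_v(t)}x,t)\bigr)\vec{n}_v(P_{I_v(t)}x,t).
\end{align*}
Since $|P_{I_v(t)}x-x|=\dist(x,I_v(t))$ and $v$ is globally Lipschitz in space, the right-hand side is bounded by $\|\nabla v\|_{L^\infty}\dist(x,I_v(t))$, which is absorbed into $Cr_c^{-1}\|v\|_{W^{1,\infty}}\dist(x,I_v(t))$ using $r_c\leq\tfrac12$. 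The only subtle point in the whole argument is the restriction to $\Rd\setminus I_v(t)$ in the norm of $\nabla^2 v$; this is dictated by the anticipated discontinuity of the normal derivative of the tangential velocity at $I_v(t)$ discussed earlier and is consistent with the regularity of a strong solution (Definition~\ref{DefinitionStrongSolution}~(iii)).
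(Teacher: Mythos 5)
Your proof is correct and follows essentially the same route as the paper's (very terse) proof: use the identity $\vec{n}_v(P_{I_v(t)}x,t)=\nabla\sigdist(x,I_v(t))$ from \eqref{sdistAux4}, then apply the product rule together with the bounds \eqref{Bound2ndDerivativeSignedDistance} on derivatives of the signed distance. Your observation for \eqref{EstimateBarVV} that the projection is idempotent, so the normal factors cancel and only the difference $v(P_{I_v(t)}x,t)-v(x,t)$ remains, is a clean way to obtain the estimate (in fact slightly sharper than applying the Lipschitz bound \eqref{Bound1stDerivativeNormalVelocity} directly, which would also suffice).
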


\begin{proof}
The estimates \eqref{Bound1stDerivativeNormalVelocity} and \eqref{BoundDerivativesNormalVelocity} 
are a direct consequence of the regularity requirements on the velocity field $v$ of a
strong solution, see Definition~\ref{DefinitionStrongSolution}, the pointwise bounds 
\eqref{Bound2ndDerivativeSignedDistance} and the representation
of the normal vector field on the interface in terms of the signed distance function
\eqref{sdistAux4}.
\end{proof}

\section{Weak-strong uniqueness of varifold solutions to two-fluid Navier-Stokes flow: The case of equal viscosities}
\label{MainResultEqualViscosities}

In this section we provide a proof of the weak-strong uniqueness principle
to the free boundary problem for the incompressible Navier-Stokes equation for two fluids 
\eqref{EquationTransport}--\eqref{EquationIncompressibility} in the case of equal
shear viscosities $\mu_+=\mu_-$. Note that in this case the problematic viscous stress term $R_{visc}$ in the relative entropy inequality (see Proposition~\ref{PropositionRelativeEntropyInequalityFull}) vanishes because
of $\mu(\chi_u)-\mu(\chi_v) = 0$. In this setting, it is possible to choose $w\equiv 0$ which directly implies $A_{visc}=0$, $A_{adv}=0$, $A_{dt}=0$, $A_{weightVol}=0$, and $A_{surTen}=0$. What remains to be done is a post-processing of the terms $R_{surTen}$, $R_{adv}$, $R_{dt}$, and $R_{weightVol}$ which remain on the right-hand side of the relative entropy inequality. 

\subsection{Estimate for the surface tension terms}
We start by post-processing the terms related to surface tension $R_{surTen}$.
\begin{lemma}
Consider the situation of Proposition~\ref{PropositionRelativeEntropyInequalityFull}.
The terms related to surface tension $R_{surTen}$ are estimated by
\begin{align}
\nonumber
R_{surTen}
&\leq
\delta \int_0^T \int_\Rd |\nabla (u-v-w)|^2 \,dx\,dt
\\&~~~\label{RsurTenEqualVisc}
+C(\delta) r_c^{-4}\big(1+\|v\|_{L^\infty_tW^{2,\infty}_x}^2\big)
\int_0^TE[\chi_u,u,V|\chi_v,v](t)\,\dt
\end{align}
for any $\delta>0$.
\end{lemma}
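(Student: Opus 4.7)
Our plan is to bound each of the six integrals constituting $R_{surTen}$ separately and to show that each is absorbed either into the dissipation term $\delta\int_0^T\int_\Rd|\nabla(u-v-w)|^2\,\dx\,\dt$ or into a Gronwall-type term of order $r_c^{-4}(1+\|v\|^2_{L^\infty_tW^{2,\infty}_x})\int_0^T E[\chi_u,u,V|\chi_v,v](t)\,\dt$. The first two integrals, supported on the varifold, are immediate: using $|s-\xi|^2 \leq 2(1-s\cdot\xi)$ (valid since $|s|=1$ and $|\xi|\leq 1$) together with \eqref{controlSquaredErrorNormalVarifold}, the first is bounded by $C\|\nabla v\|_{L^\infty}\int_0^T E[\chi_u,u,V|\chi_v,v](t)\,\dt$; similarly, $|\xi|\leq 1$ and the multiplicity contribution $\int(1-\theta_t)\,\mathrm{d}|V_t|_{\Sb^{d-1}}$ in the entropy control the second. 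The third integral fits the hypotheses of Lemma~\ref{lemmaBoundRMixed} with $w\equiv 0$ and $F=\sigma\nabla(\nabla\cdot\xi)$; the bound $\|F\|_{L^\infty}\leq C\sigma r_c^{-2}$ from Remark~\ref{regExtNormal} yields exactly the required splitting.

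The remaining three interface integrals require a structural cancellation. Using the explicit form $\xi = \zeta(\sigdist(\cdot,I_v)/r_c)\,\vec{n}_v(P_{I_v(t)}\cdot)$, the identity $(\nabla A)^T\xi = (\xi\cdot\nabla)A$, and the crucial geometric fact that $(\vec{n}_v(P_{I_v(t)}x)\cdot\nabla)\bar V_{\vec{n}}(x,t) = 0$ (since $\bar V_{\vec{n}}$ depends only on $P_{I_v(t)}x$ by \eqref{projNormalVel}), the fourth and fifth integrals telescope after expanding the tensorial inner products to the single expression
\begin{align*}
-\sigma\int_0^T\int_\Rd(\vec{n}^u - \xi)\cdot(\xi\cdot\nabla)v\,\mathrm{d}|\nabla\chi_u|\,\dt.
\end{align*}
Decomposing $\vec{n}^u - \xi = (\vec{n}^u\cdot\vec{n}_v - \zeta)\vec{n}_v + (\mathrm{Id}-\vec{n}_v\otimes\vec{n}_v)\vec{n}^u$ and invoking the pointwise inequalities $|\vec{n}^u\cdot\vec{n}_v - \zeta|\leq C(1-\vec{n}^u\cdot\xi)$ and $\zeta|(\mathrm{Id}-\vec{n}_v\otimes\vec{n}_v)\vec{n}^u|^2\leq 2(1-\vec{n}^u\cdot\xi)$ (both consequences of \eqref{controlByTiltExcess}), a Young's inequality yields the desired bound $C\|\nabla v\|_{L^\infty}\int_0^T E[\chi_u,u,V|\chi_v,v](t)\,\dt$.

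For the sixth integral, we subtract its strong-solution analog, which vanishes identically: at any $x\in I_v(t)$, $\bar V_{\vec{n}}(x,t)-v(x,t)$ is tangent to $I_v(t)$, $\xi|_{I_v(t)} = \vec{n}_v$, and $\vec{n}_v\cdot\partial_\tau\vec{n}_v = 0$ for every tangential direction $\tau$, so $\int_{I_v(t)}\vec{n}_v\cdot((\bar V_{\vec{n}}-v)\cdot\nabla)\xi\,\mathrm{d}S = 0$. Subtracting this zero and integrating by parts in $x$ (using $\vec{n}^u\,\mathrm{d}|\nabla\chi_u| = \mathrm{d}\nabla\chi_u$) reexpresses the sixth integral as a volume integral of $(\chi_u-\chi_v)$ against $\nabla(\bar V_{\vec{n}}-v):(\nabla\xi)^T + (\bar V_{\vec{n}}-v)\cdot\nabla(\nabla\cdot\xi)$, supported in $\{\dist(\cdot,I_v(t))\leq r_c\}$. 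The estimates $\|\nabla\xi\|_{L^\infty}\leq Cr_c^{-1}$, $\|\nabla^2\xi\|_{L^\infty}\leq Cr_c^{-2}$, together with \eqref{Bound1stDerivativeNormalVelocity}--\eqref{BoundDerivativesNormalVelocity}, then allow us to close using the weighted-volume contribution in the relative entropy, with the mismatch between $\int|\chi_u-\chi_v|\,\dx$ and the weighted version $\int|\chi_u-\chi_v|\,|\beta(\sigdist/r_c)|\,\dx$ handled via the first-order vanishing of $(\bar V_{\vec{n}}-v)\cdot\vec{n}_v(P_{I_v(t)}\cdot)$ at $I_v(t)$ (a consequence of Remark~\ref{regVectorField} and a Taylor expansion).

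The main obstacle is avoiding the naive $\sqrt{E[\chi_u,u,V|\chi_v,v]}$ bound that Cauchy--Schwarz would produce for the combined fourth through sixth integrals. The decisive ingredients are (i) the algebraic cancellation reducing terms 4 and 5 to an $(\vec{n}^u-\xi)\cdot(\xi\cdot\nabla)v$ form in which the two factors are both coercively controlled by $1-\vec{n}^u\cdot\xi$, and (ii) the subtraction-and-integration-by-parts step in term 6 producing a $(\chi_u-\chi_v)$ factor, whose quadratic interaction with the weighted-volume term delivers the desired genuine $E[\chi_u,u,V|\chi_v,v]$ bound.
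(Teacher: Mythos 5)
Your handling of the first three contributions to $R_{surTen}$ is fine: the two varifold integrals follow from $|s-\xi|^2\le 2(1-s\cdot\xi)$ together with \eqref{controlSquaredErrorNormalVarifold} and the multiplicity contribution $\int(1-\theta_t)\,\mathrm{d}|V_t|_{\Sb^{d-1}}$ in the entropy, and the third term is Lemma~\ref{lemmaBoundRMixed}. The argument for the last three interface integrals, however, breaks down at several points.

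The identity $(\nabla A)^T\xi=(\xi\cdot\nabla)A$ is false: under the paper's convention $(\nabla A)_{ij}=\partial_jA_i$, the $i$-th component of $(\nabla A)^T\xi$ is $\sum_j\xi_j\,\partial_iA_j$, while that of $(\xi\cdot\nabla)A$ is $\sum_j\xi_j\,\partial_jA_i$; the two differ by $\xi$ contracted with the curl of $A$. In particular, $(\vec{n}_v(P_{I_v(t)}x)\cdot\nabla)\bar{V}_{\vec{n}}=0$ (which is correct) does not give $(\nabla\bar{V}_{\vec{n}})^T\xi=0$; a direct computation yields $(\nabla\bar{V}_{\vec{n}})^T\xi=\zeta\,\nabla(\lambda\circ P_{I_v(t)})$ with $\lambda=v\cdot\vec{n}_v$, a nonzero tangential vector. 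So the claimed telescoping of terms 4 and 5 into $-\sigma\int(\vec{n}_u-\xi)\cdot(\xi\cdot\nabla)v\,\mathrm{d}|\nabla\chi_u|$ fails. Moreover, even granting that expression, your decomposition does not close: the bound $\zeta\,|(\Id-\vec{n}_v\otimes\vec{n}_v)\vec{n}_u|^2\le 2(1-\vec{n}_u\cdot\xi)$ controls the \emph{square} of the tangential part of $\vec{n}_u$, so that $\zeta\,|(\Id-\vec{n}_v\otimes\vec{n}_v)\vec{n}_u|\le\sqrt{2\zeta}\sqrt{1-\vec{n}_u\cdot\xi}$ is only a square root, and the Young inequality you invoke produces an uncontrolled remainder $\tfrac12\zeta\|\nabla v\|^2_{L^\infty}$ whose integral $\int\zeta\,\mathrm{d}|\nabla\chi_u|\approx |\nabla\chi_u(\cdot,t)|(\Rd)$ is not controlled by the relative entropy. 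Term 6 handled in isolation fails for the same reason: after integration by parts and subtraction of the $\chi_v$ contribution, the kernel $\nabla\cdot\big(((\bar{V}_{\vec{n}}-v)\cdot\nabla)\xi\big)$ does not vanish on $I_v(t)$ (e.g.\ the piece $(\bar{V}_{\vec{n}}-v)\cdot\nabla(\nabla\cdot\xi)$ pairs the nonvanishing tangential component of $\bar{V}_{\vec{n}}-v$ with $\nabla\vec{H}$; only the \emph{integrated} surface quantity $\int_{I_v(t)}\vec{n}_v\cdot((\bar{V}_{\vec{n}}-v)\cdot\nabla)\xi\,\mathrm{d}S$ vanishes). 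The resulting $(\chi_u-\chi_v)$ integral therefore scales as $\int_\Rd|\chi_u-\chi_v|\,\dx$ rather than as the weighted volume term, and the first-order vanishing of the \emph{normal} component of $\bar{V}_{\vec{n}}-v$ does not save the tangential contribution.

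The correct mechanism is structurally different. Term 4 is bounded on its own via the rewriting $\sigma\int(1-\xi\cdot\vec{n}_u)\,\vec{n}_v\cdot(\vec{n}_v\cdot\nabla)v + \sigma\int\big(\xi\cdot(\xi\cdot\nabla)v - \vec{n}_v\cdot(\vec{n}_v\cdot\nabla)v\big)$, each integrand carrying a genuine factor $1-\vec{n}_u\cdot\xi$ or $1-\zeta$, respectively, controlled by \eqref{controlByTiltExcess}. Terms 5 and 6 must be treated \emph{together}: decomposing $\bar{V}_{\vec{n}}-v=(\bar{V}_{\vec{n}}-V_{\vec{n}})+(V_{\vec{n}}-v)$ with $V_{\vec{n}}$ the normal velocity evaluated at $x$ rather than at $P_{I_v(t)}x$, the $V_{\vec{n}}-v$ parts of terms 5 and 6 cancel \emph{exactly} by \eqref{tangentialCancel}, while the $\bar{V}_{\vec{n}}-V_{\vec{n}}$ parts carry an extra factor of $\dist(x,I_v(t))$ thanks to \eqref{EstimateBarVV}, which, combined with \eqref{controlByTiltExcess2}, provides the additional $\dist^2$ weight that upgrades the estimate from $\sqrt{E}$ to $E$.
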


\begin{proof}
We start by using \eqref{controlByTiltExcess} and \eqref{CutOffNormal} to estimate
\begin{align}\label{auxSurTenGronwall1}
\nonumber
&-\sigma\int_0^T\int_{\Rd}
\Big(\xi\cdot\frac{\nabla\chi_u}{|\nabla\chi_u|}\Big)\,
\vec{n}_v(P_{I_v(t)}x)\cdot\big(\vec{n}_v(P_{I_v(t)}x)\cdot\nabla\big)v-\xi\cdot(\xi\cdot\nabla) v
\,\mathrm{d}|\nabla\chi_u|\,\dt
\\&\nonumber
= \sigma\int_0^T\int_{\Rd}
\Big(1-\xi\cdot\frac{\nabla\chi_u}{|\nabla\chi_u|}\Big)\,
\vec{n}_v(P_{I_v(t)}x)\cdot\big(\vec{n}_v(P_{I_v(t)}x)\cdot\nabla\big)v
\,\mathrm{d}|\nabla\chi_u|\,\dt 
\\&~~~\nonumber
+\sigma\int_0^T\int_{\Rd}
\xi\cdot(\xi\cdot\nabla) v - \vec{n}_v(P_{I_v(t)}x)
\cdot\big(\vec{n}_v(P_{I_v(t)}x)\cdot\nabla\big)v
\,\mathrm{d}|\nabla\chi_u|\,\dt
\\&\nonumber
\leq C\|\nabla v \|_{L^\infty}\int_0^T\int_{\Rd}
1-\xi\cdot\frac{\nabla\chi_u}{|\nabla\chi_u|}
\,\mathrm{d}|\nabla\chi_u|\,\dt 
\\&~~~\nonumber
+C\|\nabla v \|_{L^\infty}\int_0^T\int_\Rd
1-\zeta\Big(\frac{\sigdist(x,I_v(t))}{r_c}\Big)
\,\mathrm{d}|\nabla\chi_u|\,\dt 
\\&
\leq C\|v\|_{L^\infty_tW^{1,\infty}_x}\int_0^TE[\chi_u,u,V|\chi_v,v](t)\,\dt.
\end{align}
Recall from \eqref{controlSquaredErrorNormalVarifold} that the squared
error in the varifold normal is controlled by the relative entropy functional.
Together with the bound from Lemma~\ref{lemmaBoundRMixed}, \eqref{Bound2ndDerivativeSignedDistance}
as well as \eqref{auxSurTenGronwall1} we get an estimate for the first four terms of $R_{surTen}$
\begin{align}
\label{FirstEstimateRsurf}
&R_{surTen}
\\&
\nonumber
\leq C(\delta)r_c^{-4}(1+\|v\|_{L^\infty_tW^{1,\infty}_x})
\int_0^TE[\chi_u,u,V|\chi_v,v](t)\,\dt
\\&~~~
\nonumber
+\frac{\delta}{2}\int_0^T\int_\Rd |\nabla(u-v-w)|^2\,\dx\,\dt
\\&~~~
\nonumber
+\sigma\int_0^T\int_{\Rd}\frac{\nabla\chi_u}{|\nabla\chi_u|}
\cdot\big((\mathrm{Id}{-}\vec{n}_v(P_{I_v(t)}x)\otimes\vec{n}_v(P_{I_v(t)}x))
(\nabla\bar{V}_{\vec{n}}{-}\nabla v)^T \xi\big)
\,\mathrm{d}|\nabla\chi_u|\,\dt
\\&~~~
\nonumber
+\sigma\int_0^T\int_{\Rd}\frac{\nabla\chi_u}{|\nabla\chi_u|}
\cdot\big((\bar{V}_{\vec{n}}-v)\cdot\nabla\big)\xi\,\mathrm{d}|\nabla\chi_u|\,\dt
\end{align}
for almost every $T\in [0,\Tmax)$ and all $\delta\in (0,1]$. 
To estimate the remaining two terms we decompose $\bar{V}_{\vec{n}}-v$ as 
\begin{align}\label{decompX}
\bar{V}_{\vec{n}} - v = (\bar{V}_{\vec{n}} - V_{\vec{n}}) + (V_{\vec{n}} - v),
\end{align}
where the vector field $V_{\vec{n}}$ is given by
\begin{align}
V_{\vec{n}}(x,t) &:= \big(v(x,t)\cdot\vec{n}_v(P_{I_v(t)}x,t)\big)\vec{n}_v(P_{I_v(t)}x,t) \label{normalCompVel} 
\end{align}
in the space-time domain $\{\dist(x,I_v(t))<r_c\}$ (i.\,e.\ in contrast to $\vec V_{vec{n}}$, for $V_{\vec{n}}$ the velocity $v$ is evaluated not at the projection of $x$ onto the interface, but at $x$ itself). Note that it will not matter as to how $V_{\vec{n}}$ and similar quantities are defined outside of the area $\{\dist(x,I_v(t))<r_c\}$, as the terms will always be multiplied by suitable cutoffs which vanish outside of $\{\dist(x,I_v(t))<r_c\}$. In the next two steps, we compute and bound
the contributions from the two different parts in the decomposition \eqref{decompX} of
the error $\bar{V}_{\vec{n}} - v$. 

\subsubsection*{First step: Controlling the error $V_{\vec{n}} - v$}
By definition of the vector field $V_{\vec{n}}$ in \eqref{normalCompVel},
we may write $V_{\vec{n}} - v = -\big(\mathrm{Id}-\vec{n}_v(P_{I_v(t)}x)\otimes\vec{n}_v(P_{I_v(t)}x)\big)v$.
It is then not clear why the term
\begin{align*}
&\sigma\int_0^T\int_{\Rd}\frac{\nabla\chi_u}{|\nabla\chi_u|}
\cdot\big((\mathrm{Id}{-}\vec{n}_v(P_{I_v(t)}x)\otimes\vec{n}_v(P_{I_v(t)}x))
(\nabla V_{\vec{n}}{-}\nabla v)^T \xi\big)
\,\mathrm{d}|\nabla\chi_u|\,\dt
\\&
+\sigma\int_0^T\int_{\Rd}\frac{\nabla\chi_u}{|\nabla\chi_u|}
\cdot\big((V_{\vec{n}}-v)\cdot\nabla\big)\xi\,\mathrm{d}|\nabla\chi_u|\,\dt
\end{align*}
should be controlled by our relative entropy functional. However, the integrands enjoy 
a crucial cancellation
\begin{align}\label{tangentialCancel}
(\mathrm{Id}{-}\vec{n}_v(P_{I_v(t)}x)\otimes\vec{n}_v(P_{I_v(t)}x))
(\nabla V_{\vec{n}}{-}\nabla v)^T \xi + \big((V_{\vec{n}}-v)\cdot\nabla\big)\xi = 0
\end{align}
in the space-time domain $\{(x,t)\in\R^d\times [0,\Tmax):\dist(x,I_v(t))<r_c\}$.
To verify this cancellation, we first recall from \eqref{sdistAux4} that
$\nabla\sigdist(x,I_v(t)) = \vec{n}_v(P_{I_v(t)}x,t)$. We then start by rewriting
\begin{align*}
\big((V_{\vec{n}}-v)\cdot\nabla\big)\xi = 
-\nabla\xi~(\mathrm{Id}-\nabla\sigdist(\cdot,I_v)\otimes\nabla\sigdist(\cdot,I_v))v.
\end{align*}  
Note that when the derivative hits the cutoff multiplier in the definition of $\xi$ (see \eqref{CutOffNormal}),
the resulting term on the right hand side of the last identity vanishes. Hence, we obtain
together with \eqref{sdistAux2}
\begin{align*}
&\big((V_{\vec{n}}-v)\cdot\nabla\big)\xi 
\\&
= -\zeta\big(r_c^{-1}\sigdist(\cdot,I_v)\big)\big(\nabla^2\sigdist(\cdot,I_v)\big)
(\mathrm{Id}-\nabla\sigdist(\cdot,I_v)\otimes\nabla\sigdist(\cdot,I_v))v
\\&
= -\zeta\big(r_c^{-1}\sigdist(\cdot,I_v)\big)\big(\nabla^2\sigdist(\cdot,I_v)\big)v.
\end{align*}
On the other side, another application of \eqref{sdistAux2} yields
\begin{align*}
&(\nabla V_{\vec{n}}{-}\nabla v)^T\xi 
\\&
=-(\nabla v)^T\big(\mathrm{Id}{-}\vec{n}_v(P_{I_v(t)}x)\otimes\vec{n}_v(P_{I_v(t)}x)\big)\xi
+\zeta\big(r_c^{-1}\sigdist(\cdot,I_v)\big)\big(\nabla^2\sigdist(\cdot,I_v)\big)v
\\&
=\zeta\big(r_c^{-1}\sigdist(\cdot,I_v)\big)\big(\nabla^2\sigdist(\cdot,I_v)\big)v.
\end{align*}
Therefore, the desired cancellation \eqref{tangentialCancel} indeed holds true since by \eqref{sdistAux2}
the right-hand side of the last computation remains unchanged after projecting via 
$\mathrm{Id}-\vec{n}_v\otimes\vec{n}_v$.

\subsubsection*{Second step: Controlling the error $\bar{V}_{\vec{n}}-V_{\vec{n}}$} It remains to control
the contributions from the following two quantities:
\begin{align*}
I &:= \int_0^T\int_{\Rd}\vec{n}_u\cdot
\big(\big(\mathrm{Id}{-}\vec{n}_v(P_{I_v(t)}x)\otimes\vec{n}_v(P_{I_v(t)}x)\big)
(\nabla\bar{V}_{\vec{n}}{-}\nabla V_{\vec{n}})^T \xi\big)
\,\mathrm{d}|\nabla\chi_u|\,\dt,
\\
II &:= \int_0^T\int_{\Rd}\vec{n}_u\cdot
\big((\bar{V}_{\vec{n}}-V_{\vec{n}})\cdot\nabla\big)\xi\,\mathrm{d}|\nabla\chi_u|\,\dt.
\end{align*}
Note first that we can write
\begin{align*}
I = \int_0^T\int_{\Rd}(\vec{n}_u-\xi)\cdot
\big(\big(\mathrm{Id}{-}\vec{n}_v(P_{I_v(t)}x)\otimes\vec{n}_v(P_{I_v(t)}x)\big)
(\nabla\bar{V}_{\vec{n}}{-}\nabla V_{\vec{n}})^T\xi\big)
\,\mathrm{d}|\nabla\chi_u|\,\dt.
\end{align*}
Moreover, recall from \eqref{gradientProjection} the formula for the gradient of the 
projection onto the nearest point on the interface $I_v(t)$.
The definition of $V_{\vec n}$ (see \eqref{normalCompVel}) and $\bar V_{\vec n}(x)=V_{vec n}(P_{I_v(t)}x)$, the product rule, \eqref{sdistAux4}, \eqref{Bound2ndDerivativeSignedDistance}, and \eqref{sdistAux2} imply using the definition of $\xi$ and the property $|\xi|\leq 1$
\begin{align*}
&\Big|\big(\mathrm{Id}{-}\vec{n}_v(P_{I_v(t)}x)\otimes\vec{n}_v(P_{I_v(t)}x)\big)
(\nabla\bar{V}_{\vec{n}}{-}\nabla V_{\vec{n}})^T \xi\Big|
\\&
\leq \Big|\big(\mathrm{Id}{-}\vec{n}_v(P_{I_v(t)}x)\otimes\vec{n}_v(P_{I_v(t)}x)\big)(\nabla (v(P_{I_v(t)}x)) - \nabla v(x))^T \Big|
\\&~~~
+\Big|\big(\mathrm{Id}{-}\vec{n}_v(P_{I_v(t)}x)\otimes\vec{n}_v(P_{I_v(t)}x)\big)(\nabla (\vec{n}_v(P_{I_v(t)}x))^T (v(P_{I_v(t)}x)-v(x)) \Big|
\\&~~~
+||v||_{L^\infty} \Big|(\nabla (\vec{n}_v(P_{I_v(t)}x)))^T \xi\Big|
\\&
\leq C r_c^{-1} ||v||_{W^{2,\infty}(\Rd\setminus I_v(t))} \dist(x,I_v(t))
\end{align*}
where in the last step we have used also \eqref{gradientProjection}.
Together
Young's inequality
and the coercivity properties of the relative entropy \eqref{controlByTiltExcess2} and \eqref{controlByTiltExcess}
we then immediately get the estimate
\begin{align}\label{proError1}
\nonumber
I &\leq 
 C\int_0^T\int_\Rd |\vec{n}_u - \xi|^2\,\mathrm{d}|\nabla\chi_u|\,\dt
\\&~~~\nonumber
+Cr_c^{-4}\|v\|^2_{L^\infty_tW^{2,\infty}_x(\Rd\setminus I_v(t))}
\int_0^T\int_\Rd |\dist(x,I_v(t))|^2\,\mathrm{d}|\nabla\chi_u|\,\dt
\\&
\leq C(1+r_c^{-4}\|v\|^2_{L^\infty_tW^{2,\infty}_x(\Rd\setminus I_v(t))})
\int_0^TE[\chi_u,u,V|\chi_v,v](t)\,\dt.
\end{align}
To estimate the second term $II$, we start by adding zero and then use again
$\bar{V}_{\vec{n}}(x,t) = V_{\vec{n}}(P_{I_v(t)}x,t)$, \eqref{Bound1stDerivativeNormalVelocity}, 
\eqref{Bound2ndDerivativeSignedDistance} as well as \eqref{controlByTiltExcess2} and \eqref{controlByTiltExcess}
\begin{align*}
II &= \int_0^T\int_{\Rd}(\vec{n}_u-\xi)\cdot
\big((\bar{V}_{\vec{n}}-V_{\vec{n}})\cdot\nabla\big)\xi\,\mathrm{d}|\nabla\chi_u|\,\dt
\\&~~~
+\int_0^T\int_{\Rd}\xi\cdot
\big((\bar{V}_{\vec{n}}-V_{\vec{n}})\cdot\nabla\big)\xi\,\mathrm{d}|\nabla\chi_u|\,\dt
\\&
\leq C(1+r_c^{-2}\|v\|^2_{L^\infty_tW^{1,\infty}_x})
\int_0^TE[\chi_u,u,V|\chi_v,v](t)\,\dt
\\&~~~
+\int_0^T\int_{\Rd}\xi\cdot
\big((\bar{V}_{\vec{n}}-V_{\vec{n}})\cdot\nabla\big)\xi\,\mathrm{d}|\nabla\chi_u|\,\dt.
\end{align*}
Using \eqref{sdistAux2}, we continue by computing
\begin{align*}
&\int_0^T\int_{\Rd}\xi\cdot\big((\bar{V}_{\vec{n}}-V_{\vec{n}})
\cdot\nabla\big)\xi\,\mathrm{d}|\nabla\chi_u|\,\dt
\\&
=r_c^{-1}\int_0^T\int_{\Rd}\zeta'\Big(\frac{\sigdist(x,I_v(t))}{r_c}\Big)
\xi\otimes(\bar{V}_{\vec{n}}-V_{\vec{n}}):\vec{n}_v(P_{I_v(t)})\otimes\vec{n}_v(P_{I_v(t)})
\,\mathrm{d}|\nabla\chi_u|\,\dt
\end{align*}
Hence, it follows from $\zeta'(0)=0$ and $|\zeta''|\leq C$ as well as \eqref{EstimateBarVV}
that
\begin{align}\label{proError2}
\nonumber
II &\leq C(1+r_c^{-2}\|v\|^2_{L^\infty_tW^{1,\infty}_x})
\int_0^TE[\chi_u,u,V|\chi_v,v](t)\,\dt
\\&~~~\nonumber
+Cr_c^{-3}\|v\|_{L^\infty_tW^{1,\infty}_x}\int_0^T\int_\Rd
|\dist(x,I_v(t))|^2\,\mathrm{d}|\nabla\chi_u|\,\dt
\\&
\leq C(1 + r_c^{-3}\|v\|^2_{L^\infty_tW^{1,\infty}_x})
\int_0^TE[\chi_u,u,V|\chi_v,v](t)\,\dt.
\end{align}

\subsubsection*{Third step: Summary} Inserting \eqref{tangentialCancel},
\eqref{proError1}, and \eqref{proError2} into \eqref{FirstEstimateRsurf} entails the bound
\begin{align*}
&R_{surTen} 
\\&
\leq \frac{C(\delta)}{r_c^4}
(1{+}\|v\|_{L^\infty_tW^{2,\infty}_x(\Rd\setminus I_v(t))}\vee
\|v\|_{L^\infty_tW^{2,\infty}_x(\Rd\setminus I_v(t))}^2)
\int_0^TE[\chi_u,u,V|\chi_v,v](t)\,\dt
\\&~~~
+\delta\int_0^T\int_\Rd |\nabla(u-v-w)|^2\,\dx\,\dt.
\end{align*}
This yields the desired estimate.
\end{proof}

\subsection{Estimate for the remaining terms $R_{adv}$, $R_{dt}$, and $R_{weightVol}$} To bound the advection-related terms
\begin{align*}
R_{adv} = &-\int_0^T\int_\Rd\big(\rho(\chi_u)-\rho(\chi_v)\big)
(u-v-w)\cdot(v\cdot\nabla)v\,\dx\,\dt 
\\&
-\int_0^T\int_\Rd \rho(\chi_u)(u-v-w)\cdot
\big((u-v-w)\cdot\nabla\big)v\,\dx\,\dt
\end{align*}
from the relative entropy inequality, the time-derivative related terms $R_{dt}$, and the terms resulting from the weighted volume control term in the relative entropy
\begin{align*}
R_{weightVol} := &\int_0^T\int_\Rd(\chi_u{-}\chi_v)
\big(\big(\bar{V}_{\vec{n}}{-}V_{\vec{n}}\big)\cdot\nabla\big)
\beta\Big(\frac{\sigdist(\cdot,I_v)}{r_c}\Big)\,\dx\,\dt
\\&
+\int_0^T\int_{\R^d}(\chi_u{-}\chi_v)\big((u{-}v{-}w)\cdot\nabla\big)
\beta\Big(\frac{\sigdist(\cdot,I_v)}{r_c}\Big)\,\dx\,\dt
\end{align*}
(with $V_{\vec{n}}(x,t):=(\vec{n}_v(P_{I_v(t)}x,t) \otimes \vec{n}_v(P_{I_v(t)}x,t)) v(x,t)$),
we use mostly straightforward estimates.
\begin{lemma}
Consider the situation of Proposition~\ref{PropositionRelativeEntropyInequalityFull}. The terms $R_{adv}$, $R_{dt}$, and $R_{weightVol}$ are subject to the bounds
\begin{align}\label{RadvEqualVisc}
R_{adv} &\leq C(\delta)(1+\|v\|_{L^\infty_tW^{1,\infty}_x}^4)
\int_0^TE[\chi_u,u,V|\chi_v,v](t)\,\dt
\\&~~~~\nonumber
+\delta \int_0^T\int_\Rd |\nabla (u-v-w)|^2\,\dx\,\dt,
\end{align}
\begin{align}\label{RdtEqualVisc}
R_{dt}
\leq \delta \int_0^T\int_\Rd |\nabla (u-v-w)|^2\,\dx\,\dt
+C(\delta)\|\partial_t v\|_{L^\infty_{x,t}}\int_0^T E[\chi_u,u,V|\chi_v,v](t)\,\dt,
\end{align}
and
\begin{align}
\label{RweightVolEqualVisc}
R_{weightVol} &\leq
\delta \int_0^T\int_\Rd |\nabla (u-v-w)|^2\,\dx\,\dt
\\&~~~~\nonumber
+C(\delta)r_c^{-2}(1+\|v\|_{L^\infty_tW^{1,\infty}_x})\int_0^T E[\chi_u,u,V|\chi_v,v](t)\,\dt
\end{align}
for any $\delta>0$.
\end{lemma}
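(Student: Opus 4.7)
My plan is to handle each of the three groups of terms by straightforward applications of Lemma~\ref{lemmaBoundRMixed} combined with the coercivity properties \eqref{coercivityWeight}--\eqref{controlByTiltExcess} of the relative entropy and the regularity of $v$ from Definition~\ref{DefinitionStrongSolution}. The mechanism is always the same: cast every contribution either into the canonical bilinear form $(\chi_u-\chi_v)(u-v-w)\cdot F$ with a bounded vector field $F$, to which Lemma~\ref{lemmaBoundRMixed} applies directly, or into a direct energy-type bound of the form $\int\rho(\chi_u)|u-v-w|^2\,g\,\dx\,\dt$ with a bounded scalar weight $g$. I do not expect any serious obstacle here; the only point that requires a little care will be the first term in $R_{weightVol}$, where the pointwise estimate \eqref{EstimateBarVV} for $\bar V_{\vec n}-V_{\vec n}$ has to be combined with the coercivity \eqref{coercivityWeight} for $\beta$ in order to absorb the extra factor $\dist(x,I_v(t))$ produced by \eqref{EstimateBarVV} into the weighted-volume term of the relative entropy.

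For $R_{dt}$, I would apply Lemma~\ref{lemmaBoundRMixed} with $F:=\partial_t v$; by Definition~\ref{DefinitionStrongSolution} we have $\|\partial_t v\|_{L^\infty_{x,t}}<\infty$. The three terms on the right-hand side of that lemma then give (i) a $\delta\int|\nabla(u-v-w)|^2\,\dx\,\dt$ contribution to be absorbed on the left-hand side of the relative entropy inequality, (ii) a $\int\rho(\chi_u)|u-v-w|^2\,\dx\,\dt$ contribution bounded by the relative entropy, and (iii) a $\int|\chi_u-\chi_v||\beta(\sigdist(\cdot,I_v)/r_c)|\,\dx\,\dt$ contribution likewise bounded by the relative entropy. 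This yields \eqref{RdtEqualVisc} directly.

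For $R_{adv}$, I would treat the two integrals separately. The first one is handled by Lemma~\ref{lemmaBoundRMixed} with $F:=(v\cdot\nabla)v$, which satisfies $\|F\|_{L^\infty}\leq C\|v\|_{L^\infty_tW^{1,\infty}_x}^2$; this produces the advertised coefficient $C(\delta)(1+\|v\|_{L^\infty_tW^{1,\infty}_x}^4)$ together with the usual $\delta$-absorbable gradient error. The second integral does not even require the lemma: pulling $\|\nabla v\|_{L^\infty}$ out of the integrand directly produces $C\|v\|_{L^\infty_tW^{1,\infty}_x}\int\rho(\chi_u)|u-v-w|^2\,\dx\,\dt$, which is bounded by $C\|v\|_{L^\infty_tW^{1,\infty}_x}$ times the relative entropy. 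Adding the two estimates gives \eqref{RadvEqualVisc}.

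Finally, for $R_{weightVol}$ I would split its two contributions. The second one is handled by Lemma~\ref{lemmaBoundRMixed} with $F:=\nabla\beta(\sigdist(\cdot,I_v)/r_c)$; since this vector field is supported in $\{\dist(\cdot,I_v(t))<r_c\}$ and there satisfies $|\nabla\beta(\sigdist/r_c)|\leq Cr_c^{-1}$, we have $\|F\|_{L^\infty}\leq Cr_c^{-1}$, and the lemma immediately produces the advertised coefficient $C(\delta)r_c^{-2}$. For the first contribution I would invoke the pointwise bound \eqref{EstimateBarVV}, namely $|\bar V_{\vec n}-V_{\vec n}|\leq Cr_c^{-1}\|v\|_{L^\infty_tW^{1,\infty}_x}\dist(x,I_v(t))$, together with $|\nabla\beta(\sigdist/r_c)|\leq Cr_c^{-1}$; this reduces the estimate to bounding $Cr_c^{-2}\|v\|_{L^\infty_tW^{1,\infty}_x}\int_0^T\int_{\{\dist<r_c\}}|\chi_u-\chi_v|\,\dist(x,I_v(t))\,\dx\,\dt$. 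By \eqref{coercivityWeight}, on the support of the indicator we have $\dist(x,I_v(t))/r_c\leq|\beta(\sigdist(\cdot,I_v)/r_c)|$, so this expression is bounded by $Cr_c^{-1}\|v\|_{L^\infty_tW^{1,\infty}_x}\int_0^T\int_\Rd|\chi_u-\chi_v||\beta(\sigdist(\cdot,I_v)/r_c)|\,\dx\,\dt$ and hence by $Cr_c^{-1}\|v\|_{L^\infty_tW^{1,\infty}_x}\int_0^T E[\chi_u,u,V|\chi_v,v](t)\,\dt$, which together with the previous contribution matches \eqref{RweightVolEqualVisc}.
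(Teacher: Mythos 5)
Your proposal is correct and follows essentially the same route as the paper: Lemma~\ref{lemmaBoundRMixed} applied with $F=\partial_t v$ for $R_{dt}$, with $F=(v\cdot\nabla)v$ for the first term of $R_{adv}$, and with $F=\nabla\beta(\sigdist(\cdot,I_v)/r_c)$ for the second term of $R_{weightVol}$, combined with a direct $L^\infty$-estimate for the quadratic term of $R_{adv}$ and the pointwise bound \eqref{EstimateBarVV} together with \eqref{coercivityWeight} for the first term of $R_{weightVol}$. The only cosmetic difference is that your intermediate bound for the first $R_{weightVol}$ contribution carries an extra $r_c^{-1}$ relative to the paper's displayed inequality; since $r_c\leq 1$, this is harmless and is absorbed into the final $C(\delta)r_c^{-2}(1+\|v\|_{L^\infty_tW^{1,\infty}_x})$ prefactor.
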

\begin{proof}
To derive \eqref{RadvEqualVisc}, we use a direct estimate for the second term in $R_{adv}$ as well as Lemma~\ref{lemmaBoundRMixed} for the first term.

The bound \eqref{RdtEqualVisc} is derived similarly.

Finally, we show estimate \eqref{RweightVolEqualVisc}.  Note that by definition we have $\bar{V}_{\vec{n}}(x,t)=V_{\vec{n}}(P_{I_v(t)}x,t)$. Hence, we obtain using the bound \eqref{EstimateBarVV} as well as \eqref{coercivityWeight} and $|\beta'|\leq C$
\begin{align*}
R_{weightVol} &\leq C\|v\|_{L^\infty_tW^{1,\infty}_x}\int_0^T\int_\Rd|\chi_u{-}\chi_v|\,
\big|\beta\Big(\frac{\sigdist(x,I_v(t))}{r_c}\Big)\Big|\,\dx\,\dt
\\&~~~
+C r_c^{-1} \int_0^T\int_{\{\dist(x,I_v(t))\leq r_c\}}|\chi_u -\chi_v| |u-v-w| \,\dx\,\dt.
\end{align*}
An application of Lemma~\ref{lemmaBoundRMixed} yields \eqref{RweightVolEqualVisc}.
\end{proof}

\subsection{The weak-strong uniqueness principle in the case of equal viscosities}
We conclude our discussion of the case of equal shear viscosities $\mu_+=\mu_-$ for
the free boundary problem for the incompressible Navier-Stokes equation 
for two fluids \eqref{EquationTransport}--\eqref{EquationIncompressibility}
with the proof of the weak-strong uniqueness principle. 

\begin{proposition}
\label{PropositionWeakStrongUniquenessEqualViscosities}
Let $d\leq 3$. Let $(\chi_u,u,V)$ be a varifold solution to the free boundary problem for 
the incompressible Navier-Stokes equation for two fluids \eqref{EquationTransport}--\eqref{EquationIncompressibility} 
in the sense of Definition~\ref{DefinitionVarifoldSolution} on some time interval $[0,\Tend)$
with initial data $(\chi_u^0,u_0)$. Let $(\chi_v,v)$ be a strong solution to 
\eqref{EquationTransport}--\eqref{EquationIncompressibility} in the sense of 
Definition~\ref{DefinitionStrongSolution} on some time interval $[0,\Tmax)$ with $\Tmax\leq \Tend$
and initial data $(\chi_v^0,v_0)$. We assume that the shear viscosities of the two fluids coincide, i.e., $\mu^+=\mu^-$.

Then, there exists a constant $C>0$ which only depends on the data of the strong solution such that
the stability estimate
\begin{align*}
E[\chi_u,u,V|\chi_v,v](T) \leq E[\chi_u,u,V|\chi_v,v](0)e^{CT}
\end{align*}
holds. In particular, if the initial data of the varifold solution and the strong solution coincide, 
the varifold solution must be equal to the strong solution in the sense
\begin{align*}
\chi_u(\cdot,t)=\chi_v(\cdot,t) &&\text{and}&& u(\cdot,t)=v(\cdot,t)
\end{align*}
almost everywhere for almost every $t\in [0,\Tmax)$. Furthermore, in this case the varifold is given by
\begin{align*}
\mathrm{d}V_t = \delta_{\frac{\nabla \chi_v}{|\nabla \chi_v|}} \mathrm{d}|\nabla \chi_v|
\end{align*}
for almost every $t\in [0,\Tmax)$.
\end{proposition}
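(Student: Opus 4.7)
The plan is to feed the admissible choice $w \equiv 0$ into the relative entropy inequality of Proposition~\ref{PropositionRelativeEntropyInequalityFull} and then post-process the surviving right-hand side using the lemmas above. With $w \equiv 0$ the auxiliary terms collapse: $A_{visc} = A_{dt} = A_{adv} = A_{weightVol} = A_{surTen} = 0$ trivially, while the obstruction term $R_{visc}$ vanishes because $\mu(\chi_u) - \mu(\chi_v) = (\mu^+ - \mu^-)(\chi_u - \chi_v) = 0$ under the hypothesis $\mu^+ = \mu^-$. What remains is
\begin{align*}
E\big[\chi_u,u,V\big|\chi_v,v\big](T) + \int_0^T\!\!\int_{\R^d} 2\mu(\chi_u)\big|\Dsym(u-v)\big|^2 \,\dx\,\dt
\leq E(0) + R_{surTen} + R_{dt} + R_{adv} + R_{weightVol}.
\end{align*}

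Next I would invoke the four bounds \eqref{RsurTenEqualVisc}, \eqref{RadvEqualVisc}, \eqref{RdtEqualVisc}, and \eqref{RweightVolEqualVisc} derived in the preceding subsections. Each of them estimates the corresponding remainder by $\delta \int_0^T\!\!\int_{\R^d} |\nabla(u-v)|^2 \,\dx\,\dt$ plus a term of the form $C(\delta, v, r_c)\int_0^T E[\chi_u,u,V|\chi_v,v](t)\,\dt$. Summing the four bounds produces a single factor $4\delta$ in front of the gradient square, which I would absorb into the dissipation on the left. For this absorption step I would use Korn's inequality $\|\nabla(u-v)\|_{L^2(\R^d)}^2 \leq C \|\Dsym(u-v)\|_{L^2(\R^d)}^2$ (valid for $H^1$ vector fields on $\R^d$) together with the uniform lower bound $\mu(\chi_u) \geq \mu^+ \wedge \mu^- > 0$. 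Choosing $\delta$ small enough (depending only on the densities and viscosities) reduces the inequality to
\begin{align*}
E\big[\chi_u,u,V\big|\chi_v,v\big](T) \leq E(0) + C\int_0^T E\big[\chi_u,u,V\big|\chi_v,v\big](t)\,\dt,
\end{align*}
where $C$ depends only on $\|v\|_{L^\infty_tW^{2,\infty}_x(\R^d\setminus I_v(t))}$, $\|\partial_t v\|_{L^\infty_{x,t}}$, $r_c$, and the material constants; Gronwall's lemma then yields the claimed stability estimate $E(T) \leq E(0)\,e^{CT}$.

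For the uniqueness consequence, I would specialize to $E(0) = 0$, so that Gronwall forces $E(T) = 0$ for almost every $T \in [0,\Tmax)$. Reading off the four non-negative summands in \eqref{DefinitionRelativeEntropyFunctional} individually then gives all the desired conclusions: vanishing of the kinetic part $\int \tfrac{1}{2}\rho(\chi_u)|u-v|^2\,\dx$ yields $u = v$ a.e.; vanishing of the weighted volume term $\int |\chi_u - \chi_v|\,|\beta(\sigdist(\cdot,I_v)/r_c)|\,\dx$ forces $\chi_u = \chi_v$ a.e.\ since $\beta\circ(\sigdist/r_c)$ only vanishes on the Lebesgue negligible set $I_v(t)$; finally, combining the vanishing of $\int 1-\xi\cdot \vec n_u\,\mathrm{d}|\nabla\chi_u|$ and $\int 1-\theta_t\,\mathrm{d}|V_t|_{\Sb^{d-1}}$ with the compatibility identity \eqref{controlSquaredErrorNormalVarifold} gives $s = \xi = \vec n_v = \nabla\chi_v/|\nabla\chi_v|$ for $V_t$-a.e.\ $(x,s)$ together with $|V_t|_{\Sb^{d-1}} = |\nabla \chi_v|$, which precisely means $\mathrm{d}V_t = \delta_{\nabla\chi_v/|\nabla\chi_v|}\,\mathrm{d}|\nabla\chi_v|$.

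The main technical obstacle is rather cosmetic in this setting: it lies in verifying that the constants produced by \eqref{RsurTenEqualVisc}--\eqref{RweightVolEqualVisc} genuinely depend only on the data of the strong solution (so that Gronwall delivers a single clean constant $C$), and in confirming that the $H^1$-regularity of $u - v$ provided by the velocity spaces in Definitions~\ref{DefinitionVarifoldSolution} and \ref{DefinitionStrongSolution} is sufficient for the Korn-type absorption to take place on all of $\R^d$. All the hard geometric analysis has already been done in the surface-tension estimate; the remainder of the argument is a clean Gronwall closure.
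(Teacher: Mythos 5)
Your proposal is correct and follows essentially the same route as the paper: plug $w\equiv 0$ into Proposition~\ref{PropositionRelativeEntropyInequalityFull}, observe that $R_{visc}$ and all the $A$-terms vanish, invoke the four estimates \eqref{RsurTenEqualVisc}--\eqref{RweightVolEqualVisc}, absorb the $\delta\|\nabla(u-v)\|_{L^2}^2$ term using Korn's inequality and $\mu(\chi_u)\geq \mu^+\wedge\mu^->0$, and close with Gronwall. The uniqueness and varifold identification from $E(T)=0$ is likewise identical, down to the use of \eqref{controlSquaredErrorNormalVarifold} to extract the Dirac structure of $V_t$.
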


\begin{proof}
Applying the relative entropy inequality from 
Proposition~\ref{PropositionRelativeEntropyInequalityFull} with $w=0$,
using the fact that the problematic term $R_{visc}$ vanishes in the
case of equal shear viscosities $\mu_+=\mu_-$, as well as using the bounds 
from \eqref{RsurTenEqualVisc}, \eqref{RadvEqualVisc},
\eqref{RdtEqualVisc} and \eqref{RweightVolEqualVisc},
we observe that we established the following bound
\begin{align}\label{GronwallEqualVisc}
&E[\chi_u,u,V|\chi_v,v](T) + c \int_0^T\int_\Rd |\nabla u- \nabla v|^2\,\dx\,\dt
\\&
\nonumber
\leq E[\chi_u,u,V|\chi_v,v](0)
+\delta \int_0^T \int_\Rd |\nabla u-\nabla v|^2 \dx\,\dt
\\&~~~
\nonumber
+ \frac{C(\delta)}{r_c^4}(1{+}\|\partial_tv\|_{L^\infty_{x,t}}
{+}\|v\|_{L^\infty_tW^{2,\infty}_x(\Rd\setminus I_v(t))}
\vee\|v\|_{L^\infty_tW^{2,\infty}_x(\Rd\setminus I_v(t))}^2)
\\&~~~~\times
\nonumber
\int_0^TE[\chi_u,u,V|\chi_v,v](t)\,\dt
\end{align}
for almost every $T\in [0,\Tmax)$. An absorption argument along with a subsequent application of Gronwall's lemma then 
immediately yields the asserted stability estimate.

Consider the case of coinciding initial conditions, i.e., $E[\chi_u,u,V|\chi_v,v](0)=0$.
In this case, we deduce from the stability estimate that the relative entropy
vanishes for almost every $t\in [0,\Tmax)$.
From this it immediately follows that $u(\cdot,t)=v(\cdot,t)$ as well as 
$\chi_u(\cdot,t)=\chi_v(\cdot,t)$ almost everywhere for almost every $t\in [0,\Tmax)$. 

The asserted representation of the varifold $V$ of the varifold solution follows
from the following considerations. First, we deduce $|\nabla\chi_u(\cdot,t)|=|V_t|_{\Sb^{d-1}}$ 
for almost every $t\in [0,\Tmax)$ as a consequence of the fact that 
the density of the varifold satisfies $\theta_t=\smash{\frac{\mathrm{d}|\nabla\chi_u(\cdot,t)|}{\mathrm{d}|V_t|_{\Sb^{d-1}}}}\equiv 1$ 
almost everywhere for almost every $t\in [0,\Tmax)$.
The remaining fact that the measure on $\Sd$ is given by $\smash{\delta_{\vec{n}_u(x,t)}}$
for $\smash{|V_t|_{\Sb^{d-1}}}$-almost every $x\in\Rd$ for almost every $t\in [0,\Tmax)$ then follows 
from the control of the squared error in the normal of the varifold by the relative
entropy functional, see \eqref{controlSquaredErrorNormalVarifold}. This concludes the proof.
\end{proof}

\section{Weak-strong uniqueness of varifold solutions to two-fluid Navier-Stokes flow: The case of different viscosities}

We turn to the derivation of the weak-strong uniqueness principle in the case of different shear viscosities of the two fluids. In this regime, we cannot anymore ignore the viscous stress term $(\mu(\chi_v)-\mu(\chi_u))(\nabla v+\nabla v^T)$.
The key idea is to construct a solenoidal vector field $w$ which is small in the $L^2$-norm but whose gradient compensates for most of this problematic term, and then use the relative entropy inequality from Proposition~\ref{PropositionRelativeEntropyInequalityFull} with this function. The precise definition as well as a list of all the relevant properties of this vector field are the content of Proposition~\ref{PropositionCompensationFunction}. 

A main ingredient for the construction of $w$ are the local interface error heights as measured in orthogonal direction from the interface of the strong solution (see Figure~\ref{FigureIntefaceError}). For this reason, we first prove the relevant properties of the local heights of the interface error in Proposition~\ref{PropositionInterfaceErrorHeight}. However, in order to control certain surface-tension terms in the relative entropy inequality, we actually need the vector field $w$ to have bounded spatial derivatives. To this aim, we perform an additional regularization of the height functions. This will be carried out in detail in Proposition~\ref{PropositionInterfaceErrorHeightRegularized} by a (time-dependent) mollification. 
After all these preparations, in Section~\ref{SectionEstimateAsurTen}--\ref{SectionEstimateAweightVol} we then perform the post-processing of the additional terms $A_{visc}$, $A_{dt}$, $A_{adv}$, and $A_{surTen}$ in the relative entropy inequality from Proposition~\ref{PropositionRelativeEntropyInequalityFull}. Based on these bounds, in Section~\ref{SectionProofResult} we finally provide the proof of the stability estimate and the weak-strong uniqueness principle for varifold solutions to the free boundary problem for  the incompressible Navier-Stokes equation for two fluids \eqref{EquationTransport}--\eqref{EquationIncompressibility} from Theorem~\ref{weakStrongUniq}.

\subsection{The evolution of the local height of the interface error}
Consider a strong solution $(\chi_v,v)$ to the free boundary problem for 
the incompressible Navier-Stokes equation for two fluids 
\eqref{EquationTransport}--\eqref{EquationIncompressibility} in the sense of 
Definition~\ref{DefinitionStrongSolution} on some time interval $[0,\Tmax)$.
For the sake of better readability, let us recall some definitions and constructions
related to the associated family of evolving interfaces $I_v(t)$ of the strong solution.

For the family $(\Omega_t^+)_{t\in [0,\Tmax)}$ of smoothly evolving domains
of the strong solution, the associated signed distance function is given by
\begin{align*}
\sigdist(x,I_v(t)) = \begin{cases}
															\mathrm{dist}(x,I_v(t)), & x\in\Omega^+_t, \\
															-\mathrm{dist}(x,I_v(t)), & x\notin\Omega^+_t.
													 \end{cases}
\end{align*} 
From Definition \ref{definition:domains} of a family of smoothly evolving domains it
follows that the family of maps $\Phi_t\colon I_v(t)\times (-r_c,r_c) \to \Rd$
given by $\Phi_t(x,y) := x+y\vec{n}_v(x,t)$ are $C^2$-diffeomorphisms onto their
image $\{x\in\R^d\colon \dist(x,I_v(t))<r_c\}$. Here, $\vec{n}_v(\cdot,t)$ denotes the
normal vector field of the interface $I_v(t)$ pointing inwards $\{x\in\Rd\colon \chi_v(x,t)=1\}$.
The signed distance function (resp.\ its time derivative) to the interface $I_v(t)$ of the strong solution 
is then of class $C^0_tC^3_x$ (resp.\ $C^0_tC^2_x$) in the space-time tubular neighborhood 
$\bigcup_{t\in [0,\Tmax)}\mathrm{im}(\Phi_t)\times \{t\}$ due to the regularity
assumptions in Definition \ref{definition:domains}. Moreover, the projection $P_{I_v(t)}x$
of a point $x$ onto the nearest point on the manifold $I_v(t)$ is well-defined
and of class $C^0_tC^2_x$ in the same tubular neighborhood. Observe that the
inverse of $\Phi_t$ is given by is given by $\Phi_t^{-1}(x) = (P_{I_v(t)}x,\sigdist(x,I_v(t)))$
for all $x\in\Rd$ such that $\dist(x,I_v(t))<r_c$.

In Lemma \ref{lem:evolSignedDist}, we computed the time evolution of the signed distance function
to the interface $I_v(t)$ of a strong solution. Recall also the various relations 
for the projected inner unit normal vector field $\vec{n}_v(P_{I_v(t)}x,t)$ from Lemma \ref{lem:evolSignedDist}, 
which will be of frequent use in subsequent computations. Finally, we remind the
reader of the definition of the vector field $\xi$ from Definition \ref{def:ExtNormal}, 
which is a global extension of the inner unit normal vector field of the interface $I_v(t)$.

\begin{proposition}
\label{PropositionInterfaceErrorHeight}
Let $\chi_v\in L^\infty([0,\Tmax);\BV(\mathbb{R}^d;\{0,1\}))$
be an indicator function such that $\Omega^+_t :=\{x\in\R^d\colon\chi_v(x,t)=1\}$ is a family 
of smoothly evolving domains and $I_v(t) := \partial\Omega^+_t$ is a family of
smoothly evolving surfaces in the sense of Definition~\ref{definition:domains}.
Let $\xi$ be the extension of the unit normal vector field $\vec{n}_v$
from Definition~\ref{def:ExtNormal}.

Let $\theta:[0,\infty)\rightarrow [0,1]$ be a smooth cutoff 
with $\theta\equiv 0$ outside of $[0,\frac{1}{2}]$ and $\theta\equiv 1$ in $[0,\frac{1}{4}]$. 
For an indicator function $\chi_u\in L^\infty([0,\Tmax];\BV(\mathbb{R}^d;\{0,1\}))$ and $t\geq 0$, 
we define the local height of the one-sided interface error $h^+(\cdot,t):I_v(t)\rightarrow \mathbb{R}_0^+$ as
\begin{align}
\label{DefinitionHeightFunction}
h^+(x,t):=
\int_0^\infty (1-\chi_u)(x+y\vec{n}_v(x,t),t) \, \theta\Big(\frac{y}{r_c}\Big) \,\dy.
\end{align}
Similarly, we introduce the local height of the interface error in the other direction
\begin{align*}
h^-(x,t):=
\int_0^\infty \chi_u(x-y\vec{n}_v(x,t),t) \theta\Big(\frac{y}{r_c}\Big) \,\dy.
\end{align*}
Then $h^+$ and $h^-$ have the following properties:

\begin{subequations}
\noindent{\bf a) ($L^2$-bound)} We have the estimates $|h^\pm(x,t)|\leq \frac{r_c}{2}$ and
\begin{align}\label{HeightFunctionEstimate}
\int_{I_v(t)} |h^\pm(x,t)|^2\,\dS(x)
\leq C\int_\Rd |\chi_u{-}\chi_v| \min\Big\{\frac{\dist(x,I_v(t))}{r_c},1\Big\} \,\dx.
\end{align}

\noindent{\bf b) ($H^1$-bound)} Moreover, the estimate holds
\begin{align}
\label{HeightFunctionGradientEstimate}
&\int_{I_v(t)} \min\{|\nabla^{\tan} h^\pm (x,t)|^2,|\nabla^{\tan} h^\pm(x,t)|\} \,\dS  + |D^s h^\pm|(I_v(t))
\\&~
\nonumber
\leq
C \int_\Rd 1-\xi\cdot \frac{\nabla\chi_u}{|\nabla\chi_u|} 
\,\mathrm{d}|\nabla\chi_u|+\frac{C}{r_c^2} \int_\Rd |\chi_u{-}\chi_v| \min\Big\{\frac{\dist(x,I_v(t))}{r_c},1\Big\} \,\dx.
\end{align}

\noindent{\bf c) (Approximation property)} The functions $h^+$ and $h^-$ provide an approximation of the set 
$\{\chi_u=1\}$ in terms of a subgraph over the set $I_v(t)$ by setting
\begin{align*}
\chi_{v,h^+,h^-}:=\chi_v
- \chi_{0\leq \sigdist(x,I_v(t))\leq h^+(P_{I_v(t)}x,t)}
+ \chi_{-h^-(P_{I_v(t)}x,t)\leq \sigdist(x,I_v(t))\leq 0},
\end{align*}
up to an error of
\begin{align}
\label{ErrorImprovedInterfaceApproximationNonsmooth}
\nonumber
&\int_\Rd \big|\chi_u-\chi_{v,h^+,h^-}\big| \,\dx
\\&
\leq C \int_\Rd 1-\xi\cdot \frac{\nabla\chi_u}{|\nabla\chi_u|} \,\mathrm{d}|\nabla\chi_u|
+ C \int_\Rd |\chi_u-\chi_v| \min\Big\{\frac{\dist(x,I_v(t))}{r_c},1\Big\} \,\dx.
\end{align}

\noindent{\bf d) (Time evolution)} Let $v$ be a solenoidal vector field
\begin{align*}
v\in L^2([0,\Tmax];H^1(\Rd;\Rd))\cap L^\infty([0,\Tmax];W^{1,\infty}(\Rd;\Rd))
\end{align*}
such that in the domain $\bigcup_{t\in [0,\Tmax)} (\Omega_t^+\cup \Omega_t^-) \times \{t\}$ the second
spatial derivatives of the vector field $v$ exist and satisfy
$\sup_{t\in [0,\Tmax)} \sup_{x\in \Omega_t^+\cup \Omega_t^-} |\nabla^2 v(x,t)| <\infty$.
Assume that $\chi_v$ solves the equation 
$\partial_t \chi_v = -\nabla \cdot(\chi_v v)$. If $\chi_u$ 
solves the equation $\partial_t \chi_u = -\nabla \cdot(\chi_u u)$ 
for another solenoidal vector field $u\in L^2([0,\Tmax];H^1(\mathbb{R}^d;\mathbb{R}^d))$, 
we have the following estimate on the time derivative of the local interface
error heights $h^\pm$:
\begin{align}
\label{HeightFunctiondtEstimate}
&\bigg|\frac{\mathrm{d}}{dt} \int_{I_v(t)} \eta(x) h^\pm(x,t) \,\dS(x) -
\int_{I_v(t)} h^\pm(x,t) (\mathrm{Id}{-}\vec{n}_v\otimes\vec{n}_v)v(x,t) \cdot \nabla \eta(x) \,\dS(x)\bigg|
\\&\nonumber
\leq 
\frac{C}{r_c^2}\|\eta\|_{W^{1,4}(I_v(t))}\Bigg(\int_{I_v(t)} |\bar{h}^\pm|^4 \,\dS \Bigg)^{1/4}
\\&~~~~~~~~~~~~~
\nonumber
\times
\Bigg(\int_{I_v(t)}  \sup_{y\in [-r_c,r_c]} |u-v|^2(x+y\vec{n}_v(x,t),t) \,\dS(x)\Bigg)^{1/2} 
\\&~~~~
\nonumber
+C\frac{1+\|v\|_{W^{2,\infty}(\Rd\setminus I_v(t))}}{r_c^3}\|\eta\|_{L^2(I_v(t))} 
\\&~~~~~~~~~~~~~\nonumber
\times
\Bigg(\int_{\Rd}|\chi_u(x,t)-\chi_v(x,t)| 
\,\min\Big\{\frac{\dist(x,I_v(t))}{r_c},1\Big\} \,\dx\Bigg)^\frac{1}{2}
\\&~~~~\nonumber
+\frac{C(1+\|v\|_{W^{1,\infty}})}{r_c^2}\max_{p\in\{2,4\}}\|\eta\|_{W^{1,p}(I_v(t))}
\int_{\Rd} 1-\xi\cdot \frac{\nabla\chi_u}{|\nabla\chi_u|} \,\mathrm{d}|\nabla\chi_u|
\\&~~~~
\nonumber
+C\|\eta\|_{L^2(I_v(t))}\bigg(\int_{I_v(t)} |u-v|^2 \,\dS \bigg)^{1/2}
\end{align}
for any test function $\eta\in C_{cpt}^\infty(\Rd)$ with $\vec{n}_v\cdot \nabla \eta=0$ on the interface $I_v(t)$, 
and where $\bar{h}^\pm$ is defined as $h^\pm$
but now with respect to the modified cut-off $\bar{\theta}(\cdot)=\theta\big(\frac{\cdot}{2}\big)$.
\end{subequations}
\end{proposition}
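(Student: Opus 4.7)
The plan is to exploit that, along every normal fiber $\{x+y\vec{n}_v(x,t):y\in[0,r_c/2]\}$ appearing in \eqref{DefinitionHeightFunction}, the point $x+y\vec{n}_v(x,t)$ lies inside $\{\chi_v=1\}$ since $\vec{n}_v$ points into this region, so that $(1-\chi_u)(x+y\vec{n}_v)=|\chi_u-\chi_v|(x+y\vec{n}_v)$ along the fiber. Throughout I will translate one-dimensional fiber integrals into volume integrals over the tubular neighborhood via the diffeomorphism $\Phi_t(x,y)=x+y\vec{n}_v(x,t)$ from Lemma~\ref{lem:evolSignedDist}, whose Jacobian is controlled by \eqref{BoundNablaPhiNablaPhi-1}; the estimates for $h^-$ are always symmetric to those for $h^+$.

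For part (a), the pointwise bound $|h^\pm|\leq r_c/2$ is immediate from $\theta\leq 1$ and $\supp\theta\subset[0,1/2]$. The $L^2$-estimate \eqref{HeightFunctionEstimate} rests on a bathtub-type inequality: setting $A(x):=\{y\in[0,r_c/2]:\chi_u(x+y\vec{n}_v(x,t))=0\}$, one has $h^+(x,t)\leq|A(x)|$ and $|A(x)|^2\leq 2\int_{A(x)} y\,\mathrm{d}y$, since among subsets of $[0,r_c/2]$ of fixed measure the integral $\int y\,\mathrm{d}y$ is minimized by the interval $[0,|A|]$. Hence $|h^+(x,t)|^2\leq 2\int_0^{r_c/2}y\,|\chi_u-\chi_v|(x+y\vec{n}_v(x,t))\,\mathrm{d}y$, and integration in $x\in I_v(t)$ followed by the change of variables $\Phi_t$ together with $y/r_c=\min\{y/r_c,1\}$ on $[0,r_c/2]$ yields \eqref{HeightFunctionEstimate}. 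For part (c), the symmetric difference $\{\chi_u\neq\chi_{v,h^+,h^-}\}$ splits into an exterior part $\{\dist(\cdot,I_v(t))\geq r_c/2\}$ (where the truncation of $\theta$ forces $h^\pm\equiv 0$ nearby, so the error reduces to $|\chi_u-\chi_v|$ and is absorbed by the weighted volume term since $\min\{\dist/r_c,1\}\geq 1/2$ there) and an interior part where any fiberwise mismatch forces $\partial^*\{\chi_u=1\}$ to cross the fiber at least twice. A slicing via the co-area formula converts each additional crossing with $\vec{n}_u\neq\xi$ into a positive contribution to $\int 1-\xi\cdot\vec{n}_u\,\mathrm{d}|\nabla\chi_u|$ through the inequality $|\vec{n}_u-\xi|^2\leq 2(1-\vec{n}_u\cdot\xi)$ from \eqref{controlByTiltExcess}, which proves \eqref{ErrorImprovedInterfaceApproximationNonsmooth}.

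For part (b), I would parametrize the graph-like portion of $\partial^*\{\chi_u=1\}$ as $x\mapsto x+h^+(x,t)\vec{n}_v(x,t)$ and invoke the area formula: the surface area of this parametrization equals $\int_{I_v(t)}\sqrt{1+|\nabla^{\tan}h^+|^2}\,\mathrm{d}S$ up to geometric corrections of order $O(h^+/r_c)$ controlled by (a) and \eqref{Bound2ndDerivativeSignedDistance}. Since $\sqrt{1+|p|^2}-1\sim\min\{|p|^2,|p|\}$ uniformly in $p\in\R^{d-1}$, subtracting the area of $I_v(t)$ and comparing the result with $|\nabla\chi_u|(\Rd)$ rewrites the excess as the tilt-excess $\int 1-\xi\cdot\vec{n}_u\,\mathrm{d}|\nabla\chi_u|$. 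The singular part $|D^sh^\pm|(I_v(t))$ records precisely the jumps where the graph representation breaks down, which are controlled again through (c) and therefore absorbed into the right-hand side of \eqref{HeightFunctionGradientEstimate}.

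Part (d) is the main technical obstacle. The plan is to differentiate $\int_{I_v(t)}\eta h^+\,\mathrm{d}S$ in time using the kinematic evolution of $I_v(t)$ from Lemma~\ref{lem:evolSignedDist}, the formula \eqref{TimeEvolutionNormalNoCutoff} for $\partial_t\vec{n}_v(P_{I_v(t)}\cdot,t)$ obtained in the proof of Lemma~\ref{lem:evolExtNormal}, and $\partial_t\chi_u=-\nabla\cdot(\chi_u u)$. Writing $u=v+(u-v)$ and pairing the $v$-contribution with the transport equation $\partial_t\chi_v=-\nabla\cdot(\chi_v v)$, then splitting $v$ on each fiber into its normal and tangential components with respect to $\vec{n}_v(P_{I_v(t)}\cdot,t)$ (so that the normal part is essentially $\bar V_{\vec{n}}$ up to an error $O(\|v\|_{W^{1,\infty}}\dist(\cdot,I_v(t)))$ from \eqref{EstimateBarVV}), produces precisely the tangential-transport term on the left-hand side of \eqref{HeightFunctiondtEstimate} plus a geometric remainder of order $(1+\|v\|_{W^{2,\infty}(\Rd\setminus I_v(t))})/r_c^3$ that pairs with the $L^2$-bound of (a) and the tilt-excess to yield the second and third right-hand side terms. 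The $(u-v)$-piece is integrated by parts in the fiber variable $y$ and tangentially on $I_v(t)$, using the constraint $\vec{n}_v\cdot\nabla\eta=0$; the boundary contribution at $y=0$ produces the final term $\|\eta\|_{L^2}(\int|u-v|^2\,\mathrm{d}S)^{1/2}$, while the bulk is absorbed using Lemma~\ref{LemmaEstimateL2SupByH1} applied to $u-v$ combined with an $L^4$-bound on $\bar h^\pm$ obtained by interpolating (a) (applied to the heavier cutoff $\bar\theta$) against $|\bar h^\pm|\leq r_c/2$. The central difficulty is that the tangential gradient on $I_v(t)$ does not commute with the normal shift $x\mapsto x+y\vec{n}_v(x,t)$, producing commutators of size $O(y\|\nabla^2\sigdist\|)\lesssim O(y/r_c)$ that must be absorbed into the tilt-excess rather than into the dissipation; this is what forces the $\|v\|_{W^{2,\infty}(\Rd\setminus I_v(t))}$-dependence of the second term and the mixture of $W^{1,2}$- and $W^{1,4}$-norms of $\eta$ on the right-hand side of \eqref{HeightFunctiondtEstimate}.
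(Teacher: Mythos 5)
Your treatment of parts (a), (c), and (most of) (d) is broadly in the spirit of the paper's argument: the bathtub inequality $|A|^2 \leq 2\int_{A} y\,\mathrm{d}y$ for (a), the fiberwise-crossing analysis combined with Theorem~\ref{TheoG} for (c), and the decomposition $u=v+(u-v)$ with the kinematic transport of $I_v(t)$ and $\vec{n}_v(P_{I_v(t)}\cdot)$ for (d) are exactly the ideas the paper uses. One small caveat for (d): the statement already leaves $\big(\int_{I_v(t)} |\bar{h}^\pm|^4 \,\dS\big)^{1/4}$ explicit on the right-hand side, so there is nothing to interpolate at this stage; the genuine $L^4$ control of $\bar h^\pm$ comes later via an Orlicz--Sobolev embedding (Proposition~\ref{OrliczHeight}), not by interpolation of (a) against the $L^\infty$ bound, which would in fact give the strictly weaker bound $C r_c^{1/2} E^{1/4}$ instead of $C E^{1/2}|\log E|^{1/4}$.

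Part (b), however, contains a genuine gap. The area-formula heuristic treats $h^+$ as if it were the height of a graph parametrization of $\partial^*\{\chi_u=1\}$, but $h^+$ is the \emph{integrated thickness} $\int_0^\infty (1-\chi_u)\theta(y/r_c)\,\dy$, not the position of a boundary point. If a fiber intersects $\{\chi_u=0\}$ in $(0,a)\cup(b,c)$, then $h^+ = a + (c-b)$ while the set $\{x+h^+(x)\vec{n}_v(x):x\in I_v(t)\}$ lies nowhere near the actual reduced boundary, so the parametrized surface you compute the area of is simply not $\partial^*\{\chi_u=1\}$, and comparing its area to $|\nabla\chi_u|(\Rd)$ has no content. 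Even on the good set where $\{\chi_u=0\}$ is a single interval from $0$, the asserted chain ``area of graph minus area of $I_v(t)$ $\approx$ tilt excess'' \emph{is} the claim being proved, so the argument is circular. Finally, the singular part $|D^sh^\pm|(I_v(t))$ is not controlled by part (c): (c) bounds the \emph{volume} of $\{\chi_u\neq\chi_{v,h^+,h^-}\}$, which is entirely insensitive to jumps and Cantor parts of $h^\pm$ as a BV function on the base $I_v(t)$. The paper instead obtains a distributional identity for $D^\tan h^+$ by pairing against test fields and changing variables via $\Phi_t$, yielding an expression in terms of $\nabla\chi_u$ measured along fibers; the singular and absolutely-continuous parts are then estimated by a case analysis ($A_1$ vs.\ $A_2$) that uses Theorem~\ref{TheoG} to count and orient the one-dimensional boundary points of $E^+_x$ and the co-area formula for rectifiable sets. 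None of that structure is recoverable from an area-formula heuristic; you would need to carry out that slicing argument (or an equivalent one) to close the proof of (b).
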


\begin{proof}
{\bf Step 1: Proof of the estimate on the $L^2$-norm.}
The trivial estimate $|h^\pm(x,t)|\leq\frac{r_c}{2}$ follows directly from the definition of $h^\pm$.
To establish the $L^2$-estimate, let
$\ell^+(x):=\int_0^{r_c}(1-\chi_u)(x+y\vec{n}_v(x,t),t)\,\dy$. A straighforward estimate
then gives 
\begin{align}\label{L2InterfaceErrorHeightNoCutoff}
|\ell^+(x)|^2 = 2\int_0^{\ell^+(x)}y\,\dy 
\leq C\int_0^{r_c}|\chi_u(\Phi_t(x,y),t)-\chi_v(\Phi_t(x,y),t)|\frac{y}{r_c}\,\dy.
\end{align}
Note that the term on the left hand side dominates $|h^+|^2$ since we dropped the cutoff function. 
Hence, the desired estimate on the $L^2$-norm of $h^+$ follows at once by a change of variables
and recalling the fact that $\dist(\Phi_t(x,y),I_v(t))=y$. The corresponding bound
for $h^-$ then follows along the same lines.

{\bf Step 2: Proof of the estimate on the spatial derivative \eqref{HeightFunctionGradientEstimate}.}
The definition \eqref{DefinitionHeightFunction} is equivalent to
\begin{align*}
h^+(\Phi_t(x,0),t)=\int_0^\infty (1-\chi_u)(\Phi_t(x,y)) \, \theta\Big(\frac{y}{r_c}\Big) \,\dy.
\end{align*}
We compute for any smooth vector field $\eta\in C^\infty_{cpt}(\Rd;\Rd)$ 
(recall that $\Phi_t(x,0)=x$ and $\dist(\Phi_t(x,y),I_v(t))=y$ 
for any $x\in I_v(t)$ and any $y$ with $|y|\leq r_c$)
\begin{align*}
&\int_{I_v(t)} \eta(x) \cdot \mathrm{d} (D_x^{\tan} h^+(\cdot,t))(x)
\\&
=-\int_{I_v(t)} h^+(x,t) \nabla^{\tan} \cdot \eta(x) \,\dS(x)
- \int_{I_v(t)} h^+(x,t) \eta(x) \cdot \vec{H}(x,t) \,\dS(x)
\\&
=-\int_0^{r_c} \int_{I_v(t)} (1-\chi_u)(\Phi_t(x,y),t) 
\theta\Big(\frac{y}{r_c}\Big) \nabla^{\tan} \cdot \eta(x)  \,\dS(x) \,\dy
\\&~~~~
-\int_0^{r_c} \int_{I_v(t)}(1-\chi_u)(\Phi_t(x,y),t) 
\theta\Big(\frac{y}{r_c}\Big)  \eta(x) \cdot \vec{H}(\Phi_t(x,0),t) \,\dS(x) \,\dy
\\&
=-\int_{\Rd} (1-\chi_u)(x,t) \theta\Big(\frac{\dist(x,I_v(t))}{r_c}\Big) |\det \nabla\Phi_t^{-1}(x)| 
\\&~~~~~~~~~~~~~~~~~~~
\times
(\Id{-}\vec{n}_v(P_{I_v(t)}x)\otimes\vec{n}_v(P_{I_v(t)}x)):\nabla \eta(P_{I_v(t)}x) \,\dx
\\&~~~~
-\int_{\Rd} (1-\chi_u)(x,t) \theta\Big(\frac{\dist(x,I_v(t))}{r_c}\Big)  
\eta(P_{I_v(t)}x) \cdot \vec{H}(P_{I_v(t)}x) |\det \nabla\Phi_t^{-1}(x)| \,\dx
\\&
=-\int_{\Rd} \theta\Big(\frac{\dist(x,I_v(t))}{r_c}\Big)  |\det \nabla\Phi_t^{-1}(x)| 
\eta(P_{I_v(t)}x)(\Id{-}\vec{n}_v(P_{I_v(t)}x)\otimes\vec{n}_v(P_{I_v(t)}x)) \cdot \,\mathrm{d} \nabla\chi_u
\\&~~~~
+\int_{\Rd} (1-\chi_u)(x,t) \theta\bigg(\frac{\dist(x,I_v(t))}{r_c}\bigg) \eta(P_{I_v(t)}x)
\\&~~~~~~~~~~
\cdot \Big(\nabla \cdot \big( (\Id{-}\vec{n}_v(P_{I_v(t)}x)\otimes\vec{n}_v(P_{I_v(t)}x))|\det \nabla\Phi_t^{-1}|\big)
-\vec{H}(P_{I_v(t)}x) |\det \nabla\Phi_t^{-1}|\Big) \,\dx,
\end{align*}
where in the last step we have used $\nabla\sigdist(x,I_v(t))=\vec{n}_v(P_{I_v(t)}x)$.
This yields by another change of variables in the second integral, the fact that 
$\chi_v(\Phi_t(x,y),t)=1$ for any $y>0$, \eqref{Bound2ndDerivativeSignedDistance}, 
\eqref{BoundMeanCurvature}, $|\det \nabla\Phi_t^{-1}|\leq C$ 
as well as by abbreviating $\vec{n}_u = \frac{\nabla\chi_u}{|\nabla\chi_u|}$
\begin{align*}
&\int_{U\cap I_v(t)} 1 \,\mathrm{d}|D_x^{\tan} h^+(\cdot,t)|
\\&\leq
C\int_{\{x+y\vec{n}_v(x,t):\,x\in U\cap I_v(t),y\in (-r_c,r_c)\}} 
\big|\vec{n}_v(P_{I_v(t)}x) - \vec{n}_u\big| \,\mathrm{d}|\nabla\chi_u(\cdot,t)|
\\&~~~
+\frac{C}{r_c} \int_{U\cap I_v(t)} \int_0^{r_c} |\chi_u(\Phi_t(x,y),t)-\chi_v(\Phi_t(x,y),t)| \,\dy \,\dS(x)
\end{align*}
for any Borel set $U\subset \Rd$. 
Recall that the indicator function $\chi_u(\cdot,t)$ of the varifold solution 
is of bounded variation in $I:=\{x\in\Rd\colon\sigdist(x,I_v(t))\in (-r_c,r_c)\}$.
In particular, $E^+:=\{x\in\Rd\colon \chi_u>0\}\cap I$ is a set
of finite perimeter in $I$. Applying Theorem~\ref{TheoG} in local coordinates the sections
$$E^+_x = \{y\in (-r_c,r_c)\colon \chi_u(x+y\vec{n}_v(x,t))>0\}$$
are guaranteed to be one-dimensional Caccioppoli sets in $(-r_c,r_c)$ for $\mathcal{H}^{d-1}$-almost
every $x\in I_v(t)$.
Note that whenever $|\vec{n}_v\cdot\vec{n}_u|\leq\frac{1}{2}$
then $1-\vec{n}_v\cdot\vec{n}_u\geq\frac{1}{2}$, and therefore using also the co-area formula for rectifiable
sets (see \cite[(2.72)]{Ambrosio2000a})
\begin{align}\label{auxSpatialDerivativeHplus}
&\int_{U\cap I_v(t)} 1 \,\mathrm{d}|D_x^{\tan} h^+(\cdot,t)| 
\\&\nonumber
\leq
\frac{C}{r_c} \int_{U\cap I_v(t)} \int_0^{r_c} |\chi_u(\Phi_t(x,y),t)-\chi_v(\Phi_t(x,y),t)| \,\dy \,\dS(x)
\\&~~~\nonumber
+ C\int_{U\cap I_v(t)}\int_{\partial^*E^+_x\cap
\{\vec{n}_v(x)\cdot\vec{n}_u(x{+}y\vec{n}_v(x,t))\geq\frac{1}{2}\}\cap(-r_c,r_c)} 
\frac{|\vec{n}_v(x)-\vec{n}_u|}{|\vec{n}_v(x)\cdot\vec{n}_u|} \,\mathrm{d}\mathcal{H}^0(y)\,\mathrm{d}S(x)
\\&~~~\nonumber
+ C\int_{\{x{+}y\vec{n}_v(x,t):\,x\in U\cap I_v(t),y\in (-r_c,r_c),
\vec{n}_v(x)\cdot\vec{n}_u(x{+}y\vec{n}_v(x,t))\leq\frac{1}{2}\}} 
\big(1-\vec{n}_v(P_{I_v(t)}x)\cdot\vec{n}_u\big) \,\mathrm{d}|\nabla\chi_u(\cdot,t)|.
\end{align}

We now distinguish between different cases depending on $x\in I_v(t)$ up to $\mathcal{H}^{d-1}$-measure zero.
We start with the set of points $x\in A_1\subset I_v(t)$ such that
\begin{align}\label{DefinitionA1}
&\int_0^{r_c} |\chi_u(\Phi_t(x,y),t)-\chi_v(\Phi_t(x,y),t)| \,\dy
\\&\nonumber
+\int_{\partial^*E^+_x\cap
\{\vec{n}_v(x)\cdot\vec{n}_u(x{+}y\vec{n}_v(x,t))\geq\frac{1}{2}\}\cap(-r_c,r_c)} 
\frac{|\vec{n}_v(x)-\vec{n}_u|}{|\vec{n}_v(x)\cdot\vec{n}_u|} \,\mathrm{d}\mathcal{H}^0(y)
\\&\nonumber
+\sup_{y\in\{\tilde y\in (-r_c,r_c)\cap\partial^*E^+_x\colon
\vec{n}_v(x)\cdot\vec{n}_u(x{+}\tilde y\vec{n}_v(x,t))\leq\frac{1}{2}\}}
1-\vec{n}_v(P_{I_v(t)}x)\cdot\vec{n}_u(x{+}y\vec{n}_v(x,t))
\\&\nonumber
\leq \frac{1}{4}.
\end{align}
By splitting the measure $D_x^\mathrm{tan}h^+$ into a part which is absolutely continuous with respect
to the surface measure on $I_v(t)$, for which we denote the density by $\nabla^\mathrm{tan} h^+$, 
as well as a singular part $D^sh^+$, we obtain from \eqref{auxSpatialDerivativeHplus}
(note that the third integral in \eqref{auxSpatialDerivativeHplus}
does not contribute to this estimate by the definition of the set $A_1\subset I_v(t)$)
\begin{align*}
&\int_{U\cap I_v(t)\cap A_1} |\nabla^\mathrm{tan}h^+|(x)\,\dS(x)
\\&
\leq \int_{U\cap I_v(t)\cap A_1} \frac{C}{r_c}\int_0^{r_c} |\chi_u(\Phi_t(x,y),t)-\chi_v(\Phi_t(x,y),t)| \,\dy \,\dS(x)
\\&~\,
+ \int_{U\cap I_v(t)\cap A_1}C\int_{\partial^*E^+_x\cap
\{\vec{n}_v(x)\cdot\vec{n}_u(x{+}y\vec{n}_v(x,t))\geq\frac{1}{2}\}\cap(-r_c,r_c)}  
\frac{|\vec{n}_v(x)-\vec{n}_u|}{|\vec{n}_v(x)\cdot\vec{n}_u|} \,\mathrm{d}\mathcal{H}^0(y)\,\mathrm{d}S(x)
\end{align*}
for every Borel set $U\subset\Rd$. Since $U$ was arbitrary, we deduce that $|\nabla^\mathrm{tan}h^+|$
is bounded on $A_1$ by the two integrands on the right hand side of the last inequality. 
Hence, we obtain
\begin{align*}
&\int_{A_1} |\nabla^\mathrm{tan}h^+|^2(x)\,\dS(x) + |D^sh^+|(A_1)
\\&
\leq Cr_c^{-2}\int_{I_v(t)}\bigg|\int_0^{r_c} |\chi_u(\Phi_t(x,y),t)-\chi_v(\Phi_t(x,y),t)| \,\dy\bigg|^2 \,\dS(x)
\\&~\,
+ C\int_{I_v(t)\cap A_1}\bigg|\int_{\partial^*E^+_x\cap
\{\vec{n}_v(x)\cdot\vec{n}_u(x{+}y\vec{n}_v(x,t))\geq\frac{1}{2}\}\cap(-r_c,r_c)}  
|\vec{n}_v-\vec{n}_u|\,\mathrm{d}\mathcal{H}^0(y) \bigg|^2\,\dS(x).
\end{align*}
The first term on the right hand side can be estimated as in the proof of the $L^2$-bound for $h^\pm$.
To bound the second term, we make the following observation. First, we may represent the one-dimensional 
Caccioppoli sets $E^+_x$ as a finite union of disjoint intervals (see \cite[Proposition~3.52]{Ambrosio2000a}).
It then follows from property iv) in Theorem~\ref{TheoG} that
$\partial^*E^+_x\cap(-r_c,r_c)$ can only contain at most one point. Indeed, otherwise
we would find at least one point $y\in \partial^*E^+_x\cap(-r_c,r_c)$ 
such that $\vec{n}_v(x)\cdot\vec{n}_u(x{+}y\vec{n}_v(x,t))<0$ which is a contradiction to 
the definition of $A_1$. By another application of the co-area formula for rectifiable
sets (see \cite[(2.72)]{Ambrosio2000a}) we therefore get
\begin{align}\label{square}\nonumber
&\int_{A_1} |\nabla^\mathrm{tan}h^+|^2(x)\,\dS(x) + |D^sh^+|(A_1)
\\&
\leq \frac{C}{r_c^2} \int_\Rd |\chi_u-\chi_v| \min\Big\{\frac{\dist(x,I_v(t))}{r_c},1\Big\} \,\dx
\\&\nonumber~~~
+C\int_{\{\dist(x,I_v(t))<r_c\}} 1-\vec{n}_v(P_{I_v(t)}x)\cdot \frac{\nabla\chi_u}{|\nabla\chi_u|} 
\,\mathrm{d}|\nabla\chi_u|(x).
\end{align}

We now turn to the second case, namely the set of points $A_2:=I_v(t)\setminus A_1$.
We begin with a preliminary computation.
When splitting $E^+_x$ into a finite family of disjoint open intervals
as before, it again follows from property iv) in Theorem~\ref{TheoG} that every second point
$y\in\partial^*E^+_x\cap(-r_c,r_c)$ has to have the property that $\vec{n}_v(x)\cdot\vec{n}_u(x{+}y\vec{n}_v(x,t))<0$, i.e.,
$|\vec{n}_v(x)-\vec{n}_u|\leq 2\leq 2(1-\vec{n}_v(x)\cdot\vec{n}_u)$. In particular, by another application of 
the co-area formula for rectifiable sets (see \cite[(2.72)]{Ambrosio2000a}) we obtain the bound
\begin{align}\nonumber\label{auxLinear}
&\int_{A_2}\int_{\partial^*E^+_x\cap
\{\vec{n}_v(x)\cdot\vec{n}_u(x{+}y\vec{n}_v(x,t))\geq\frac{1}{2}\}\cap(-r_c,r_c)}  
\frac{|\vec{n}_v(x)-\vec{n}_u|}{|\vec{n}_v(x)\cdot\vec{n}_u|} \,\mathrm{d}\mathcal{H}^0(y)\,\mathrm{d}S(x)
\\&
\leq 8\int_{\{\dist(x,I_v(t))<r_c\}} 1-\vec{n}_v(P_{I_v(t)}x)\cdot \frac{\nabla\chi_u}{|\nabla\chi_u|} 
\,\mathrm{d}|\nabla\chi_u|(x).
\end{align}

Now, we proceed as follows. By definition of $A_2$, either one of the three summands 
in \eqref{DefinitionA1} has to be $\geq \frac{1}{12}$. We distinguish between two cases.
If the third one is not, then this actually means that the set
$\{\tilde y\in (-r_c,r_c)\cap\partial^*E^+_x\colon
\vec{n}_v(x)\cdot\vec{n}_u(x{+}\tilde y\vec{n}_v(x,t))\leq\frac{1}{2}\}$
is empty, i.e., the third summand has to vanish. Hence, either one of the first
two summands in \eqref{DefinitionA1} has to be $\geq \frac{1}{8}$. If the first one
is not, we use that $\int_0^{r_c} |\chi_u(\Phi_t(x,y),t)-\chi_v(\Phi_t(x,y),t)| \,\dy \leq r_c$
and bound this by the second term and then \eqref{auxLinear}. If the second one is not, then
\begin{align}\label{auxLinear2}\nonumber
\ell^+(x)&:=\int_0^{r_c} |\chi_u(\Phi_t(x,y),t)-\chi_v(\Phi_t(x,y),t)| \,\dy \leq r_c 
\\&
\leq \frac{C}{r_c}\int_0^{\ell^+(x)}y\,\dy 
\leq C\int_0^{r_c}|\chi_u(\Phi_t(x,y),t)-\chi_v(\Phi_t(x,y),t)|\frac{y}{r_c}\,\dy.
\end{align}
Now, we move on with the remaining case, i.e., that the third summand in \eqref{DefinitionA1}
does not vanish. In other words, $\{\tilde y\in (-r_c,r_c)\cap\partial^*E^+_x\colon
\vec{n}_v(x)\cdot\vec{n}_u(x{+}\tilde y\vec{n}_v(x,t))\leq\frac{1}{2}\}$ is non-empty.
We then estimate
\begin{align}\label{auxLinear3}\nonumber
&\int_0^{r_c} |\chi_u(\Phi_t(x,y),t)-\chi_v(\Phi_t(x,y),t)| \,\dy
\\&
\leq r_c
\leq 2r_c\int_{\partial^*E^+_x\cap (-r_c,r_c)}
1-\vec{n}_v(x)\cdot\vec{n}_u(x{+}\tilde y\vec{n}_v(x,t))
\,\mathrm{d}\mathcal{H}^0(y).
\end{align}

Taking finally $U=A_2$ in \eqref{auxSpatialDerivativeHplus}, the conclusions of the above case study together with
the three estimates \eqref{auxLinear}, \eqref{auxLinear2} and \eqref{auxLinear3} followed
by another application of the co-area formula for rectifiable
sets (see \cite[(2.72)]{Ambrosio2000a}) to further estimate the latter,
then imply that
\begin{align}\label{linear}\nonumber
&\int_{A_2} |\nabla^\mathrm{tan}h^+|(x)\,\dS(x) + |D^sh^+|(A_2)
\\&
\leq \frac{C}{r_c} \int_\Rd |\chi_u-\chi_v| \min\Big\{\frac{\dist(x,I_v(t))}{r_c},1\Big\} \,\dx
\\&\nonumber~~~
+C\int_{\{\dist(x,I_v(t))<r_c\}} 1-\vec{n}_v(P_{I_v(t)}x)\cdot \frac{\nabla\chi_u}{|\nabla\chi_u|} 
\,\mathrm{d}|\nabla\chi_u|(x).
\end{align}

The two estimates \eqref{square} and \eqref{linear} thus entail the desired upper 
bound \eqref{HeightFunctionGradientEstimate} for the (tangential) 
gradient of $h^\pm$ with $\xi$ replaced by $\vec{n}_v(P_{I_v(t)}x)$. 
However, one may replace $\vec{n}_v(P_{I_v(t)}x)$ by $\xi$ because of
\eqref{controlByTiltExcess}.

{\bf Step 3: Proof of the approximation property for the interface \eqref{ErrorImprovedInterfaceApproximationNonsmooth}.}
In order to establish \eqref{ErrorImprovedInterfaceApproximationNonsmooth}, 
we rewrite using the coordinate transform $\Phi_t$ (recall that $\sigdist(\Phi_t(x,y),I_v(t))=y$ and 
that $|h^\pm|\leq r_c$)
\begin{align}
\nonumber
\int_\Rd& |\chi_u-\chi_{v,h^+,h^-}| \,\dx
\\&
\label{BetterApproximationFirst}
= \int_{I_v(t)} \int_{0}^{r_c} \det\nabla \Phi_t(x,y) |\chi_u(\Phi_t(x,y))-1+\chi_{\{y\leq h^+(x)\}}| \,\dy \,\dS(x)
\\&~~~~
\nonumber
+ \int_{I_v(t)} \int_{-r_c}^0 \det\nabla \Phi_t(x,y) |\chi_u(\Phi_t(x,y))-\chi_{\{y\geq -h^-(x)\}}| \,\dy \,\dS(x)
\\&~~~~
+ \int_{\{\dist(x,I_v(t))\geq r_c\}} |\chi_u-\chi_v| \,\dx.
\nonumber
\end{align}
In order to derive a bound for the first term on the right-hand side of 
\eqref{BetterApproximationFirst}, we distinguish between different cases
depending on $x\in I_v(t)$ up to $\mathcal{H}^{d-1}$-measure zero. We
first distinguish between $h^+(x)\geq\frac{r_c}{4}$ and $h^+(x)<\frac{r_c}{4}$.  
In the former case, a straightforward estimate yields (recall \eqref{BoundNablaPhiNablaPhi-1})
\begin{align}\label{auxCase1}
\nonumber
&\bigg|\int_{0}^{r_c} \det\nabla \Phi_t(x,y) |\chi_u(\Phi_t(x,y))-1+\chi_{\{y\leq h^+(x)\}}| \,\dy\bigg|
\\&~~~~\leq Cr_c \leq\frac{C}{r_c}\int_0^{h^+(x)}y\,\dy 
\leq C\int_0^{r_c}|\chi_u(\Phi_t(x,y))-\chi_v(\Phi_t(x,y))|\frac{y}{r_c}\,\dy,
\end{align}
which is indeed of required order after a change of variables. 
We now consider the other case, i.e., $h^+(x)<\frac{r_c}{4}$.
Recall that the indicator function $\chi_u(\cdot,t)$ of the varifold solution 
is of bounded variation in $I^+:=\{x\in\Rd\colon\sigdist(x,I_v(t))\in (0,r_c)\}$.
In particular, $E^+:=\{x\in\Rd\colon 1-\chi_u>0\}\cap I_+$ is a set
of finite perimeter in $I^+$. Recall also that $E^+=I^+\cap\{x\in\Rd\colon(\chi_v-\chi_u)_+>0\}$ 
since $\chi_v\equiv 1$ in $I^+$. Applying Theorem~\ref{TheoG} in local coordinates, the sections
\begin{align*}
E^+_x = \{y\in (0,r_c)\colon 1-\chi_u(x+y\vec{n}_v(x,t))>0\}
\end{align*}
are guaranteed to be one-dimensional Caccioppoli sets in $(0,r_c)$ for $\mathcal{H}^{d-1}$-almost
every $x\in I_v(t)$. Hence, we may represent the one-dimensional section $E^+_x$ 
for such $x\in I_v(t)$ as a finite union of disjoint intervals (see \cite[Proposition~3.52]{Ambrosio2000a})
\begin{align*}
E^+_x\cap (0,r_c) = \bigcup_{m=1}^{K(x)}(a_m,b_m).
\end{align*}
If $K(x) = 0$ then $h^+(x)=0$, and the inner integral in the first term on the right hand side of
\eqref{BetterApproximationFirst} vanishes for this $x$. If $K(x) = 1$
and $a_1=0$, then by definition of $h^+(x)$ we have $(a_1,b_1)=(0,h^+(x))$ (recall that we now consider the case $h^+(x)\leq \frac{r_c}{4}$). Thus, again the inner integral in the first term on the right hand side of
\eqref{BetterApproximationFirst} vanishes for this $x$.
Hence, it remains
to discuss the case that there is at least one non-empty interval in the 
decomposition of $E^+_x$, say $(a,b)$, such that $a\in (0,r_c)$. From property iv) in
Theorem~\ref{TheoG} it then follows that 
\begin{align*}
\vec{n}_v(x,t)\cdot \frac{-\nabla\chi_{E^+}}{|\nabla\chi_{E^+}|}(x+a\vec{n}_v(x,t)) \leq 0.
\end{align*}
Hence, we may bound
\begin{align*}
&\bigg|\int_{0}^{r_c} \det\nabla \Phi_t(x,y) |\chi_u(\Phi_t(x,y))-1+\chi_{\{y\leq h^+(x)\}}| \,\dy\bigg|
\\&~~~~\leq Cr_c \leq C\int_{(0,r_c)\cap(\partial^*E^+)_x}
1-\vec{n}_v(x,t)\cdot \frac{-\nabla\chi_{E^+}}{|\nabla\chi_{E^+}|}(x+y\vec{n}_v(x,t))\,\mathrm{d}\mathcal{H}^0(y)
\end{align*}
Gathering the bounds from the different cases together with the estimate in \eqref{auxCase1}, we therefore obtain 
by the co-area formula for rectifiable
sets (see \cite[(2.72)]{Ambrosio2000a}) together with the change 
of variables $\Phi_t(x,y)$
\begin{align*}
&\bigg|\int_{I_v(t)}\int_{0}^{r_c} \det\nabla \Phi_t(x,y) 
|\chi_u(\Phi_t(x,y))-1+\chi_{\{y\leq h^+(x)\}}| \,\dy\,\dS(x)\bigg| \\
&~~~~\leq C \int_{I_v(t)}\int_{(0,r_c)\cap(\partial^*E^+)_x} 1-\vec{n}_v(x,t)\cdot 
\frac{-\nabla\chi_{E^+}}{|\nabla\chi_{E^+}|}(x+y\vec{n}_v(x,t))\,\mathrm{d}\mathcal{H}^0(y)\,\dS(x) \\
&~~~~~~~~
+ C \int_\Rd \int_{-r_c}^{r_c} |\chi_u(\Phi_t(x,y))-\chi_v(\Phi_t(x,y))| \frac{y}{r_c} \,dy \,\dx
\\
&~~~~\leq C \int_{\{\dist(x,I_v(t))<r_c\}} 1-\vec{n}_v(P_{I_v(t)}x)\cdot \frac{\nabla\chi_u}{|\nabla\chi_u|} 
\,\mathrm{d}|\nabla\chi_u|(x) \\
&~~~~~~~~
+ C \int_\Rd |\chi_u-\chi_v| \min\Big\{\frac{\dist(x,I_v(t))}{r_c},1\Big\} \,\dx,
\end{align*}
which is by \eqref{controlByTiltExcess} as well as \eqref{controlByTiltExcess2} indeed a bound of desired order. Moreover, performing analogous estimates for the second term 
on the right-hand side of \eqref{BetterApproximationFirst} and estimating the third term on the right-hand side of \eqref{BetterApproximationFirst} trivially, we then get
\begin{align*}
\int_\Rd& |\chi_u-\chi_{v,h^+,h^-}| \,\dx \\
&\leq C \int_\Rd 1-\xi\cdot \frac{\nabla\chi_u}{|\nabla\chi_u|} \,\mathrm{d}|\nabla\chi_u|
+ C \int_\Rd |\chi_u-\chi_v| \min\Big\{\frac{\dist(x,I_v(t))}{r_c},1\Big\} \,\dx
\end{align*}
which is precisely the desired estimate \eqref{ErrorImprovedInterfaceApproximationNonsmooth}.

{\bf Step 4: Proof of estimate on the time derivative \eqref{HeightFunctiondtEstimate}.}
To bound the time derivative, we compute using the weak formulation 
of the continuity equation $\partial_t \chi_u = -\nabla \cdot(\chi_u u)$ and 
abbreviating $I^+(t):=\{x\in\Rd:\sigdist(x,I_v(t))\in [0,r_c)\}$ (recall that the boundary $\partial I^+(t) = I_v(t)$ moves with normal speed $\vec{n}_v \cdot v$)
\begin{align*}
&\frac{\mathrm{d}}{\dt} \int_{I_v(t)} \eta(x) h^+(x,t) \,\dS(x)
\\&
=\frac{\mathrm{d}}{\dt} \int_{I_v(t)} \int_0^\infty \eta(x) (1-\chi_u)(x+y\vec{n}_v(x,t),t) 
\, \theta\Big(\frac{y}{r_c}\Big) \,\dy\,\dS(x)
\\&
=\frac{\mathrm{d}}{\dt} \int_{I^+(t)}  \eta(P_{I_v(t)}x) |\det \nabla \Phi_t^{-1}|(x) 
(1-\chi_u)(x,t) \, \theta\Big(\frac{\dist(x,I_v(t))}{r_c}\Big) \,\dx
\\&
=\int_{I^+(t)} (1-\chi_u)(x,t) u \cdot \nabla \Big(\eta(P_{I_v(t)}x) 
|\det \nabla \Phi_t^{-1}|(x) \, \theta\Big(\frac{\dist(x,I_v(t))}{r_c}\Big)\Big) \,\dx
\\&~~\,
+\int_{I_v(t)} (\vec{n}_v\cdot u)(x,t) (1{-}\chi_u)(x,t) \eta(P_{I_v(t)}x) |\det \nabla \Phi_t^{-1}|(x)
\theta \Big(\frac{\dist(x,I_v(t))}{r_c}\Big) \,\dS(x)
\\&~~\,
+\int_{I^+(t)} (1-\chi_u)(x,t) \, \frac{\mathrm{d}}{\dt}\Big(\eta(P_{I_v(t)}x) |\det \nabla \Phi_t^{-1}|(x) 
\theta \Big(\frac{\dist(x,I_v(t))}{r_c}\Big)\Big) \,\dx
\\&~~\,
-\int_{I_v(t)} (\vec{n}_v\cdot v)(x,t) (1{-}\chi_u)(x,t) \eta(P_{I_v(t)}x) |\det \nabla \Phi_t^{-1}|(x)
\theta \Big(\frac{\dist(x,I_v(t))}{r_c}\Big) \,\dS(x).
\end{align*}
Recall from \eqref{gradientProjection} the formula for the gradient of the projection
onto the nearest point on the interface $I_v(t)$. Recalling also the definitions of the 
extended normal velocity $V_\vec{n}(x,t):=
\big(v(x,t)\cdot\vec{n}_v(P_{I_v(t)}x,t)\big)\,\vec{n}_v(P_{I_v(t)}x,t)$
and its projection $\bar{V}_{\vec{n}}(x,t):=V_{\vec{n}}(P_{I_v(t)}x,t)$ from \eqref{normalCompVel}
respectively \eqref{projNormalVel}, we also have
\begin{align*}
&-\int_{I^+} (1-\chi_u(x,t)) |\det \nabla \Phi_t^{-1}|(x) 
\theta \Big(\frac{\dist(x,I_v(t))}{r_c}\Big) (\nabla\eta)(P_{I_v(t)}x)
\\&~~~~~~~~~~~~~~~~~~
\cdot \big((v(P_{I_v(t)}x,t)-\bar{V}_{\vec{n}}(x,t)) 
\cdot \nabla\big) P_{I_v(t)}x \,\dx
\\&
=-\int_{I_v(t)} \int_0^{r_c} (1-\chi_u(\Phi_t(x,y),t)) \theta \Big(\frac{y}{r_c}\Big) \nabla \eta(x)
\\&~~~~~~~~~~~~~~~~~~
\cdot \big((v(x,t)-V_{\vec{n}}(x,t)) \cdot \nabla\big) P_{I_v(t)}(\Phi_t(x,y)) \,\dy \,\dS(x)
\\&
=-\int_{I_v(t)} h^+(x,t) (\Id{-}\vec{n}_v(x)\otimes\vec{n}_v(x))v(x,t) \cdot \nabla \eta(x) \,\dS(x) \\
&~~~~+\int_{I^+(t)} (1-\chi_u(x,t)) |\det \nabla \Phi_t^{-1}|(x) \theta 
\Big(\frac{\dist(x,I_v(t))}{r_c}\Big)\dist(x,I_v(t))(\nabla \eta)(P_{I_v(t)}x) 
\\&~~~~~~~~~~~~~~~~~~
\cdot \big((v(P_{I_v(t)}x,t)-\bar{V}_{\vec{n}}(x,t)) \cdot \nabla\big)\vec{n}_v(P_{I_v(t)}x) \,\dx.
\end{align*}
Adding this formula to the above formula for $\frac{\mathrm{d}}{\dt} \int_{I_v(t)} \eta(x) h^+(x,t) \,\dS(x)$, 
introducing the abbreviation $f:=|\det \nabla \Phi_t^{-1}|(x) \, \theta(\frac{\dist(x,I_v(t))}{r_c})$, 
and using the fact that $\chi_v=1$ in $I^+(t)$, we obtain
\begin{align}
\nonumber
&\frac{\mathrm{d}}{\dt} \int_{I_v(t)} \eta(x) h^+(x,t) \,\dx
-\int_{I_v(t)} h^+(x,t) (\Id{-}\vec{n}_v\otimes\vec{n}_v)v(x,t) \cdot \nabla \eta(x) \,\dS(x)
\\&
\nonumber
=\int_{I^+(t)} (\chi_u(x,t)-\chi_v(x,t))f(x)\dist(x,I_v(t)) 
(\nabla \eta)(P_{I_v(t)}x)
\\&~~~~~~~~~~~~
\nonumber
\cdot\big((v(P_{I_v(t)}x,t)-\bar{V}_{\vec{n}}(x,t))
\cdot \nabla\big)\vec{n}_v(P_{I_v(t)}x) \,\dx 
\\&~~~
\label{TimeDerivativehFirst}
-\int_{I^+(t)} (\chi_u(x,t)-\chi_v(x,t)) \eta(P_{I_v(t)}x)  (u-v) \cdot \nabla f \,\dx
\\&~~~
\nonumber
-\int_{I^+(t)} (\chi_u(x,t)-\chi_v(x,t)) f(x) (\nabla \eta)(P_{I_v(t)}x) \cdot ((u-v) \cdot \nabla)P_{I_v(t)}x  \,\dx
\\&~~~
\nonumber
-\int_{I^+(t)} (\chi_u(x,t)-\chi_v(x,t)) f(x) (\nabla \eta)(P_{I_v(t)}x) 
\\&~~~~~~~~~~~~
\nonumber
\cdot 
\big((v(x,t)-(v(P_{I_v(t)}x,t)-\bar{V}_{\vec{n}}(x,t))) 
\cdot \nabla\big) P_{I_v(t)}x \,\dx
\\&~~~
\nonumber
-\int_{I^+(t)} (\chi_u(x,t)-\chi_v(x,t)) f(x) (\nabla \eta)(P_{I_v(t)}x) \cdot \frac{\mathrm{d}}{\dt} P_{I_v(t)}x \,\dx
\\&~~~
\nonumber
-\int_{I^+(t)} (\chi_u(x,t)-\chi_v(x,t)) \, \eta(P_{I_v(t)}x) \Big(\frac{\mathrm{d}}{\dt}f+v \cdot \nabla f\Big) \,\dx
\\&~~~
\nonumber
+\int_{I_v(t)} \vec{n}_v \cdot (u-v) (1-\chi_u) \eta \,\dS.
\end{align}
Note that $f(x)=|\det \nabla \Phi_t^{-1}|(x) \, \theta(\frac{\dist(x,I_v(t))}{r_c})=1$ for any 
$t$ and any $x\in I_v(t)$. Thus, we have $\frac{\mathrm{d}}{\dt} f + v\cdot \nabla f=0$ on $I_v(t)$.
Furthermore, we have $|\nabla\bar{V}_{\vec{n}}| \leq \frac{C}{r_c^2}\|v\|_{W^{1,\infty}}$
and $|\nabla^2\bar{V}_{\vec{n}}| \leq \frac{C}{r_c^3}\|v\|_{W^{2,\infty}(\Rd\setminus I_v(t))}$ 
because of $\bar{V}_{\vec{n}}(x)=V_{\vec{n}}(P_{I_v(t)}x)$, \eqref{Bound2ndDerivativeSignedDistance},
the corresponding estimate \eqref{Bound1stDerivativeNormalVelocity} for the gradient of $V_{\vec{n}}$ 
as well as the formula \eqref{gradientProjection} for the gradient of $P_{I_v(t)}$.
Because of \eqref{sdistAux4} and the equation \eqref{TimeEvolutionNormalNoCutoff} for the time evolution of the 
normal vector, we thus get the bounds 
$|\frac{\mathrm{d}}{\dt}\nabla\sigdist(\cdot,I_v(t))|\leq \frac{C}{r_c^2}\|v\|_{W^{1,\infty}}$
and $|\nabla\frac{\mathrm{d}}{\dt}\nabla\sigdist(\cdot,I_v(t))|\leq 
\frac{C}{r_c^3}\|v\|_{W^{2,\infty}(\Rd\setminus I_v(t))}$. 
Taking all of these bounds together, we obtain $|f|\leq \frac{C}{r_c}$, $|\nabla f|\leq \frac{C}{r_c^2}$ and 
$|\nabla^2 f|+|\nabla \frac{\mathrm{d}}{\dt} f|\leq \frac{C}{r_c^3}(1+\|v\|_{W^{2,\infty}(\Rd\setminus I_v(t))})$. 
As a consequence, we get
\begin{align}\label{fTransport}
\Big|\frac{\mathrm{d}}{\dt}f+v \cdot \nabla f\Big|
\leq \frac{C}{r_c^3}(1+\|v\|_{W^{2,\infty}(\Rd\setminus I_v(t))})\dist(\cdot,I_v(t)).
\end{align}
Moreover, we may compute
\begin{align}\label{TimeDerivativeProjection}
\frac{\mathrm{d}}{\dt}  P_{I_v(t)}x = -\vec{n}_v(P_{I_v(t)}x)
\frac{\mathrm{d}}{\dt}\sigdist(x,I_v(t)) - \sigdist(x,I_v(t))
\frac{\mathrm{d}}{\dt}(\vec{n}_v(P_{I_v(t)}x)).
\end{align}
Since $\vec{n}_v\cdot\nabla\eta = 0$ holds on the interface $I_v(t)$ by assumption,
we obtain from \eqref{TimeDerivativeProjection}
\begin{align*}
&-\int_{I^+(t)} (\chi_u(x,t)-\chi_v(x,t)) f(x) (\nabla \eta)(P_{I_v(t)}x) 
\cdot \frac{\mathrm{d}}{\dt}  P_{I_v(t)}x \,\dx
\\&
=\int_{I^+(t)} (\chi_u(x,t)-\chi_v(x,t)) \sigdist(x,I_v(t))
 f(x) (\nabla\eta)(P_{I_v(t)}x)\cdot\frac{\mathrm{d}}{dt}(\vec{n}_v(P_{I_v(t)}x)) \,\dx.
\end{align*}
In what follows, we will by slight abuse of notation use $\nabla^{\mathrm{tan}}g(x)$
as a shorthand for $(\mathrm{Id}-\vec{n}_v(P_{I_v(t)}x)\otimes\vec{n}_v(P_{I_v(t)}x))\nabla g(x)$ 
for scalar fields as well as $(\nabla^{\mathrm{tan}}\cdot g)(x)$ instead of 
$(\mathrm{Id}-\vec{n}_v(P_{I_v(t)}x)\otimes\vec{n}_v(P_{I_v(t)}x)):\nabla g(x)$ for vector fields.
Let us also abbreviate $P^\mathrm{tan}x:=(\mathrm{Id}-\vec{n}_v(P_{I_v(t)}x)\otimes\vec{n}_v(P_{I_v(t)}x))$.
Note that by assumption $(\nabla\eta)(P_{I_v(t)}x)=(\nabla^{\mathrm{tan}}\eta)(P_{I_v(t)}x)$.
Moreover, it follows from \eqref{sdistAux1}, \eqref{sdistAux2} and \eqref{sdistAux4}
that $\vec{n}_v(P_{I_v(t)}x)\cdot\frac{\mathrm{d}}{\dt}(\vec{n}_v(P_{I_v(t)}x))=0$. 
Hence, we may rewrite with an integration by parts  (recall the notation $P^\mathrm{tan}(x)=(\Id-\vec{n}_v\otimes \vec{n}_v)(P_{I_v(t)}x,t)$)
\begin{align}\label{boundDtXi}
&\int_{I^+(t)} (\chi_u(x,t)-\chi_v(x,t)) \sigdist(x,I_v(t)) f(x) 
(\nabla^{\mathrm{tan}}\eta)(P_{I_v(t)}x)\cdot\frac{\mathrm{d}}{\dt}(\vec{n}_v(P_{I_v(t)}x)) \,\dx 
\\&\nonumber
=-\int_{I^+(t)} (\chi_u-\chi_v)(x,t) \sigdist(x,I_v(t))\eta(P_{I_v(t)}x)
\\&~~~~~~~~~~~~\nonumber
\times\Big(\frac{\mathrm{d}}{\dt}(\vec{n}_v(P_{I_v(t)}x))\otimes\nabla\Big):f(x)P^\mathrm{tan}(x) \,\dx 
\\&\nonumber
-\int_{I^+(t)} (\chi_u-\chi_v)(x,t) \sigdist(x,I_v(t)) 
f(x)\eta(P_{I_v(t)}x)\nabla^{\mathrm{tan}}\cdot\frac{\mathrm{d}}{\dt}(\vec{n}_v(P_{I_v(t)}x)) \,\dx 
\\&\nonumber
-\int_\Rd \sigdist(x,I_v(t)) f(x)\eta(P_{I_v(t)}x)\Big(\frac{\nabla\chi_u}{|\nabla\chi_u|}-\vec{n}_v(P_{I_v(t)}x)\Big)
\cdot\frac{\mathrm{d}}{\dt}(\vec{n}_v(P_{I_v(t)}x)) \,\mathrm{d}|\nabla\chi_u|.
\end{align}
Using from \eqref{sdistAux2} and \eqref{sdistAux4} that the spatial partial derivatives of the 
extended normal vector field are orthogonal to the gradient of the signed distance function, 
the same argument also shows that
\begin{align}\label{boundTangVel}&\int_{I^+(t)} (\chi_u(x,t)-\chi_v(x,t))f(x)\dist(x,I_v(t)) 
(\nabla^{\mathrm{tan}} \eta)(P_{I_v(t)}x)
\\&~~~~~~~~~~~~\nonumber
\cdot\big((v(P_{I_v(t)}x,t)-\bar{V}_{\vec{n}}(x,t))
\cdot \nabla\big)\vec{n}_v(P_{I_v(t)}x) \,\dx 
\\&\nonumber
=-\int_{I^+(t)} (\chi_u(x,t)-\chi_v(x,t)) \dist(x,I_v(t)) \eta(P_{I_v(t)}x)
\\&~~~~~~~~~~~~~~\nonumber
\times\big(((v(P_{I_v(t)}x,t)-\bar{V}_{\vec{n}}(x,t))\cdot \nabla)
\vec{n}_v(P_{I_v(t)}x)\otimes\nabla\big):f(x)P^\mathrm{tan}(x) \,\dx
\\&~~~\nonumber
-\int_{I^+(t)} (\chi_u(x,t)-\chi_v(x,t)) \dist(x,I_v(t)) f(x)\eta(P_{I_v(t)}x)
\\&~~~~~~~~~~~~~~\nonumber
\times\nabla^\mathrm{tan} 
\cdot\big(\big((v(P_{I_v(t)}x,t)-\bar{V}_{\vec{n}}(x,t))
\cdot \nabla\big)\vec{n}_v(P_{I_v(t)}x) \big)\,\dx
\\&~~~\nonumber
-\int_\Rd \sigdist(x,I_v(t)) f(x)\eta(P_{I_v(t)}x)\Big(\frac{\nabla\chi_u}{|\nabla\chi_u|}-\vec{n}_v(P_{I_v(t)}x)\Big)
\\&~~~~~~~~~~~~~~\nonumber
\cdot\big((v(P_{I_v(t)}x,t)-\bar{V}_{\vec{n}}(x,t)) 
\cdot \nabla\big)\vec{n}_v(P_{I_v(t)}x)\,\mathrm{d}|\nabla\chi_u|.
\end{align} 
It follows from \eqref{gradientProjection} as well as \eqref{sdistAux2} and \eqref{sdistAux4}
that $(\vec{n}_v(P_{I_v(t)}x)\cdot\nabla)P_{I_v(t)}x = 0$. Hence, we obtain
\begin{align}
\label{TimeDerivativehAux1}
&\int_{I^+(t)} (\chi_u(x,t)-\chi_v(x,t)) f(x) (\nabla \eta)(P_{I_v(t)}x) 
\\&~~~~~~~~
\nonumber
\cdot \big((v(x,t)-(v(P_{I_v(t)}x,t)-\bar{V}_{\vec{n}}(x,t))) 
\cdot \nabla\big) P_{I_v(t)}(x) \,\dx
\\&
\nonumber
=\int_{I^+(t)} (\chi_u-\chi_v)(x,t) f(x) (\nabla \eta)(P_{I_v(t)}x) \cdot 
\big((v(x,t)-v(P_{I_v(t)}x,t)) \cdot \nabla\big) P_{I_v(t)}x \,\dx.
\end{align}
Since the domain of integration is $I^+(t)$, we may write
\begin{align*}
&v(x,t)-v(P_{I_v(t)}x,t) \\&= \sigdist(x,I_v(t))\int_{(0,1]}
\nabla v\big(P_{I_v(t)}x+\lambda\sigdist(x,I_v(t))\vec{n}_v(P_{I_v(t)}x)\big)\,d\lambda
\cdot\vec{n}_v(P_{I_v(t)}x).
\end{align*}
From this and the fact $\vec{n}_v(P_{I_v(t)})\cdot \nabla P_{I_v(t)}(x) = 0$,
we deduce by another integration by parts that (where $|F|\leq r_c^{-1}\|v\|_{W^{2,\infty}(\Rd\setminus I_v(t))}$) 
\begin{align}\label{boundVelProj}
&\int_{I^+(t)} (\chi_u(x,t)-\chi_v(x,t)) f(x) (\nabla^\mathrm{tan} \eta)(P_{I_v(t)}x) \cdot 
((v(x,t)-v(P_{I_v(t)} x,t)) \cdot \nabla) P_{I_v(t)}x \,\dx
\\&\nonumber
=-\int_{I^+(t)} (\chi_u(x,t)-\chi_v(x,t)) \eta(P_{I_v(t)}x)
\\&~~~~~~~~~~~~~~\nonumber
\times\big(((v(x,t)-v(P_{I_v(t)} x,t)) \cdot \nabla) P_{I_v(t)}x\otimes\nabla\big):
f(x)P^\mathrm{tan}x \,\dx 
\\&~~~\nonumber
-\int_{I^+(t)} (\chi_u(x,t)-\chi_v(x,t)) f(x) \eta(P_{I_v(t)}x) 
((v(x,t)-v(P_{I_v(t)} x,t)) \cdot \nabla(\nabla^\mathrm{tan}\cdot P_{I_v(t)}x) \,\dx 
\\&~~~\nonumber
-\int_{I^+(t)} (\chi_u(x,t)-\chi_v(x,t)) \dist(x,I_v(t)) f(x) \eta(P_{I_v(t)}x) 
F(x,t):\nabla P_{I_v(t)}x \,\dx 
\\&~~~\nonumber
-\int_\Rd  f(x)\eta(P_{I_v(t)}x)\Big(\frac{\nabla\chi_u}{|\nabla\chi_u|}{-}\vec{n}_v(P_{I_v(t)}x)\Big)\cdot
\big((v(x,t){-}v(P_{I_v(t)} x,t)) \cdot \nabla\big)P_{I_v(t)}x\,\mathrm{d}|\nabla\chi_u|.
\end{align}
Hence, plugging in \eqref{TimeDerivativehAux1}, \eqref{boundTangVel} and \eqref{boundVelProj}, \eqref{boundDtXi} into \eqref{TimeDerivativehFirst} and using the estimates $\smash{|\nabla\bar{V}_{\vec{n}}| \leq \frac{C}{r_c^2}\|v\|_{W^{1,\infty}}}$,
$\smash{|\frac{\mathrm{d}}{\dt}\vec{n}_v(P_{I_v(t)}x)|\leq \frac{C}{r_c^2}\|v\|_{W^{1,\infty}}}$,
$\smash{|\nabla\frac{\mathrm{d}}{\dt}\vec{n}_v(P_{I_v(t)}x)|\leq \frac{C}{r_c^3}\|v\|_{W^{2,\infty}(\Rd\setminus I_v(t))}}$, and
$\smash{|\nabla f|\leq \frac{C}{r_c^2}}$,
we obtain
\begin{align*}
&\bigg|\frac{\mathrm{d}}{\dt} \int_{I_v(t)} \eta(x) h^+(x,t) \,\dx 
- \int_{I_v(t)} h^+(x,t) (\mathrm{Id}{-}\vec{n}_v\otimes\vec{n}_v)v(x,t) \cdot \nabla \eta(x) \,dS(x)\bigg|
\\&
\leq \frac{C}{r_c^2} \int_{\{\dist(x,I_v(t))\leq r_c\}} 
|\chi_u(x,t)-\chi_v(x,t)| |u(x,t)-v(x,t)| |\eta(P_{I_v(t)}x)| \,\dx
\\&~~~
+\frac{C}{r_c}\int_{\{\dist(x,I_v(t))\leq r_c\}}
 |\chi_u(x,t)-\chi_v(x,t)| |u(x,t)-v(x,t)| |\nabla \eta(P_{I_v(t)}x)| \,\dx
\\&~~~
+\frac{C(1{+}\|v\|_{W^{1,\infty}})}{r_c}\int_{\{\dist(x,I_v(t))\leq r_c\}} 
\bigg|\frac{\nabla\chi_u}{|\nabla\chi_u|}-\vec{n}_v(P_{I_v(t)}x)\bigg| 
\frac{|\sigdist(x,I_v(t))|}{r_c} |\eta(P_{I_v(t)}x)| \,\mathrm{d}|\nabla\chi_u|(x)
\\&~~~
+\frac{C(1{+}\|v\|_{W^{2,\infty}(\Rd\setminus I_v(t))})}{r_c^3}\int_{\{\dist(x,I_v(t))\leq r_c\}} 
|\chi_u(x,t){-}\chi_v(x,t)|\frac{|\sigdist(x,I_v(t))|}{r_c} |\eta(P_{I_v(t)}x)| \,\dx
\\&~~~
+C\int_{I_v(t)} |u-v| |\eta| \,\dS.
\end{align*}
This yields by the change of variables $\Phi_t(x,y)$ and a straightforward estimate
\begin{align*}
&\bigg|\frac{\mathrm{d}}{\dt} \int_{I_v(t)} \eta(x) h^+(x,t) \,dx 
- \int_{I_v(t)} h^+(x,t) (\mathrm{Id}{-}\vec{n}_v\otimes\vec{n}_v)v(x,t) \cdot \nabla \eta(x) \,\dS(x)\bigg|
\\&
\leq 
\frac{C}{r_c^2}\|\eta\|_{W^{1,4}(I_v(t))}\bigg(\int_{I_v(t)} \bigg(\int_{0}^{\frac{r_c}{2}} 
|\chi_u-\chi_v|(x+y\vec{n}_v(x,t),t) \,\dy\bigg)^4 \,\dS\bigg)^{1/4}
\\&~~~~~~~~~~
\times
\bigg(\int_{I_v(t)}  \sup_{y\in [-r_c,r_c]} |u-v|^2(x+y\vec{n}_v(x,t),t) \,\dS(x)\bigg)^{1/2} 
\\&~
+\frac{C(1{+}\|v\|_{W^{2,\infty}(\Rd\setminus I_v(t))})}{r_c^3}\|\eta\|_{L^2(I_v(t))} 
\\&~~~~~~~~~~~~
\times
\bigg(\int_{\Rd}|\chi_u(x,t)-\chi_v(x,t)| \, 
\min\Big\{\frac{\dist(x,I_v(t))}{r_c},1\Big\} \,\dx\bigg)^\frac{1}{2}
\\&~
+\frac{C(1{+}\|v\|_{W^{1,\infty}})}{r_c}\|\eta\|_{L^\infty(I_v(t))}
\bigg(\int_{\{\dist(x,I_v(t))\leq r_c\}}
\bigg|\frac{\nabla\chi_u}{|\nabla\chi_u|}-\vec{n}_v(P_{I_v(t)}x)\bigg|^2
\,\mathrm{d}|\nabla\chi_u|\bigg)^\frac{1}{2}
\\&~~~~~~~~~~~~
\times
\bigg(\int_{\{\dist(x,I_v(t))\leq r_c\}}\frac{|\sigdist(x,I_v(t))|^2}{r_c^2}
\,\mathrm{d}|\nabla\chi_u|\bigg)^\frac{1}{2}
\\&~
+C\bigg(\int_{I_v(t)} |u-v|^2 \,dS \bigg)^{1/2} \, \|\eta\|_{L^2(I_v(t))}.
\end{align*}
Using finally the Sobolev embedding to bound the $L^\infty$-norm of $\eta$ on the interface
(which is either one- or two-dimensional; note that the constant in the Sobolev embedding may be bounded by $Cr_c^{-1}$ for our geometry), we infer from this estimate the desired bound 
\eqref{HeightFunctiondtEstimate}, using also
\eqref{controlByTiltExcess} and \eqref{controlByTiltExcess2}. This concludes the proof.
\end{proof}

\subsection{A regularization of the local height of the interface error}

In order to modify our relative entropy to compensate for the velocity gradient discontinuity at 
the interface, we need regularized versions of the local heights of the interface error $h^+$ and 
$h^-$ which in particular have Lipschitz regularity. To this aim, we fix some function $e(t)>0$ and basically 
apply a mollifier on scale $e(t)$ to the local interface error heights $h^+$ and $h^-$ at each time. These regularized 
versions $\smash{h^+_{e(t)}}$ and $\smash{h^-_{e(t)}}$ of the local interface error heights then have the following properties:

\begin{proposition}
\label{PropositionInterfaceErrorHeightRegularized}	
Let $\chi_v\in L^\infty([0,\Tmax);\BV(\mathbb{R}^d;\{0,1\}))$
be an indicator function such that $\Omega^+_t :=\{x\in\R^d\colon\chi_v(x,t)=1\}$ is a family 
of smoothly evolving domains and $I_v(t) := \partial\Omega^+_t$ is a family of
smoothly evolving surfaces in the sense of Definition~\ref{definition:domains}.
Let $\xi$ be the extension of the unit normal vector field $\vec{n}_v$
from Definition~\ref{def:ExtNormal}. 

Let $\chi_u\in L^\infty([0,\Tmax);\BV(\mathbb{R}^d;\{0,1\}))$ be another indicator function
and let then $h^+$ resp.\ $h^-$ be as defined in Proposition~\ref{PropositionInterfaceErrorHeight}. 
Let $\theta\colon\mathbb{R}^+\rightarrow [0,1]$ be a smooth cutoff with $\theta(s)=1$ for $s\in [0,\frac{1}{4}]$ and 
$\theta(s)=0$ for $s\geq \frac{1}{2}$. Let $e\colon [0,\Tmax)\to(0,r_c]$ be a $\smash{C^1}$-function and define the regularized height 
of the local interface error
\begin{align}
\label{Definitionhregularized}
h^\pm_{e(t)}(x,t):= \frac{\int_{I_v(t)} \theta\big(\frac{|\tilde x-x|}{e(t)}\big) h^\pm(\tilde x,t) 
\,\dS(\tilde x)}{\int_{I_v(t)} \theta\big(\frac{|\tilde x-x|}{e(t)}\big) \,\dS(\tilde x)}.
\end{align}
Then $h^+_{e(t)}$ and $h^-_{e(t)}$ have the following properties:

\begin{subequations}
\noindent{\bf a) ($H^1$-bound)} If the interface error terms from the relative entropy are bounded by
\begin{align*}
&\int_\Rd 1-\xi(\cdot,t)\cdot
\frac{\nabla\chi_u(\cdot,t)}{|\nabla\chi_u(\cdot,t)|} 
\,\mathrm{d}|\nabla \chi_u(\cdot,t)| 
\\&
+\int_\Rd \big|\chi_u(\cdot,t)-\chi_v(\cdot,t)\big|\,\Big|\beta\Big(\frac{\sigdist(\cdot,I_v(t))}{r_c}\Big)\Big|\,\dx
\leq e(t)^2,
\end{align*}
we have the Lipschitz estimate $\smash{|\nabla h^\pm_{e(t)}(\cdot,t)|} \leq C r_c^{-2}$, 
the global bound $|\nabla^2 \smash{h^\pm_{e(t)}}(\cdot,t)| \leq C e(t)^{-1} r_c^{-4}$, 
and the bound
\begin{align}\label{EstimateErrorHeightSmooth}
\int_{I_v(t)} |\nabla h^\pm_{e(t)}|^2 + |h^\pm_{e(t)}|^2 \,\dS
&\leq \frac{C}{r_c^2} \int_\Rd 1-\xi\cdot \frac{\nabla\chi_u}{|\nabla\chi_u|} \,\mathrm{d}|\nabla\chi_u|
\\&~~~
\nonumber
+ \frac{C}{r_c^4} \int_\Rd |\chi_u-\chi_v| \min\Big\{\frac{\dist(x,I_v(t))}{r_c},1\Big\} \,\dx.
\end{align}

\noindent{\bf b) (Improved approximation property)} The functions $h^+_{e(t)}$ and $h^-_{e(t)}$ provide an approximation 
for the interface of the weak solution
\begin{align}\label{def:improvedApprox}
\chi_{v,h^+_{e(t)},h^-_{e(t)}}:=&\chi_v
- \chi_{0\leq \sigdist(x,I_v(t))\leq h^+_{e(t)}(P_{I_v(t)}x,t)}
\\&~~~\nonumber
+ \chi_{-h^-_{e(t)}(P_{I_v(t)}x,t)\leq \sigdist(x,I_v(t))\leq 0},
\end{align}
up to an error of
\begin{align}
\label{ErrorImprovedInterfaceApproximationSmooth}\nonumber
&\int_\Rd \big|\chi_u-\chi_{v,h^+_{e(t)},h^-_{e(t)}}\big| \,\dx
\\&
\leq C \int_\Rd 1-\xi\cdot \frac{\nabla\chi_u}{|\nabla\chi_u|} \,\mathrm{d}|\nabla\chi_u|
+ C \int_\Rd |\chi_u-\chi_v| \min\Big\{\frac{\dist(x,I_v(t))}{r_c},1\Big\} \,\dx
\\&~~~
\nonumber
+C e(t) \bigg(\int_\Rd 1-\xi\cdot \frac{\nabla\chi_u}{|\nabla\chi_u|} \,\mathrm{d}|\nabla \chi_u|\bigg)^{1/2} \mathcal{H}^{d-1}(I_v(t))^{1/2}
\\&~~~
\nonumber
+C \frac{e(t)}{r_c} \bigg(\int_\Rd |\chi_u-\chi_v| 
\min\Big\{\frac{\dist(x,I_v(t))}{r_c},1\Big\} \,\dx\bigg)^{1/2} \mathcal{H}^{d-1}(I_v(t))^{1/2}.
\end{align}

\noindent{\bf c) (Time evolution)} Let $v$ be a solenoidal vector field
\begin{align*}
v\in L^2([0,\Tmax];H^1(\Rd;\Rd))\cap L^\infty([0,\Tmax];W^{1,\infty}(\Rd;\Rd))
\end{align*}
such that in the domain $\bigcup_{t\in [0,\Tmax)} (\Omega_t^+\cup \Omega_t^-) \times \{t\}$ the second
spatial derivatives of the vector field $v$ exist and satisfy
$\sup_{t\in [0,\Tmax)} \sup_{x\in \Omega_t^+\cup \Omega_t^-} |\nabla^2 v(x,t)| <\infty$.
Assume that $\chi_v$ solves the equation 
$\partial_t \chi_v = -\nabla \cdot(\chi_v v)$. If $\chi_u$ 
solves the equation $\partial_t \chi_u = -\nabla \cdot(\chi_u u)$ 
for another solenoidal vector field $u\in L^2([0,\Tmax];H^1(\mathbb{R}^d;\mathbb{R}^d))$,  
we have the following estimate on the time derivative of $h^\pm_{e(t)}$:
\begin{align}
\label{HeightFunctionRegularizeddtEstimate}
&\bigg|\frac{d}{dt} \int_{I_v(t)} \eta(x) h^\pm_{e(t)}(x,t) \,\dx
- \int_{I_v(t)} h^\pm_{e(t)}(x,t) (\mathrm{Id}{-}\vec{n}_v\otimes\vec{n}_v)v(x,t) \cdot \nabla \eta(x) \,\dS(x) \bigg|
\\&\nonumber
\leq 
\frac{C}{e(t)r_c^2}\|\eta\|_{L^{4}(I_v(t))}\bigg(\int_{I_v(t)} |\bar{h}^\pm|^4 \,\dS \bigg)^{1/4}
\\&~~~~~~
\nonumber
\times
\bigg(\int_{I_v(t)}  \sup_{y\in [-r_c,r_c]} |u-v|^2(x+y\vec{n}_v(x,t),t) \,\dS(x)\bigg)^{1/2} 
\\&~~~~\nonumber
+C\frac{(1+\|v\|_{W^{1,\infty}})}{e(t) r_c}\max_{p\in\{2,4\}}\|\eta\|_{L^p(I_v(t))}
\int_{\Rd} 1-\xi\cdot\frac{\nabla\chi_u}{|\nabla\chi_u|} \,\mathrm{d}|\nabla\chi_u|
\\&~~~~
\nonumber
+Cr_c^{-4}\|v\|_{W^{1,\infty}}(1+e'(t))
\bigg(\int_\Rd 1-\xi\cdot \frac{\nabla\chi_u}{|\nabla\chi_u|} \,\mathrm{d}|\nabla\chi_u|\bigg)^{1/2} ||\eta||_{L^2(I_v(t))}
\\&~~~~
\nonumber
+C\bigg(\frac{1+\|v\|_{W^{2,\infty}(\Rd\setminus I_v(t))}}{r_c}+\frac{\|v\|_{W^{1,\infty}}}{r_c^6}(1+e'(t))\bigg)
\|\eta\|_{L^2(I_v(t))} 
\\*&~~~~~~~~\nonumber
\times
\bigg(\int_{\Rd}|\chi_u(x,t)-\chi_v(x,t)| \, 
\min\Big\{\frac{\dist(x,I_v(t))}{r_c},1\Big\} \,\dx\bigg)^\frac{1}{2}
\\&~~~~\nonumber
+C\|\eta\|_{L^2(I_v(t))}\bigg(\int_{I_v(t)}|u-v|^2\,\dS\bigg)^\frac{1}{2}
\end{align}
for any smooth test function $\eta\in C_{cpt}^\infty(\Rd)$ with $\vec{n}_v\cdot\nabla\eta = 0$ on the interface $I_v(t)$,
and where $\bar{h}^\pm$ is defined as $h^\pm$ but now with respect to the modified cut-off function 
$\bar{\theta}(\cdot)=\theta\big(\frac{\cdot}{2}\big)$.
\end{subequations}
\end{proposition}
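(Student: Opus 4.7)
All three parts of the proposition follow from the corresponding results for the unmollified heights $h^\pm$ from Proposition~\ref{PropositionInterfaceErrorHeight}, combined with careful estimates on the surface mollification defined by \eqref{Definitionhregularized}. The denominator $\int_{I_v(t)} \theta(|\tilde x - x|/e(t)) \,\dS(\tilde x)$ is comparable to $e(t)^{d-1}$ uniformly in $x\in I_v(t)$ with tangential derivatives controlled by $r_c^{-1}e(t)^{d-2}$, thanks to the regularity of the smoothly evolving surfaces (Definition~\ref{definition:domains}); I shall use this without further comment.

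\textbf{Part (a).} Differentiating \eqref{Definitionhregularized} once in $x$ and integrating by parts in $\tilde x$ transfers the derivative from the cutoff $\theta(|\tilde x - x|/e(t))$ onto $h^\pm(\tilde x,t)$. The resulting surface integral I split into the absolutely continuous part on the set $\{|\nabla^{\tan}h^\pm|\leq 1\}$, to which I apply Cauchy--Schwarz together with \eqref{HeightFunctionGradientEstimate} and the area bound on the support of the mollifier, and on $\{|\nabla^{\tan}h^\pm|\geq 1\}\cup\supp |D^s h^\pm|$, where I use the $L^1$-type bound furnished by \eqref{HeightFunctionGradientEstimate}. Invoking the smallness assumption $\int_\Rd 1-\xi\cdot \nabla\chi_u/|\nabla\chi_u|\,d|\nabla\chi_u| + \int_\Rd |\chi_u-\chi_v||\beta(\sigdist/r_c)|\,dx \leq e(t)^2$ and dividing by the denominator then produces the pointwise Lipschitz bound $|\nabla h^\pm_{e(t)}|\leq Cr_c^{-2}$ (for both $d=2$ and $d=3$). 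A second differentiation, where an integration by parts on the gradient of the mollifier is no longer possible, introduces the additional factor $e(t)^{-1}$ and yields $|\nabla^2 h^\pm_{e(t)}|\leq Ce(t)^{-1}r_c^{-4}$. Finally, the global estimate \eqref{EstimateErrorHeightSmooth} follows from Jensen's inequality applied to \eqref{HeightFunctionEstimate} (for the $L^2$-bound) and from the same splitting procedure applied to the $L^2$-norm of $\nabla h^\pm_{e(t)}$ (for the gradient bound); all $r_c$-dependencies are retained in the convenient, non-sharp form stated.

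\textbf{Part (b).} I use the triangle inequality
\[
\int_\Rd \bigl|\chi_u - \chi_{v,h^+_{e(t)},h^-_{e(t)}}\bigr|\,\dx \leq \int_\Rd |\chi_u - \chi_{v,h^+,h^-}|\,\dx + \int_\Rd \bigl|\chi_{v,h^+,h^-} - \chi_{v,h^+_{e(t)},h^-_{e(t)}}\bigr|\,\dx .
\]
The first term is controlled by \eqref{ErrorImprovedInterfaceApproximationNonsmooth}. For the second term, the integrand is supported in the set where $\sigdist(x,I_v(t))$ lies between $h^\pm(P_{I_v(t)}x,t)$ and $h^\pm_{e(t)}(P_{I_v(t)}x,t)$, so the change of variables $\Phi_t$ gives the upper bound $C\int_{I_v(t)}|h^\pm - h^\pm_{e(t)}|\,\dS$. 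Standard mollification theory yields $\|h^\pm - h^\pm_{e(t)}\|_{L^1(I_v(t))} \leq Ce(t)|Dh^\pm|(I_v(t))$; splitting $|Dh^\pm|$ according to $\{|\nabla^{\tan}h^\pm|\leq 1\}$ (where Cauchy--Schwarz together with \eqref{HeightFunctionGradientEstimate} and $\mathcal{H}^{d-1}(I_v(t))^{1/2}$ produces the third term in \eqref{ErrorImprovedInterfaceApproximationSmooth}) and $\{|\nabla^{\tan}h^\pm|\geq 1\}\cup\supp |D^s h^\pm|$ (which, via \eqref{HeightFunctionGradientEstimate} again, gives the fourth term) yields the claim.

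\textbf{Part (c).} This is the main technical step. Chain rule on \eqref{Definitionhregularized} produces three contributions to the time derivative of $h^\pm_{e(t)}$: (i) the contribution from $\partial_t h^\pm$ inside the numerator; (ii) the contribution from $\dot e(t)$; (iii) the contribution from the evolution of the surface $I_v(t)$ over which we integrate (handled by transporting $I_v(t)$ by its normal velocity $\bar V_{\vec n}$ and using the formula from Lemma~\ref{lem:evolSignedDist}). For the leading contribution (i), I will apply \eqref{HeightFunctiondtEstimate} with test function
\[
\tilde\eta(\tilde x,t) := \frac{\theta(|\tilde x - x|/e(t))}{\int_{I_v(t)}\theta(|\tilde y - x|/e(t))\,\dS(\tilde y)}\,\eta(x),
\]
extended off $I_v(t)$ constantly along normal lines and then smoothly cut off; this extension is precisely the one that renders $\vec n_v\cdot\nabla\tilde\eta=0$ on $I_v(t)$, so that \eqref{HeightFunctiondtEstimate} is applicable. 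The $W^{1,p}(I_v(t))$-norms of $\tilde\eta$ contribute the anticipated $e(t)^{-1}$ factors combined with $e(t)^{-(d-1)(1-1/p)}$ from the mollifier's support; after integrating against $\eta(x)$ and reassembling via Fubini, the dominant transport term of \eqref{HeightFunctiondtEstimate} reproduces precisely $\int_{I_v(t)} h^\pm_{e(t)}\,(\Id-\vec n_v\otimes \vec n_v)v\cdot\nabla\eta\,\dS$, which is what one subtracts on the left of \eqref{HeightFunctionRegularizeddtEstimate}. The remaining error terms of \eqref{HeightFunctiondtEstimate}, integrated against $\eta(x)$, yield the first four error terms on the right side of \eqref{HeightFunctionRegularizeddtEstimate}. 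Contributions (ii) and (iii) are estimated directly using the pointwise bound $|h^\pm|\leq r_c/2$, the $L^2$-bound \eqref{HeightFunctionEstimate}, and the tangential Lipschitz bound on the kernel (which is where the factor $1+e'(t)$ appears). \textbf{The main obstacle} lies in organizing this bookkeeping so that every term is traceable to exactly one of \eqref{HeightFunctiondtEstimate}'s four error contributions, and in verifying the orthogonality condition for $\tilde\eta$ at each base point $x$; the latter is ensured by the normal extension construction described above, which is admissible since the estimate \eqref{HeightFunctiondtEstimate} only depends on the values of $\eta$ on $I_v(t)$ itself.
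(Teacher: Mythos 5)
Parts (a) and (b) of your sketch follow essentially the same route as the paper: for (a), differentiating \eqref{Definitionhregularized}, integrating by parts in $\tilde x$ (the paper wraps the split between small and large gradient values into a single convex function $G$ and invokes Jensen, but this is the same arithmetic as your Cauchy--Schwarz-plus-$L^1$ splitting); for (b), the same triangle inequality, change of variables, and the bound $\|h^\pm-h^\pm_{e(t)}\|_{L^1}\lesssim e(t)|D^{\tan}h^\pm|(I_v(t))$, which the paper establishes via a dyadic telescoping but your appeal to standard mollification estimates amounts to the same thing.

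For part (c), however, there is a genuine gap in how you propose to handle the contribution from $\dot e(t)$. After the Fubini rewrite $\int_{I_v(t)}\eta\,h^\pm_{e(t)}\,\dS=\int_{I_v(t)} h^\pm\,\eta_e\,\dS$ and the chain rule, the $\dot e(t)$-contribution to $\tfrac{d}{dt}\eta_e$ takes the form
\[
-\frac{e'(t)}{e(t)}\int_{I_v(t)}\frac{F'_{e,\theta}(\tilde x,x)\,\eta(x)}{\int_{I_v(t)}\theta(\ldots)\,\dS}\,\dS(x),
\]
where the kernel $F'_{e,\theta}$ is of size $O(1)$ pointwise and supported at scale $e(t)$. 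A \emph{direct} estimate, as you propose --- pointwise bounds on $h^\pm$ and on the kernel, plus the $L^2$-bound \eqref{HeightFunctionEstimate} --- yields at best a bound of order $\tfrac{e'(t)}{e(t)}\|h^\pm\|_{L^2}\|\eta\|_{L^2}$. This is a factor $e(t)^{-1}$ worse than the $(1+e'(t))$ prefactor appearing in \eqref{HeightFunctionRegularizeddtEstimate}. The difference matters: tracing through $\|g\|_{L^2}$ in \eqref{timeEvolutionWL2} into the $A_{dt}$ estimate and then into the final Gronwall argument, a term of the form $\tfrac{e'(t)}{e(t)}E(t)$ on the right-hand side is, under the ansatz $E\leq e^2$, of the \emph{same} order as $\tfrac{d}{dt}e^2(t)$. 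With an uncontrolled constant in front, the comparison argument in the proof of Theorem~\ref{weakStrongUniq} (which needs the right-hand side to be \emph{strictly} dominated by $e^2(T^*)-e^2(0)$) no longer closes. What you are missing is the key observation that $F'_{e,\theta}(\cdot,x)$ has vanishing average on $I_v(t)\cap B_{e(t)}(x)$ (see \eqref{ZeroAvergeRHS}); the paper exploits this by solving a Neumann problem for the tangential Laplacian to write $F'_{e,\theta}=\nabla^{\tan}\cdot F_{e,\theta}$ with $\|e(t)^{-1}F_{e,\theta}\|_{L^\infty}\leq C$, and then integrates by parts to move the derivative onto $h^\pm$. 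This converts the problematic factor $\tfrac{e'(t)}{e(t)}\|h^\pm\|_{L^2}$ into $e'(t)\|\nabla h^\pm_{e(t)}\|_{L^2}$-type quantities, which is exactly the extra power of $e(t)$ the Gronwall argument needs. Without this step, the estimate stated in \eqref{HeightFunctionRegularizeddtEstimate} cannot be obtained.

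A minor further remark on (c): your formulation "apply \eqref{HeightFunctiondtEstimate} with test function $\tilde\eta(\tilde x,t)$ \ldots then integrate against $\eta(x)$" risks conflating two different operations. Since the base point $x$ also lives on the moving surface $I_v(t)$, one cannot simply differentiate the inner integral for each fixed $x$ and then integrate over $x$; the clean route (which the paper takes) is to perform the Fubini step \emph{before} differentiating, arriving at the single test function $\eta_e$, and then treat the time-dependence of $\eta_e$ separately. Your instinct that the normal extension of $\eta_e$ satisfies $\vec n_v\cdot\nabla\eta_e=0$ on $I_v(t)$ is correct and is indeed needed to invoke \eqref{HeightFunctiondtEstimate}.
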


\begin{proof}
{\bf Proof of a).}
In order to estimate the spatial derivative $\nabla {h^\pm_{e(t)}}$, we compute using the 
fact that $\smash{\nabla_x \theta\big(\frac{|x-\tilde x|}{e(t)}\big)=-\nabla_{\tilde x} \theta\big(\frac{|x-\tilde x|}{e(t)}\big)}$ 
(note that all of the subsequent gradients are to be understood in the tangential sense on the manifold $I_v(t)$)
\begin{align*}
\nabla h^\pm_{e(t)} (x,t) &= 
-\frac{\int_{I_v(t)} \nabla_{\tilde x} \theta\big(\frac{|\tilde x-x|}{e(t)}\big) 
h^\pm(\tilde x,t) \,\dS(\tilde x)}{\int_{I_v(t)} \theta\big(\frac{|\tilde x-x|}{e(t)}\big) \,\dS(\tilde x)}
\\&~~~~
+\frac{\int_{I_v(t)} \theta\big(\frac{|\tilde x-x|}{e(t)}\big) h^\pm(\tilde x,t) \,\dS(\tilde x) 
\int_{I_v(t)} \nabla_{\tilde x} \theta\big(\frac{|\tilde x-x|}{e(t)}\big)  
\,\dS(\tilde x)}{\big(\int_{I_v(t)} \theta\big(\frac{|\tilde x-x|}{e(t)}\big) \,\dS(\tilde x)\big)^2}
\\
&= 
\frac{\int_{I_v(t)} \theta\big(\frac{|\tilde x-x|}{e(t)}\big) 
\nabla h^\pm(\tilde x,t) \,\dS(\tilde x)}{\int_{I_v(t)} \theta\big(\frac{|\tilde x-x|}{e(t)}\big) \,\dS(\tilde x)}
+\frac{\int_{I_v(t)} \theta\big(\frac{|\tilde x-x|}{e(t)}\big) \,\mathrm{d}D^s h^\pm(\tilde x)}{\int_{I_v(t)} 
\theta\big(\frac{|\tilde x-x|}{e(t)}\big) \,\dS(\tilde x)}
\\&~~~~
+\frac{\int_{I_v(t)} \theta\big(\frac{|\tilde x-x|}{e(t)}\big) h^\pm(\tilde x,t)  
\vec{H}(\tilde x,t) \,\dS(\tilde x)}{\int_{I_v(t)} \theta\big(\frac{|\tilde x-x|}{e(t)}\big) \,\dS(\tilde x)}
\\&~~~~
-\frac{\int_{I_v(t)} \theta\big(\frac{|\tilde x-x|}{e(t)}\big) h^\pm(\tilde x,t) \,\dS(\tilde x) 
\int_{I_v(t)} \theta\big(\frac{|\tilde x-x|}{e(t)}\big) \vec{H}(\tilde x, t) \,\dS(\tilde x)}{\big(\int_{I_v(t)} 
\theta\big(\frac{|\tilde x-x|}{e(t)}\big) \,\dS(\tilde x)\big)^2}.
\end{align*}
Introduce the convex function
\begin{align}\label{auxGConvex}
G(p):=
\begin{cases}
|p|^2&\text{for }|p|\leq 1,
\\
2|p|-1&\text{for }|p|\geq 1.
\end{cases}
\end{align}
Using the estimate \eqref{BoundMeanCurvature}, the obvious bounds $G(p+\tilde p)\leq CG(p)+CG(\tilde p)$ 
and $G(\lambda p)\leq C(\lambda+\lambda^2) G(p)$ for any $p$, $\tilde p$, and $\lambda>0$, and Jensen's 
inequality, we obtain (as the recession function of $G$ is given by $2|p|$)
\begin{align}
\label{GGradEstimatehEplus}
G(|\nabla h^\pm_{e(t)} (x,t)|)
&\leq C \frac{\int_{I_v(t)} \theta\big(\frac{|\tilde x-x|}{e(t)}\big) \big(G(|\nabla h^\pm(\tilde x,t)|) 
+ G(r_c^{-1} |h^\pm(\tilde x,t)|)\big) \,\dS(\tilde x)}{\int_{I_v(t)} \theta\big(\frac{|\tilde x-x|}{e(t)}\big) \,\dS(\tilde x)}
\\&~~~
\nonumber
+C\frac{\int_{I_v(t)} \theta\big(\frac{|\tilde x-x|}{e(t)}\big) \,\mathrm{d}|D^s h^\pm|(\tilde x,t)}{\int_{I_v(t)} 
\theta\big(\frac{|\tilde x-x|}{e(t)}\big) \,\dS(\tilde x)}.
\end{align}
Consider $x\in I_v(t)$. By the assumption from Definition~\ref{definition:domains}, there is a
$C^3$-function $g\colon B_1(0)\subset\R^{d-1}\to\R$ with $\|\nabla g\|_{L^\infty}\leq 1$, $g(0)=0$, and $\nabla g(0)=0$, and such that $I_v(t)\cap B_{2r_c}(x)$
is after rotation and translation given as the graph $\{(x,g(x)):x\in \R^{d-1}\}$. 
Using the fact that $\theta\equiv 0$ on $\R\setminus[0,\frac{1}{2}]$ and $e(t)<r_c\leq 1$, i.e., the map 
$\smash{I_v(t)\ni\tilde x\mapsto \theta(\frac{|\tilde x - x|}{e(t)})}$ is supported in a coordinate patch given by the graph
of $g$, we then may bound
\begin{align*}
\int_{I_v(t)} \theta\Big(\frac{|\tilde x-x|}{e(t)}\Big) \,\dS(\tilde x)
&\leq \int_{I_v(t)\cap B_{\frac{e(t)}{2}}(x)} 1\,\dS(\tilde x)
\leq C \int_{\{\tilde x\in \R^{d-1} \colon |\tilde x|<\frac{e(t)}{2}\}} 1\,\mathrm{d}\tilde x
\\&
\leq Ce(t)^{d-1}.
\end{align*}
We also obtain a lower bound using that $\theta\equiv 1$ on $[0,\frac{1}{4}]$ and again $e(t)<r_c\leq 1$
\begin{align*}
\int_{I_v(t)} \theta\Big(\frac{|\tilde x-x|}{e(t)}\Big) \,\dS(\tilde x) 
&\geq \int_{I_v(t)\cap B_{\frac{e(t)}{4}}(x)} 1\,\dS(\tilde x)
\geq c \int_{\{\tilde x\in \R^{d-1} \colon |\tilde x|<c e(t)\}} 1\,\mathrm{d}\tilde x
\\&
\geq ce(t)^{d-1}.
\end{align*}
In summary, we infer that
\begin{align}
\label{EstimateMollifier}
c e(t)^{d-1} \leq \int_{I_v(t)} \theta\Big(\frac{|\tilde x-x|}{e(t)}\Big) \,\dS(\tilde x)
\leq C e(t)^{d-1}.
\end{align}
Making use of \eqref{EstimateMollifier}, the assumptions $\int_\Rd 1-\xi \cdot \vec{n}_u \,d|\nabla \chi_u|\leq e(t)^2\leq r_c^2 <1$
and $d\leq 3$, the upper bounds $|\theta|\leq 1$ and $G(\lambda p)\leq C(\lambda+\lambda^2) G(p)$, 
as well as the already established $L^2$- resp.\ $H^1$-bound for the local interface error heights $h^\pm$
from \eqref{HeightFunctionEstimate} resp.\ \eqref{HeightFunctionGradientEstimate} we deduce 
\begin{align*}
G(|\nabla h^\pm_{e(t)} (x,t)|) \leq C r_c^{-2},
\end{align*}
which is precisely the first assertion in a). Similarly, one derives 
the other desired estimate $G(e(t) |\nabla^2 h^\pm_{e(t)}(x,t)|) \leq C r_c^{-4}$.

Integrating \eqref{GGradEstimatehEplus} over $I_v(t)$ and employing the global upper bound 
$|\nabla h^\pm_{e(t)}(\cdot,t)|\leq C r_c^{-2}$, which in turn entails 
$\smash{G(|\nabla h^\pm_{e(t)}(\cdot,t)|)\geq c r_c^2 |\nabla h^\pm_{e(t)}(\cdot,t)|^2}$, we get
\begin{align}\label{preliminaryH1BoundInterfaceErrorHeight}
\nonumber
&\int_{I_v(t)} |\nabla h^\pm_{e(t)} (x,t)|^2 \,\dS(x)
\\&
\leq C r_c^{-2} \int_{I_v(t)} \frac{\int_{I_v(t)} \theta\big(\frac{|\tilde x-x|}{e(t)}\big) 
G(|\nabla h^\pm(\tilde x,t)|) + G(r_c^{-1} |h^\pm(\tilde x,t)|) \,\dS(\tilde x)}{\int_{I_v(t)} 
\theta\big(\frac{|\tilde x-x|}{e(t)}\big) \,\dS(\tilde x)} \,\dS(x)
\\&~~~\nonumber
+C r_c^{-2} \int_{I_v(t)} \frac{\int_{I_v(t)} \theta\big(\frac{|\tilde x-x|}{e(t)}\big) 
\,\mathrm{d}|D^s h^\pm|(\tilde x,t)}{\int_{I_v(t)} \theta\big(\frac{|\tilde x-x|}{e(t)}\big) \,\dS(\tilde x)} \,\dS(x).
\end{align}
Applying Fubini's theorem and using the bounds \eqref{EstimateMollifier}, 
$G(\lambda p)\leq C(\lambda+\lambda^2) G(p)$, as well as \eqref{HeightFunctionEstimate} and 
\eqref{HeightFunctionGradientEstimate} 
we deduce the estimate on $\smash{\int_{I_v(t)} |\nabla h^\pm_{e(t)}|^2 \,\dS}$ stated in a). The estimate on 
$\smash{\int_{I_v(t)} |h^\pm_{e(t)}|^2 \,\dS}$ follows by an analogous argument, first squaring \eqref{Definitionhregularized} 
and applying Jensen's inequality, then integrating over $I_v(t)$, and finally using \eqref{EstimateMollifier}, Fubini
as well as \eqref{HeightFunctionEstimate} and \eqref{HeightFunctionGradientEstimate}. 

{\bf Proof of b).}
We start with a change of variables to estimate (recall \eqref{BoundNablaPhiNablaPhi-1})
\begin{align*}
&\int_\Rd |\chi_{v,h^+_{e(t)},h^-_{e(t)}}-\chi_{v,h^+,h^-}| \,\dx 
\\&
\leq C\int_{I_v(t)}\int_{0}^{r_c}|\chi_{0\leq \sigdist(x,I_v(t))\leq h^+_{e(t)}(P_{I_v(t)}x,t)}
-\chi_{0\leq \sigdist(x,I_v(t))\leq h^+(P_{I_v(t)}x,t)}|\,\dy\,\dS
\\&~
+C\int_{I_v(t)}\int_{0}^{r_c}| \chi_{-h^-_{e(t)}(P_{I_v(t)}x,t)\leq \sigdist(x,I_v(t))\leq 0}
- \chi_{-h^-(P_{I_v(t)}x,t)\leq \sigdist(x,I_v(t))\leq 0}|\,\dy\,\dS
\\&
\leq C\int_{I_v(t)} |h^+_{e(t)}(x,t)-h^+(x,t)| + |h^-_{e(t)}(x,t)-h^-(x,t)| \,\dS(x).
\end{align*}
By adding zero and using \eqref{ErrorImprovedInterfaceApproximationNonsmooth}
we therefore obtain
\begin{align*}
\int_\Rd& |\chi_u-\chi_{v,h^+_{e(t)},h^-_{e(t)}}| \,\dx
\\&
\leq
\int_\Rd |\chi_u-\chi_{v,h^+,h^-}| \,\dx
+\int_\Rd |\chi_{v,h^+_{e(t)},h^-_{e(t)}}-\chi_{v,h^+,h^-}| \,\dx
\\&
\leq
C \int_\Rd 1-\xi\cdot \frac{\nabla \chi_u}{|\nabla \chi_u|} \,d|\nabla \chi_u|
\\&~~~
+ C \int_{\Rd} |\chi_u-\chi_v| \min\Big\{\frac{\dist(x,I_v(t))}{r_c},1\Big\}\,\dx
\\&~~~
+C\int_{I_v(t)} |h^+_{e(t)}(x,t)-h^+(x,t)| + |h^-_{e(t)}(x,t)-h^-(x,t)| \,\dS(x).
\end{align*}
Observe that one can decompose
\begin{align*}
h^\pm(x,t)=h^\pm_{e(t)}(x,t) + \sum_{k=0}^\infty \big(h^\pm_{2^{-k-1}e(t)}(x,t)- h^\pm_{2^{-k}e(t)}(x,t)\big).
\end{align*}
A straightforward estimate in local coordinates then yields
\begin{align*}
\int_{I_v(t)}& \big|h_{2^{-k}e(t)}^\pm-h_{2^{-k-1}e(t)}^\pm \big| \,\dS
\\&
\leq C 2^{-k} e(t) \int_{I_v(t)} 1 \,\mathrm{d}|D^{\operatorname{tan}}h^\pm|
\\&
\leq C 2^{-k} e(t) \int_{I_v(t)} 1 \,\mathrm{d}|D^s h^\pm|
+ C 2^{-k} e(t) \int_{I_v(t)} |\nabla h^+| \chi_{\{|\nabla h^+|\geq 1\}} \,\dS
\\&~~~~
+ C 2^{-k} e(t) \bigg(\int_{I_v(t)} |\nabla h^+|^2 \chi_{\{|\nabla h^+|\leq 1\}} \,\dS\bigg)^{1/2} \mathcal{H}^{d-1}(I_v(t))^{1/2}.
\end{align*}
Using \eqref{HeightFunctionGradientEstimate} and summing with respect to 
$k\in \mathbb{N}$, we get the desired estimate \eqref{ErrorImprovedInterfaceApproximationSmooth}.

{\bf Proof of c).}
Note that
\begin{align*}
\int_{I_v(t)} \eta(x) h^\pm_{e(t)}(x,t) \,\dS = \int_{I_v(t)} h^\pm(\tilde x,t) \int_{I_v(t)} 
\frac{\theta\big(\frac{|\tilde x-x|}{e(t)}\big) \eta(x)}{\int_{I_v(t)}\theta\big(\frac{|\hat x-x|}{e(t)}\big)
\,\dS(\hat x)} \,\dS(x) \,\dS(\tilde x).
\end{align*}
Abbreviating
\begin{align*}
\eta_e(\tilde x,t):=\int_{I_v(t)}\frac{\theta\big(\frac{|\tilde x-x|}{e(t)}\big) 
\eta(x)}{\int_{I_v(t)}\theta\big(\frac{|\hat x-x|}{e(t)}\big)\,\dS(\hat x)} \,\dS(x),
\end{align*}
we compute
\begin{align*}
|\nabla_{\tilde x}^\mathrm{tan}\eta_e(\tilde x,t)| 
&=\bigg|\int_{I_v(t)}\frac{\nabla_{\tilde x}^\mathrm{tan}\theta\big(\frac{|\tilde x-x|}{e(t)}\big) 
\eta(x)}{\int_{I_v(t)}\theta\big(\frac{|\hat x-x|}{e(t)}\big)\,\dS(\hat x)} \,\dS(x)\bigg|
\\&
\leq\int_{I_v(t)}\bigg(\frac{\big|\theta'\big|\big(\frac{|\tilde x-x|}{e(t)}\big)}
{\int_{I_v(t)}\theta\big(\frac{|\hat x-x|}{e(t)}\big)\,\dS(\hat x)}\bigg)\frac{\eta(x)}{e(t)} \,\dS(x).
\end{align*}
As in the argument for \eqref{EstimateMollifier}, one checks that 
$\int_{I_v(t)}|\theta'|(\frac{|\tilde x - x|}{e(t)})\,\dS(x)\leq Ce(t)^{d-1}$.
Using the lower bound from \eqref{EstimateMollifier}, the proof for the standard $L^p$-inequality
for convolutions carries over and we obtain $\|\eta_e\|_{L^p(I_v(t))}\leq C\|\eta\|_{L^p(I_v(t))}$
as well as
\begin{align*}
\int_{I_v(t)} |\nabla \eta_e(x,t)|^{p} \,\dS(x)
&\leq \frac{C}{e(t)^p} \int_{I_v(t)} |\eta(x,t)|^p \,\dS(x)
\end{align*}
for any $p \geq 1$.
As a consequence of \eqref{HeightFunctiondtEstimate} and these considerations, we deduce
\begin{align}
\label{HeightFunctionRegularizedAlmostdtEstimate}\nonumber
&\bigg|\frac{\mathrm{d}}{\dt} \int_{I_v(t)} \eta(x) h^\pm_{e(t)}(x,t) \,\dx
-\int_{I_v(t)} h^\pm(\tilde x,t) \frac{\mathrm{d}}{\dt} \eta_e(\tilde x,t) \,\dS(\tilde x)
\\&~~~~~~~~~~~
\nonumber
-\int_{I_v(t)} h^\pm(\tilde x,t) (\mathrm{Id}{-}\vec{n}_v\otimes\vec{n}_v) v(\tilde x,t) 
\cdot \nabla_{\tilde x} \eta_e(\tilde x,t)\,\dS(\tilde x)\bigg|
\\&
\leq 
\frac{C}{e(t)r_c^2}\|\eta\|_{L^{4}(I_v(t))}\Bigg(\int_{I_v(t)} |\bar{h}^\pm|^4 \,\dS \Bigg)^{1/4}
\\&~~~~~~
\nonumber
\times
\Bigg(\int_{I_v(t)}  \sup_{y\in [-r_c,r_c]} |u-v|^2(x+y\vec{n}_v(x,t),t) \,\dS(x)\Bigg)^{1/2} 
\\&~~~~
\nonumber
+C\frac{1+\|v\|_{W^{2,\infty}(\Rd\setminus I_v(t))}}{r_c}\|\eta\|_{L^2(I_v(t))} 
\\&~~~~~~~~\nonumber
\times
\Bigg(\int_{\Rd}|\chi_u(x,t)-\chi_v(x,t)| 
\,\min\Big\{\frac{\dist(x,I_v(t))}{r_c},1\Big\} \,\dx\Bigg)^\frac{1}{2}
\\&~~~~\nonumber
+C\frac{(1+\|v\|_{W^{1,\infty}})}{r_c e(t)}\max_{p\in\{2,4\}}\|\eta\|_{L^p(I_v(t))}
\int_{\Rd} 1-\xi\cdot\frac{\nabla\chi_u}{|\nabla\chi_u|} \,\mathrm{d}|\nabla\chi_u|
\\&~~~~
\nonumber
+C\|\eta\|_{L^2(I_v(t))}\bigg(\int_{I_v(t)} |u-v|^2 \,\dS \bigg)^{1/2}.
\end{align}
Using the estimate $|v(x,t)-v(\tilde x,t)|\leq C |x-\tilde x| \|\nabla v\|_{L^\infty}$, we infer
\begin{align}
\label{AdvectionTerm1}
&\Bigg|\int_{I_v(t)} h^\pm(\tilde x,t) v(\tilde x,t) \cdot \nabla_{\tilde x} \int_{I_v(t)} 
\frac{\theta\big(\frac{|\tilde x-x|}{e(t)}\big) \eta(x)}{\int_{I_v(t)}
\theta\big(\frac{|\hat x-x|}{e(t)}\big)\,\dS(\hat x)} \,\dS(x) \,\dS(\tilde x)
\\&~~~
\nonumber
+\int_{I_v(t)} \eta(x)(v(x,t)\cdot \nabla) h^\pm_{e(t)}(x,t) \,\dS(x)\Bigg|
\\&
\nonumber
=\Bigg|\int_{I_v(t)} \int_{I_v(t)} \eta(x)h^\pm(\tilde x,t) v(\tilde x,t) \cdot \nabla_{\tilde x} 
\frac{\theta\big(\frac{|\tilde x-x|}{e(t)}\big)}{\int_{I_v(t)}
\theta\big(\frac{|\hat x-x|}{e(t)}\big)\,\dS(\hat x)} \,\dS(x) \,\dS(\tilde x)
\\&~~~~~~
\nonumber
+\int_{I_v(t)} \int_{I_v(t)} \eta(x) h^\pm(\tilde x,t) v(x,t) \cdot \nabla_x 
\frac{\theta\big(\frac{|\tilde x-x|}{e(t)}\big)}{\int_{I_v(t)}\theta\big(\frac{|\hat x-x|}{e(t)}\big)
\,\dS(\hat x)} \,\dS(\tilde x) \,\dS(x)\Bigg|
\\&
\nonumber
\leq
\int_{I_v(t)} \int_{I_v(t)} h^\pm(\tilde x,t) \|\nabla v\|_{L^\infty} \frac{|\theta'|\big(\frac{|\tilde x-x|}{e(t)}\big) 
|\tilde x-x| |\eta(x)|}{e(t) \int_{I_v(t)}\theta\big(\frac{|\hat x-x|}{e(t)}\big)\,\dS(\hat x)} \,\dS(x) \,\dS(\tilde x)
\\&~~~
\nonumber
+\int_{I_v(t)} \int_{I_v(t)} h^\pm(\tilde x,t) \|v\|_{L^\infty} \frac{\theta\big(\frac{|\tilde x-x|}{e(t)}\big)
|\eta(x)|\Big|\nabla_x \int_{I_v(t)}\theta\big(\frac{|\hat x-x|}{e(t)}\big)\,\dS(\hat x)\Big|}{\big(\int_{I_v(t)}
\theta\big(\frac{|\hat x-x|}{e(t)}\big)\,\dS(\hat x)\big)^2} \,\dS(x) \,\dS(\tilde x)
\\&
\nonumber
\leq Cr_c^{-1}\|v\|_{W^{1,\infty}}
\bigg(\int_{I_v(t)} |h^\pm(x,t)|^2  \,\dS(x) \bigg)^{1/2} 
\bigg(\int_{I_v(t)} |\eta(x)|^2  \,\dS(x) \bigg)^{1/2}
\end{align}
where in the last step we have used the simple equality
\begin{align}\label{TangentialDerivativeAreaCutoff}
\nabla_x^\mathrm{tan} \int_{I_v(t)}\theta\Big(\frac{|\hat x-x|}{e(t)}\Big)\,\dS(\hat x)
&
=-\int_{I_v(t)}\nabla_{\hat x}^\mathrm{tan} \theta\Big(\frac{|\hat x-x|}{e(t)}\Big)\,\dS(\hat x)
\\&\nonumber
=\int_{I_v(t)} \theta\Big(\frac{|\hat x-x|}{e(t)}\Big) \vec{H}(\hat x) \,\dS(\hat x)
\end{align}
and the bounds \eqref{BoundMeanCurvature} and \eqref{EstimateMollifier}. Recall from the transport theorem for moving hypersurfaces
(see \cite{Pruess2016}) that we have for any $f\in C^1(\Rd\times [0,\Tmax))$
\begin{align}\label{TransportMovingSurface}
\frac{\mathrm{d}}{\dt}\int_{I_v(t)}f(x,t)\,\dS(x) &= \int_{I_v(t)}\partial_t f(x,t)\,\dS(x)
+\int_{I_v(t)} V_{\vec{n}}\cdot\nabla f(x,t)\,\dS(x) 
\\&~~~\nonumber
+\int_{I_v(t)} f(x,t)\,\vec{H}\cdot V_{\vec{n}}\,\dS(x)
\end{align}
with the normal velocity $V_{\vec{n}}(x,t)=(v(x,t)\cdot\vec{n}_v(P_{I_v(t)}x,t))\vec{n}_v(P_{I_v(t)}x,t)$.
Making use of \eqref{TransportMovingSurface} and $\frac{\mathrm{d}}{\dt}P_{I_v(t)}\tilde x = -V_{\vec{n}}(\tilde x,t)$
for $\tilde x\in I_v(t)$ (see \eqref{TimeDerivativeProjection}), we then compute for every $\tilde x \in I_v(t)$
\begin{align*}
&\frac{\mathrm{d}}{\dt} \int_{I_v(t)} \theta\Big(\frac{|\hat x-x|}{e(t)}\Big)  \,\dS(\hat x)
=\frac{\mathrm{d}}{\dt} \int_{I_v(t)} \theta\Big(\frac{|P_{I_v(t)}\hat x-P_{I_v(t)}x|}{e(t)}\Big)  \,\dS(\hat x)
\\&
=-\frac{e'(t)}{e(t)^2}\int_{I_v(t)} \theta'\Big(\frac{| \hat x- x|}{e(t)}\Big) 
|\hat x- x| 
\,\dS(\hat x)
\\&~~~
+\frac{1}{e(t)}\int_{I_v(t)} \theta'\Big(\frac{| \hat x- x|}{e(t)}\Big) 
\frac{( \hat x- x)\cdot(V_{\vec{n}}(\hat x,t)-V_{\vec{n}}(x,t))}{e(t) |\hat x-x|} 
\,\dS(\hat x)
\\&~~~
+\int_{I_v(t)} \theta\Big(\frac{|\hat x-x|}{e(t)}\Big) V_{\vec{n}}(\hat x) \cdot \vec{H}(\hat x) \,\dS(\hat x).
\end{align*}
This together with another application of \eqref{TransportMovingSurface}
and the fact that $\vec{n}_v\cdot\nabla \eta=0$ on the interface $I_v(t)$ implies for $\tilde x\in I_v(t)$
\begin{align}\label{TimeEvolutionMollifiedTestFunction}
&\frac{\mathrm{d}}{\dt}\eta_e(\tilde x,t) 
=\frac{\mathrm{d}}{\dt}\int_{I_v(t)}
\frac{\theta\big(\frac{|P_{I_v(t)}\tilde x-P_{I_v(t)}x|}{e(t)}\big)\eta(x)}
{\int_{I_v(t)}\theta\big(\frac{|P_{I_v(t)}\hat x-P_{I_v(t)}x|}{e(t)}\big)\,\dS(\hat x)}
\,\dS(x)
\\&\nonumber
=\int_{I_v(t)}\bigg(\frac{\theta\big(\frac{|\tilde x - x|}{e(t)}\big)\eta(x)}
{\int_{I_v(t)}\theta\big(\frac{|\hat x -x|}{e(t)}\big)\,\dS(\hat x)}\bigg)
V_{\vec{n}}(x)\cdot\vec{H}(x)\,\dS(x) 
\\\nonumber&~~~
-\int_{I_v(t)}\frac{\theta\big(\frac{|\tilde x - x|}{e(t)}\big)\eta(x)
\big(\int_{I_v(t)}\theta\big(\frac{|\hat x -x|}{e(t)}\big)V_{\vec{n}}(\hat x)\cdot\vec{H}(\hat x)\,\dS(\hat x)\big)}
{\big(\int_{I_v(t)}\theta\big(\frac{|\hat x -x|}{e(t)}\big)\,\dS(\hat x)\big)^2}\,\dS(x) 
\\\nonumber&~~~
+\int_{I_v(t)}\frac{\eta(x)\theta'\big(\frac{|\tilde x - x|}{e(t)}\big)
\frac{(\tilde x-x)\cdot (V_{\vec{n}}(\tilde x)-V_{\vec{n}}( x))}{e(t)|\tilde x-x|}}
{\int_{I_v(t)}\theta\big(\frac{|\hat x -x|}{e(t)}\big)\,\dS(\hat x)}\,\dS(x)
\\\nonumber&~~~
-\int_{I_v(t)}\frac{\theta\big(\frac{|\tilde x - x|}{e(t)}\big)\eta(x)
\int_{I_v(t)}\theta'\big(\frac{|\hat x -x|}{e(t)}\big)\frac{(\hat x-x)\cdot (V_{\vec n}(\hat x,t)-V_{\vec n}(x,t))}{e(t)|\hat x-x|}\,\dS(\hat x)}
{\big(\int_{I_v(t)}\theta\big(\frac{|\hat x -x|}{e(t)}\big)\,\dS(\hat x)\big)^2}\,\dS(x)
\\\nonumber&~~~
-\frac{e'(t)}{e(t)}\int_{I_v(t)}\frac{F'_{e,\theta}(\tilde x,x)\eta(x)}
{\int_{I_v(t)}\theta\big(\frac{|\hat x -x|}{e(t)}\big)\,\dS(\hat x)}\,\dS(x)
\end{align}
where $F'_{e,\theta}(t)\colon I_v(t)\times I_v(t)\to\R$ is the kernel
\begin{align}\label{kernelRightHandSide}
&F'_{e,\theta}(t)(\tilde x,x) := 
\theta'\Big(\frac{|\tilde x - x|}{e(t)}\Big)
\frac{|P_{I_v(t)} \tilde x - P_{I_v(t)} x|}{e(t)}
\\ \nonumber
&~~~~~~~~~~~~~~~~~~~~~-\theta\Big(\frac{|\tilde x {-} x|}{e(t)}\Big)
\frac{\int_{I_v(t)}\theta'\big(\frac{|\hat x - x|}{e(t)}\big)
\frac{|P_{I_v(t)}\hat x-P_{I_v(t)}x|}{e(t)} \,\dS(\hat x)}
{\int_{I_v(t)}\theta\big(\frac{|\hat x - x|}{e(t)}\big)\,\dS(\hat x)}.
\end{align}
Observe that we have
\begin{align}\label{ZeroAvergeRHS}
\int_{I_v(t)}F'_{e,\theta}(t)(\tilde x,x)\,\dS(\tilde x) = 
0.
\end{align}
By the choice of the cutoff $\theta$, we see that for every given $x\in I_v(t)$
the kernel $F'_{e,\theta}(t)$ is supported in $B_{e(t)/2}(x)\cap I_v(t)$. Moreover,
the exact same argumentation which led to the upper bound in \eqref{EstimateMollifier}
(we only used the support and upper bound for $\theta$ as well as $e(t)\leq r_c$)
shows that the kernel $F'_{e,\theta}$
satisfies the upper bound
\begin{align}\label{upperBoundRHS}
\int_{I_v(t)} |F'_{e,\theta}(\tilde x,x)|^p\,\dS(\tilde x) \leq C(p) e(t)^{d-1}
\end{align}
for any $1\leq p<\infty$.
We next intend to rewrite the function $F'_{e,\theta}(\tilde x,x)$ for fixed $x$ as the divergence of a vector field.
By the property \eqref{ZeroAvergeRHS}, we may consider Neumann problem
for the (tangential) Laplacian with right hand side $F'_{e,\theta}(\cdot,x)$ 
in some neighborhood (of scale $e(t)$) of the point $x$. To do this we first rescale
the setup, i.e., we consider the kernel $F'_1(\tilde x,x):=F'_{e,\theta}(e(t)\tilde x,e(t)x)$ 
for $\tilde x,x\in e(t)^{-1}I_v(t)$. By scaling and the fact that $F'_{e,\theta}$ is 
supported on scale $e(t)/2$, it follows that $F'_1(\cdot,x)$ has zero average on 
$e(t)^{-1}I_v(t)\cap B_1(x)$ for every point $x\in e(t)^{-1}I_v(t)$ and that
\begin{align}\label{upperBoundRHSrescaled}
\int_{e(t)^{-1}I_v(t)} |F'_{1}(\tilde x,x)|^p\,\dS(\tilde x) \leq C(p).
\end{align}
We fix $x\in e(t)^{-1}I_v(t)$ and solve on $e(t)^{-1}I_v(t)\cap B_1(x)$ the weak formulation of the
equation $-\Delta^\mathrm{tan}_{\tilde x}\hat F_{1}(\cdot,x) = F'_{1}(\cdot,x)$ with vanishing
Neumann boundary condition. More precisely, we require $\hat F_1(\cdot, x)$ to have vanishing
average on $e(t)^{-1}I_v(t)\cap B_1(x)$ (note that in the weak formulation the curvature term does not appear because it
gets contracted with the tangential derivative of the test function). By elliptic regularity
and \eqref{upperBoundRHSrescaled}, it follows
\begin{align}\label{L2BoundGradientUnitScale}
||\nabla^\mathrm{tan}\hat F_1(\tilde x, x)||_{L^\infty}
\leq C.
\end{align}
We now rescale back to $I_v(t)$ and define $\hat F_{e,\theta}(\tilde x,x):=e(t)^2\hat F_1(e(t)^{-1}\tilde x,e(t)^{-1}x)$
for $x\in I_v(t)$ and $\tilde x\in I_v(t)\cap B_{e(t)}(x)$. For fixed $x\in I_v(t)$, $\hat F_{e,\theta}(\cdot, x)$
has vanishing average on $I_v(t)\cap B_{e(t)}(x)$ and solves $-\Delta^\mathrm{tan}_{\tilde x}\hat F_{e,\theta}(\cdot,x)
=F'_{e,\theta}(\cdot,x)$ on $I_v(t)\cap B_{e(t)}(x)$ with vanishing Neumann boundary condition.
We finally introduce $F_{e,\theta}(\tilde x,x):=\nabla^\mathrm{tan}_{\tilde x}\hat F_{e,\theta}(\tilde x,x)$
for $x\in I_v(t)$ and $\tilde x\in I_v(t)\cap B_{e(t)}(x)$. It then follows from scaling,
\eqref{L2BoundGradientUnitScale} as well as $e(t)<r_c$ that $\nabla_{\tilde x} \cdot F_{e,\theta}(\tilde x,x)=F'_{e,\theta}$ and
\begin{align}\label{ImprovedKernel}
&
||e^{-1}(t)F_{e,\theta}(\tilde x,x)||_{L^\infty}
\leq C.
\end{align}
We now have everything in place to proceed with estimating the term
\begin{align*}
\bigg|\int_{I_v(t)} h^\pm(\tilde x,t) \frac{\mathrm{d}}{\dt} \eta_e(\tilde x,t) \,\dS(\tilde x)\bigg|.
\end{align*}
To this end, we will make use of \eqref{TimeEvolutionMollifiedTestFunction} and estimate term by term.
Because of \eqref{BoundMeanCurvature}, \eqref{EstimateMollifier}, $\|\eta_e\|_{L^p(I_v(t))}\leq C\|\eta\|_{L^p(I_v(t))}$, the estimate
\begin{align*}
\int_{I_v(t)} |\theta'|\Big(\frac{|\tilde x-x|}{e(t)}\Big) \,\dS(\tilde x)
\leq C e(t)^{d-1},
\end{align*}
the Lipschitz property $|V_{\vec n}(x)-V_{\vec n}(\tilde x)|\leq ||\nabla v||_{L^\infty} |x-\tilde x|$, and the fact that $\theta(s)=0$ for $s\geq 1$,
the first four terms on the right-hand side of \eqref{TimeEvolutionMollifiedTestFunction} are straightforward to estimate and result in the bound
\begin{align}\label{auxTimeEta}
Cr_c^{-1}\|v\|_{W^{1,\infty}}\|h^\pm(\cdot,t)\|_{L^2(I_v(t))}\|\eta\|_{L^2(I_v(t))}.
\end{align}
To estimate the fifth term, we first apply Fubini's theorem
and then perform an integration by parts
(recall that we imposed vanishing Neumann boundary conditions)  
which entails because of the above considerations 
\begin{align*}
&\frac{1}{e(t)}\int_{I_v(t)} h^\pm(\tilde x,t) \int_{I_v(t)}\frac{F'_{e,\theta}(\tilde x,x)}
{\int_{I_v(t)}\theta\big(\frac{|\hat x - x|}{e(t)}\big)\,\dS(\hat x)}\eta(x)\,\dS(x) \,\dS(\tilde x) \\
&=\int_{I_v(t)}\bigg(\int_{I_v(t)\cap B_{\frac{3}{4}e(t)}(x)} h^\pm(\tilde x,t) 
\frac{e(t)^{-1}F'_{e,\theta}(\tilde x,x)}
{\int_{I_v(t)}\theta\big(\frac{|\hat x - x|}{e(t)}\big)\,\dS(\hat x)}\,\dS(\tilde x)\bigg)
\eta(x)\,\dS(x)  \\
&=-\int_{I_v(t)}\bigg(\int_{I_v(t)\cap B_{\frac{3}{4}e(t)}}
\nabla_{\tilde x}h^\pm(\tilde x,t)\cdot\frac{e(t)^{-1}F_{e,\theta}(\tilde x,x)}
{\int_{I_v(t)}\theta\big(\frac{|\hat x - x|}{e(t)}\big)\,\dS(\hat x)}\,\dS(\tilde x)\bigg)\eta(x)\,\dS(x) \\
&~~~-\int_{I_v(t)}h^\pm(\tilde x,t)\vec{H}(\tilde x, t)\cdot\bigg(\int_{I_v(t)}\frac{e(t)^{-1}F_{e,\theta}(\tilde x,x)}
{\int_{I_v(t)}\theta\big(\frac{|\hat x - x|}{e(t)}\big)\,\dS(\hat x)}\eta(x)\,\dS(x)\bigg)\,\dS(\tilde x).
\end{align*}
Using \eqref{ImprovedKernel} as well as the lower bound from \eqref{EstimateMollifier} we see
that the second term can be estimated by a term of the form \eqref{auxTimeEta}. 
For the first term, note that by the properties of $F_{e,\theta}$ we may interpret 
the integral in brackets as the mollification of $\nabla h^\pm$ on scale $e(t)$.
Applying the argument which led to \eqref{preliminaryH1BoundInterfaceErrorHeight}
(for this, we only need the upper bound \eqref{ImprovedKernel} for $F_{e,\theta}$,
a lower bound as in \eqref{EstimateMollifier} is only required for $\theta$)
we observe that one can bound this term similar to $\smash{\|\nabla h^\pm_{e(t)}(\cdot,t)\|_{L^2(I_v(t))}}$.
We therefore obtain the bound
\begin{align*}
&\bigg|\int_{I_v(t)} h^\pm(\tilde x,t) \frac{\mathrm{d}}{\dt} \eta_e(\tilde x,t) \,\dS(\tilde x)\bigg| \\
&\leq Cr_c^{-4}\|v\|_{W^{1,\infty}}(1+e'(t))
\bigg(\int_\Rd 1-\xi\cdot \frac{\nabla\chi_u}{|\nabla\chi_u|} \,\mathrm{d}|\nabla\chi_u|\bigg)^{1/2} ||\eta||_{L^2(I_v(t))}
\\&~~
+Cr_c^{-6}\|v\|_{W^{1,\infty}}(1+e'(t))
\bigg(\int_\Rd |\chi_u-\chi_v| \min\Big\{\frac{\dist(x,I_v(t))}{r_c},1\Big\} \,\dx\bigg)^{1/2} ||\eta||_{L^2(I_v(t))}.
\end{align*}
Hence, combining \eqref{HeightFunctionRegularizedAlmostdtEstimate} with these estimates
for the fourth
term from \eqref{TimeEvolutionMollifiedTestFunction}
as well as \eqref{auxTimeEta} and \eqref{AdvectionTerm1}, we obtain the desired estimate on the time derivative.
This concludes the proof.
\end{proof}

\subsection{Construction of the compensation function $w$ for the velocity gradient discontinuity}
We turn to the construction of a compensating vector field, which shall be small in the $L^2$-norm 
but whose associated viscous stress $\mu(\chi_u)\Dsym w$ shall compensate for (most of) the 
problematic viscous term $(\mu(\chi_u)-\mu(\chi_v))\Dsym v$
appearing on the right hand side of the relative entropy inequality from 
Proposition~\ref{PropositionRelativeEntropyInequalityFull} in the case of different shear viscosities.

Before we state the main result of this section, we introduce some further notation.
Let $\smash{h^+_{e(t)}}$ be defined as in Proposition~\ref{PropositionInterfaceErrorHeightRegularized}. 
We then denote by $\smash{P_{h^+_{e(t)}}}$ the downward projection onto the graph of $\smash{h^+_{e(t)}}$, i.e.,
\begin{align*}
P_{h^+_{e(t)}}(x,t) := P_{I_v(t)}x + h^+_{e(t)}(P_{I_v(t)}x,t)\vec{n}_v(P_{I_v(t)}x,t).
\end{align*}
for all $(x,t)$ such that $\dist(x,I_v(t))<r_c$. Note that this map does not define 
an orthogonal projection. Analogously, one introduces the projection $\smash{P_{h^-_{e(t)}}}$ 
onto the graph of $\smash{h^-_{e(t)}}$.

\begin{proposition}\label{PropositionCompensationFunction}
Let $(\chi_u,u,V)$ be a varifold solution to the free boundary problem for 
the incompressible Navier-Stokes equation for two fluids \eqref{EquationTransport}--\eqref{EquationIncompressibility} 
in the sense of Definition~\ref{DefinitionVarifoldSolution} on some time interval $[0,\Tend)$. 
Let $(\chi_v,v)$ be a strong solution to \eqref{EquationTransport}--\eqref{EquationIncompressibility} in the sense of 
Definition~\ref{DefinitionStrongSolution} on some time interval $[0,\Tmax)$ with $\Tmax\leq \Tend$.
Let $\xi$ be the extension of the inner unit normal vector field $\vec{n}_v$ of the interface $I_v(t)$ 
from Definition~\ref{def:ExtNormal}. Let $e\colon [0,\Tmax)\to(0,r_c]$ be a $C^1$-function
and assume that the relative entropy is bounded by $E[\chi_u,u,V|\chi_v,v](t)\leq e(t)^2$.
Let the regularized local interface error heights $h^+_{e(t)}$ and $h^-_{e(t)}$ be defined as in 
Proposition~\ref{PropositionInterfaceErrorHeightRegularized}. 

Then there exists a solenoidal vector field  $w\in L^2([0,\Tmax];H^1(\Rd))$
such that $w$ is subject to the estimates
\begin{align}
\label{Estimatew}
&\int_\Rd |w|^2 \,\dx
\leq C (r_c^{-4}R^2\|v\|^2_{W^{2,\infty}(\Rd\setminus I_v(t))}+1)
\\&\nonumber
~~~~~~~~~~~~~~~~~~~~~~\times
\int_{I_v(t)}
|h^+_{e(t)}|^2 {+} |\nabla h^+_{e(t)}|^2 + |h^-_{e(t)}|^2 {+} |\nabla h^-_{e(t)}|^2 \,\dS,
\end{align}
where $R>0$ is such that $I_v(t)+B_{r_c}\subset B_R(0)$, and 
\begin{align}
\label{EstimateDw}
&\int_{\{\sigdist(x,I_v(t))\geq 0\}} \big|\nabla w - \chi_{0\leq \sigdist(x,I_v(t))\leq h^+_{e(t)}(P_{I_v(t)} x)}
W \otimes \vec{n}_v(P_{I_v(t)}x,t)\big|^2 \,\dx
\\&
\nonumber
+\int_{\{\sigdist(x,I_v(t))\leq 0\}} \big|\nabla w - \chi_{-h^-_{e(t)}(P_{I_v(t)} x)\leq \sigdist(x,I_v(t))\leq 0} 
W \otimes \vec{n}_v(P_{I_v(t)}x,t)\big|^2 \,\dx
\\&
\nonumber
+\int_\Rd \chi_{\sigdist(x,I_v(t))\notin [-h^-_{e(t)}(P_{I_v(t)}x),h^+_{e(t)}(P_{I_v(t)} x)]} |\nabla w|^2 \,\dx
\\&
\nonumber
\leq  C r_c^{-4}\|v\|_{W^{2,\infty}(\Rd\setminus I_v(t))}^2
\int_{I_v(t)} |h^+_{e(t)}|^2 + |\nabla h^+_{e(t)}|^2 + |h^-_{e(t)}|^2 + |\nabla h^-_{e(t)}|^2 \,\dS,
\end{align}
where the vector field $W$ is given by
\begin{align}\label{definitionW}
W(x,t) &:= \frac{2(\mu_+-\mu_-)}{\mu_+\,(1{-}\chi_v)+\mu_-\chi_v}\big(\Id-\vec{n}_v
\otimes\vec{n}_v\big)(P_{I_v(t)}x)\big(\Dsym v\cdot\vec{n}_v(P_{I_v(t)}x)\big),
\end{align}
with the symmetric gradient defined by $\Dsym v := \frac{1}{2}(\nabla v + \nabla v^T)$),
as well as the estimates
\begin{align}
\label{EstimateL2supw}
&\int_{I_v(t)} \sup_{y\in (-r_c,r_c)} |w(x + y\vec{n}_v(x,t))|^2 \,\dS(x)
\\&\nonumber 
\leq C r_c^{-4}\|v\|_{W^{2,\infty}(\Rd\setminus I_v(t))}^2
\int_{I_v(t)} |h^+_{e(t)}|^2 + |\nabla h^+_{e(t)}|^2 + |h^-_{e(t)}|^2 + |\nabla h^-_{e(t)}|^2 \,\dS,
\end{align}
\begin{align}
\label{EstimateLinftyw}
\|\nabla w\|_{L^\infty} &\leq
Cr_c^{-4}|\log e(t)|\|v\|_{W^{2,\infty}(\Rd\setminus I_v(t))} +
Cr_c^{-3}\|\nabla^3 v\|_{L^\infty(\Rd\setminus I_v(t))}
\\\nonumber&~~~
+ C r_c^{-9} \big(1{+}\mathcal{H}^{d-1}(I_v(t))\big)\|v\|_{W^{2,\infty}(\Rd\setminus I_v(t))} ,
\end{align}
\begin{align}
\label{EstimateL2Linftyw}
&\bigg(\int_{I_v(t)}\sup_{y\in[-r_c,r_c]}|(\nabla w)^T(x+y\vec{n}_v(x,t)) \vec{n}_v(x,t)|^2\,\dS(x)\bigg)^\frac{1}{2} 
\\&\nonumber
\leq Cr_c^{-9}(1+\mathcal{H}^{d-1}(I_v(t)))\|v\|_{W^{2,\infty}(\Rd\setminus I_v(t))}e(t)
+Cr_c^{-2}\|v\|_{W^{3,\infty}(\Rd\setminus I_v(t))}e(t)
\\&~~~\nonumber
+Cr_c^{-1}\|v\|_{W^{2,\infty}(\Rd\setminus I_v(t))}|\log e(t)|^\frac{1}{2}e(t)
\end{align}
and
\begin{align}
\label{Estimatedtw}
\partial_t w(\cdot,t) = - \big(v(\cdot,t)\cdot\nabla\big)w(\cdot,t) + g + \hat{g},
\end{align}
where the vector fields $g$ and $\hat{g}$ are subject to the bounds
\begin{align}\label{timeEvolutionWL43}
&\|\hat{g}\|_{L^\frac{4}{3}(\Rd)} \\ 
&\nonumber
\leq C\frac{\|v\|_{W^{1,\infty}}\|v\|_{W^{2,\infty}(\Rd\setminus I_v(t))}}
{e(t)r_c^3}\bigg(\int_{I_v(t)} |\bar{h}^\pm|^4 \,\dS \bigg)^\frac{1}{4}
\\&\nonumber~~~~~~~~~~~~~~
\times\bigg(\int_{I_v(t)} |h^+_{e(t)}|^2 + |\nabla h^+_{e(t)}|^2 
+ |h^-_{e(t)}|^2 + |\nabla h^-_{e(t)}|^2 \,\dS\bigg)^\frac{1}{2} 
\\&~~~\nonumber
+ C\frac{\|v\|_{W^{1,\infty}}}
{e(t)r_c^2}\bigg(\int_{I_v(t)} |\bar{h}^\pm|^4 \,\dS \bigg)^\frac{1}{4}
(\|u{-}v{-}w\|_{L^2}^\frac{1}{2}\|\nabla(u{-}v{-}w)\|_{L^2}^\frac{1}{2}+\|u{-}v{-}w\|_{L^2})
\\&~~~\nonumber
+C\frac{\|v\|_{W^{1,\infty}}(1+\|v\|_{W^{1,\infty}})}{e(t)}
\int_{\Rd} 1-\xi\cdot\frac{\nabla\chi_u}{|\nabla\chi_u|} \,\mathrm{d}|\nabla\chi_u|,
\end{align}
and
\begin{align}\label{timeEvolutionWL2}
&\|g\|_{L^2(\Rd)} \\
&\nonumber
\leq C \frac{1{+}\|v\|_{W^{1,\infty}}}{r_c^2}
(\|\partial_t \nabla v\|_{L^\infty(\Rd\setminus I_v(t))}{+}(R^2{+}1)\|v\|_{W^{2,\infty}(\Rd\setminus I_v(t))})
\\&\nonumber~~~~~~~~~~~~~~
\times\bigg(\int_{I_v(t)} |h^+_{e(t)}|^2 + |\nabla h^+_{e(t)}|^2 + |h^-_{e(t)}|^2 + |\nabla h^-_{e(t)}|^2 \,\dS\bigg)^\frac{1}{2}
\\&~~~\nonumber
+C\frac{\|v\|_{W^{1,\infty}}(1+\|v\|_{W^{1,\infty}})}{e(t)r_c}
\int_{\Rd} 1-\xi\cdot\frac{\nabla\chi_u}{|\nabla\chi_u|} \,\mathrm{d}|\nabla\chi_u|
\\&~~~\nonumber
+Cr_c^{-2}(1+e'(t))\|v\|_{W^{1,\infty}}^2
\bigg(\int_{I_v(t)}|h^\pm|^2\,\dS\bigg)^\frac{1}{2}
\\&~~~\nonumber
+C\frac{\|v\|_{W^{1,\infty}}(1{+}\|v\|_{W^{2,\infty}(\Rd\setminus I_v(t))})}{r_c}
\bigg(\int_{\Rd}|\chi_u{-}\chi_v| \min\Big\{\frac{\dist(x,I_v(t))}{r_c},1\Big\} \,\dx\bigg)^\frac{1}{2}
\\&~~~\nonumber
+C\|v\|_{W^{1,\infty}}(\|u{-}v{-}w\|_{L^2}^\frac{1}{2}\|\nabla(u{-}v{-}w)\|_{L^2}^\frac{1}{2}+\|u{-}v{-}w\|_{L^2}),
\end{align}
where $\bar{h}^\pm$ is defined as $h^\pm$ but now with respect to the modified cut-off function 
$\bar{\theta}(\cdot)=\theta\big(\frac{\cdot}{2}\big)$, see Proposition~\ref{PropositionInterfaceErrorHeight}.
Furthermore, $w$ may be taken to have the regularity $\nabla w(\cdot,t)\in W^{1,\infty}(\Rd\setminus (I_v(t)\cup I_{h^+_e}(t)\cup I_{h^+_e}(t)))$ for almost every $t$, where $I_{h^\pm_e}(t)$ denotes the $C^3$-manifold $\{x\pm h^\pm_{e(t)}(x) \vec{n}_v(x):x\in I_v(t)\}$.
\end{proposition}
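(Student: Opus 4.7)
The construction proceeds in three stages: first, an explicit pointwise pre-compensation $\tilde w$ defined along normal fibers of $I_v(t)$; second, a Helmholtz projection to enforce incompressibility; third, a time-derivative analysis that exploits the transport structure of the regularized height functions from Proposition~\ref{PropositionInterfaceErrorHeightRegularized}.

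Writing $s(x,t) := \sigdist(x,I_v(t))$, $P := P_{I_v(t)}x$, and fixing a smooth cutoff $\eta$ with $\eta(\tau)=1$ for $|\tau|\leq \frac{1}{2}$ and $\eta(\tau)=0$ for $|\tau|\geq 1$, I would define on the upper side
\begin{equation*}
\tilde w^+(x,t) := \eta\Bigl(\tfrac{s(x,t)}{r_c}\Bigr)\int_0^{\min\{(s(x,t))_+,\,h^+_{e(t)}(P,t)\}} W\bigl(P + y\,\vec{n}_v(P,t),t\bigr)\,dy,
\end{equation*}
and analogously $\tilde w^-$ using $h^-_{e(t)}$ below; then $\tilde w := \tilde w^+ + \tilde w^-$. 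By construction $\tilde w$ is supported in the $r_c$-tubular neighborhood of $I_v(t)$ and is piecewise smooth (discontinuous only across $I_v(t)$ and the two auxiliary interfaces $I_{h^\pm_e}(t)$). Since $W$ is tangential ($W\cdot \vec{n}_v(P)=0$), normal differentiation inside the strip $\{0\leq s\leq h^+_{e(t)}(P)\}$ yields $\partial_s \tilde w^+ = W$, i.e.\ $\nabla\tilde w^+\,\vec{n}_v(P)=W$ there; tangential differentiation picks up factors of $\nabla P$, $\nabla\vec{n}_v(P)$, $\nabla^{\mathrm{tan}} h^+_{e(t)}$ and $\nabla W$, each bounded by Proposition~\ref{PropositionInterfaceErrorHeightRegularized}a), Lemma~\ref{lem:evolSignedDist}, and the $W^{3,\infty}$-regularity of $v$ on either side of $I_v$. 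Outside the strip but inside the cutoff $\tilde w$ depends only on $P$ and has purely tangential gradient with the same bounds. This directly produces the gradient bound \eqref{EstimateDw} for $\tilde w$, the pointwise $L^\infty$ bound needed for \eqref{EstimateLinftyw} (up to the Helmholtz correction), and the tubular bounds \eqref{EstimateL2supw}, \eqref{EstimateL2Linftyw} via Lemma~\ref{LemmaEstimateL2SupByH1}.

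To enforce incompressibility, set $w:=\tilde w - \nabla\phi$ where $\phi$ solves $\Delta\phi=\nabla\cdot\tilde w$ on $\R^d$ with decay at infinity. Because $W\perp\vec{n}_v(P)$, the ``large'' normal--normal component of $\nabla\tilde w$ contributes nothing to $\nabla\cdot\tilde w$, so $\nabla\cdot\tilde w$ is supported in the strip and dominated by $|\nabla h^\pm_{e(t)}|\,|W|+|\nabla W|$. Calder\'on--Zygmund theory gives $\|\nabla^2\phi\|_{L^p}\lesssim\|\nabla\cdot\tilde w\|_{L^p}$ for $1<p<\infty$, which preserves \eqref{EstimateDw} for $w$; the endpoint estimate produces $\|\nabla^2\phi\|_{L^\infty}\lesssim |\log e(t)|\,\|\nabla\cdot\tilde w\|_{L^\infty}+(\text{tail})$, yielding the logarithmic factor in \eqref{EstimateLinftyw}. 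A Poincar\'e-type argument using that $\tilde w$ is supported in $B_R(0)$ converts $\|\nabla\phi\|_{L^2}\lesssim R\,\|\nabla\cdot\tilde w\|_{L^2}$ into \eqref{Estimatew}.

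The main obstacle is \eqref{Estimatedtw} with the splittings \eqref{timeEvolutionWL43} and \eqref{timeEvolutionWL2}. I would first compute $(\partial_t + v\cdot\nabla)\tilde w$ by grouping contributions from four differentiations: of the upper integration limit $h^+_{e(t)}(P)$, of the integrand $W(P+y\vec{n}_v(P))$ along the fiber, of the geometric data $P$ and $\vec{n}_v(P)$, and of the cutoff $\eta(s/r_c)$. Lemma~\ref{lem:evolSignedDist} shows that $(\partial_t + v\cdot\nabla)s$ and $(\partial_t+v\cdot\nabla)\vec{n}_v(P)$ carry an $O(\dist(\cdot,I_v)\|v\|_{W^{1,\infty}})$ and an $O(\|v\|_{W^{1,\infty}})$ factor with purely tangential structure, which when applied to $W$ gains the prefactor $\|\partial_t\nabla v\|_{L^\infty(\Rd\setminus I_v)}+(1{+}R^2)\|v\|_{W^{2,\infty}(\Rd\setminus I_v)}$ producing the first summand of $g$. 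The top limit $(\partial_t+v\cdot\nabla)h^+_{e(t)}$ cannot be controlled pointwise; here I would invoke Proposition~\ref{PropositionInterfaceErrorHeightRegularized}c) by \emph{testing against a dual field} constructed from $W$ along the fiber, which converts the time derivative of $h^+_{e(t)}$ into the six summands on the right-hand side of \eqref{HeightFunctionRegularizeddtEstimate}. The first of these, the product $\|\bar h^\pm\|_{L^4(I_v)}\cdot\|\sup_y|u-v|\|_{L^2(I_v)}$, has only the integrability of an $L^{4/3}$ function after extension in the normal direction, which forces the separate $\hat g$ piece; all other summands close in $L^2$ and go into $g$. Finally, applying the Helmholtz projection to $(\partial_t+v\cdot\nabla)\tilde w$ introduces a commutator $[\mathbb{P},v\cdot\nabla]\tilde w = -\nabla\Delta^{-1}(\nabla v:\nabla\tilde w)+\bigl((v\cdot\nabla)\nabla\Delta^{-1}\nabla\cdot\tilde w - \nabla\Delta^{-1}(v\cdot\nabla(\nabla\cdot\tilde w))\bigr)$, whose $L^2$ and $L^{4/3}$ norms are controlled via Calder\'on--Zygmund bounds by $\|\nabla v\|_{L^\infty}\|\nabla\tilde w\|_{L^p}$ and thus by the quantities already present in $g$ and $\hat g$. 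This commutator estimate, together with the careful bookkeeping of each $\|v\|_{W^{k,\infty}}$-factor in the borderline $L^{4/3}$ term, is the technically most delicate step of the entire proof.
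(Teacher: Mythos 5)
Your construction is essentially the same as the paper's: an explicit fiber-wise pre-compensation $\tilde w^\pm$ defined along normal rays, followed by a Leray/Helmholtz correction $w=\tilde w-\nabla\phi$ with $\Delta\phi=\nabla\cdot\tilde w$ (the paper realizes this as convolution against $\theta(x)=\frac{x}{\mathcal H^{d-1}(\Sd)|x|^d}$), and a time-derivative analysis that re-expresses $\partial_t+v\cdot\nabla$ using the transport structure of $\sigdist$, $P_{I_v(t)}$, and $\vec n_v(P_{I_v(t)}\cdot)$ and then appeals to the duality estimate \eqref{HeightFunctionRegularizeddtEstimate} for the material derivative of $h^\pm_{e(t)}$. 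So the skeleton is right. However, several of the estimates you invoke do not hold in the form you state them, and closing those gaps is where most of the real work in the paper happens.

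The most serious issue is the $L^\infty$ bound \eqref{EstimateLinftyw}. You claim a log-Lipschitz-type endpoint estimate $\|\nabla^2\phi\|_{L^\infty}\lesssim|\log e(t)|\,\|\nabla\cdot\tilde w\|_{L^\infty}+(\mathrm{tail})$. That estimate requires some modulus of continuity of $\nabla\cdot\tilde w$, but from \eqref{divwplus} the divergence has a genuine jump discontinuity across the surfaces $I_{h^\pm_e}(t)$ (coming from the indicator $\chi_{\sigdist>h^+_{e(t)}(P)}$ in the first term). In particular $\nabla(\nabla\cdot\tilde w)$ is a measure, not an $L^\infty$ function, so the standard log-Lipschitz bound does not apply directly and naively bounding the high-frequency dyadic pieces by $\|\nabla(\nabla\cdot\tilde w)\|_{L^\infty}$ fails. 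What the paper actually does (Step 3) is a dyadic decomposition $\theta=\sum_k\theta_k$ combined, for the short-wavelength blocks, with a tangent-plane comparison: the jump measure $\nabla\chi_{\{\sigdist>h^+_{e(t)}(P)\}}$ is replaced by the surface measure on a tangent plane $F$, one exploits $\int_F(\Id-\vec n_F\otimes\vec n_F)\theta_k\,\dS=0$ to integrate by parts, and the normal component of $\theta_k$ is damped by a $\dist(\tilde x,F)/|x-\tilde x|^d$ factor. The geometric error between the true surface and $F$ is controlled by $\|\nabla^2 h^\pm_{e(t)}\|_{L^\infty}\lesssim e(t)^{-1}r_c^{-4}$, and summing over the $\lesssim|\log e(t)|$ scales beats the $e(t)^{-1}$ factor. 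Without this cancellation the $|\log e(t)|$ prefactor simply does not come out, and the final Gronwall argument in Theorem~\ref{weakStrongUniq} would break. The same machinery is needed for \eqref{EstimateL2Linftyw}; you cannot obtain it by applying Lemma~\ref{LemmaEstimateL2SupByH1} to $\nabla w$, since $\nabla w\notin H^1$ (again because of the jump surfaces), and the paper's Step 4 is a separate, substantial argument.

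A second gap is the commutator bound in the time-derivative estimate. You claim $[\mathbb P,v\cdot\nabla]\tilde w$ is controlled ``via Calder\'on--Zygmund bounds by $\|\nabla v\|_{L^\infty}\|\nabla\tilde w\|_{L^p}$,'' but for $p=2$ this gives a term of size $\|\nabla v\|_{L^\infty}\bigl(\int_{I_v(t)}h^\pm_{e(t)}\,\dS\bigr)^{1/2}$, because the leading (jump) part of $\nabla\tilde w$ has $L^2$-norm of order $\|h^\pm_{e(t)}\|_{L^1(I_v)}^{1/2}$. This does not match the $\bigl(\int|h^\pm_{e(t)}|^2+|\nabla h^\pm_{e(t)}|^2\bigr)^{1/2}$ right-hand side of \eqref{timeEvolutionWL2} and is in general much larger. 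The paper circumvents this (Step 5) by splitting $v=V_{\vec n}+V_{\tan}$: for the normal part, the commutator kernel $\nabla^2\theta(x-\tilde x)\otimes(\tilde x-x)$ is itself a CZ kernel, allowing one to Taylor-expand $V_{\vec n}$ to second order and use its $W^{2,\infty}$-regularity in the tubular neighborhood; for the tangential part, the structure $V_{\tan}\perp\vec n_v(P)$ makes the commutator see only $\nabla\tilde w-W\otimes\vec n_v(P)$ and $\nabla\cdot\tilde w$, which \emph{are} of the required $L^2$-size via \eqref{auxGradientCompensationWplus} and \eqref{auxL2divWplus}. Your single-line commutator estimate misses both of these cancellations.

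The remaining ingredients in your outline (the tangential-only structure of $W$, the duality use of Proposition~\ref{PropositionInterfaceErrorHeightRegularized}c), the Poincar\'e-type far-field bound giving the $R$-dependence in \eqref{Estimatew}, and the identification of the $L^{4/3}$-only term as coming from the first summand of \eqref{HeightFunctionRegularizeddtEstimate}) are correct and align with the paper.
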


\begin{proof}
{\bf Step 1: Definition of $w$.}
Let $\eta$ be a cutoff supported at each $t\in [0,\Tmax)$ in the set $I_v(t)+B_{r_c/2}$ 
with $\eta\equiv 1$ in $I_v(t)+B_{r_c/4}$ and $|\nabla \eta|\leq C r_c^{-1}$, $|\nabla^2 \eta|\leq C r_c^{-2}$ 
as well as $|\partial_t \eta|\leq C r_c^{-1} \|v\|_{L^\infty}$ and 
$|\partial_t \nabla \eta|\leq C r_c^{-2} \|v\|_{W^{1,\infty}}$. For example, one may choose
$\eta(x,t):=\theta(\frac{\dist(x,I_v(t))}{r_c})$ where 
$\theta\colon\mathbb{R}^+\rightarrow [0,1]$ is the smooth cutoff already
used in the definition of the regularized local interface error heights
in Proposition~\ref{PropositionInterfaceErrorHeightRegularized}.

Define the vector field $W$ as given in \eqref{definitionW} and set
(making use of the notation $a\wedge b = \min\{a,b\}$ and $a\vee b = \max\{a,b\}$)
\begin{align}
\label{Definitionwplus}
w^+(x,t):= \eta \int_0^{(\sigdist(x,I_v(t)) \vee 0) \wedge h^+_{e(t)}(P_{I_v(t)} x)} 
W (P_{I_v(t)} x + y\vec{n}_v(P_{I_v(t)}x,t)) \,\dy
\end{align}
as well as
\begin{align}
\label{Definitionwminus}
w^-(x,t):= \eta \int_{(\sigdist(x,I_v(t)) \wedge 0) \vee -h^-_{e(t)}(P_{I_v(t)} x)}^0 
W (P_{I_v(t)} x + y\vec{n}_v(P_{I_v(t)}x,t)) \,\dy.
\end{align}
For this choice, we have
\begin{align}\label{Dwplus}
&\nabla w^+ (x,t) 
\\&\nonumber
= \chi_{0\leq \sigdist(x,I_v(t))\leq h^+_{e(t)}(P_{I_v(t)} x)} W(x) \otimes \vec{n}_v(P_{I_v(t)}x)
\\&~~~~
\nonumber
+ \eta\, \chi_{\sigdist(x,I_v(t))>h^+_{e(t)}(P_{I_v(t)} x)} W(P_{h^+_{e(t)}} x) \otimes \nabla h^+_{e(t)}(P_{I_v(t)}x) \nabla P_{I_v(t)}(x)
\\&~~~~
\nonumber
+\eta \int_0^{(\sigdist(x,I_v(t)) \vee 0) \wedge h^+_{e(t)}(P_{I_v(t)} x)} \nabla 
W (P_{I_v(t)} x {+} y \vec{n}_v(P_{I_v(t)}x)) (\nabla P_{I_v(t)}x{+}y\nabla \vec{n}_v(P_{I_v(t)}x)) \,\dy
\\&~~~~
\nonumber
+\nabla \eta \int_0^{(\sigdist(x,I_v(t)) \vee 0) \wedge h^+_{e(t)}(P_{I_v(t)} x)} W (P_{I_v(t)} x {+} y \vec{n}_v(P_{I_v(t)}x)) \,\dy
\end{align}
(note that this directly implies the last claim about the regularity of $w$, namely $\nabla w(\cdot,t)\in W^{1,\infty}(\Rd\setminus (I_v(t)\cup I_{h^+_e}(t)\cup I_{h^+_e}(t)))$ for almost every $t$)
as well as
\begin{align}\label{dtwplus}
&\partial_t w^+(x,t)
\\&\nonumber
=\chi_{0\leq \sigdist(x,I_v(t))\leq h^+_{e(t)}(P_{I_v(t)} x)} W (x) \partial_t \sigdist(x,I_v(t))
\\&~~~~
\nonumber
+\eta \, \chi_{\sigdist(x,I_v(t))> h^+_{e(t)}(P_{I_v(t)} x)} W (P_{h^+_{e(t)}} x) 
\big(\partial_t h^+_{e(t)}(P_{I_v(t)}x)+\partial_t P_{I_v(t)}x \cdot \nabla h^+_{e(t)}(P_{I_v(t)}x)\big)
\\&~~~~
\nonumber
+\eta \int_0^{(\sigdist(x,I_v(t)) \vee 0) \wedge h^+_{e(t)}(P_{I_v(t)} x)} 
\partial_t W (P_{I_v(t)} x {+} y \vec{n}_v(P_{I_v(t)}x)) \,\dy
\\&~~~~
\nonumber
+\eta \int_0^{(\sigdist(x,I_v(t)) \vee 0) \wedge h^+_{e(t)}(P_{I_v(t)} x)} 
\nabla W (P_{I_v(t)} x {+} y \vec{n}_v(P_{I_v(t)}x)) (\partial_t P_{I_v(t)}x{+}y\partial_t \vec{n}_v(P_{I_v(t)}x)) \,\dy
\\&~~~~
\nonumber
+\partial_t \eta \int_0^{(\sigdist(x,I_v(t)) \vee 0) 
\wedge h^+_{e(t)}(P_{I_v(t)} x)} W (P_{I_v(t)} x + y \vec{n}_v(P_{I_v(t)}x)) \,\dy.
\end{align}
Moreover, note that \eqref{Dwplus} entails by the definition of the vector field $W$
\begin{align}\label{divwplus}
&\nabla \cdot w^+ (x,t)
\\&\nonumber
=\eta\, \chi_{\sigdist(x,I_v(t))>h^+_{e(t)}(P_{I_v(t)} x)} W(P_{h^+_{e(t)}} x) \cdot \nabla h^+_{e(t)}(P_{I_v(t)}x) \nabla P_{I_v(t)}(x)
\\&~~~
\nonumber
+\eta \int_0^{(\sigdist(x,I_v(t)) \vee 0) \wedge h^+_{e(t)}(P_{I_v(t)} x)} 
\operatorname{tr} \nabla W (P_{I_v(t)} x {+} y \vec{n}_v(P_{I_v(t)}x)) 
(\nabla P_{I_v(t)}x{+}y\nabla \vec{n}_v(P_{I_v(t)}x)) \,\dy
\\&~~~
\nonumber
+\nabla \eta \cdot \int_0^{(\sigdist(x,I_v(t)) \vee 0) \wedge h^+_{e(t)}(P_{I_v(t)} x)}
 W (P_{I_v(t)} x {+} y \vec{n}_v(P_{I_v(t)}x)) \,\dy.
\end{align}
Analogous formulas and properties can be derived for $w^-$.
The function $w^+ + w^-$ would then satisfy our conditions, with the exception 
of the solenoidality $\nabla \cdot w=0$. For this reason, we introduce the (usual) kernel
\begin{align*}
\theta(x):=\frac{1}{\mathcal{H}^{d-1}(\mathbb{S}^{d-1})}\frac{x}{|x|^d}
\end{align*}
and set
\begin{align}\label{DefinitionCompensationFunction}
w(x,t) &:= w^+(x,t)-(\theta \ast \nabla \cdot w^+)(x,t)
+w^-(x,t)-(\theta \ast \nabla \cdot w^-)(x,t).
\end{align}
It is immediate that $\nabla \cdot w=0$. 

{\bf Step 2: Estimates on $w$ and $\nabla w$.}
From \eqref{Dwplus}, $|\nabla\eta|\leq Cr_c^{-1}$ as well as the bounds \eqref{Bound2ndDerivativeSignedDistance}
and \eqref{BoundGradientProjection} we deduce the pointwise bound
\begin{align}
\nonumber
&\big|\nabla w^+ - \chi_{0\leq \sigdist(x,I_v(t))\leq h^+_{e(t)}(P_{I_v(t)} x)} W(x) \otimes\vec{n}_v(P_{I_v(t)}x)\big|
\\&
\label{BoundPointwiseDwplus}
\leq C \chi_{\supp \eta} r_c^{-1} \|\nabla v\|_{L^\infty} |\nabla h^+_{e(t)}(P_{I_v(t)} x)| 
\\&~~~~
\nonumber
+C \chi_{\supp \eta}  \big(r_c^{-2} \|\nabla v\|_{L^\infty} + r_c^{-1}
\|\nabla^2 v\|_{L^\infty(\Rd\setminus I_v(t))}\big)  |h^+_{e(t)}(P_{I_v(t)} x)|
\\&~~~~
\nonumber
+C r_c^{-1} \chi_{\supp \eta} \|\nabla v\|_{L^\infty} |h^+_{e(t)}(P_{I_v(t)} x)|
\end{align}
and therefore by integration and a change of variables $\Phi_t$
\begin{align}\label{auxGradientCompensationWplus}
&\int_\Rd \Big|\nabla w^+ - \chi_{0\leq \sigdist(x,I_v(t))\leq h^+_{e(t)}(P_{I_v(t)} x)} W(x)
\otimes\vec{n}_v(P_{I_v(t)}x)\Big|^2 \,\dx
\\&\nonumber
\leq C (r_c^{-4}\|\nabla v\|_{L^\infty}^2+r_c^{-2}\|\nabla^2 v\|_{L^\infty(\Rd\setminus I_v(t))}^2)
\int_\Rd \chi_{\mathrm{supp}\,\eta}(|h^+_{e(t)}|^2 + |\nabla h^+_{e(t)}|^2)(P_{I_v(t)}x)\,\dx
\\&\nonumber
\leq C r_c^{-4}\|v\|^2_{W^{2,\infty}(\Rd\setminus I_v(t))}
\int_{I_v(t)} |h^+_{e(t)}|^2 + |\nabla h^+_{e(t)}|^2 \,\dS.
\end{align}
Observe that this also implies by \eqref{definitionW}
\begin{align}\label{auxL2divWplus}
&\int_\Rd |\nabla\cdot w^+|^2\,\dx 
\leq C r_c^{-4}\|v\|^2_{W^{2,\infty}(\Rd\setminus I_v(t))}
\int_{I_v(t)} |h^+_{e(t)}|^2 + |\nabla h^+_{e(t)}|^2 \,\dS.
\end{align}
From this, Theorem~\ref{SingularIntegralOperator}, and the fact that 
$\nabla \theta$ is a singular integral kernel subject to the assumptions of 
Theorem~\ref{SingularIntegralOperator}, we deduce
\begin{align}
\int_\Rd& \big|\nabla (\theta \ast (\nabla \cdot w^+)) \big|^2 \,\dx
\label{EstimateDwdiv}
\leq C r_c^{-4}\|v\|_{W^{2,\infty}(\Rd\setminus I_v(t))}^2
\int_{I_v(t)} |h^+_{e(t)}|^2 + |\nabla h^+_{e(t)}|^2 \,\dS.
\end{align}
Combining the estimates \eqref{auxGradientCompensationWplus} and \eqref{EstimateDwdiv} 
with the corresponding inequalities for $w^-$ 
and $\theta \ast \nabla\cdot w^-$, we deduce our estimate \eqref{EstimateDw}.

The trivial estimate $|w^+(x,t)|\leq \chi_{\supp \eta}(x,t) \|\nabla v\|_{L^\infty} h^+_{e(t)}(P_{I_v(t)}x)$ gives
by the change of variables $\Phi_t 	$
\begin{align}\label{L2Wplus}
&\int_\Rd |w^+|^2 \,\dx
\leq C r_c \int_{I_v(t)} |h^+_{e(t)}|^2 \,\dS.
\end{align}
Now, let $R>1$ be big enough such that $I_v(t)+B_{r_c}\subset B_R(0)$ for all $t\in [0,\Tmax)$.
We then estimate with an integration by parts and Theorem~\ref{SingularIntegralOperator}
applied to the singular integral operator $\nabla\theta$ 
\begin{align}\label{auxL2ConvolutionWplus}\nonumber
\int_{\Rd\setminus B_{3R}(0)} \big| \theta \ast (\nabla \cdot w^+)\big|^2 \,\dx
&= \int_{\Rd\setminus B_{3R}(0)}\bigg|\int_{B_R(0)}\theta(x-\tilde x)
(\nabla\cdot w^+(\tilde x))\,\mathrm{d}\tilde x\bigg|^2\,\dx
\\&\nonumber
\leq \int_{\Rd}\bigg|\int_{B_R(0)}
\nabla\theta(x-\tilde x)w^+(\tilde x)
\,\mathrm{d}\tilde x\bigg|^2\,\dx
\\&
\leq C\int_{B_R(0)}|w^+|^2\,\dx.
\end{align}
By Young's inequality for convolutions, \eqref{auxL2divWplus}, \eqref{L2Wplus} and \eqref{auxL2ConvolutionWplus}
we then obtain
\begin{align}\nonumber
&\int_\Rd \big| \theta \ast (\nabla \cdot w^+)\big|^2 \,\dx
\\&
\nonumber
=\int_{B_{3R}(0)} \big| \theta \ast (\nabla \cdot w^+)\big|^2 \,\dx
+\int_{\Rd\setminus B_{3R}(0)} \big| \theta \ast (\nabla \cdot w^+)\big|^2 \,\dx
\\&\label{Estimatewdiv}
\leq C\bigg(\int_{B_{3R}(0)}\frac{1}{|x|^{d-1}}\,\dx\bigg)^2 \int_\Rd |\nabla \cdot w^+ |^2 \,\dx
+C\int_{\Rd}|w^+|^2\,\dx
\\&\nonumber
\leq C (r_c^{-4}R^2\|v\|^2_{W^{2,\infty}(\Rd\setminus I_v(t))}+1)
\int_{I_v(t)} |h^+_{e(t)}|^2 + |\nabla h^+_{e(t)}|^2 \,\dS.
\end{align}
Together with the respective estimates for $w^-$ and $\theta \ast (\nabla \cdot w^-)$, this implies \eqref{Estimatew}.
The estimate \eqref{EstimateL2supw} follows directly from \eqref{Definitionwplus} and 
the estimates \eqref{EstimateDwdiv} and \eqref{Estimatewdiv} on the $H^1$-norm of $\theta \ast (\nabla \cdot w^+)$ as well as 
the definition of $w^-$ and the analogous estimates for $\theta \ast (\nabla \cdot w^-)$. 

{\bf Step 3: $L^\infty$-estimates for $\nabla w$.}
Regarding the estimate \eqref{EstimateLinftyw} on $\|\nabla w\|_{L^\infty}$ 
we have by \eqref{BoundPointwiseDwplus} and the estimates $|\nabla h^+_{e(t)}|\leq Cr_c^{-2}$ 
and $|h^+_{e(t)}|\leq r_c\leq 1$ from Proposition~\ref{PropositionInterfaceErrorHeightRegularized}
\begin{align}
\label{EstimateDwplus}
\|\nabla w^+\|_{L^\infty}\leq Cr_c^{-4}\|v\|_{W^{2,\infty}(\Rd\setminus I_v(t))}.
\end{align}
To estimate $|\nabla (\theta \ast (\nabla \cdot w^+))|$, we first compute starting with \eqref{divwplus}
\begin{align}
\label{BoundD2wplus}
&\nabla (\nabla \cdot w^+) (x,t)
\\&
\nonumber
=\eta\, \chi_{\sigdist(x,I_v)>h^+_{e(t)}(P_{I_v(t)} x)} W(P_{h^+_{e(t)}} x) \cdot \nabla^2 h^+_{e(t)}(P_{I_v(t)}x) 
\nabla P_{I_v(t)}(x)\nabla P_{I_v(t)}(x)
\\&~~~~
\nonumber
+\big(W(P_{h^+_{e(t)}} x) \cdot \nabla h^+_{e(t)}(P_{I_v(t)}x) \nabla P_{I_v(t)}(x)\big) 
\, \nabla \chi_{\sigdist(x,I_v)>h^+_{e(t)}(P_{I_v(t)} x)}
\\&~~~~
\nonumber
+F(x,t),
\end{align}
where $F(x,t)$ is subject to a bound of the form 
$|F(x,t)|\leq C r_c^{-5}\|v\|_{W^{3,\infty}(\Rd\setminus I_v(t))})$ 
and supported in $I_v(t)+B_{r_c}$. Next, we decompose the kernel $\theta$ as $\theta = \sum_{k=-\infty}^\infty \theta_k$ 
with smooth functions $\theta_k$ with $\supp \theta_k \subset B_{2^{k+1}}\setminus B_{2^{k-1}}$. More precisely,
we first choose a smooth function $\varphi\colon\R_+\to [0,1]$ such that $\varphi(s)=0$
whenever $s\notin [-1/2,2]$ and such that $\sum_{k\in\Z}\varphi(2^ks)=1$ for
all $s>0$. Such a function indeed exists, see for instance \cite{Bahouri2011}. 
We then let $\theta_k(x):=\varphi(2^k|x|)\theta(x)$.
Note that $\|\theta_k\|_{L^1(\Rd)}\leq C 2^k$, $\|\nabla \theta_k\|_{L^1(\Rd)}\leq C$
as well as $|\nabla \theta_k|\leq C (2^k)^{-d}$.
We estimate
\begin{align}\label{dyadicDecomposition}
|\nabla (\theta \ast (\nabla \cdot w^+))|
&\leq \sum_{k=\lfloor \log e^2(t)\rfloor}^0 
|\nabla (\theta_k \ast (\nabla \cdot w^+))|
+\sum_{k=1}^\infty
|\nabla (\theta_k \ast (\nabla \cdot w^+))|
\\&~~~\nonumber
+\sum_{k=-\infty}^{\lfloor \log e^2(t)\rfloor-1}
|\theta_k \ast \nabla(\nabla \cdot w^+)|.
\end{align}
Using Young's inequality for convolutions as well as the estimate $\|\nabla \theta_k\|_{L^1(\Rd)}\leq C$
we obtain
\begin{align}\label{middleFrequencies}
\sum_{k=\lfloor \log e^2(t)\rfloor}^0 
|\nabla (\theta_k \ast (\nabla \cdot w^+))|
\leq 2C|\log e(t)|\|\nabla\cdot w^+\|_{L^\infty}.
\end{align}
Moreover, it follows from $|\nabla \theta_k|\leq C (2^k)^{-d}$, the precise formula for $\nabla\cdot w^+$
in \eqref{divwplus}, \eqref{Bound2ndDerivativeSignedDistance}, \eqref{BoundGradientProjection}, a change of variables 
and H\"older's inequality that
\begin{align}\label{longFrequencies}
&\sum_{k=1}^\infty
|\nabla (\theta_k \ast (\nabla \cdot w^+))|
\\&\nonumber
\leq Cr_c^{-2}\|v\|_{W^{2,\infty}(\Rd\setminus I_v(t))}\sum_{k=1}^\infty (2^k)^{-d}
\int_{I_v(t)+B_{r_c/2}} |\nabla h^+_{e(t)}(P_{I_v(t)}x)| + |h^+_{e(t)}(P_{I_v(t)}x)| \,\dx
\\&\nonumber
\leq Cr_c^{-2}\|v\|_{W^{2,\infty}(\Rd\setminus I_v(t))}\sqrt{\mathcal{H}^{d-1}(I_v(t))}
\bigg(\int_{I_v(t)} |\nabla h^+_{e(t)}|^2+|h^+_{e(t)}|^2\,\dS\bigg)^\frac{1}{2}.
\end{align}
Using \eqref{BoundD2wplus}, the estimate 
$|\nabla^2 h^\pm_{e(t)}(\cdot,t)| \leq C r_c^{-4} e(t)^{-1}$ from Proposition~\ref{PropositionInterfaceErrorHeightRegularized},
\eqref{Bound2ndDerivativeSignedDistance}, \eqref{BoundGradientProjection} 
and again Young's inequality for convolutions (recall that $\|\theta_k\|_{L^1(\Rd)}\leq C 2^k$), we get
\begin{align}
\label{shortFrequencies0}
&\sum_{k=-\infty}^{\lfloor \log e^2(t)\rfloor-1}
|\theta_k \ast \nabla(\nabla \cdot w^+)|(\tilde x,t) \leq I + II + III
\end{align}
where the three terms on the right hand side are given by
\begin{align}\label{shortFrequencies1}
I := \sum_{k=-\infty}^{\lfloor \log e^2(t)\rfloor-1} 2^{k} 
Cr_c^{-5}\|v\|_{W^{3,\infty}(\Rd\setminus I_v(t))}
\leq Cr_c^{-5}\|v\|_{W^{3,\infty}(\Rd\setminus I_v(t))}e^2(t)
\end{align}
and
\begin{align}\label{shortFrequencies2}
II := Cr_c^{-5}\| v\|_{W^{1,\infty}} e(t)^{-1} \sum_{k=-\infty}^{\lfloor \log e^2(t)\rfloor-1} 2^{k} 
\leq Cr_c^{-5}\| v\|_{W^{1,\infty}} e(t)
\end{align}
as well as
\begin{align}\label{shortFrequencies3}
&III :=
\sum_{k=-\infty}^{\lfloor \log e^2(t) \rfloor -1} \bigg|\int_\Rd \theta_k(x{-}\tilde x) 
\otimes \big(W(P_{h^+_{e(t)}} x) \cdot  (\nabla P_{I_v(t)})^T(x) \nabla h^+_{e(t)}(P_{I_v(t)} x)\big)
\\&~~~~~~~~~~~~~~~~~~~~~~~~~~~~~~~~~~~~~~~~~~~~~~~~~~~~~~~~~~~~~~\nonumber
\,\mathrm{d}\nabla \chi_{\sigdist(x,I_v(t))>h^+_{e(t)}(P_{I_v(t)}x)}(x)\bigg|.
\end{align}

To estimate the latter term, we proceed as follows. First of all, note that
by the definition of $\smash{h^+_{e(t)}}$ in \eqref{Definitionhregularized}
as well as the trivial bound $|h^+|\leq r_c$ it holds $\smash{|h^+_{e(t)}|}\leq r_c$.
Then for all $\tilde x\in I_v(t)+\{|x|> r_c+2^{\lfloor \log e^2(t) \rfloor}\}$
and all $k\leq \lfloor \log e^2(t) \rfloor - 1$ we observe that 
$\chi_{\{\sigdist(x,I_v(t))>h^+_{e(t)}(P_{I_v(t)}x)\}}(x)=1$ for all $x\in\Rd$
such that $|x-\tilde x|\leq 2^{k+1}$. In particular, for such $\tilde x$
the third term on the right hand side of \eqref{shortFrequencies0} vanishes
since the corresponding second term in the formula for $\nabla(\nabla\cdot w^+)$
(see \eqref{BoundD2wplus}) does not appear anymore.

Hence, let $\tilde x\in I_v(t)+\{|x|\leq r_c+2^{\lfloor \log e^2(t) \rfloor}\}$ and denote 
by $F$ the tangent plane to the manifold $\{\sigdist(x,I_v(t))=h^+_{e(t)}(P_{I_v(t)}x)\}$ 
at the nearest point to $\tilde x$. We then have for any $\psi\in C^\infty_{cpt}(\Rd)$
\begin{align*}
&\int_\Rd \psi(x) \,\mathrm{d}\nabla \chi_{\{\sigdist(x,I_v(t))>h^+_{e(t)}(P_{I_v(t)}x)\}}(x)
-\int_\Rd \psi(x) \,\mathrm{d}\nabla \chi_{\{\sigdist(x,F)>0\}}(x)
\\&
=\int_{\{\sigdist(x,I_v(t))>h^+_{e(t)}(P_{I_v(t)}x)\}} \nabla \psi(x) \,\mathrm{d}x
-\int_{\{\sigdist(x,F)>0\}} \nabla \psi(x) \,\mathrm{d}x
\end{align*}
and as a consequence
\begin{align*}
&\int_\Rd \theta_k(x-\tilde x) \otimes \big(W(P_{h^+_{e(t)}} x) \cdot  (\nabla P_{I_v(t)})^T(x) 
\nabla h^+_{e(t)}(P_{I_v(t)} x)\big) \,\mathrm{d}\nabla \chi_{\{\sigdist(x,I_v(t))>h^+_{e(t)}(P_{I_v(t)}x)\}}(x)
\\&
=\int_F \theta_k(x-\tilde x) \otimes \big(W(P_{h^+_{e(t)}} x) \cdot  
(\nabla P_{I_v(t)})^T(x) \nabla h^+_{e(t)}(P_{I_v(t)} x)\big) \vec{n}_F \,\dS(x)
\\&~~~~
+\int_\Rd (\chi_{\{\sigdist(x,I_v(t))>h^+_{e(t)}(P_{I_v(t)}x)\}}-\chi_{\{\sigdist(x,F)>0\}})
\\&~~~~~~~~~~~~~~
\nabla \big(\theta_k(x-\tilde x) \otimes \big(W(P_{h^+_{e(t)}} x) \cdot  
(\nabla P_{I_v(t)})^T(x) \nabla h^+_{e(t)}(P_{I_v(t)} x)\big)\big) \,\dx.
\end{align*}
Recall that we defined $\theta_k(x):=\varphi(2^k|x|)\theta(x)$ where
$\varphi\colon\R_+\to [0,1]$ is a smooth function such that $\varphi(s)=0$
whenever $s\notin [-1/2,2]$ and such that $\sum_{k\in\Z}\varphi(2^ks)=1$ for
all $s>0$. Hence,
$|\vec{n}_F \cdot \theta_k(x-\tilde x)|\leq 
C\frac{|\vec{n}_F\cdot(x-\cdot\tilde x)|}{|x-\tilde x|^d}
\leq C\frac{\dist(\tilde x,F)}{|x-\tilde x|^d}$ 
for all $x\in F$. It also follows from the definition
of $\theta$ that $\int_F (\Id-\vec{n}_F\otimes \vec{n}_F) \theta_k(x-\tilde x)\,\dS(x)=0$.
Hence we may solve $(\Id-\vec{n}_F\otimes \vec{n}_F) \theta_k(\cdot\,-\tilde x)
=\Delta_x^{\mathrm{tan}}\tilde\theta_k(\cdot,\tilde x)$ on $B_{2^{k+2}}(\tilde x)\cap F$ 
with vanishing Neumann boundary conditions. In particular, for $\hat\theta_k(x,\tilde x):=
\nabla^{\mathrm{tan}}_x\tilde\theta_k(x,\tilde x)$ we obtain
$(\Id-\vec{n}_F\otimes \vec{n}_F) \theta_k(x-\tilde x)=\nabla^{\mathrm{tan}}_x\cdot\nabla_x\hat\theta_k(x,\tilde x)$.
It follows from elliptic regularity that $\hat\theta(\cdot,\tilde x)$ is $C^\infty$. 
Moreover, since we could have rescaled $\theta_k$ first 
to unit scale, then solved the associated problem on that scale, and finally rescaled the solution back 
to the dyadic scale $k$ we see that $|\hat \theta_k(x,\tilde x)|\leq C (2^k)^{2-d}$. 
We then have by an integration by parts
\begin{align*}
\bigg|\int_F(\Id-\vec{n}_F\otimes \vec{n}_F) \theta_k(x-\tilde x)\otimes \psi \,\dS(x)\bigg|
&\leq \int_{F\cap B_{2^{k+1}}(\tilde x)}|\hat \theta_k(x,\tilde x)||\nabla^{\mathrm{tan}}\psi|\,\dS(x) \\
&\leq C(2^k)^{2-d}\int_{F\cap B_{2^{k+1}}(\tilde x)}|\nabla^{\mathrm{tan}}\psi|\,\dS(x)
\end{align*}
for any $\psi\in C^1_{cpt}(\Rd;\Rd)$. Furthermore, it holds
\begin{align*}
\int_{B_{2^k}(\tilde x)} |\chi_{\{\sigdist(x,I_v(t))>h^+_{e(t)}(P_{I_v(t)}x)\}}-\chi_{\{\sigdist(x,F)>0\}}| \,\dx
&\leq C \|\nabla^2 h^+_{e(t)}\|_{L^\infty} (2^k)^{d+1}.
\end{align*}
Using these considerations in the previous formula, we obtain
\begin{align}\label{auxShortFrequencies}
&\bigg|\int_\Rd \theta_k(x-\tilde x) \otimes \big(W(P_{h^+_{e(t)}} x) \cdot  
(\nabla P_{I_v(t)})^T(x) \nabla h^+_{e(t)}(P_{I_v(t)} x)\big) 
\,\mathrm{d}\nabla \chi_{\{\sigdist(x,I_v(t))>h^+_{e(t)}(P_{I_v(t)}x)\}}(x)\bigg|
\\&\nonumber
\leq
\int_{F\cap B_{2^{k+1}}(\tilde x)\setminus B_{2^{k-1}}(\tilde x)} 
\frac{\dist(\tilde x,F)}{|\tilde x-x|^d} |W(P_{h^+_{e(t)}} x) \cdot  (\nabla P_{I_v(t)})^T(x) \nabla h^+_{e(t)}(P_{I_v(t)} x)| \,\dS(x)
\\&~~\nonumber
+\int_{F\cap B_{2^{k+1}}(\tilde x)} C (2^k)^{2-d} |\nabla (W(P_{h^+_{e(t)}} x)
 \cdot (\nabla P_{I_v(t)})^T(x) \nabla h^+_{e(t)}(P_{I_v(t)}x))| \,\dS(x)
\\&~~\nonumber
+C \|\nabla^2 h^+_{e(t)}\|_{L^\infty} (2^k)^{d+1} \big\|\nabla  
\big(\theta_k(x-\tilde x) \otimes \big(W(P_{h^+_{e(t)}} x) \cdot  (\nabla P_{I_v(t)})^T(x) 
\nabla h^+_{e(t)}(P_{I_v(t)} x)\big)\big)\big\|_{L^\infty}.
\end{align}
Making use of the fact that the integral vanishes for $\dist(\tilde x,F)\geq 2^{k+1}$
and the bounds \eqref{Bound2ndDerivativeSignedDistance} and \eqref{BoundGradientProjection}
we obtain
\begin{align}\label{shortFrequencies4}
&\int_{F\cap B_{2^{k+1}}(\tilde x)\setminus B_{2^{k-1}}(\tilde x)} 
\frac{\dist(\tilde x,F)}{|\tilde x-x|^d} |W(P_{h^+_{e(t)}} x) \cdot  (\nabla P_{I_v(t)})^T(x) \nabla h^+_{e(t)}(P_{I_v(t)} x)| \,\dS(x)
\\&\nonumber
\leq \chi_{\{\dist(\tilde x,F)< 2^{k}\}}Cr_c^{-3}\|v\|_{W^{1,\infty}}\frac{\dist(\tilde x, F)}{2^k}
\int_{F\cap B_{2^{k+1}}(\tilde x)\setminus B_{2^{k-1}}(\tilde x)} 
\frac{|\nabla h^+_{e(t)}(P_{I_v(t)} x)|}{|\tilde x -x|^{d-1}} \,\dS(x).
\end{align}
Using also $|\nabla h^+_{e(t)}|\leq Cr_c^{-2}$
and $|\nabla^2 h^+_{e(t)}|\leq Cr_c^{-4}e(t)^{-1}$ 
from Proposition~\ref{PropositionInterfaceErrorHeightRegularized}, we get
\begin{align}\label{shortFrequencies5}
&\int_{F\cap B_{2^{k+1}}(\tilde x)} C (2^k)^{2-d} |\nabla (W(P_{h^+_{e(t)}} x)
\cdot (\nabla P_{I_v(t)})^T(x) \nabla h^+_{e(t)}(P_{I_v(t)}x))| \,\dS(x)
\\&\nonumber
\leq C 2^k \big(e(t)^{-1} r_c^{-5} \|v\|_{W^{1,\infty}}+ 
r_c^{-4}\|v\|_{W^{2,\infty}(\Rd\setminus I_v(t))} \big)
\end{align}
and
\begin{align}\label{shortFrequencies6}
&C \|\nabla^2 h^+_{e(t)}\|_{L^\infty} (2^k)^{d+1} \big\|\nabla  
\big(\theta_k(x-\tilde x) \otimes \big(W(P_{h^+_{e(t)}} x) \cdot  (\nabla P_{I_v(t)})^T(x) 
\nabla h^+_{e(t)}(P_{I_v(t)} x)\big)\big)\big\|_{L^\infty}
\\&\nonumber
\leq C r_c^{-4} e(t)^{-1} 2^k r_c^{-3} \|v\|_{W^{1,\infty}} 
\\&~~~\nonumber
+ C r_c^{-4} e(t)^{-1} (2^k)^2 \big(e(t)^{-1} r_c^{-5} \|v\|_{W^{1,\infty}}+ 
r_c^{-4}\|v\|_{W^{2,\infty}(\Rd\setminus I_v(t))} \big).
\end{align}
Using \eqref{auxShortFrequencies}, \eqref{shortFrequencies4}, \eqref{shortFrequencies5}
and \eqref{shortFrequencies6} to estimate the term in \eqref{shortFrequencies3},
we get
\begin{align}\label{shortFrequencies7}
III &\leq C\frac{\|v\|_{W^{1,\infty}}}{r_c^3}\sum_{k=-\infty}^{\lfloor \log e^2(t) \rfloor -1}
\chi_{\{\dist(\tilde x,F)< 2^{k}\}}\frac{\dist(\tilde x, F)}{2^k}
\int_{F\cap B_{2^{k+1}}(\tilde x)\setminus B_{2^{k-1}}(\tilde x)} 
\frac{|\nabla h^+_{e(t)}(P_{I_v(t)} x)|}{|\tilde x -x|^{d-1}} \,\dS(x)
\\&\nonumber~~~
+Cr_c^{-9}\|v\|_{W^{2,\infty}(\Rd\setminus I_v(t))}e(t).
\end{align}
In turn, combining this with \eqref{shortFrequencies1} and
\eqref{shortFrequencies2} and gathering also \eqref{middleFrequencies}, \eqref{longFrequencies},
\eqref{EstimateDwplus} as well as the corresponding bounds for 
$\nabla w^-$ and $\nabla (\theta \ast \nabla \cdot w^-)$, we then
finally deduce \eqref{EstimateLinftyw}.

{\bf Step 4: $L^2L^\infty$-estimate for $\nabla w$.} By making use of the precise formula \eqref{Dwplus} for $\nabla w^+$
and the definition of the vector field $W$ in \eqref{definitionW}, we immediately get
\begin{align}\label{L2Linftyw1}
&\int_{I_v(t)}\sup_{y\in[-r_c,r_c]}|(\nabla w^+)^T(x+y\vec{n}_v(x,t))\cdot\vec{n}_v(x,t)|^2\,\dS(x)
\\&\nonumber
\leq Cr_c^{-2}\|v\|_{W^{2,\infty}(\Rd\setminus I_v(t))}\int_{I_v(t)}
|h^+_{e(t)}|^2+|\nabla h^+_{e(t)}|^2\,\dS.
\end{align}
To estimate the contribution from $|\nabla (\theta \ast (\nabla \cdot w^+))|$ we use the same 
dyadic decomposition as in \eqref{dyadicDecomposition}. 
We start with the terms in the range $k=\lfloor\log e^2(t)\rfloor,\ldots,0$.

Let $x\in I_v(t)$ and $y\in (-r_c,r_c)$ be fixed. We abbreviate $\bar{x}:=x+y\vec{n}_v(x,t)$.
Denote by $F_x$ the tangent plane of the interface $I_v(t)$ at the point $x$. Let $\Phi_{F_x}\colon F_x\times \R\to \Rd$
be the diffeomorphism given by $\Phi_{F_x}(\hat x,\hat y):=\hat x+\hat y\vec{n}_{F_x}(\hat x)$.
We start estimating using the change of variables $\Phi_{F_x}$, the bound 
$|\nabla\theta_k(x)|\leq C\chi_{2^{k-1}\leq|x|\leq 2^{k+1}}|x|^{-d}$,
as well as the fact that
$\hat x + y\vec{n}_{F_x}(\hat x)=\hat x + y\vec{n}_v(x,t)$ is exactly the point
on the ray originating from $\hat x\in F_x$ in normal direction which is closest to $\bar{x}$
\begin{align*}
&|\big(\nabla (\theta_k \ast (\nabla \cdot w^+))\big)^T(x+y\vec{n}_v(x,t))|
\\&
\leq
\int_{(B_{2^{k+1}}(\bar x)\setminus B_{2^{k-1}}(\bar x))\cap (I_v(t)+B_{r_c/2})}
|\nabla\theta_k(\bar x{-}\tilde x)||(\nabla\cdot w^+)(\tilde x)|
\,\mathrm{d}\tilde x 
\\&
\leq 
C\int_{F_x\cap (B_{2^{k+1}}(x)\setminus B_{2^{k-1}}(x))} 
\sup_{\hat y\in [-r_c,r_c]}
\frac{|(\nabla\cdot w^+)(\hat x {+} \hat y\vec{n}_{F_x}(\hat x))|}
{|x-\hat x|^{d-1}}\,\dS(\hat x).
\end{align*}
Note that the right hand side is independent of $y$. Hence, we may estimate
with Minkowski's inequality 
\begin{align*}
&\bigg(\int_{I_v(t)}\sup_{y\in[-r_c,r_c]}\bigg|\sum_{k=\lfloor\log e^2(t)\rfloor-1}^0
\nabla (\theta_k \ast (\nabla \cdot w^+))(x+y\vec{n}_v(x,t))\bigg|^2\,\dS(x)\bigg)^\frac{1}{2}
\\&
\leq 
C|\log e(t)|
\bigg(\int_{I_v(t)}\bigg|\int_{F_x} 
\sup_{\hat y\in [-r_c,r_c]}
\frac{|(\nabla\cdot w^+)(\hat x {+} \hat y\vec{n}_{F_x}(\hat x))|}
{|x-\hat x|^{d-1}}\,\dS(\hat x)\bigg|^2\,\dS(x)\bigg)^\frac{1}{2}
\end{align*}
The inner integral is to be understood in the Cauchy principal value sense. To proceed
we use the $L^2$-theory for singular operators of convolution type, 
the precise formula \eqref{divwplus} for $\nabla\cdot w^+$ as well as
\eqref{Bound2ndDerivativeSignedDistance} and \eqref{BoundGradientProjection}
which entails
\begin{align*}
&\bigg(\int_{I_v(t)}\bigg|\int_{F_x} 
\sup_{\hat y\in [-r_c,r_c]}
\frac{|(\nabla\cdot w^+)(\hat x {+} \hat y\vec{n}_{F_x}(\hat x))|}
{|x-\hat x|^{d-1}}\,\dS(\hat x)\bigg|^2\,\dS(x)\bigg)^\frac{1}{2}
\\&
\leq C\bigg(\int_{I_v(t)}\sup_{y\in[-r_c,r_c]}|(\nabla\cdot w^+)(x{+}y\vec{n}_v(x,t))|^2
\dS(x)\bigg)^\frac{1}{2}
\\&
\leq Cr_c^{-1}\|v\|_{W^{2,\infty}(\Rd\setminus I_v(t))}^\frac{1}{2}
\bigg(\int_{I_v(t)} |h^+_{e(t)}|^2 + |\nabla h^+_{e(t)}|^2 \,\dS\bigg)^\frac{1}{2}.
\end{align*}
An application of \eqref{EstimateErrorHeightSmooth} and
the assumption $E[\chi_u,u,V|\chi_v,v](t)\leq e^2(t)$ finally yields
\begin{align}\label{L2Linftyw0}
&\bigg(\int_{I_v(t)}\sup_{y\in[-r_c,r_c]}\bigg|\sum_{k=\lfloor\log e^2(t)\rfloor-1}^0
\nabla (\theta_k \ast (\nabla \cdot w^+))(x+y\vec{n}_v(x,t))\bigg|^2\,\dS(x)\bigg)^{1/2}
\\&\nonumber
\leq Cr_c^{-5}\|v\|_{W^{2,\infty}(\Rd\setminus I_v(t))}|\log e(t)|e(t).
\end{align}

We move on with the contributions in the range $k=1,\ldots,\infty$.
Note that by \eqref{longFrequencies} we may directly infer from \eqref{EstimateErrorHeightSmooth}
and the assumption $E[\chi_u,u,V|\chi_v,v](t)\leq e^2(t)$
\begin{align}\label{L2Linftyw2}
&\int_{I_v(t)}\sup_{y\in[-r_c,r_c]}\Big|\sum_{k=1}^\infty
\big(\nabla (\theta_k \ast (\nabla \cdot w^+))\big)^T(x+y\vec{n}_v(x,t))
\cdot\vec{n}_v(x,t)\Big|^2\,\dS(x)
\\&\nonumber
\leq
 Cr_c^{-8}\|v\|^2_{W^{2,\infty}(\Rd\setminus I_v(t))}\mathcal{H}^{d-1}(I_v(t))^2 e^2(t).
\end{align}

Moreover, the contributions estimated in \eqref{shortFrequencies1} and \eqref{shortFrequencies2}
result in a bound of the form (recall that $e(t)<r_c$)
\begin{align}\label{L2Linftyw3}
Cr_c^{-4}\|v\|^2_{W^{3,\infty}(\Rd\setminus I_v(t))}e^2(t) + Cr_c^{-8}\| v\|_{W^{1,\infty}}^2 e^2(t).
\end{align}
Note that when summing the respective bounds from \eqref{shortFrequencies5} and \eqref{shortFrequencies6}
over the relevant range $k=-\infty,\ldots,\lfloor \log e^2(t) \rfloor - 1$, we actually gain
a factor $e(t)$, i.e., the contributions estimated in \eqref{shortFrequencies5} and \eqref{shortFrequencies6}
then directly yield a bound of the form
\begin{align}\label{L2Linftyw4}
Cr_c^{-18}\|v\|_{W^{2,\infty}(\Rd\setminus I_v(t))}^2e^2(t).
\end{align}
Finally, the contribution from \eqref{shortFrequencies4} may be estimated as follows.
Let $x\in I_v(t)$, $y\in [-r_c,r_c]$ and denote by $F_{\bar x}$ the tangent plane
to the manifold $\{\sigdist(x,I_v(t))=h^+_{e(t)}(P_{I_v(t)}x)\}$ at the nearest point
to $\bar x = x+y\vec{n}_v(x,t)$. In light of \eqref{shortFrequencies4}, we start estimating
for $k \leq \lfloor \log e^2(t) \rfloor - 1$ by using Jensen's inequality, the bound
$|\nabla h^+_{e(t)}|\leq Cr_c^{-2}$ from Proposition~\ref{PropositionInterfaceErrorHeightRegularized},
as well as the fact that $|\bar x - \tilde x|\geq |x-\tilde x|$ for all $\tilde x\in I_v(t)$
(since $x=P_{I_v(t)}\bar x$ is the closest point to $\bar x$ on the interface $I_v(t)$)
\begin{align*}
&\bigg|\int_{F_{\bar x}\cap B_{2^{k+1}}(\bar x)\setminus B_{2^{k-1}}(\bar x)} 
\frac{|\nabla h^+_{e(t)}(P_{I_v(t)} \tilde x)|}{|\bar x - \tilde x|^{d-1}} \,\dS(\tilde x)\bigg|^2
\\&
\leq \int_{F_{\bar x}\cap B_{2^{k+1}}(\bar x)\setminus B_{2^{k-1}}(\bar x)} 
\frac{|\nabla h^+_{e(t)}(P_{I_v(t)} \tilde x)|^2}{|\bar x - \tilde x|^{d-1}} \,\dS(\tilde x)
\\&
\leq Cr_c^{-2(d-1)}\int_{I_v(t)\cap B_{Cr_c^{-2}2^{k+1}}(x)} 
\frac{|\nabla h^+_{e(t)}(\tilde x)|^2}{|x - \tilde x|^{d-1}} \,\dS(\tilde x).
\end{align*}
Since this bound does not depend anymore on $y\in [-r_c,r_c]$, we may estimate
the contributions from \eqref{shortFrequencies4} using Minkowski's inequality
as well as once more the $L^2$-theory for singular operators of convolution type
to reduce everything to the $H^1$-bound \eqref{EstimateErrorHeightSmooth}
for the local interface error heights. All in all, the contributions from 
\eqref{shortFrequencies4} are therefore bounded by
\begin{align}\label{L2Linftyw5}
Cr_c^{-14}\|v\|_{W^{1,\infty}}^2e^2(t).
\end{align}
The asserted bound \eqref{EstimateL2Linftyw} then finally follows from
collecting the estimates \eqref{L2Linftyw1}, \eqref{L2Linftyw0}, \eqref{L2Linftyw2}, 
\eqref{L2Linftyw3}, \eqref{L2Linftyw4} and \eqref{L2Linftyw5} together with
the analogous bounds for $\nabla w^-$ and $\nabla(\theta\ast\nabla\cdot w^-)$. 

{\bf Step 5: Estimate on the time derivative $\partial_t w$.}
To estimate $\partial_t w^+$, we first deduce using \eqref{dtwplus}, $|\partial_t \eta|\leq C r_c^{-1} \|v\|_{L^\infty}$,
$|\frac{\mathrm{d}}{\dt}\vec{n}_v(P_{I_v(t)}x)|\leq \frac{C}{r_c^2}\|v\|_{W^{1,\infty}}$
(which follows from \eqref{TimeEvolutionNormalNoCutoff}), \eqref{transportProblem}
and finally \eqref{TimeDerivativeProjection} that 
\begin{align*}
&\partial_t w^+(x,t) \\
&=\chi_{0\leq \sigdist(x,I_v)\leq h^+_{e(t)}(P_{I_v(t)} x)} W (x) \partial_t \sigdist(x,I_v(t))
\\&~~~
+\eta \, \chi_{\sigdist(x,I_v)> h^+_{e(t)}(P_{I_v(t)} x)} W (P_{h^+_{e(t)}} x) 
\big(\partial_t h^+_{e(t)}(P_{I_v(t)}x)+\partial_t P_{I_v(t)}x \cdot \nabla h^+_{e(t)}(P_{I_v(t)}x)\big)
\\&~~~
+\tilde g^+
\end{align*}
for some vector field $\tilde g^+$ subject to 
$\|\tilde g^+(\cdot,t)\|_{L^2}\leq Cr_c^{-2}(1+\|v\|_{W^{1,\infty}}) (\|v\|_{W^{1,\infty}}+\|\partial_t \nabla v\|_{L^\infty}
+\|v\|_{W^{2,\infty}(\Rd\setminus I_v(t))}) (\int_{I_v(t)} |h^+_{e(t)}(\cdot,t)|^2 \,\dS)^{1/2}$.
Using \eqref{Dwplus}, \eqref{transportProblem} as well as \eqref{TimeDerivativeProjection} we may compute
\begin{align*}
&(v(x)\cdot \nabla)w^+(x,t) 
\\&
+ \chi_{0\leq \sigdist(x,I_v(t))\leq h^+_{e(t)}(P_{I_v(t)} x)} W (x) \partial_t \sigdist(x,I_v(t)) 
\\&
+\eta \, \chi_{\sigdist(x,I_v(t))> h^+_{e(t)}(P_{I_v(t)} x)} W (P_{h^+_{e(t)}} x) 
\partial_t P_{I_v(t)}x \cdot \nabla h^+_{e(t)}(P_{I_v(t)}x)
\\&
=\eta\, \chi_{\sigdist(x,I_v(t))>h^+_{e(t)}(P_{I_v(t)} x)} W(P_{h^+_{e(t)}} x) 
(\mathrm{Id}{-}\vec{n}_v\otimes\vec{n}_v)v(P_{I_v(t)}x)
\cdot\nabla h^+_{e(t)}(P_{I_v(t)}x)
\\&~~~
+\chi_{0\leq \sigdist(x,I_v(t))\leq h^+_{e(t)}(P_{I_v(t)} x)} W(x)
\big((v(x)-v(P_{I_v(t)}x)\big)\cdot\vec{n}_v(P_{I_v(t)})
\\&~~~
+ \eta\, \chi_{\sigdist(x,I_v(t))>h^+_{e(t)}(P_{I_v(t)} x)} W(P_{h^+_{e(t)}} x) 
\big(\nabla P_{I_v(t)}(x)v(x)-v(P_{I_v(t)}x)\big)\cdot\nabla h^+_{e(t)}(P_{I_v(t)}x)
\\&~~~
+ \tilde g_1^+,
\end{align*}
for some $\|\tilde g_1^+\|_{L^2}\leq
Cr_c^{-2}\|v\|_{W^{2,\infty}(\Rd\setminus I_v(t))}(\int_{I_v(t)} 
|h^+_{e(t)}(\cdot,t)|^2 + |\nabla h^+_{e(t)}(\cdot,t)|^2 \,\dS)^{\frac{1}{2}}$.
This computation in turn implies
\begin{align}\label{EstimatedtwplusFirst}
&\partial_t w^+(x,t) 
\\&\nonumber
=-(v(x)\cdot \nabla)w^+(x,t)
\\&~~~\nonumber
+\eta \, \chi_{\sigdist(x,I_v(t))> h^+_{e(t)}(P_{I_v(t)} x)} W (P_{h^+_{e(t)}} x) 
\big(\partial_t h^+_{e(t)}(P_{I_v(t)}x)+(\mathrm{Id}{-}\vec{n}_v\otimes\vec{n}_v)v(P_{I_v(t)}x)
\cdot\nabla h^+_{e(t)}(P_{I_v(t)}x)\big)
\\&~~~\nonumber
+g^+
\end{align}
for some $g^+$ with
\begin{align*}
\|g^+\|_{L^2}
&\leq C r_c^{-2} (1{+}\|v\|_{W^{1,\infty}})
(\|\partial_t \nabla v\|_{L^\infty}{+}\|v\|_{W^{2,\infty}(\Rd\setminus I_v(t))})
\bigg(\int_{I_v(t)} |h^+_{e(t)}|^2 {+} |\nabla h^+_{e(t)}|^2 \,\dS\bigg)^{\frac{1}{2}}.
\end{align*}
We now aim to make use of \eqref{HeightFunctionRegularizeddtEstimate} to further
estimate the second term in the right hand side of \eqref{EstimatedtwplusFirst}.
To establish the corresponding $L^2$- resp.\ $L^\frac{4}{3}$-contributions,
we first need to perform an integration by parts in order to use
\eqref{HeightFunctionRegularizeddtEstimate}. The resulting curvature
term as well as all other terms which do not appear in the third term
of \eqref{EstimatedtwplusFirst} can be directly bounded by a term
whose associated $L^2$-norm is controlled by $Cr_c^{-1}\|v\|_{W^{1,\infty}}
\|v\|_{W^{2,\infty}(\Rd\setminus I_v(t))}(\int_{I_v(t)} 
|h^+_{e(t)}(\cdot,t)|^2 {+} |\nabla h^+_{e(t)}(\cdot,t)|^2 \,\dS)^{\frac{1}{2}}$.
Hence, using \eqref{HeightFunctionRegularizeddtEstimate} in \eqref{EstimatedtwplusFirst}
implies
\begin{align}
\label{Estimatedtwplus}
\partial_t w^+(x,t)
&=-(v\cdot \nabla)w^+(x,t)
+\bar g^+
+\hat g^+
\end{align}
with the corresponding $L^2$-bound
\begin{align}\label{aux1TimeDerivativeW}
&\|\bar g^+\|_{L^2(\Rd)}
\\&\nonumber
\leq C \frac{1{+}\|v\|_{W^{1,\infty}}}{r_c^2}
(\|\partial_t \nabla v\|_{L^\infty}{+}\|v\|_{W^{2,\infty}(\Rd\setminus I_v(t)})
\bigg(\int_{I_v(t)} |h^+_{e(t)}|^2 {+} |\nabla h^+_{e(t)}|^2 \,\dS\bigg)^\frac{1}{2}
\\&~~~\nonumber
+C\frac{\|v\|_{W^{1,\infty}}(1+\|v\|_{W^{1,\infty}})}{e(t)r_c}
\int_{\Rd} 1-\xi\cdot\frac{\nabla\chi_u}{|\nabla\chi_u|} \,\mathrm{d}|\nabla\chi_u|
\\&~~~\nonumber
+Cr_c^{-2}\|v\|_{W^{1,\infty}}^2(1+e'(t))
\big(\|h^\pm(\cdot,t)\|_{L^2(I_v(t))}
+\|\nabla h^\pm_{e(t)}(\cdot,t)\|_{L^2(I_v(t))}\big)
\\&~~~\nonumber
+C\frac{\|v\|_{W^{1,\infty}}(1{+}\|v\|_{W^{2,\infty}(\Rd\setminus I_v(t))})}{r_c}
\bigg(\int_{\Rd}|\chi_u{-}\chi_v| \min\Big\{\frac{\dist(x,I_v(t))}{r_c},1\Big\} \,\dx\bigg)^\frac{1}{2}
\\&~~~\nonumber
+C\|v\|_{W^{1,\infty}}\bigg(\int_{I_v(t)}|u-v|^2\,\dS\bigg)^\frac{1}{2}
\end{align}
and $L^\frac{4}{3}$-estimate
\begin{align}\label{aux2TimeDerivativeW}
&\|\hat g^+\|_{L^{\frac{4}{3}}(\Rd)}
\\&\nonumber
\leq C\frac{\|v\|_{W^{1,\infty}}}{e(t)r_c^2}\bigg(\int_{I_v(t)} |\bar{h}^\pm|^4 \,\dS \bigg)^\frac{1}{4}
\bigg(\int_{I_v(t)}  \sup_{y\in [-r_c,r_c]} |u{-}v|^2(x{+}y\vec{n}_v(x,t),t) \,\dS(x)\bigg)^\frac{1}{2} 
\\&~~~\nonumber
+C\frac{\|v\|_{W^{1,\infty}}(1+\|v\|_{W^{1,\infty}})}{e(t)}
\int_{\Rd} 1-\xi\cdot\frac{\nabla\chi_u}{|\nabla\chi_u|} \,\mathrm{d}|\nabla\chi_u|.
\end{align}
In both bounds, we add and subtract the compensation function $w$ and therefore obtain
together with \eqref{EstimateL2supw} and \eqref{BoundL2Linfty}
\begin{align}\label{aux3TimeDerivativeW}\nonumber
\int_{I_v(t)}|u-v|^2\,\dS &\leq \int_{I_v(t)}  \sup_{y\in [-r_c,r_c]} |u{-}v|^2(x{+}y\vec{n}_v(x,t),t) \,\dS(x)
\\&\nonumber
\leq \int_{I_v(t)}  \sup_{y\in [-r_c,r_c]} |u-v-w|^2(x+y\vec{n}_v(x,t),t) \,\dS(x)
\\&~~~\nonumber
+\int_{I_v(t)}  \sup_{y\in [-r_c,r_c]} |w(x+y\vec{n}_v(x,t),t)|^2\,\dS(x)
\\&
\leq C r_c^{-4}\|v\|_{W^{2,\infty}(\Rd\setminus I_v(t))}^2
 \int_{I_v(t)} |h^\pm_{e(t)}|^2 + |\nabla h^\pm_{e(t)}|^2 \,\dS
\\&~~~\nonumber
+C(\|u{-}v{-}w\|_{L^2}\|\nabla(u{-}v{-}w)\|_{L^2}+\|u{-}v{-}w\|_{L^2}^2).
\end{align}
Analogous estimates may be derived for $w^-$.
We therefore proceed with the terms related to $\theta\ast\nabla\cdot w^\pm$.
First of all, note that the singular integral operator $(\theta \ast \nabla \cdot)$ 
satisfies (see Theorem~\ref{SingularIntegralOperator})
\begin{align}\label{aux4TimeDerivativeW}
\|\theta \ast \nabla \cdot \hat g\|_{L^{\frac{4}{3}}(\Rd)} \leq C \|\hat g\|_{L^{\frac{4}{3}}(\Rd)}, \quad
\|\theta \ast \nabla \cdot \bar g\|_{L^{2}(\Rd)} \leq C \|\bar g\|_{L^{2}(\Rd)}.
\end{align}
Furthermore, to estimate 
$\|\theta \ast \nabla \cdot ((v\cdot \nabla)w^+) - (v\cdot \nabla) (\theta \ast \nabla \cdot w^+)\|_{L^{2}(\Rd)}$ 
we first replace $v$ with its normal velocity $V_{\vec{n}}(x):=(v(x)\cdot\vec{n}_v(P_{I_v(t)}x))\vec{n}_v(P_{I_v(t)}x)$.
We want to exploit the fact that the vector field $V_{\vec{n}}$ has bounded derivatives up to second order, see
\eqref{Bound1stDerivativeNormalVelocity} and \eqref{BoundDerivativesNormalVelocity}. 
Moreover, the kernel $\nabla^2 \theta(x-\tilde x) \otimes (\tilde x-x)$ gives rise to a singular 
integral operator of convolution type, as does $\nabla \theta$. To see this, we need
to check whether its average over $\Sd$ vanishes. We write 
$x\otimes\nabla^2\theta(x) = \nabla F(x)-\delta_{ij}e_i\otimes\nabla\theta\otimes e_j$,
where $F(x) = x\otimes\nabla\theta(x)$. Now, since
$\nabla\theta$ is homogeneous of degree $-d$, $F$ itself is homogeneous of degree $-(d-1)$. Hence,
we compute $\int_{B_1\setminus B_r}\nabla F\,\dx=\int_{\Sd}\vec{n}\otimes F\,\dS - \int_{r\Sd}\vec{n}\otimes F\,\dS = 0$
for every $0<r<1$. Passing to the limit $r\to 1$ shows that $\nabla F$, and therefore also $\nabla^2\theta(x)\otimes x$,
have vanishing average on $\Sd$. We may now compute (where the integrals are well defined in the Cauchy
principal value sense due to the above considerations) for almost every $x\in\Rd$
\begin{align*}
&\int_\Rd \nabla \theta(x-\tilde x) \cdot (V_{\vec{n}}(\tilde x,t)\cdot \nabla_{\tilde x}) 
w^+(\tilde x,t)- (V_{\vec{n}}(x,t)\cdot \nabla_x) \nabla \theta(x-\tilde x) \cdot w^+(\tilde x,t) \,\mathrm{d}\tilde x
\\&
=\int_\Rd \nabla \theta(x-\tilde x) ((V_{\vec{n}}(\tilde x,t)-V_{\vec{n}}(x,t))
\cdot \nabla_{\tilde x}) w^+(\tilde x,t) \,\mathrm{d}\tilde x
\\&
=\int_\Rd \nabla^2 \theta(x-\tilde x) : (V_{\vec{n}}(\tilde x,t)-V_{\vec{n}}(x,t)-(\tilde x-x)\cdot 
\nabla V_{\vec{n}}(\tilde x,t)) \otimes w^+(\tilde x,t) \,\mathrm{d}\tilde x
\\&~~~
-\int_\Rd \nabla \theta(x-\tilde x) \cdot (\nabla \cdot V_{\vec{n}})(\tilde x,t) w^+(\tilde x,t) \,\mathrm{d}\tilde x
\\&~~~
+\int_\Rd \nabla^2 \theta(x-\tilde x) : ((\tilde x-x) \cdot \nabla) 
V_{\vec{n}}(\tilde x,t) \otimes \,w^+(\tilde x,t) \,\mathrm{d}\tilde x.
\end{align*}
Note that we have 
$|V_{\vec{n}}(\tilde x,t)-V_{\vec{n}}(x,t)-(\tilde x-x)\cdot 
\nabla V_{\vec{n}}(x,t)|\leq \|\nabla^2 V_{\vec{n}}\|_{L^\infty} |\tilde x-x|^2$ and $|V_{\vec{n}}(\tilde x,t)-V_{\vec{n}}(x,t)-(\tilde x-x)\cdot 
\nabla V_{\vec{n}}(x,t)|\leq ||\nabla V_{\vec n}||_{L^\infty} |\tilde x-x|$.
We then estimate using Young's inequality for convolutions and $|\nabla^2 \theta(x)|\leq |x|^{-d-1}$
\begin{align}\label{aux7TimeDerivativeW}
&\int_{\Rd\setminus B_{3R}(0)}\bigg|\int_{B_R(0)} \nabla^2 \theta(x-\tilde x) : 
(V_{\vec{n}}(\tilde x)-V_{\vec{n}}(x)-(\tilde x-x)\cdot 
\nabla V_{\vec{n}}(\tilde x)) \otimes w^+(\tilde x) \,\mathrm{d}\tilde x\bigg|^2\dx
\\&\nonumber
\leq C\|\nabla V_{\vec{n}}\|_{L^\infty}^2
\int_{\Rd\setminus B_{3R}(0)}\bigg|\int_{B_R(0)} 
\frac{1}{|x-\tilde x|^{d}}
|w^+(\tilde x)| \,\mathrm{d}\tilde x\bigg|^2\dx
\\&\nonumber
\leq C\|\nabla V_{\vec{n}}\|_{L^\infty}^2 ||\,|\cdot|^{-d}\,||_{L^2(\Rd\setminus B_R)}^2 \bigg|\int_{B_R(0)}|w^+| \,\dx\bigg|^2
\\&\nonumber
\leq C R^{-d} R^d \int_{B_R(0)}|w^+|^2 \,\dx.
\end{align}
As a consequence, we obtain from \eqref{aux7TimeDerivativeW}, Young's inequality for convolutions,
\eqref{L2Wplus} as well as \eqref{BoundDerivativesNormalVelocity}
\begin{align}\label{aux8TimeDerivativeW}
&\int_{\Rd}\bigg|\int_{\Rd} \nabla^2 \theta(x-\tilde x) : 
(V_{\vec{n}}(\tilde x)-V_{\vec{n}}(x)-(\tilde x-x)\cdot 
\nabla V_{\vec{n}}(\tilde x)) \otimes w^+(\tilde x) \,\mathrm{d}\tilde x\bigg|^2\dx
\\&\nonumber
\leq C\|\nabla^2 V_{\vec{n}}\|_{L^\infty}^2 
\int_{B_{3R}(0)}\bigg|\int_{\Rd}\frac{|w^+(\tilde x)|}{|x-\tilde x|^{d-1}}\,\mathrm{d}\tilde x\bigg|^2\,\dx
+C\|\nabla V_{\vec{n}}\|_{L^\infty}^2 \int_{B_R(0)}|w^+|^2\,\dx
\\&\nonumber
\leq Cr_c^{-4}\|v\|_{W^{2,\infty}(\Rd\setminus I_v(t))}^2(1{+}R^2)\int_{I_v(t)} |h^+_{e(t)}|^2 \,\dS.
\end{align}
Applying Theorem~\ref{SingularIntegralOperator} to the singular integral operators
$\nabla\theta$ resp.\ $\nabla^2\theta\otimes x$ as well as making use of
\eqref{Bound1stDerivativeNormalVelocity}, \eqref{L2Wplus} and \eqref{aux8TimeDerivativeW}
we then obtain the estimate
\begin{align}\label{aux5TimeDerivativeW}
&\int_\Rd |\theta \ast \nabla \cdot ((V_{\vec{n}}\cdot \nabla)w^+)
-(V_{\vec{n}}\cdot \nabla)(\theta \ast \nabla \cdot w^+)|^2 \,\dx
\\&\nonumber
\leq Cr_c^{-4}\|v\|_{W^{2,\infty}(\Rd\setminus I_v(t))}^2(1{+}R^2)\int_{I_v(t)} |h^+_{e(t)}|^2 \,\dS
\\&\nonumber~~~
+C\|\nabla V_{\vec{n}}\|_{L^\infty}^2 \int_{\Rd}|w^+|^2\,\dx
\\&\nonumber
\leq Cr_c^{-4}\|v\|_{W^{2,\infty}(\Rd\setminus I_v(t))}^2(1{+}R^2)\int_{I_v(t)} |h^+_{e(t)}|^2 \,\dS.
\end{align}
It remains to estimate 
$\|\theta \ast \nabla \cdot ((V_{\mathrm{tan}}\cdot \nabla)w^+) 
- (V_{\mathrm{tan}}\cdot \nabla) (\theta \ast \nabla \cdot w^+)\|_{L^{2}(\Rd)}$ 
with $V_{\mathrm{tan}}(x)=(\mathrm{Id}-\vec{n}_v(P_{I_v(t)}x)\otimes\vec{n}_v(P_{I_v(t)}x))v(x)$ 
denoting the tangential velocity of $v$. To this end, note that we may rewrite
\begin{align*}
&\int_\Rd \nabla \theta(x-\tilde x) \cdot (V_{\mathrm{tan}}(\tilde x,t)\cdot \nabla_{\tilde x}) 
w^+(\tilde x,t) - (\nabla\cdot w^+(\tilde x,t))(V_{\mathrm{tan}}(x,t)\cdot \nabla_x)\theta(x-\tilde x) \,\mathrm{d}\tilde x
\\&
=\int_\Rd \nabla \theta(x{-}\tilde x) \big(\nabla w^+(\tilde x) {-} 
\chi_{0\leq \sigdist(\tilde x,I_v(t))\leq h^+_{e(t)}(P_{I_v(t)} \tilde x)} W(\tilde x)
\otimes\vec{n}_v(P_{I_v(t)}\tilde x)\big)V_{\mathrm{tan}}(\tilde x,t)\,\mathrm{d}\tilde x
\\&~~~
-\int_{\Rd}(\nabla\cdot w^+(\tilde x,t))(V_{\mathrm{tan}}(x,t)\cdot \nabla_x)
\theta(x-\tilde x) \,\mathrm{d}\tilde x.
\end{align*}
Using Theorem~\ref{SingularIntegralOperator}, \eqref{auxGradientCompensationWplus} as well
as \eqref{auxL2divWplus} we then obtain
\begin{align}\label{aux6TimeDerivativeW}\nonumber
&\|\theta \ast \nabla \cdot ((V_{\mathrm{tan}}\cdot \nabla)w^+) 
- (V_{\mathrm{tan}}\cdot \nabla) (\theta \ast \nabla \cdot w^+)\|_{L^{2}(\Rd)}^2
\\&
\leq Cr_c^{-4}\|v\|_{L^\infty}^2\|v\|_{W^{2,\infty}(\Rd\setminus I_v(t))}^2
\int_{I_v(t)} |h^+_{e(t)}|^2 + |\nabla h^+_{e(t)}|^2 \,\dS.
\end{align}
Putting all the estimates \eqref{aux1TimeDerivativeW}, \eqref{aux2TimeDerivativeW},
\eqref{aux3TimeDerivativeW}, \eqref{aux4TimeDerivativeW}, \eqref{aux5TimeDerivativeW}
and \eqref{aux6TimeDerivativeW} together, we get
\begin{align*}
\partial_t w(x,t)+(v\cdot \nabla)w(x,t) = g + \hat g
\end{align*}
with the asserted bounds. This concludes the proof.
\end{proof}

\subsection{Estimate for the additional surface tension terms}
\label{SectionEstimateAsurTen}

Having established all the relevant properties of the compensating vector field
$w$ in Proposition~\ref{PropositionCompensationFunction}, 
we can move on with the post-processing of the additional terms
in the relative entropy inequality from Proposition~\ref{PropositionRelativeEntropyInequalityFull}.
To this end, we start with the additional surface tension terms given by
\begin{align}\label{aux0AsurTen}
A_{surTen} &= 
-\sigma\int_0^T\int_{\R^d\times\Sb^{d-1}}(s{-}\xi)
\cdot\big((s{-}\xi)\cdot\nabla\big) w \,\mathrm{d}V_t(x,s)\,\dt 
\\&~~~\nonumber
+\sigma\int_0^T\int_{\R^d}(1-\theta_t)\,
\xi\cdot(\xi\cdot\nabla)w\,\mathrm{d}|V_t|_{\Sb^{d-1}}(x)\,\dt
\\&~~~\nonumber
+\sigma\int_0^T\int_\Rd(\chi_u-\chi_v)(w\cdot\nabla)(\nabla\cdot\xi)\,\dx\,\dt
\\&~~~\nonumber
+\sigma\int_0^T\int_\Rd (\chi_u-\chi_v) \nabla w :\nabla \xi^T \,\dx\,\dt
\\&~~~\nonumber
-\sigma\int_0^T\int_\Rd \xi \cdot \big((\vec{n}_u-\xi) \cdot \nabla\big) w \,\mathrm{d}|\nabla \chi_u|\,\dt
\\&\nonumber
=: I + II + III + IV + V.
\end{align}
A precise estimate for these terms is the content of the following result.

\begin{lemma}
Let the assumptions and notation of Proposition~\ref{PropositionCompensationFunction} be in place. 
In particular, we assume that there exists a $C^1$-function
$e\colon[0,\Tmax)\to [0,r_c)$ such that the relative entropy is bounded by $E[\chi_u,u,V,|\chi_v,v](t)\leq e^2(t)$.
Then the additional surface tension terms $A_{surTen}$ are bounded by a Gronwall-type term
\begin{align}\label{GronwallAsurTen}
A_{surTen} &\leq
\frac{C}{r_c^{10}}(1+\|v\|_{L^\infty_tW^{2,\infty}_x(\Rd\setminus I_v(t))}^2
+\|v\|_{L^\infty_tW^{3,\infty}_x(\Rd\setminus I_v(t))})
\\&\nonumber~~~~~~~~~~~~~~~~~~
\int_0^T(1+|\log e(t)|)E[\chi_u,u,V|\chi_v,v](t)\,\mathrm{d}t
\\&\nonumber~~~~
+\frac{C}{r_c^{10}}(1+\|v\|_{L^\infty_tW^{2,\infty}_x(\Rd\setminus I_v(t))}^2
+\|v\|_{L^\infty_tW^{3,\infty}_x(\Rd\setminus I_v(t))})
\\&\nonumber~~~~~~~~~~~~~~~~~~
\int_0^T (1+|\log e(t)|)e(t)E[\chi_u,u,V|\chi_v,v]^\frac{1}{2}(t)\,\mathrm{d}t.
\end{align}
\end{lemma}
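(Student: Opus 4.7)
The plan is to estimate each of the five terms $I$--$V$ in \eqref{aux0AsurTen} separately, invoking the $L^\infty$- and $L^2$-bounds on $w$ and $\nabla w$ from Proposition~\ref{PropositionCompensationFunction} together with the coercivity properties of the relative entropy functional collected earlier in the section. Terms $I$, $II$, and $V$ share a common structure: the integrands $|s-\xi|^2$, $1-\theta_t$, and $|\vec{n}_u-\xi|^2$ are all controlled in integrated form by $E(t)$ via \eqref{controlSquaredErrorNormalVarifold} and \eqref{controlByTiltExcess}, so pulling $\nabla w$ out in $L^\infty$ and invoking \eqref{EstimateLinftyw} yields a contribution of the form $C\|\nabla w\|_{L^\infty}\int_0^T E(t)\,\dt$, which produces precisely the $(1+|\log e(t)|)\,E(t)$ Gronwall integrand. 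For $V$, since only the linear quantity $|\vec{n}_u-\xi|$ is available under the integral against $|\nabla\chi_u|$, a case distinction based on whether $|\vec{n}_u-\xi|$ is large or small splits the contribution into a quadratic (Gronwall) part and a linear (Cauchy--Schwarz) part, the latter generating a lower-order $e(t)E(t)^{1/2}$-type piece.

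Term $III$ is bounded using $\|\nabla(\nabla\cdot\xi)\|_{L^\infty}\leq Cr_c^{-3}$ and the fact that $\nabla\xi$ is supported in a tubular neighborhood of $I_v(t)$ of width $r_c$, so the task reduces to bounding $\int|\chi_u-\chi_v|\,|w|\,\dx$. Near the interface I exploit the pointwise estimate $|w|\leq C\|v\|_{W^{1,\infty}}|h^\pm_{e(t)}(P_{I_v(t)}x)|$ (which is manifest from \eqref{Definitionwplus}--\eqref{Definitionwminus}), combined with the layer bound $\int_{-r_c}^{r_c}|\chi_u-\chi_v|(\Phi_t(x,y),t)\,\dy\leq h^+(x,t)+h^-(x,t)$ and Cauchy--Schwarz; after a change of variables, this reduces to a product of $L^2(I_v(t))$-norms of $h^\pm$ and $h^\pm_{e(t)}$, both of which are controlled by $E(t)$ through \eqref{HeightFunctionEstimate} and \eqref{EstimateErrorHeightSmooth}. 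The contribution from the non-local Biot--Savart correction $\theta\ast\nabla\cdot w^\pm$, which does not have compact support near $I_v(t)$, is handled via its global $L^2$-bound from \eqref{Estimatew} paired with the weighted volume control in $E(t)$.

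The most delicate term is $IV$. The key observation is the algebraic cancellation
\[
\bigl(W\otimes\vec{n}_v(P_{I_v(t)}x)\bigr):\nabla\xi^T=0:
\]
since $\xi=\zeta(\sigdist/r_c)\nabla\sigdist$, one computes using \eqref{sdistAux2} and \eqref{sdistAux4} that $\nabla\xi^T\vec{n}_v=r_c^{-1}\zeta'(\sigdist/r_c)\vec{n}_v$, which is orthogonal to the tangential vector $W$ from \eqref{definitionW}. To exploit this, I decompose
\[
\chi_u-\chi_v=\bigl(\chi_u-\chi_{v,h^+_{e(t)},h^-_{e(t)}}\bigr)+\bigl(\chi_{v,h^+_{e(t)},h^-_{e(t)}}-\chi_v\bigr).
\]
The first summand is paired against $\|\nabla w\|_{L^\infty}\|\nabla\xi\|_{L^\infty}$ and bounded in $L^1$ by the improved approximation estimate \eqref{ErrorImprovedInterfaceApproximationSmooth}, which already packages the contribution as the sum of a Gronwall-type piece and the lower-order $e(t)E(t)^{1/2}$ piece. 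The second summand is supported exactly in the strip where, by \eqref{EstimateDw}, $\nabla w$ is $L^2$-close to the rank-one tensor $\chi_{\{0\leq\sigdist\leq h^+_{e(t)}\}}W\otimes\vec{n}_v(P_{I_v(t)}x)$ (and analogously on the other side); the cancellation above removes the leading part, and the $L^2$-residual is estimated against $\|\nabla\xi\|_{L^\infty}\leq Cr_c^{-1}$ using again \eqref{EstimateErrorHeightSmooth} to absorb into $E(t)$. The main obstacle is precisely this bookkeeping for $IV$: the cancellation must be exploited sharply enough to avoid losing a power of $r_c$ on the Gronwall part, and to preserve the $e(t)$ prefactor in front of the lower-order contribution so that it combines with the analogous pieces from $V$ and the approximation error in the stated second integral of \eqref{GronwallAsurTen}. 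The remaining terms are comparatively routine once the bounds of Proposition~\ref{PropositionCompensationFunction} are in hand.
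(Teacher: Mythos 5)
Your treatment of $I$, $II$, and $III$ is essentially the paper's. The cancellation $(W\otimes\vec{n}_v):\nabla\xi^T=0$ that you identify for $IV$ is genuine (since $W$ is tangential and $\vec{n}_v\cdot\partial_i\vec{n}_v=0$), and it is a legitimate alternative entry point; however, the bookkeeping you sketch is lossier than what the statement requires. Pairing the strip $\chi_v-\chi_{v,h^\pm_{e(t)}}$ with the residual $\nabla w-\chi_{\mathrm{strip}}W\otimes\vec{n}_v$ via Cauchy--Schwarz produces a factor of order $E^{1/4}$ from the strip measure and $E^{1/2}$ from the \eqref{EstimateDw}-residual, i.e.\ $E^{3/4}$, which is \emph{not} dominated by $E(t)+e(t)E(t)^{1/2}$ (indeed $E^{3/4}\leq e^{1/2}E^{1/2}$, and $e^{1/2}\geq e$). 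The paper avoids this by integrating by parts in the $(\chi_v-\chi_{v,h^\pm_{e(t)}})$-part, which trades $\nabla w$ for $w$ and lets one use the $L^2_\mathrm{tan}L^\infty_\mathrm{nor}$ bound \eqref{EstimateL2supw} on $w$ rather than any pointwise bound on $\nabla w$, giving a clean $E(t)$-contribution.

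The genuine gap is in $V$. Your case distinction on whether $|\vec{n}_u-\xi|$ is large or small, followed by Cauchy--Schwarz and $\|\nabla w\|_{L^\infty}$, cannot produce the prefactor $e(t)$ in the lower-order term. Concretely, in the ``$|\vec{n}_u-\xi|$ small'' regime Cauchy--Schwarz yields
\[
\Big(\int|\vec{n}_u-\xi|^2\,\mathrm{d}|\nabla\chi_u|\Big)^{1/2}\Big(\int|(\nabla w)^T\xi|^2\,\mathrm{d}|\nabla\chi_u|\Big)^{1/2},
\]
and if the second factor is controlled only by $\|\nabla w\|_{L^\infty}|\nabla\chi_u|(\Rd)^{1/2}$, the resulting bound is of size $|\log e(t)|\,E^{1/2}$ with \emph{no} factor $e(t)$; this cannot be absorbed into the Gronwall argument. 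The paper's estimate of $V$ crucially uses the much finer estimate \eqref{EstimateL2Linftyw}, which controls $\int_{I_v(t)}\sup_y|(\nabla w)^T(x+y\vec{n}_v)\vec{n}_v|^2\,\mathrm{d}S$ by a quantity of size $|\log e(t)|\,e(t)^2$. To apply it, the paper first performs a measure-theoretic decomposition of the reduced boundary (using the structure of one-dimensional sections of Caccioppoli sets, Theorem~\ref{TheoG}, and the co-area formula): on the part where each normal ray meets the reduced boundary in at most one point with $\vec{n}_v\cdot\vec{n}_u\geq\tfrac12$, the $\mathcal{H}^{d-1}$-integral over $\partial^*E^+$ can be re-parametrised over $I_v(t)$ and dominated by the $\sup_y$-integral, so \eqref{EstimateL2Linftyw} applies. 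The part where sections have more than two boundary points is handled by the observation that every second boundary point must have $\vec{n}_v\cdot\vec{n}_u<0$, so the $\mathcal{H}^{d-1}$-mass of the ``good'' boundary points is dominated by the tilt-excess, and here $\|\nabla w\|_{L^\infty}$ suffices. Your proposal is missing \eqref{EstimateL2Linftyw} entirely as well as the section structure needed to deploy it; with only $\|\nabla w\|_{L^\infty}$ available, the bound you would obtain for $V$ is too weak.
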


\begin{proof}
We estimate term by term in \eqref{aux0AsurTen}. A straightforward estimate for the first two terms 
using also the coercivity property \eqref{controlSquaredErrorNormalVarifold} yields
\begin{align}\label{aux1AsurTen}
I + II &\leq C\int_0^T\|\nabla w(t)\|_{L^\infty_{x}}\int_{\R^d\times\Sb^{d-1}}|s-\xi|^2\,\mathrm{d}V_t(x,s)\,\dt 
\\&~~~\nonumber
+C\int_0^T\|\nabla w(t)\|_{L^\infty_{x}}\int_{\R^d}(1-\theta_t)\,\mathrm{d}|V_t|_{\Sb^{d-1}}(x)\,\dt
\\&\nonumber
\leq C\int_0^T\|\nabla w(t)\|_{L^\infty_{x}}E[\chi_u,u,V|\chi_v,v](t)\,\mathrm{d}t.
\end{align}
Making use of \eqref{Bound2ndDerivativeSignedDistance}, a change of variables $\Phi_t$, 
H\"older's and Young's inequality, \eqref{EstimateL2supw}, \eqref{L2hNoCutoff},
\eqref{EstimateErrorHeightSmooth} as well as the coercivity property \eqref{coercivityWeight}
the term $III$ may be bounded by
\begin{align}\label{aux2AsurTen}
III &\leq\frac{C}{r_c^2}\int_0^T\int_{I_v(t)}\sup_{y\in [-r_c,r_c]}|w(x{+}y\vec{n}_v(x,t))|
\int_{-r_c}^{r_c}|\chi_u{-}\chi_v|(x{+}y\vec{n}_v(x,t))\,\dy\,\dS\,\dt
\\&\nonumber
\leq \frac{C}{r_c^2}\int_0^T\int_{I_v(t)}\sup_{y\in [-r_c,r_c]}|w(x{+}y\vec{n}_v(x,t))|^2\,\dS\,\dt
\\&~~~\nonumber
+\frac{C}{r_c^2}\int_0^T\int_{I_v(t)}\bigg|\int_{-r_c}^{r_c}|\chi_u{-}\chi_v|(x{+}y\vec{n}_v(x,t))\,\dy\bigg|^2\,\dS\,\dt
\\&\nonumber
\leq \frac{C}{r_c^6}\|v\|_{L^\infty_tW^{2,\infty}_x(\Rd\setminus I_v(t))}^2
\int_0^T\int_{I_v(t)} |h^\pm_{e(t)}|^2 + |\nabla h^\pm_{e(t)}|^2 \,\dS\,\dt
\\&~~~\nonumber
+\frac{C}{r_c^2}\int_0^T\int_\Rd |\chi_u{-}\chi_v| \min\Big\{\frac{\dist(x,I_v(t))}{r_c},1\Big\} \,\dx\,\dt
\\&\nonumber
\leq \frac{C}{r_c^{10}}\|v\|_{L^\infty_tW^{2,\infty}_x(\Rd\setminus I_v(t))}^2
\int_0^T\int_\Rd 1-\xi\cdot \frac{\nabla\chi_u}{|\nabla\chi_u|} \,\mathrm{d}|\nabla\chi_u|\,\dt
\\&~~~\nonumber
+ \frac{C}{r_c^{10}}(1+\|v\|_{L^\infty_tW^{2,\infty}_x(\Rd\setminus I_v(t))}^2)
\int_0^T\int_\Rd |\chi_u{-}\chi_v| \min\Big\{\frac{\dist(x,I_v(t))}{r_c},1\Big\} \,\dx\,\dt
\\&\nonumber
\leq \frac{C}{r_c^{10}}(1+\|v\|_{L^\infty_tW^{2,\infty}_x(\Rd\setminus I_v(t))}^2)
\int_0^TE[\chi_u,u,V|\chi_v,v](t)\,\mathrm{d}t.
\end{align}
For the term $IV$, we first add zero, then perform an integration by parts
which is followed by an application of H\"older's inequality to obtain
\begin{align}\label{aux3AsurTen}
IV &\leq C\int_0^T\bigg(\int_\Rd|\chi_u-\chi_{v,h^+_{e(t)},h^-_{e(t)}}|\,\dx\bigg)^\frac{1}{2}
\bigg(\int_\Rd|(\nabla w)^T\colon\nabla\xi|^2\,\dx\bigg)^\frac{1}{2}\,\dt
\\&~~~\nonumber
+C\int_0^T\bigg|\int_{\Rd}(\chi_v-\chi_{v,h^+_{e(t)},h^-_{e(t)}})(w\cdot\nabla)(\nabla\cdot\xi)\,\dx\bigg|\,\dt
\\&~~~\nonumber
+C\int_0^T\bigg|\int_{\Rd}((w\cdot\nabla)\xi)\cdot\mathrm{d}\nabla(\chi_v-\chi_{v,h^+_{e(t)},h^-_{e(t)}})\bigg|\,\dt
\\&\nonumber
=: (IV)_a + (IV)_b + (IV)_c.
\end{align}
By definition of $\xi$, see \eqref{CutOffNormal}, recall that 
\begin{align*}
\nabla\xi = \frac{\zeta'\big(\frac{\sigdist(x,I_v(t))}{r_c}\big)}{r_c}
\vec{n}_v(P_{I_v(t)}x)\otimes\vec{n}_v(P_{I_v(t)}x) + \zeta\Big(\frac{\sigdist(x,I_v(t))}{r_c}\Big)
\nabla^2\sigdist(x,I_v(t)).
\end{align*}
Recalling also \eqref{EstimateDw}, \eqref{definitionW} and \eqref{EstimateDwdiv}
as well as making use of \eqref{ErrorImprovedInterfaceApproximationSmooth},
\eqref{Bound2ndDerivativeSignedDistance}, \eqref{BoundGradientProjection},
\eqref{EstimateErrorHeightSmooth} and finally the coercivity property \eqref{coercivityWeight}
the term $(IV)_a$ from \eqref{aux3AsurTen} is estimated by
\begin{align}\label{aux4AsurTen}
(IV)_a
&\leq
\frac{C}{r_c}\int_0^T E[\chi_u,u,V|\chi_v,v](t) + e(t)E[\chi_u,u,V|\chi_v,v]^\frac{1}{2}(t)\,\dt
\\&\nonumber~~~
+\frac{C}{r_c^4}\|v\|_{L^\infty_tW^{2,\infty}_x(\Rd\setminus I_v(t))}^2\int_0^T
\int_{I_v(t)}|h^\pm_{e(t)}|^2+|\nabla h^\pm_{e(t)}|^2\,\dS\,\dt
\\&\nonumber
\leq \frac{C}{r_c^8}(1{+}\|v\|_{L^\infty_tW^{2,\infty}_x(\Rd\setminus I_v(t))}^2)
\int_0^T E[\chi_u,u,V|\chi_v,v](t) {+} e(t)E[\chi_u,u,V|\chi_v,v]^\frac{1}{2}(t)\,\dt.
\end{align}
Recalling from \eqref{def:improvedApprox} the definition of $\chi_{v,h^+_{e(t)},h^-_{e(t)}}$, we may estimate
the term $(IV)_b$ from \eqref{aux3AsurTen} by a change of variables $\Phi_t$, \eqref{Bound2ndDerivativeSignedDistance},
H\"older's and Young's inequality, \eqref{EstimateL2supw} as well as \eqref{EstimateErrorHeightSmooth}
\begin{align}\label{aux5AsurTen}
(IV)_b &\leq
\frac{C}{r_c^2}\int_0^T\int_{I_v(t)}|h^\pm_{e(t)}|^2\,\dS\,\dt 
\\&~~~\nonumber
+\frac{C}{r_c^2}\int_0^T\int_{I_v(t)}\sup_{y\in[-r_c,r_c]}|w(x{+}y\vec{n}_v(x,t))|^2\,\dS\,\dt
\\&\nonumber
\leq \frac{C}{r_c^{10}}\|v\|_{L^\infty_tW^{2,\infty}_x(\Rd\setminus I_v(t))}^2\int_0^T E[\chi_u,u,V|\chi_v,v](t)\,\dt.
\end{align}
To estimate the term $(IV)_c$ from \eqref{aux3AsurTen}, we again make use of the definition of $\chi_{v,h^+_{e(t)},h^-_{e(t)}}$,
\eqref{Bound2ndDerivativeSignedDistance}, H\"older's and Young's inequality, 
\eqref{EstimateL2supw} as well as \eqref{EstimateErrorHeightSmooth} which yields the following bound
\begin{align}\label{aux6AsurTen}
(IV)_c
&\leq\frac{C}{r_c}\int_0^T\int_{I_v(t)}|\nabla h^\pm_{e(t)}|
\sup_{y\in[-r_c,r_c]}|w(x{+}y\vec{n}_v(x,t))|\,\dS\,\dt
\\&\nonumber
\leq \frac{C}{r_c^9}\|v\|_{L^\infty_tW^{2,\infty}_x(\Rd\setminus I_v(t))}
\int_0^T E[\chi_u,u,V|\chi_v,v](t)\,\dt.
\end{align}
Hence, taking together the bounds from \eqref{aux4AsurTen}, \eqref{aux5AsurTen} and \eqref{aux6AsurTen}
we obtain
\begin{align}\label{aux7AsurTen}
IV &\leq \frac{C}{r_c^{10}}(1{+}\|v\|_{L^\infty_tW^{2,\infty}_x(\Rd\setminus I_v(t))}^2)
\int_0^T E[\chi_u,u,V|\chi_v,v](t)\,\dt 
\\&~~~\nonumber
+\frac{C}{r_c^{10}}(1{+}\|v\|_{L^\infty_tW^{2,\infty}_x(\Rd\setminus I_v(t))}^2)
\int_0^Te(t)E^\frac{1}{2}[\chi_u,u,V|\chi_v,v](t)\,\dt.
\end{align}

In order to estimate the term $V$, we argue as follows. In a first step, we split $\Rd$ into the region
$I_v(t)+B_{r_c}$ near to and the region $\Rd\setminus(I_v(t)+B_{r_c})$ away from the interface
of the strong solution. Recall then that the indicator function $\chi_u(\cdot,t)$ of the varifold solution 
is of bounded variation in $I_v(t)+B_{r_c}$.
In particular, $E^+:=\{x\in\Rd\colon \chi_u>0\}\cap (I_v(t)+B_{r_c})$ is a set
of finite perimeter in $I_v(t)+B_{r_c}$. Applying Theorem~\ref{TheoG} in local coordinates, the sections
$$E^+_x = \{y\in (-r_c,r_c)\colon \chi_u(x+y\vec{n}_v(x,t))>0\}$$
are guaranteed to be one-dimensional Caccioppoli sets in $(-r_c,r_c)$,
and such that all of the four properties listed in Theorem~\ref{TheoG} hold true for $\mathcal{H}^{d-1}$-almost
every $x\in I_v(t)$.
Recall from \cite[Proposition~3.52]{Ambrosio2000a} that one-dimensional Caccioppoli sets are in fact finite unions of disjoint intervals.
We then distinguish for $\mathcal{H}^{d-1}$-almost
every $x\in I_v(t)$ between the cases that $\mathcal{H}^0(\partial^*E_x^+)\leq 2$ or
$\mathcal{H}^0(\partial^*E_x^+)> 2$. In other words, we distinguish between those sections which consist
of at most one interval and those which consist of at least two intervals.
It also turns out to be useful to further keep track of whether $\vec{n}_v\cdot\vec{n}_u\leq \frac{1}{2}$
or $\vec{n}_v\cdot\vec{n}_u\geq \frac{1}{2}$ holds.

We then obtain by Young's and H\"older's inequality
as well as the fact that due to Definition~\ref{def:ExtNormal} 
the vector field $\xi$ is supported in $I_v(t)+B_{r_c}$
\begin{align}\label{aux9surTen}
V &\leq 
\int_0^T\bigg(\int_{\{x{+}y\vec{n}_v(x,t)\in\partial^*E^+\colon 
x\in I_v(t),\,|y|<r_c,\,\mathcal{H}^0(\partial^*E_x^+)\leq 2,\,
\vec{n}_v(x)\cdot\vec{n}_u(x{+}y\vec{n}_v(x,t))\geq\frac{1}{2}\}} 
|(\nabla w)^T \xi|^2 \,\mathrm{d}\mathcal{H}^{d-1}\bigg)^{1/2}
\\&~~~~~~~~~~~~\nonumber\times 
\bigg(\int_\Rd |\vec{n}_u-\xi|^2 \,\mathrm{d}|\nabla \chi_u|\bigg)^{1/2}\,\dt 
\\&~~~\nonumber
+C\int_0^T\|\nabla w(t)\|_{L^\infty_x}\bigg(
\int_{\{x{+}y\vec{n}_v(x,t)\in\partial^*E^+\colon 
x\in I_v(t),\,|y|<r_c,\,\mathcal{H}^0(\partial^*E_x^+)> 2,\,
\vec{n}_v(x)\cdot\vec{n}_u(x{+}y\vec{n}_v(x,t))\geq\frac{1}{2}\}}
1\,\mathrm{d}\mathcal{H}^{d-1}\bigg)\,\dt
\\&~~~\nonumber
+C\int_0^T\|\nabla w(t)\|_{L^\infty_x}\bigg(
\int_{\{x{+}y\vec{n}_v(x,t)\in\partial^*E^+\colon 
x\in I_v(t),\,|y|<r_c,\,\vec{n}_v(x)\cdot\vec{n}_u(x{+}y\vec{n}_v(x,t))\leq\frac{1}{2}\}}
1\,\mathrm{d}\mathcal{H}^{d-1}\bigg)\,\dt
\\&~~~\nonumber
+C\int_0^T\|\nabla w(t)\|_{L^\infty_x}\bigg(\int_{\Rd\setminus (I_v(t)+ B_{r_c})}
1\,\mathrm{d}|\nabla\chi_u|\bigg)\,\dt
\\&\nonumber
\leq C\int_0^T\|\nabla w(t)\|_{L^\infty_x}E[\chi_u,u,V|\chi_v,v](t)\,\dt
\\&\nonumber~~~
+C\int_0^T\bigg(\int_{\{x{+}y\vec{n}_v(x,t)\in\partial^*E^+\colon 
x\in I_v(t),\,|y|<r_c,\,\mathcal{H}^0(\partial^*E_x^+)\leq 2,\,
\vec{n}_v(x)\cdot\vec{n}_u(x{+}y\vec{n}_v(x,t))\geq\frac{1}{2}\}} 
|(\nabla w)^T \xi|^2 \,\mathrm{d}\mathcal{H}^{d-1}\bigg)^\frac{1}{2}
\\&~~~~~~~~~~~~\nonumber\times 
\bigg(\int_\Rd |\vec{n}_u-\xi|^2 \,\mathrm{d}|\nabla \chi_u|\bigg)^{1/2}\,\dt 
\\&\nonumber~~~
+C\int_0^T\|\nabla w(t)\|_{L^\infty_x}\bigg(
\int_{\{x{+}y\vec{n}_v(x,t)\in\partial^*E^+\colon 
x\in I_v(t),\,|y|<r_c,\,\mathcal{H}^0(\partial^*E_x^+)> 2,\,
\vec{n}_v(x)\cdot\vec{n}_u(x{+}y\vec{n}_v(x,t))\geq\frac{1}{2}\}}
1\,\mathrm{d}\mathcal{H}^{d-1}\bigg)\,\dt
\\&\nonumber
=: C\int_0^T\|\nabla w(t)\|_{L^\infty_x}E[\chi_u,u,V|\chi_v,v](t)\,\dt
+ V_a + V_b.
\end{align}
To estimate $V_a$ from \eqref{aux9surTen}, we use the co-area formula for rectifiable
sets (see \cite[(2.72)]{Ambrosio2000a}), \eqref{EstimateL2Linftyw}, H\"older's inequality and
the coercivity property \eqref{controlByTiltExcess} which together yield 
(we abbreviate in the first line $F(x,y,t):=(\nabla w)^T(x{+}y\vec{n}_v(x,t))\vec{n}_v(x,t)$)
\begin{align}\label{aux10surTen}
V_a 
&\leq
C\int_0^T\bigg(\int_{\{x\in I_v(t)\colon\mathcal{H}^0(\partial^*E_x^+)\leq 2\}}
\int_{\{y\in\partial^*E^+_x\colon 
\vec{n}_v(x)\cdot\vec{n}_u(x{+}y\vec{n}_v(x,t))\geq\frac{1}{2}\}}|F(x,y,t)|^2
\,\mathrm{d}\mathcal{H}^0(y)\,\dS(x)\bigg)^\frac{1}{2}
\\&~~~~~~~~~~~~\nonumber\times 
\bigg(\int_\Rd |\vec{n}_u-\xi|^2 \,\mathrm{d}|\nabla \chi_u|\bigg)^{1/2}\,\dt
\\&\nonumber
\leq C\int_0^T\bigg(\int_{I_v(t)}\sup_{y\in[-r_c,r_c]}
|(\nabla w)^T(x{+}y\vec{n}_v(x,t))\cdot\vec{n}_v(x,t)|^2\,\dS(x)\bigg)^\frac{1}{2}
\\&~~~~~~~~~~~~\nonumber\times 
\bigg(\int_\Rd |\vec{n}_u-\xi|^2 \,\mathrm{d}|\nabla \chi_u|\bigg)^{1/2}\,\dt
\\&\nonumber
\leq \frac{C}{r_c^{9}}\|v\|_{L^\infty_tW^{3,\infty}_x(\Rd\setminus I_v(t))}
\int_0^T (1+|\log e(t)|)e(t)E[\chi_u,u,V|\chi_v,v]^\frac{1}{2}(t)\,\dt.
\end{align}
It remains to bound the term $V_b$ from \eqref{aux9surTen}. To this end, we make use of the fact that
it follows from property iv) in Theorem~\ref{TheoG} that every second point
$y\in\partial^*E^+_x\cap(-r_c,r_c)$ has to have the property that $\vec{n}_v(x)\cdot\vec{n}_u(x{+}y\vec{n}_v(x,t))<0$, i.e.,
$1\leq 1-\vec{n}_v(x)\cdot\vec{n}_u(x{+}y\vec{n}_v(x,t))$. We may therefore estimate
with the help of the co-area formula for rectifiable
sets (see \cite[(2.72)]{Ambrosio2000a}) and the bound \eqref{EstimateLinftyw}
\begin{align}\label{aux11surTen}
V_b &\leq C\int_0^T\|\nabla w(t)\|_{L^\infty_x}
\int_{\{x\in I_v(t)\colon\mathcal{H}^0(\partial^*E_x^+)> 2\}}
\int_{\{y\in\partial^*E^+_x\colon \vec{n}_v(x)\cdot\vec{n}_u(x{+}y\vec{n}_v(x,t))\geq\frac{1}{2}\}}
1\,\mathrm{d}\mathcal{H}^0(y)\,\dS(x)\,\dt
\\&\nonumber
\leq C
\int_0^T \|\nabla w(t)\|_{L^\infty_x} \int_{I_v(t)}\int_{\partial^*E^+_x}1-\vec{n}_v(x,t)\cdot\vec{n}_u(x{+}y\vec{n}_v(x,t))
\,\mathrm{d}\mathcal{H}^0(y)\,\dS(x)\,\dt
\\&\nonumber
\leq \frac{C}{r_c^9}|\log e(t)| \|v\|_{L^\infty_tW^{3,\infty}_x(\Rd\setminus I_v(t))}
\int_0^TE[\chi_u,u,V|\chi_v,v](t)\,\dt.
\end{align} 
All in all, we obtain from the assumption $E[\chi_u,u,V|\chi_v,v](t)\leq e^2(t)$
as well as \eqref{aux9surTen}, \eqref{aux10surTen}, \eqref{aux11surTen} and \eqref{EstimateLinftyw}
\begin{align}\label{aux8surTen}
V &\leq 
\frac{C}{r_c^{9}}\|v\|_{L^\infty_tW^{3,\infty}_x(\Rd\setminus I_v(t))}
\int_0^T (1+|\log e(t)|)e(t)E[\chi_u,u,V|\chi_v,v]^\frac{1}{2}(t)\,\dt.
\end{align}
Hence, we deduce from the bounds \eqref{aux1AsurTen}, \eqref{aux2AsurTen}, \eqref{aux7AsurTen},
\eqref{aux8surTen} as well as \eqref{EstimateLinftyw} the asserted estimate for the additional surface tension terms.
\end{proof}

\subsection{Estimate for the viscosity terms}
\label{SectionEstimateAvisc}

In contrast to the case of equal shear viscosities $\mu_+=\mu_-$, we have to deal
with the problematic viscous stress term given by $(\mu(\chi_v)-\mu(\chi_u))(\nabla v+\nabla v^T)$. 
We now show that the choice of $w$ indeed compensates for (most of) this term in the sense that
the viscosity terms from Proposition~\ref{PropositionRelativeEntropyInequalityFull}
\begin{align}\label{aux0Avisc}
R_{visc}+A_{visc} &= -\int_0^T \int_{\mathbb{R}^d} 2\big(\mu(\chi_u)-\mu(\chi_v)\big)
\Dsym v:\Dsym (u-v) \,\dx\,\dt
\\&~~~\nonumber
+\int_0^T \int_{\mathbb{R}^d} 2\big(\mu(\chi_u)-\mu(\chi_v)\big)
\Dsym v:\Dsym w \,\dx\,\dt
\\&~~~\nonumber
-\int_0^T \int_{\mathbb{R}^d} 2\mu(\chi_u) \Dsym w:\Dsym(u-v-w) \,\dx\,\dt
\end{align}
may be bounded by a Gronwall-type term.

\begin{lemma}
Let the assumptions and notation of Proposition~\ref{PropositionCompensationFunction} be in place. 
In particular, we assume that there exists a $C^1$-function
$e\colon[0,\Tmax)\to [0,r_c)$ such that the relative entropy is bounded by $E[\chi_u,u,V,|\chi_v,v](t)\leq e^2(t)$.

Then, for any $\delta>0$ there exists a constant $C>0$ such that the viscosity terms $R_{visc}+A_{visc}$ 
may be estimated by
\begin{align}\label{GronwallAvisc}
R_{visc}+A_{visc}
&\leq\frac{C}{r_c^{8}}\|v\|_{L^\infty_tW^{2,\infty}_x(\Rd\setminus I_v(t))}^2
\int_0^TE[\chi_u,u,V|\chi_v,v](t)\,\dt
\\&\nonumber~~~
+\frac{C}{r_c}\|v\|_{L^\infty_tW^{1,\infty}_x}^2\int_0^Te(t)E[\chi_u,u,V|\chi_v,v]^\frac{1}{2}(t)\,\dt
\\&\nonumber~~~
+\delta\int_0^T\int_{\Rd}|\Dsym(u-v-w)|^2\,\dx\,\dt.
\end{align}
\end{lemma}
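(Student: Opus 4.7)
The plan is to exploit the algebraic cancellation that was engineered into the construction of $w$ in Proposition~\ref{PropositionCompensationFunction}: the vector field $w$ is defined precisely so that $\mu(\chi_u)\Dsym w$ essentially annihilates the tangent-normal contribution of $(\mu(\chi_u)-\mu(\chi_v))\Dsym v$, which is the only part that fails to be small across the jump in the velocity gradient at the interface. As a first step I would add and subtract $\int_0^T\!\int 2(\mu(\chi_u)-\mu(\chi_v))\Dsym v:\Dsym w\,\dx\,\dt$ to rewrite
\begin{align*}
R_{visc}+A_{visc} = -\int_0^T\!\!\int_{\Rd}2\bigl[(\mu(\chi_u)-\mu(\chi_v))\Dsym v + \mu(\chi_u)\Dsym w\bigr]:\Dsym(u-v-w)\,\dx\,\dt,
\end{align*}
which reduces the problem to estimating the bracketed integrand against the dissipation error.

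Next I would transfer from $\chi_u$ to the refined interface approximation $\chi_{v,h^+_{e(t)},h^-_{e(t)}}$ of \eqref{def:improvedApprox}, using $\mu(\chi_u) = \mu(\chi_{v,h^+_{e(t)},h^-_{e(t)}}) + (\mu_+{-}\mu_-)(\chi_u - \chi_{v,h^+_{e(t)},h^-_{e(t)}})$ in both occurrences. The resulting remainders carry the factor $\chi_u - \chi_{v,h^+_{e(t)},h^-_{e(t)}}$, and by Young's inequality (exploiting $(\chi_u-\chi_{v,h^+_{e(t)},h^-_{e(t)}})^2 = |\chi_u-\chi_{v,h^+_{e(t)},h^-_{e(t)}}|$) together with the pointwise bound \eqref{EstimateLinftyw} on $\|\nabla w\|_{L^\infty}$, the improved approximation property \eqref{ErrorImprovedInterfaceApproximationSmooth} and the hypothesis $E\leq e(t)^2$, each such contribution is absorbed into $\delta\|\Dsym(u-v-w)\|_{L^2}^2$ plus Gronwall-admissible terms of the form $CE$ and $Ce(t)E^{1/2}$. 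Similarly, I would replace $\Dsym w$ by the symmetric part of the leading gradient structure $G$ from Proposition~\ref{PropositionCompensationFunction}; the error, paired with $\Dsym(u-v-w)$ via Young's inequality, is controlled by $\delta\|\Dsym(u-v-w)\|_{L^2}^2 + C\|\nabla w - G\|_{L^2}^2$, and \eqref{EstimateDw} combined with \eqref{EstimateErrorHeightSmooth} yields precisely the leading Gronwall term $Cr_c^{-8}\|v\|_{W^{2,\infty}(\Rd\setminus I_v(t))}^2\,E$.

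The heart of the argument is then a pointwise algebraic identity in the interface layer. Decomposing $\Dsym v$ into its tangent-tangent, symmetric tangent-normal, and normal-normal components with respect to $\vec{n}_v(P_{I_v(t)}\cdot)$, and performing a case distinction between the two sub-layers $\{0\leq\sigdist\leq h^+_{e(t)}\}$ and $\{-h^-_{e(t)}\leq\sigdist\leq 0\}$, the precise constant $2(\mu_+-\mu_-)/(\mu_+(1-\chi_v)+\mu_-\chi_v)$ in \eqref{definitionW} is exactly what makes the symmetric tangent-normal part of $(\mu(\chi_{v,h^+_{e(t)},h^-_{e(t)}})-\mu(\chi_v))\Dsym v$ cancel with $\tfrac12\mu(\chi_{v,h^+_{e(t)},h^-_{e(t)}})(G+G^T)$ on each side. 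Only the residual $2(\mu(\chi_{v,h^+_{e(t)},h^-_{e(t)}})-\mu(\chi_v))[P^{\tan}\Dsym v\,P^{\tan} + (\vec{n}_v\cdot\Dsym v\cdot\vec{n}_v)\vec{n}_v\otimes\vec{n}_v]$ survives; it is pointwise bounded by $C\|\nabla v\|_{L^\infty}$ and supported on the layer of Lebesgue measure $\int_{I_v(t)}(h^+_{e(t)}+h^-_{e(t)})\,\dS$. By Young's inequality, Cauchy-Schwarz in the form $\int_{I_v(t)} h^\pm_{e(t)}\,\dS \leq \mathcal{H}^{d-1}(I_v(t))^{1/2}\|h^\pm_{e(t)}\|_{L^2(I_v(t))}$, the $L^2$-bound \eqref{EstimateErrorHeightSmooth}, and the assumption $E\leq e(t)^2$, the residual contribution is controlled by $\delta\|\Dsym(u-v-w)\|_{L^2}^2 + Cr_c^{-1}\|v\|_{W^{1,\infty}}^2\, e(t)E^{1/2}(t)$, which is exactly the middle term in \eqref{GronwallAvisc}. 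The most delicate point is the sign tracking of $\nabla w$ across the two sub-layers---this is precisely the reason why the denominator in \eqref{definitionW} is $\mu_+(1-\chi_v)+\mu_-\chi_v$ rather than $\mu(\chi_v)$, ensuring that the tangent-normal cancellation succeeds simultaneously on both sides of the interface.
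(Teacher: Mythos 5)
Your first steps — rewriting $R_{visc}+A_{visc}$ as a single pairing $-\int 2[(\mu(\chi_u)-\mu(\chi_v))\Dsym v + \mu(\chi_u)\Dsym w]:\Dsym(u-v-w)$, passing to the refined indicator $\chi_{v,h^+_{e(t)},h^-_{e(t)}}$, and using \eqref{EstimateDw} to replace $\nabla w$ by its leading structure $W\otimes\vec{n}_v$ on the layer — all match the paper, and the observation that the coefficient in \eqref{definitionW} is tuned so that the symmetric tangent-normal component of $(\mu^\mp-\mu^\pm)\Dsym v$ cancels against $\mu^\mp\Dsym w$ on each sub-layer is exactly right.

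The gap is in how you treat the surviving residual $(\mu^\mp-\mu^\pm)[P^{\tan}\Dsym v\,P^{\tan} + (\vec{n}_v\cdot\Dsym v\cdot\vec{n}_v)\vec{n}_v\otimes\vec{n}_v]$ paired with $\nabla(u-v-w)$ on the layer. You apply Young's inequality pointwise and bound the resulting layer integral by $\|\nabla v\|_{L^\infty}^2\,|\text{layer}|$, with $|\text{layer}|\leq C\mathcal{H}^{d-1}(I_v(t))^{1/2}\|h^\pm_{e(t)}\|_{L^2}\lesssim r_c^{-2}E^{1/2}$. This produces a Gronwall term of order $E^{1/2}$, not $e(t)E^{1/2}$: under the hypothesis $E\leq e^2$ one only has $E^{1/2}\leq e(t)$, and there is no algebraic manipulation that upgrades $E^{1/2}$ to the strictly smaller quantity $e(t)E^{1/2}$ (they differ by a factor $e(t)^{-1}\geq 1$). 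An $E^{1/2}$ term without the extra $e(t)$ factor would break the iteration in the proof of Theorem~\ref{weakStrongUniq}: there the right-hand side must be absorbed by $\tfrac{d}{dt}e^2(t)$, which scales like $e^2(t)$, and $E^{1/2}\sim e(t)\gg e^2(t)$.

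The paper avoids this by doing something you skip: it rewrites the residual using $\nabla\cdot(u-v-w)=0$ and then \emph{integrates by parts}, trading the factor $\nabla(u-v-w)$ for $u-v-w$ (plus a surface measure on the graph boundary of the layer). Those terms are then controlled by the $L^2_{\tan}L^\infty_{\mathrm{nor}}$ estimate \eqref{BoundL2Linfty} together with $\|h^\pm_{e(t)}\|_{L^2}\lesssim r_c^{-2}E^{1/2}$ and $\|u-v-w\|_{L^2}\lesssim E^{1/2}$. After a Gagliardo-Nirenberg/Young step this yields $C r_c^{-4}\|v\|^2_{W^{2,\infty}}E + \delta\|\Dsym(u-v-w)\|^2_{L^2}$, i.e.\ a genuine $E$-term rather than $E^{1/2}$. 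The $e(t)E^{1/2}$ term in the stated bound does not come from the residual at all but, as you correctly note earlier, from the cross terms of the improved approximation property \eqref{ErrorImprovedInterfaceApproximationSmooth} in the transfer from $\chi_u$ to $\chi_{v,h^+_{e(t)},h^-_{e(t)}}$. Without the integration by parts the estimate cannot close.
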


\begin{proof}
We argue pointwise for the time variable and start by adding zero
\begin{align}\label{aux1Avisc}
&R_{visc}+A_{visc}
\\&\nonumber
=-2\int_\Rd (\mu(\chi_u)-\mu(\chi_v))\Dsym v:\Dsym(u{-}v{-}w) \,\dx
\\&~~\nonumber
-2\int_\Rd \mu(\chi_u)\Dsym w:\Dsym(u-v-w) \,\dx
\\&\nonumber
=-2\int_\Rd \big(\mu(\chi_u)-\mu(\chi_v)-(\mu^--\mu^+)\chi_{0\leq \sigdist(x,I_v(t))\leq h^+_{e(t)}(P_{I_v(t)}x)}
\\&~~~~~~~~~~~\nonumber
-(\mu^+ -\mu^-)\chi_{-h^-_{e(t)}(P_{I_v(t)}x)\leq \sigdist(x,I_v(t))\leq 0}\big)\Dsym v:\Dsym(u{-}v{-}w) \,\dx
\\&~\nonumber
-2\int_\Rd \chi_{\sigdist(x,I_v(t))\notin [-h^-_{e(t)}(P_{I_v(t)}x),h^+_{e(t)}(P_{I_v(t)}x)]}
 \mu(\chi_u) \Dsym w:\Dsym(u{-}v{-}w) \,\dx
\\&~\nonumber
-2\int_\Rd \chi_{0\leq \sigdist(x,I_v(t))\leq h^+_{e(t)}(P_{I_v(t)}x)} 
(\mu(\chi_u)-\mu^-) \Dsym w:\Dsym(u{-}v{-}w) \,\dx
\\&~\nonumber
-2\int_\Rd \chi_{-h^-_{e(t)}(P_{I_v(t)}x)\leq \sigdist(x,I_v(t))\leq 0}
 (\mu(\chi_u)-\mu^+) \Dsym w:\Dsym(u{-}v{-}w) \,\dx
\\&~\nonumber
-2\int_\Rd \chi_{0\leq \sigdist(x,I_v(t))\leq h^+_{e(t)}(P_{I_v(t)}x)} 
((\mu^-{-}\mu^+)\Dsym v+\mu^- \Dsym w):\nabla (u{-}v{-}w) \,\dx
\\&~\nonumber
-2\int_\Rd \chi_{-h^-_{e(t)}(P_{I_v(t)}x)\leq \sigdist(x,I_v(t))\leq 0} 
((\mu^+{-}\mu^-)\Dsym v+\mu^+ \Dsym w):\nabla (u{-}v{-}w) \,\dx
\\&\nonumber
=: I + II + III + IV + V + VI.
\end{align}
We start by estimating the first four terms. Note that $\mu(\chi_u)-\mu^- = (\mu_+-\mu_-)\chi_u$. 
Recalling the definition of $\chi_{v,h^+_{e(t)},h^-_{e(t)}}$ from \eqref{def:improvedApprox}
we see that $$\chi_{0\leq \sigdist(x,I_v(t))\leq h^+_{e(t)}(P_{I_v(t)}x)}\chi_u =
\chi_{0\leq \sigdist(x,I_v(t))\leq h^+_{e(t)}(P_{I_v(t)}x)}(\chi_u-\chi_{v,h^+_{e(t)},h^-_{e(t)}}).$$
Hence, we may rewrite
\begin{align*}
III
&= -2\int_\Rd \chi_{0\leq \sigdist(x,I_v(t))\leq h^+_{e(t)}(P_{I_v(t)}x)} (\mu_+-\mu_-)
(\chi_u-\chi_{v,h^+_{e(t)},h^-_{e(t)}}) 
\\&~~~~~~~~~~~~~~~~
\times (W\otimes\vec{n}_v(P_{I_v(t)}x)):\Dsym(u{-}v{-}w) \,\dx 
\\&~~~
-2\int_\Rd \chi_{0\leq \sigdist(x,I_v(t))\leq h^+_{e(t)}(P_{I_v(t)}x)} (\mu_+-\mu_-)
\\&~~~~~~~~~~~~~~~~
\times (\nabla w-W\otimes\vec{n}_v(P_{I_v(t)}x)):\Dsym(u{-}v{-}w) \,\dx. 
\end{align*}
Carrying out an analogous computation for $IV$,
using again the definition of the smoothed approximation $\chi_{v,h^+_{e(t)},h^-_{e(t)}}$ for $\chi_u$ from \eqref{def:improvedApprox} 
and using \eqref{EstimateDw} as well as \eqref{definitionW}, we then get the bound
\begin{align*}
&I + II + III + IV
\\&
\leq C \|v\|_{W^{1,\infty}} \bigg(\int_\Rd |\chi_u-\chi_{v,h^+_{e(t)},h^-_{e(t)}}| 
\,\dx\bigg)^{1/2} \bigg(\int_\Rd |\Dsym(u{-}v{-}w)|^2 \,\dx\bigg)^{1/2}
\\&~~~
+\frac{C}{r_c^2}\|v\|_{W^{2,\infty}(\Rd\setminus I_v(t))} \bigg(\int_{I_v(t)} |h^\pm_{e(t)}|^2 {+} 
|\nabla h^\pm_{e(t)}|^2 \,\dS\bigg)^{1/2} \bigg(\int_\Rd |\Dsym(u{-}v{-}w)|^2 \,\dx\bigg)^{1/2}.
\end{align*}
Plugging in the estimates \eqref{EstimateErrorHeightSmooth} and 
\eqref{ErrorImprovedInterfaceApproximationSmooth}, we obtain by Young's inequality
\begin{align}\label{aux2Avisc}
I + II + III + IV
&\leq \frac{C\delta^{-1}}{r_c^8 }\|v\|_{W^{2,\infty}(\Rd\setminus I_v(t))} ^2
E[\chi_u,u,V|\chi_v,v](t)
\\&~~~\nonumber
+ \frac{C\delta^{-1}}{r_c}\|v\|_{W^{1,\infty}}^2e(t)E[\chi_u,u,V|\chi_v,v]^\frac{1}{2}(t)
\\&~~~\nonumber
+C\delta^{-1}\|v\|_{W^{1,\infty}}^2E[\chi_u,u,V|\chi_v,v](t)
\\&~~~\nonumber
+\delta\|\Dsym(u-v-w)\|_{L^2}
\end{align}
for every $\delta\in (0,1)$. To estimate the last two terms $V$ and $VI$
in \eqref{aux1Avisc}, we may rewrite making use of the definition 
\eqref{definitionW} of the vector field $W$ and
abbreviating $\vec{n}_v=\vec{n}_v(P_{I_v(t)}x)$, 
$\sigdist=\sigdist(x,I_v(t))$ as well as $h_{e(t)}^+=h_{e(t)}^+(P_{I_v(t)}x)$
\begin{align*}
&-\int_\Rd \chi_{0\leq \sigdist\leq h^+_{e(t)}} 
((\mu^-{-}\mu^+)\Dsym v+\mu^- \Dsym w):\nabla (u{-}v{-}w) \,\dx
\\&
=
-\int_\Rd \chi_{0\leq \sigdist\leq h^+_{e(t)}} ((\mu^-{-}\mu^+)(\Id-\vec{n}_v\otimes \vec{n}_v)
(\Dsym v\cdot \vec{n}_v)\otimes \vec{n}_v+\mu^- \Dsym w)
\\&~~~~~~~~~~~~~~~~~~~~~~~~~~~~~~~~~~~~
~~~~~~~~~~~~~~~~~~~~~~~~~~~~~~~~~~~~~~~~~~~~~~~:\nabla (u{-}v{-}w) \,\dx
\\&~~~
-\int_\Rd \chi_{0\leq \sigdist\leq h^+_{e(t)}} (\mu^-{-}\mu^+)\Dsym v\, 
(\Id-\vec{n}_v\otimes \vec{n}_v):\nabla (u{-}v{-}w) \,\dx
\\&~~~
-\int_\Rd \chi_{0\leq \sigdist\leq h^+_{e(t)}} (\mu^-{-}\mu^+)(\vec{n}_v\cdot \Dsym v\cdot \vec{n}_v) 
(\vec{n}_v\otimes \vec{n}_v): \nabla (u{-}v{-}w) \,\dx
\\&
=
-\int_\Rd \chi_{0\leq \sigdist\leq h^+_{e(t)}} ((\mu^-{-}\mu^+)(\Id-\vec{n}_v\otimes
 \vec{n}_v)(\Dsym v\cdot \vec{n}_v)\otimes \vec{n}_v+\mu^- \Dsym w)
\\&~~~~~~~~~~~~~~~~~~~~~~~~~~~~~~~~~~~~~~~~~~~~~~~
~~~~~~~~~~~~~~~~~~~~~~~~~~~~~~~~~~~~
:\nabla (u{-}v{-}w) \,\dx
\\&~~~
-\int_\Rd \chi_{0\leq \sigdist\leq h^+_{e(t)}} (\mu^-{-}\mu^+)\Dsym v\, 
(\Id-\vec{n}_v\otimes \vec{n}_v):\nabla (u{-}v{-}w) \,\dx
\\&~~~
+\int_\Rd \chi_{0\leq \sigdist\leq h^+_{e(t)}} (\mu^-{-}\mu^+)(\vec{n}_v\cdot \Dsym v\cdot \vec{n}_v)
 (\Id-\vec{n}_v\otimes \vec{n}_v):\nabla (u{-}v{-}w) \,\dx,
\\&
=
\frac{1}{2}\int_\Rd \chi_{0\leq \sigdist\leq h^+_{e(t)}} 
((W\otimes \vec{n}_v-\nabla w)+(W\otimes \vec{n}_v-\nabla w)^T):\nabla (u{-}v{-}w) \,\dx
\\&~~~
+(\mu^--\mu^+)\int_\Rd \chi_{0\leq \sigdist\leq h^+_{e(t)}} 
((\mathrm{Id}{-}\vec{n}_v\otimes\vec{n}_v)(\Dsym v\cdot\vec{n}_v)\otimes\vec{n}_v):\nabla (u{-}v{-}w) \,\dx
\\&~~~~
-\int_\Rd \chi_{0\leq \sigdist\leq h^+_{e(t)}} (\mu^-{-}\mu^+)\Dsym v\, 
(\Id-\vec{n}_v\otimes \vec{n}_v):\nabla (u{-}v{-}w) \,\dx
\\&~~~
+\int_\Rd \chi_{0\leq \sigdist\leq h^+_{e(t)}} (\mu^-{-}\mu^+)(\vec{n}_v\cdot \Dsym v\cdot \vec{n}_v)
 (\Id-\vec{n}_v\otimes \vec{n}_v):\nabla (u{-}v{-}w) \,\dx,
\end{align*}
where in the penultimate step we have used the fact that $\nabla \cdot (u-v-w)=0$,
and in the last step we added zero. This yields after an integration by parts
\begin{align*}
&-\int_\Rd \chi_{0\leq \sigdist\leq h_{e(t)}^+} 
((\mu^-{-}\mu^+)\Dsym v+\mu^- \Dsym w):\nabla (u{-}v{-}w) \,\dx
\\&
=
\frac{1}{2}\int_\Rd \chi_{0\leq \sigdist\leq h_{e(t)}^+} 
((W\otimes \vec{n}_v-\nabla w)+(W\otimes \vec{n}_v-\nabla w)^T):\nabla (u{-}v{-}w) \,\dx
\\&~~~
-(\mu^-{-}\mu^+)\int_\Rd \chi_{0\leq \sigdist\leq h_{e(t)}^+} 
\nabla\cdot(\vec{n}_v\otimes (\mathrm{Id}{-}\vec{n}_v\otimes\vec{n}_v)(\Dsym v\cdot\vec{n}_v)) 
\cdot (u{-}v{-}w) \,\dx
\\&~~~
+(\mu^-{-}\mu^+)\int_\Rd (\vec{n}_v\cdot(u{-}v{-}w))
(\mathrm{Id}{-}\vec{n}_v\otimes\vec{n}_v)(\Dsym v\cdot\vec{n}_v)
\cdot\,\mathrm{d}\nabla \chi_{0\leq \sigdist\leq h_{e(t)}^+}
\\&~~~
+(\mu^-{-}\mu^+)\int_\Rd \chi_{0\leq \sigdist\leq h_{e(t)}^+} 
\nabla \cdot \big((\Dsym v{-}(\vec{n}_v\cdot \Dsym v\cdot \vec{n}_v) \Id)
(\Id{-}\vec{n}_v\otimes \vec{n}_v)\big)
\\&~~~~~~~~~~~~~~~~~~~~~~~~~~~~~~~~~~~~~~~~~~~~~~~~~~~~
~~~~~~~~~~~~~~~~~~~~~~~~~~~~~~~~~~~~
\cdot (u{-}v{-}w) \,\dx
\\&~~~
+(\mu^-{-}\mu^+)\int_\Rd (u{-}v{-}w)
\\&~~~~~~~~~~~~~~~~~~~~~~~~~
\cdot
(\Dsym v{-}(\vec{n}_v\cdot \Dsym v\cdot \vec{n}_v) \Id)
(\Id{-}\vec{n}_v\otimes \vec{n}_v)
\,\mathrm{d}\nabla \chi_{0\leq \sigdist\leq h_{e(t)}^+}.
\end{align*}
As a consequence of \eqref{EstimateDw}, \eqref{EstimateErrorHeightSmooth}, 
\eqref{Bound2ndDerivativeSignedDistance} and the global Lipschitz estimate $|\nabla h_e^\pm(\cdot,t)|\leq Cr_c^{-2}$
from Proposition~\ref{PropositionInterfaceErrorHeightRegularized}, we obtain
\begin{align*}
&\bigg|\int_\Rd \chi_{0\leq \sigdist(x,I_v(t))\leq h_{e(t)}^+(P_{I_v(t)}x)} 
((\mu^--\mu^+)\Dsym v+\mu^- \Dsym w):\nabla (u-v-w) \,\dx\bigg|
\\&
\leq
\frac{C}{r_c^{7/2}} \|v\|_{W^{2,\infty}(\Rd\setminus I_v(t))} 
E\big[\chi_u,u,V\big|\chi_v,v\big]^{1/2} \|\nabla(u-v-v)\|_{L^2}
\\&~~~
+\frac{C}{r_c}\|v\|_{W^{2,\infty}(\Rd\setminus I_v(t))}
\int_\Rd \chi_{0\leq \sigdist(x,I_v(t))\leq h_{e(t)}^+(P_{I_v(t)}x)} |u-v-w| \,\dx
\\&~~~
+\frac{C}{r_c^2} 
\|v\|_{W^{1,\infty}} \int_{I_v(t)} \sup_{y\in (-r_c,r_c)} |u-v-w|(x+y\vec{n}_v(x,t)) |\nabla h_{e(t)}^+(x)| \,\dS(x).
\end{align*}
By a change of variables $\Phi_t$, \eqref{BoundNablaPhiNablaPhi-1}, \eqref{BoundL2Linfty}, \eqref{EstimateErrorHeightSmooth}
and an application of Young's and Korn's inequality, the latter two terms may be further estimated by
\begin{align*}
&\frac{C}{r_c^2}\|v\|_{W^{2,\infty}(\Rd\setminus I_v(t))}
\bigg(\int_{I_v(t)}\sup_{y\in (-r_c,r_c)} |u-v-w|^2(x+y\vec{n}_v(x,t))\,\dS\bigg)^\frac{1}{2}
\\&~~~~~~~~~~~~~\times
\bigg(\int_{I_v(t)}|h_{e(t)}^+|^2+|\nabla h_{e(t)}^+|^2\,\dS\bigg)^\frac{1}{2}
\\&
\leq \frac{C}{r_c^{3}}\|v\|_{W^{2,\infty}(\Rd\setminus I_v(t))}
E[\chi_u,u,V|\chi_v,v]^\frac{1}{2}(t)\|u-v-w\|_{L^2}
\\&~~~
+\frac{C}{r_c^{2}}\|v\|_{W^{2,\infty}(\Rd\setminus I_v(t))}E[\chi_u,u,V|\chi_v,v]^\frac{1}{2}(t)
\|\nabla (u-v-w)\|_{L^2}
\\&
\leq \frac{C\delta^{-1}}{r_c^{4}}\|v\|_{W^{2,\infty}(\Rd\setminus I_v(t))}^2
E[\chi_u,u,V|\chi_v,v](t)
+\delta\|\Dsym(u-v-w)\|_{L^2}
\end{align*}
for every $\delta\in (0,1]$. In total, we obtain the bound
\begin{align}\label{aux3Avisc}
V &\leq \frac{C\delta^{-1}}{r_c^{4}}\|v\|_{W^{2,\infty}(\Rd\setminus I_v(t))}^2
E[\chi_u,u,V|\chi_v,v](t)
+\delta\|\Dsym(u-v-w)\|_{L^2}
\end{align}
where $\delta\in (0,1)$ is again arbitrary. Analogously, one can derive a bound
of the same form for the last term $VI$ in \eqref{aux1Avisc}. Together with the bounds from \eqref{aux2Avisc} as well as \eqref{aux3Avisc} this concludes the proof.
\end{proof}

\subsection{Estimate for terms with the time derivative of the compensation function}

We proceed with the estimate for the terms from the relative entropy inequality of 
Proposition~\ref{PropositionRelativeEntropyInequalityFull}
\begin{align}\label{aux0Adt}
A_{dt}:=&-\int_0^T \int_{\mathbb{R}^d} \rho(\chi_u) (u-v-w)\cdot 
\partial_t w \,\dx\,\dt \\\nonumber
&-\int_0^T\int_\Rd \rho(\chi_u) (u-v-w) \cdot (v\cdot\nabla) w\,\dx\,\dt,
\end{align}
which are related to the time derivative of the compensation function $w$.

\begin{lemma}
Let the assumptions and notation of Proposition~\ref{PropositionCompensationFunction} be in place. 
In particular, we assume that there exists a $C^1$-function
$e\colon[0,\Tmax)\to [0,r_c)$ such that the relative entropy is bounded by $E[\chi_u,u,V,|\chi_v,v](t)\leq e^2(t)$.

Then, for any $\delta>0$ there exists a constant $C>0$ such that $A_{dt}$ 
may be estimated by
\begin{align}\label{GronwallAdt}
A_{dt}
&\leq\frac{C}{r_c^{22}}
\|v\|_{L^\infty_tW^{1,\infty}_x}^2(1{+}\|v\|_{L^\infty_tW^{2,\infty}_x(\Rd\setminus I_v(t))})^2
\int_0^T(1{+}|\log e(t)|)E[\chi_u,u,V|\chi_v,v](t)\,\dt
\\&\nonumber~~~
+ \frac{C}{r_c^{11}}
\|v\|_{L^\infty_tW^{1,\infty}_x}(1{+}\|v\|_{L^\infty_tW^{2,\infty}_x(\Rd\setminus I_v(t))})
\int_0^T(1{+}|\log e(t)|)E[\chi_u,u,V|\chi_v,v](t)\,\dt
\\&\nonumber~~~
+\frac{C}{r_c^{8}}(1{+}\|v\|_{L^\infty_tW^{1,\infty}_x})
(\|\partial_t \nabla v\|_{L^\infty_{x,t}}{+}(R^2{+}1)\|v\|_{L^\infty_t W^{2,\infty}_x(\Rd\setminus I_v(t))})
\\&\nonumber~~~~~~~~~~~~~~~\times
\int_0^TE[\chi_u,u,V|\chi_v,v](t)\,\dt
\\&\nonumber~~~
+\frac{C}{r_c^2}\|v\|_{L^\infty_t W^{1,\infty}_x}^2\int_0^T(1+e'(t))
E[\chi_u,u,V|\chi_v,v](t)\,\dt
\\&\nonumber~~~
+\delta\int_0^T\int_{\Rd}|\Dsym(u-v-w)|^2\,\dx\,\dt.
\end{align}
\end{lemma}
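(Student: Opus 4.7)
The plan is to exploit the evolution identity \eqref{Estimatedtw} for $w$,
\begin{align*}
\partial_t w = -(v\cdot\nabla)w + g + \hat g,
\end{align*}
with $g\in L^2$ and $\hat g\in L^{4/3}$ subject to the bounds \eqref{timeEvolutionWL2} and \eqref{timeEvolutionWL43}. Substituting into the definition \eqref{aux0Adt} of $A_{dt}$ causes the transport term $-(v\cdot\nabla)w$ to cancel precisely against the explicit $(v\cdot\nabla)w$ appearing in $A_{dt}$, reducing the estimate to
\begin{align*}
A_{dt}=-\int_0^T\!\!\int_{\R^d}\rho(\chi_u)(u-v-w)\cdot(g+\hat g)\,\dx\,\dt.
\end{align*}

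First I would bound the $g$-contribution by Cauchy--Schwarz: the factor $\|u-v-w\|_{L^2}$ is controlled by $E^{1/2}$ via the kinetic part of the relative entropy, while $\|g\|_{L^2}$ is estimated term by term through \eqref{timeEvolutionWL2}. Each of those terms is then post-processed into Gronwall form with the tools from the preceding subsections: \eqref{HeightFunctionEstimate} turns $\int|h^\pm|^2\,\dS$ into the weighted volume contribution to $E$; \eqref{EstimateErrorHeightSmooth} together with the standing hypothesis $E\leq e^2(t)$ converts $\int|h^\pm_{e(t)}|^2+|\nabla h^\pm_{e(t)}|^2\,\dS$ into $E$ (absorbing the $e(t)^{-2}$ factor generated by $\|\nabla h^\pm_{e(t)}\|_{L^2}$); the tilt-excess term in $\|g\|_{L^2}$ is already an interface piece of $E$; and $\int_{I_v(t)}|u-v|^2\,\dS$ is handled by adding and subtracting $w$, using the trace-type estimate \eqref{BoundL2Linfty} on $u-v-w$ together with the $L^2L^\infty$ control \eqref{EstimateL2supw} of $w$. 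The nonlinear factor $\|u-v-w\|_{L^2}^{1/2}\|\nabla(u-v-w)\|_{L^2}^{1/2}+\|u-v-w\|_{L^2}$ present in \eqref{timeEvolutionWL2} is split by Young's inequality, which, combined with Korn's inequality, produces an $E$-term plus an absorbable share of $\delta\int|\Dsym(u-v-w)|^2\,\dx$.

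For the $\hat g$-contribution, I would use H\"older with conjugate exponents $4$ and $4/3$,
\begin{align*}
\bigg|\int_{\R^d}\rho(\chi_u)(u-v-w)\cdot\hat g\,\dx\bigg|\leq C\,\|u-v-w\|_{L^4}\,\|\hat g\|_{L^{4/3}},
\end{align*}
and then the Gagliardo--Nirenberg inequality in dimension $d\leq 3$ to interpolate $\|u-v-w\|_{L^4}\leq C\|u-v-w\|_{L^2}^{1-\alpha}\|\nabla(u-v-w)\|_{L^2}^{\alpha}+C\|u-v-w\|_{L^2}$. Together with \eqref{timeEvolutionWL43} this produces a sum of contributions of the form $C(v,r_c)\,e(t)^{-\gamma}\,E(t)^{a}\,\|\nabla(u-v-w)\|_{L^2}^{b}$ with $a>0$ and $b<2$. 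The factor $(\int_{I_v(t)}|\bar h^\pm|^4\,\dS)^{1/4}$ in \eqref{timeEvolutionWL43} is controlled by combining the pointwise bound $|\bar h^\pm|\leq r_c$ from Proposition~\ref{PropositionInterfaceErrorHeight} with \eqref{HeightFunctionEstimate} to obtain $(\int|\bar h^\pm|^4)^{1/4}\leq C r_c^{1/2}E^{1/4}$. Young's inequality with carefully tuned exponents then splits each term into an absorbable contribution $\delta\int|\Dsym(u-v-w)|^2\,\dx$ and a Gronwall-type term controlled by $E(t)$, at the price of powers $e(t)^{-\alpha}$ which under the hypothesis $E\leq e^2$ are compensated by a factor $|\log e(t)|$.

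The main obstacle is the $\hat g$-piece: its bound \eqref{timeEvolutionWL43} itself contains the nonlinear quantity $\|u-v-w\|_{L^2}^{1/2}\|\nabla(u-v-w)\|_{L^2}^{1/2}+\|u-v-w\|_{L^2}$, so after Gagliardo--Nirenberg the gradient of $u-v-w$ appears with a fractional power that must be distributed between the dissipation and the relative entropy without breaking the Gronwall structure. The bookkeeping required to produce precisely the exponents of $r_c$, $e(t)$ and $|\log e(t)|$ stated in \eqref{GronwallAdt}, and to check that every single subterm of $\|g\|_{L^2}$ and $\|\hat g\|_{L^{4/3}}$ falls into one of the three allowed categories (a Gronwall term, a term absorbable into $\delta\int|\Dsym(u-v-w)|^2$, or an $e(t)E^{1/2}(t)$-type term), constitutes the bulk of the work but is mechanical once the strategy above is set up.
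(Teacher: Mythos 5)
Your overall strategy is the right one and matches the paper: substitute the decomposition $\partial_t w = -(v\cdot\nabla)w + g + \hat g$ from \eqref{Estimatedtw} so that the transport term cancels, estimate the $g$-contribution by Cauchy--Schwarz and the $\hat g$-contribution by H\"older plus Gagliardo--Nirenberg, and close with Young and Korn. Your treatment of the $g$-part is also correct.

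The gap is in the $L^4$-estimate for $\bar h^\pm$, and it is fatal. You propose to bound $(\int_{I_v(t)}|\bar h^\pm|^4\,\dS)^{1/4}\leq C r_c^{1/2}E^{1/4}$ by interpolating the trivial pointwise bound $|\bar h^\pm|\leq r_c$ against the $L^2$-bound \eqref{HeightFunctionEstimate}. That inequality is true, but it is a factor of roughly $e(t)^{-1/2}$ worse than what the argument needs, and nothing downstream can recover this loss. Concretely: \eqref{timeEvolutionWL43} carries a prefactor $e(t)^{-1}$, the interface-height factor $(\int|h_e^\pm|^2+|\nabla h_e^\pm|^2)^{1/2}\lesssim E^{1/2}$ absorbs one power of $e(t)$ via $E\leq e^2$, so in the end everything hinges on $(\int|\bar h^\pm|^4)^{1/4}$ itself being $O(e(t))$ up to logarithms. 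With only $(\int|\bar h^\pm|^4)^{1/4}\lesssim E^{1/4}\leq e(t)^{1/2}$, a residual factor $e(t)^{-1/2}$ survives, and after Gagliardo--Nirenberg and Young's inequality (tuned so the gradient exits with exponent $2$) the non-absorbed term becomes, e.g.\ in $d=3$, a multiple of $e(t)^{-4/3}E(t)$, which under $E\leq e^2$ can only be lowered to $E(t)^{1/3}$. This is \emph{not} dominated by $(1+|\log e(t)|)\,E(t)$; indeed $e(t)^{2/3}\gg |\log e(t)|\,e(t)^{2}$ as $e\to 0$, so the resulting integrand is genuinely larger than the Gronwall-compatible one and the stability estimate does not close. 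Your sentence that ``powers $e(t)^{-\alpha}$ \ldots under the hypothesis $E\leq e^2$ are compensated by a factor $|\log e(t)|$'' is simply false: $e^{-\alpha}$ dominates every power of $|\log e|$.

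The missing ingredient is the Orlicz--Sobolev embedding. The paper's estimate \eqref{aux2Adt} gives $(\int_{I_v(t)}|\bar h^\pm|^4\,\dS)^{1/4}\leq \frac{C}{r_c^6}e(t)\big(1+\log\frac{1}{e(t)}\big)^{1/4}$, i.e.\ one \emph{full} power of $e(t)$ at the cost of only a logarithm. This does not follow from the $L^\infty$ and $L^2$ bounds alone; it exploits the BV-type gradient control \eqref{HeightFunctionGradientEstimate} on $\bar h^\pm$ via Proposition~\ref{OrliczHeight} (two-dimensional interface, using the optimal Orlicz--Sobolev embedding of Theorem~\ref{TheoremOptimalOrliczSobolev}) respectively Lemma~\ref{OrliczHeight1D} (one-dimensional interface, elementary). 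With that sharper $L^4$-bound the $e(t)$-factors cancel, the logarithm enters with a small power, and the Young/Korn step produces precisely the $(1+|\log e(t)|)\,E(t)$ integrand plus the absorbable dissipation term of \eqref{GronwallAdt}. Without it, the argument is not merely losing constants: it fails to yield a Gronwall-compatible inequality.
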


\begin{proof}
To estimate the terms involving the time derivative of $w$ we make use of 
the decomposition of $\partial_t w + (v\cdot\nabla)w$ from \eqref{Estimatedtw}:
\begin{align*}
&\bigg|-\int_0^T\int_\Rd \rho(\chi_u) (u{-}v{-}w) \cdot \partial_t w \,\dx\,\dt
-\int_0^T\int_\Rd \rho(\chi_u) (u{-}v{-}w) \cdot (v\cdot \nabla) w \,\dx\,\dt\bigg| 
\\&\leq \int_0^T\|g\|_{L^2}\|u-v-w\|_{L^2}\,\dt + 
\int_0^T\|\hat g\|_{L^\frac{4}{3}}\|u-v-w\|_{L^4}\,\dt. 
\end{align*}
Employing the bounds \eqref{HeightFunctionEstimate}, \eqref{HeightFunctionGradientEstimate} and the assumption $E[\chi_u,u,V|\chi_v,v](t)\leq e(t)^2$
together with the Orlicz-Sobolev embedding \eqref{boundOrliczHeight} from Proposition~\ref{OrliczHeight}
or \eqref{boundOrliczHeight1D} from Lemma~\ref{OrliczHeight1D} depending
on the dimension, we obtain
\begin{align}\label{aux2Adt}
\bigg(\int_{I_v(t)} |\bar{h}^\pm|^4\,\mathrm{d}S\bigg)^\frac{1}{4} 
\leq \frac{C}{r_c^6}e(t)\Big(1+\log\frac{1}{e(t)}\Big)^\frac{1}{4}.
\end{align}
Making use of \eqref{EstimateErrorHeightSmooth}, the bound for the vector field $\hat g$ from \eqref{timeEvolutionWL43}, the Gagliardo-Nirenberg-Sobolev embedding
$\|u{-}v{-}w\|_{L^4}\leq C\|\nabla(u{-}v{-}w)\|^{1-\alpha}_{L^2}
\|u{-}v{-}w\|^{\alpha}_{L^2}$, with $\alpha=\frac{1}{2}$ for $d=2$ and $\alpha=\frac{1}{4}$ for $d=3$, 
as well as the assumption $E[\chi_u,u,V|\chi_v,v](t)\leq e(t)^2$ we obtain
\begin{align}\label{aux1Adt}
&\|\hat{g}\|_{L^\frac{4}{3}}\|u-v-w\|_{L^4} 
\\&\nonumber
\leq C\frac{\|v\|_{W^{1,\infty}}\|v\|_{W^{2,\infty}(\Rd\setminus I_v(t))}}
{r_c^{11}}\Big(1+\log\frac{1}{e(t)}\Big)^\frac{1}{4}
\\&~~~~~~~~~~~~~~\nonumber\times
(\|\nabla(u{-}v{-}w)\|_{L^2}+
\|u{-}v{-}w\|_{L^2})
E[\chi_u,u,V|\chi_v,v]^\frac{1}{2}(t)
\\&~~~\nonumber
+ C\frac{\|v\|_{W^{1,\infty}}}
{r_c^8}\Big(1+\log\frac{1}{e(t)}\Big)^\frac{1}{4}
(\|\nabla(u{-}v{-}w)\|_{L^2}{+}
\|u{-}v{-}w\|_{L^2})\|u{-}v{-}w\|_{L^2}
\\&~~~\nonumber
+ C\frac{\|v\|_{W^{1,\infty}}}
{r_c^8}\Big(1+\log\frac{1}{e(t)}\Big)^\frac{1}{4}
\|\nabla(u{-}v{-}w)\|^{\frac{3}{2}-\alpha}_{L^2}
\|u{-}v{-}w\|^{\frac{1}{2}+\alpha}_{L^2}
\\&~~~\nonumber
+C\|v\|_{W^{1,\infty}}(1{+}\|v\|_{W^{1,\infty}})
E[\chi_u,u,V|\chi_v,v]^\frac{1}{2}(t)
(\|\nabla(u{-}v{-}w)\|_{L^2}{+}\|u{-}v{-}w\|_{L^2}).
\end{align}
Now, by an application of Young's and Korn's inequality for all the terms
on the right hand side of \eqref{aux1Adt} which include an $L^2$-norm 
of the gradient of $u-v-w$ (in the case $d=3$ we use
$a^\frac{5}{4}b^\frac{3}{4}=(a(8\delta/5)^\frac{1}{2})^\frac{5}{4}
(b(8\delta/5)^{-\frac{5}{6}})^\frac{3}{4}
\leq\delta a^2 + \frac{3}{8}\big(\frac{8}{5}\big)^{-\frac{5}{3}}\delta^{-\frac{5}{3}}b^2$, which follows
from Young's inequality with exponents $p=\frac{8}{5}$ and $q=\frac{8}{3}$) we obtain
\begin{align}\nonumber\label{aux3Adt}
&\|\hat{g}\|_{L^\frac{4}{3}}\|u-v-w\|_{L^4} 
\\&
\leq \frac{C}{\delta^\frac{5}{3} r_c^{22}}
\|v\|_{W^{1,\infty}}^2(1{+}\|v\|_{W^{2,\infty}(\Rd\setminus I_v(t))})^2
(1{+}|\log e(t)|)E[\chi_u,u,V|\chi_v,v](t)
\\&~~~\nonumber
+ \frac{C}{r_c^{11}}
\|v\|_{W^{1,\infty}}(1{+}\|v\|_{W^{2,\infty}(\Rd\setminus I_v(t))})
(1{+}|\log e(t)|)E[\chi_u,u,V|\chi_v,v](t)
\\&~~~\nonumber
+\delta\|\Dsym(u{-}v{-}w)\|^2_{L^2},
\end{align}
where $\delta\in (0,1)$ is arbitrary. This gives the desired bound for the
$L^\frac{4}{3}$-contribution of $\partial_t w + (v\cdot\nabla)w$.
Concerning the $L^2$-contribution, we estimate using \eqref{HeightFunctionEstimate}, \eqref{EstimateErrorHeightSmooth}, the bound for $\|g\|_{L^2}$ from \eqref{timeEvolutionWL2}
as well as the assumption $E[\chi_u,u,V|\chi_v,v](t)\leq e(t)^2$
\begin{align}\label{aux4Adt}
&\|g\|_{L^2}\|u-v-w\|_{L^2} \\
&\nonumber
\leq C \frac{1{+}\|v\|_{W^{1,\infty}}}{r_c^{8}}
(\|\partial_t \nabla v\|_{L^\infty(\Rd\setminus I_v(t))}{+}(R^2{+}1)\|v\|_{W^{2,\infty}(\Rd\setminus I_v(t))})
E[\chi_u,u,V|\chi_v,v]^\frac{1}{2}(t)\|u{-}v{-}w\|_{L^2}
\\&~~~\nonumber
+C\|v\|_{W^{1,\infty}}(1+\|v\|_{W^{1,\infty}})
E[\chi_u,u,V|\chi_v,v]^\frac{1}{2}(t)\|u{-}v{-}w\|_{L^2}
\\&~~~\nonumber
+\frac{C}{r_c^2}(1+e'(t))\|v\|_{W^{1,\infty}}^2
E[\chi_u,u,V|\chi_v,v]^\frac{1}{2}(t)\|u{-}v{-}w\|_{L^2}
\\&~~~\nonumber
+C\frac{\|v\|_{W^{1,\infty}}(1{+}\|v\|_{W^{2,\infty}(\Rd\setminus I_v(t))})}{r_c}
E[\chi_u,u,V|\chi_v,v]^\frac{1}{2}(t)\|u{-}v{-}w\|_{L^2}
\\&~~~\nonumber
+C\|v\|_{W^{1,\infty}}(\|\nabla(u{-}v{-}w)\|_{L^2}+\|u{-}v{-}w\|_{L^2})\|u{-}v{-}w\|_{L^2}.
\end{align}
Hence, by another application of Young's and Korn's inequality, we may bound 
\begin{align}\label{aux5Adt}
&\|g\|_{L^2}\|u-v-w\|_{L^2} \\\nonumber
&\leq\frac{C}{r_c^{8}}(1{+}\|v\|_{W^{1,\infty}})
(\|\partial_t \nabla v\|_{L^\infty(\Rd\setminus I_v(t))}{+}(R^2{+}1)\|v\|_{W^{2,\infty}(\Rd\setminus I_v(t))})
E[\chi_u,u,V|\chi_v,v](t)
\\&\nonumber~~~
+\frac{C}{r_c^2}\|v\|_{W^{1,\infty}}^2(1+e'(t))
E[\chi_u,u,V|\chi_v,v](t)
\\&\nonumber~~~
+C\delta^{-1}\|v\|_{W^{1,\infty}}^2E[\chi_u,u,V|\chi_v,v](t)
\\&\nonumber~~~
+\delta\|\Dsym(u{-}v{-}w)\|^2_{L^2}
\end{align}
where $\delta\in (0,1]$ is again arbitrary. All in all, \eqref{aux3Adt} and \eqref{aux5Adt} therefore imply the desired bound.
\end{proof}

\subsection{Estimate for the additional advection terms}

We move on with the additional advection terms
from the relative entropy inequality of 
Proposition~\ref{PropositionRelativeEntropyInequalityFull}
\begin{align}\label{aux0Aadv}
A_{adv} = &-\int_0^T\int_\Rd \rho(\chi_u) (u-v-w)\cdot (w\cdot \nabla)(v+w) \,\dx\,\dt \\\nonumber
&-\int_0^T\int_\Rd \rho(\chi_u) (u-v-w)\cdot \big((u-v-w)\cdot \nabla\big)w \,\dx\,\dt.
\end{align}
A precise estimate is the content of the following result.

\begin{lemma}
Let the assumptions and notation of Proposition~\ref{PropositionCompensationFunction} be in place. 
In particular, we assume that there exists a $C^1$-function
$e\colon[0,\Tmax)\to [0,r_c)$ such that the relative entropy is bounded by $E[\chi_u,u,V,|\chi_v,v](t)\leq e^2(t)$.
Then the additional advection terms $A_{adv}$ may be bounded by a Gronwall-type term
\begin{align}\label{GronwallAadv}
A_{adv} \leq \frac{C}{r_c^{14}}(1{+}R)\|v\|_{L^\infty_tW^{3,\infty}_x(\Rd\setminus I_v(t))}^2\int_0^T(1{+}|\log e(t)|)
E[\chi_u,u,V|\chi_v,v](t)\,\dt.
\end{align}
\end{lemma}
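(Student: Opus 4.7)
The plan is to split $A_{adv}$ into three scalar integrals
\begin{align*}
A_{adv} &= -\int_0^T\!\!\int_\Rd \rho(\chi_u)(u{-}v{-}w)\cdot(w\cdot\nabla)v\,\dx\,\dt
 -\int_0^T\!\!\int_\Rd \rho(\chi_u)(u{-}v{-}w)\cdot(w\cdot\nabla)w\,\dx\,\dt\\
 &~~~-\int_0^T\!\!\int_\Rd \rho(\chi_u)(u{-}v{-}w)\cdot((u{-}v{-}w)\cdot\nabla)w\,\dx\,\dt
\end{align*}
and estimate each of them via H\"older's inequality in the obvious way. Since the densities $\rho^\pm$ are positive constants, the prefactor $\rho(\chi_u)$ is harmless. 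The velocity error $u-v-w$ is controlled in $L^2$ directly by the relative entropy, $\|u-v-w\|_{L^2}^2\leq CE[\chi_u,u,V|\chi_v,v](t)$, so all three pieces reduce to a product of $E^{1/2}(t)$ (or $E(t)$ in the last piece) with a norm of $w$ or its gradient.

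For the norms of $w$ I combine \eqref{Estimatew} with the $H^1$-estimate \eqref{EstimateErrorHeightSmooth} for the regularised interface error heights to obtain
\[
\|w(\cdot,t)\|_{L^2}^2 \leq Cr_c^{-8}R^2\|v\|_{W^{2,\infty}(\Rd\setminus I_v(t))}^2\,E[\chi_u,u,V|\chi_v,v](t),
\]
and I read off the $L^\infty$-bound on $\nabla w$ directly from \eqref{EstimateLinftyw}, whose dominant contribution is of the form $Cr_c^{-9}(1+\mathcal{H}^{d-1}(I_v(t)))\bigl(1+|\log e(t)|\bigr)\|v\|_{W^{3,\infty}(\Rd\setminus I_v(t))}$. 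The geometric quantity $\mathcal{H}^{d-1}(I_v(t))$ is bounded by a constant depending only on the strong solution, which I absorb into $C$.

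Applying Cauchy-Schwarz then yields
\begin{align*}
|(w\cdot\nabla)v\text{-term}| &\leq C\|v\|_{W^{1,\infty}}\|u{-}v{-}w\|_{L^2}\|w\|_{L^2}
\leq Cr_c^{-4}R\|v\|_{W^{2,\infty}(\Rd\setminus I_v(t))}^2\,E(t),\\
|(w\cdot\nabla)w\text{-term}| &\leq C\|\nabla w\|_{L^\infty}\|u{-}v{-}w\|_{L^2}\|w\|_{L^2}
\leq Cr_c^{-13}(1{+}R)(1{+}|\log e(t)|)\|v\|_{W^{3,\infty}(\Rd\setminus I_v(t))}^2\,E(t),\\
|((u{-}v{-}w)\cdot\nabla)w\text{-term}| &\leq C\|\nabla w\|_{L^\infty}\|u{-}v{-}w\|_{L^2}^2
\leq Cr_c^{-9}(1{+}|\log e(t)|)\|v\|_{W^{3,\infty}(\Rd\setminus I_v(t))}\,E(t),
\end{align*}
and summing the three contributions and integrating in time gives the claimed Gronwall-type bound \eqref{GronwallAadv} (the exponent $r_c^{-14}$ leaves a single power of $r_c^{-1}$ in reserve relative to the sharp exponent $r_c^{-13}$). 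There is no real obstacle here once the estimates on $\|w\|_{L^2}$ and $\|\nabla w\|_{L^\infty}$ are in place; the only slightly delicate point is the logarithmic factor, which is an unavoidable consequence of the dyadic decomposition used in the proof of \eqref{EstimateLinftyw} and which is precisely the reason the right-hand side of \eqref{GronwallAadv} is of Gronwall-with-log type rather than of pure Gronwall type.
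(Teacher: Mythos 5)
Your proof is correct and follows essentially the same route as the paper's own argument: apply H\"older/Cauchy--Schwarz to the three advection integrals, bound $\|u-v-w\|_{L^2}$ by $E^{1/2}(t)$, bound $\|w\|_{L^2}$ via \eqref{Estimatew} combined with \eqref{EstimateErrorHeightSmooth}, and bound $\|\nabla w\|_{L^\infty}$ via \eqref{EstimateLinftyw}, absorbing $\mathcal{H}^{d-1}(I_v(t))$ into the constant. The only cosmetic difference is that the paper groups $(w\cdot\nabla)(v+w)$ as a single term and works with space-time $L^2$ norms, whereas you split it and argue pointwise in $t$; the latter is arguably cleaner since the logarithmic factor from \eqref{EstimateLinftyw} depends on $t$ through $e(t)$.
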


\begin{proof}
A straightforward estimate yields
\begin{align*}
A_{adv} &\leq C(\|v\|_{L^\infty_tW^{1,\infty}_x}{+}\|\nabla w\|_{L^\infty_{x,t}})
\|u{-}v{-}w\|_{L^2_{x,t}}\bigg(\int_0^T\int_\Rd |w|^2\,\dx\,\dt\bigg)^\frac{1}{2}
\\&~~~
+C\|\nabla w\|_{L^\infty_{x,t}}\|u{-}v{-}w\|_{L^2_{x,t}}^2.
\end{align*}
Making use of \eqref{Estimatew}, \eqref{EstimateLinftyw} as well as \eqref{EstimateErrorHeightSmooth}
immediately shows that the desired bound holds true. 
\end{proof}

\subsection{Estimate for the additional weighted volume term} 
\label{SectionEstimateAweightVol}

It finally remains to state the estimate for the additional weighted volume term
from the relative entropy inequality of 
Proposition~\ref{PropositionRelativeEntropyInequalityFull}
\begin{align}\label{aux0AweightVol}
A_{weightVol} :=
\int_0^T\int_{\R^d}(\chi_u{-}\chi_v)(w\cdot\nabla)
\beta\Big(\frac{\sigdist(\cdot,I_v)}{r_c}\Big)\,\dx\,\dt.
\end{align}
\begin{lemma}

Let the assumptions and notation of Proposition~\ref{PropositionCompensationFunction} be in place.
In particular, we assume that there exists a $C^1$-function
$e\colon[0,\Tmax)\to [0,r_c)$ such that the relative entropy is bounded by $E[\chi_u,u,V,|\chi_v,v](t)\leq e^2(t)$.
Then the additional weighted volume term $A_{weightVol}$ may be bounded by a Gronwall term
\begin{align}\label{GronwallAweightVol}
A_{weightVol}\leq\frac{C}{r_c^{10}}(1+\|v\|_{L^\infty_tW^{2,\infty}_x(\Rd\setminus I_v(t))}^2)
\int_0^TE[\chi_u,u,V|\chi_v,v](t)\,\mathrm{d}t.
\end{align}
\end{lemma}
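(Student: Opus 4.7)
The plan is to bound $A_{weightVol}$ directly by exploiting the localization provided by $\nabla\beta$ together with the $L^2$-tangential-$L^\infty$-normal control on $w$ from Proposition~\ref{PropositionCompensationFunction}. Since $|\beta'|\leq 1$ and $\beta'(s)=0$ for $|s|\geq 1$, I first observe that $\nabla\beta(\sigdist(\cdot,I_v(t))/r_c)=r_c^{-1}\beta'(\sigdist/r_c)\vec{n}_v(P_{I_v(t)}x)$ satisfies $|\nabla\beta(\sigdist/r_c)|\leq C/r_c$ and is supported in the tubular neighborhood $\{x:\dist(x,I_v(t))<r_c\}$. Using the diffeomorphism $\Phi_t(x,y)=x+y\vec{n}_v(x,t)$ and \eqref{BoundNablaPhiNablaPhi-1}, a change of variables then yields
\begin{align*}
|A_{weightVol}|\leq \frac{C}{r_c}\int_0^T\!\!\int_{I_v(t)}\!\Big(\sup_{y\in[-r_c,r_c]}\!|w(x{+}y\vec{n}_v(x,t))|\Big)\int_{-r_c}^{r_c}\!|\chi_u{-}\chi_v|(x{+}y\vec{n}_v(x,t))\,\dy\,\dS\,\dt.
\end{align*}

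Next, I would apply the Cauchy--Schwarz inequality in the tangential variable to factor the integral into
\begin{align*}
|A_{weightVol}|\leq \frac{C}{r_c}\int_0^T\!\Big(\int_{I_v(t)}\sup_{y}|w|^2\,\dS\Big)^{\!1/2}\!\Big(\int_{I_v(t)}\Big(\int_{-r_c}^{r_c}|\chi_u{-}\chi_v|\,\dy\Big)^{\!2}\dS\Big)^{\!1/2}\dt.
\end{align*}
For the second factor, the elementary $L^2$-bound \eqref{L2hNoCutoff} established in the course of the proof of Lemma~\ref{lemmaBoundRMixed} yields a bound by $C\int_\Rd|\chi_u-\chi_v|\min\{\dist(x,I_v(t))/r_c,1\}\,\dx$, which is itself controlled by $C\,E[\chi_u,u,V|\chi_v,v](t)$ thanks to the coercivity \eqref{coercivityWeight}. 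For the first factor, the $L^2_{\tan}L^\infty_{\mathrm{nor}}$-estimate \eqref{EstimateL2supw} on $w$ combined with \eqref{EstimateErrorHeightSmooth} gives a bound of order $Cr_c^{-8}\|v\|_{W^{2,\infty}(\Rd\setminus I_v(t))}^2\,E[\chi_u,u,V|\chi_v,v](t)$.

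Combining the two factors via Young's inequality then yields the asserted Gronwall-type estimate \eqref{GronwallAweightVol}; in fact a somewhat sharper $r_c^{-5}$-power appears naturally, which is comfortably absorbed by $r_c^{-10}$. There is essentially no obstacle here, which is precisely the reason this estimate is stated last among the auxiliary bounds: all ingredients (the geometric height bound \eqref{L2hNoCutoff}, the normal-supremum estimate \eqref{EstimateL2supw} for the compensation field, the $H^1$-control \eqref{EstimateErrorHeightSmooth} of the regularized height functions by the relative entropy, and the coercivity property \eqref{coercivityWeight}) have already been established, and no integration by parts, no use of the momentum balance, and no appeal to the viscous dissipation term are required.
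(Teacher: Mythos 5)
Your proof is correct and follows essentially the same route as the paper, which simply points to the argument used for the term $III$ in the estimate \eqref{aux2AsurTen} of $A_{surTen}$ (change of variables via $\Phi_t$, then Young's inequality to split the product, then \eqref{EstimateL2supw}, \eqref{L2hNoCutoff}, \eqref{EstimateErrorHeightSmooth}, and the coercivity \eqref{coercivityWeight}); the only cosmetic difference is that you apply Cauchy--Schwarz first and Young afterwards, whereas the paper applies Young directly to the product of the two integrands. The resulting $r_c^{-5}$ power and the linear-in-$\|v\|$ factor you find are, as you note, comfortably absorbed into $r_c^{-10}(1+\|v\|^2_{L^\infty_t W^{2,\infty}_x(\Rd\setminus I_v(t))})$.
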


\begin{proof}
We may use the exact same argument as in the derivation of the estimate for the term $III$ from the additional surface tension terms $A_{surTen}$, see \eqref{aux2AsurTen}.
\end{proof}

\subsection{The weak-strong uniqueness principle with different viscosities}
\label{SectionProofResult}

Before we proceed with the proof of Theorem~\ref{weakStrongUniq}, let us summarize
the estimates from the previous sections in the form of a post-processed relative entropy inequality.
The proof is a direct consequence of the relative entropy inequality from
Proposition~\ref{PropositionRelativeEntropyInequalityFull} and the bounds
\eqref{RsurTenEqualVisc}, \eqref{RadvEqualVisc}, \eqref{RdtEqualVisc}, \eqref{RweightVolEqualVisc},
\eqref{GronwallAsurTen}, \eqref{GronwallAvisc}, \eqref{GronwallAdt}, \eqref{GronwallAadv}
and \eqref{GronwallAweightVol}.

\begin{proposition}[Post-processed relative entropy inequality]
\label{PropositionRelativeEntropyInequalityPostProcessed}
Let $d\leq 3$. Let $(\chi_u,u,V)$ be a varifold solution to the free boundary problem 
for the incompressible Navier--Stokes equation for two fluids 
\eqref{EquationTransport}--\eqref{EquationIncompressibility} in the sense of 
Definition~\ref{DefinitionVarifoldSolution} on some time interval $[0,\Tend)$. 
Let $(\chi_v,v)$ be a strong solution to \eqref{EquationTransport}--\eqref{EquationIncompressibility} 
in the sense of Definition~\ref{DefinitionStrongSolution} on some time interval $[0,\Tmax)$ with $\Tmax\leq \Tend$.

Let $\xi$ be the extension of the inner unit normal vector field $\vec{n}_v$ of the interface $I_v(t)$ 
from Definition~\ref{def:ExtNormal}. Let $w$ be the vector field contructed in Proposition~\ref{PropositionCompensationFunction}.
Let $\beta$ be the truncation of the identity from Proposition~\ref{PropositionRelativeEntropyInequalityFull},
and let $\theta$ be the density $\theta_t=\frac{\mathrm{d}|\nabla\chi_u(\cdot,t)|}{\mathrm{d}|V_t|_{\Sb^{d-1}}}$.
Let $e\colon [0,\Tmax)\to(0,r_c]$ be a $C^1$-function
and assume that the relative entropy
\begin{align*}
E\big[\chi_u,u,V\big|\chi_v,v\big](T) &:=
\sigma\int_\Rd 1-\xi(\cdot,T)\cdot
\frac{\nabla\chi_u(\cdot,T)}{|\nabla\chi_u(\cdot,T)|} 
\,\mathrm{d}|\nabla \chi_u(\cdot,T)| 
\\&~~~~\nonumber
+ \int_{\Rd} \frac{1}{2} \rho\big(\chi_u(\cdot,T)\big)
\big|u-v-w\big|^2(\cdot,T) \,\dx 
\\&~~~~\nonumber
+\int_\Rd \big|\chi_u(\cdot,T)-\chi_v(\cdot,T)\big|\,\Big|\beta\Big(\frac{\sigdist(\cdot,I_v(T))}{r_c}\Big)\Big|\,\dx
\\&~~~~\nonumber
+\sigma\int_\Rd 1-\theta_T\,\mathrm{d}|V_T|_{\Sb^{d-1}}
\end{align*}
is bounded by $E[\chi_u,u,V|\chi_v,v](t)\leq e(t)^2$.

Then the relative entropy is subject to the estimate
\begin{align}\label{eqT1}
&E[\chi_u,u,V|\chi_v,v](T)
+c\int_0^T \int_\Rd |\nabla (u-v-w)|^2 \dx~\dt
\\&\nonumber
\leq E[\chi_u,u,V|\chi_v,v](0)
\\&~~~\nonumber
+ C\int_0^T (1+|\log e(t)|)\,E[\chi_u,u,V|\chi_v,v](t)\,\dt
\\&~~~\nonumber
+ C\int_0^T (1+|\log e(t)|)\,e(t)\sqrt{E[\chi_u,u,V|\chi_v,v](t)}\,\dt
\\&~~~\nonumber
+ C\int_0^T \Big(\frac{\mathrm{d}}{\dt}e(t)\Big)E[\chi_u,u,V|\chi_v,v](t)\,\dt
\end{align}
for almost every $T\in [0,\Tmax)$.
Here, $C>0$ is a constant which is structurally of the form
$C=\widetilde Cr_c^{-22}$ with a constant 
$\tilde C=\tilde C(r_c,\|v\|_{L^\infty_tW^{3,\infty}_x},\|\partial_t v\|_{L^\infty_tW^{1,\infty}_x})$,
depending on the various norms of the velocity field of the strong solution, the regularity parameter $r_c$ of the interface of the strong solution, and the physical parameters $\rho^\pm$, $\mu^\pm$, and $\sigma$.
\end{proposition}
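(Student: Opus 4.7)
The plan is to use Proposition~\ref{PropositionRelativeEntropyInequalityFull} as the starting point and substitute, term by term, the post-processed bounds established in the preceding sections. The full relative entropy inequality provides
\begin{align*}
E[\chi_u,u,V|\chi_v,v](T) + \int_0^T\!\!\int_\Rd 2\mu(\chi_u)|\Dsym(u-v-w)|^2\,\dx\,\dt
\leq E(0) + \sum \text{(R terms)} + \sum \text{(A terms)},
\end{align*}
and my task reduces to feeding in the ten bounds \eqref{RsurTenEqualVisc}, \eqref{RadvEqualVisc}, \eqref{RdtEqualVisc}, \eqref{RweightVolEqualVisc} (for the $R$ contributions) and \eqref{GronwallAsurTen}, \eqref{GronwallAvisc} (which simultaneously covers $R_{visc}+A_{visc}$), \eqref{GronwallAdt}, \eqref{GronwallAadv}, \eqref{GronwallAweightVol} (for the remaining $A$ contributions). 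It is worth noting that the derivations producing \eqref{RsurTenEqualVisc}--\eqref{RweightVolEqualVisc} in Section~\ref{MainResultEqualViscosities} were actually written with the generic vector field $u-v-w$ throughout and therefore apply verbatim here; the equal-viscosities assumption entered only to force $w\equiv 0$, which is not used in those specific $R$-estimates.

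Next, I would convert the left-hand dissipation integral to a control of the full gradient of $u-v-w$: since $u-v-w$ is solenoidal and $\mu(\chi_u)\geq\min\{\mu^+,\mu^-\}>0$, Korn's inequality yields a constant $c>0$ with
\begin{align*}
\int_0^T\!\!\int_\Rd 2\mu(\chi_u)|\Dsym(u-v-w)|^2\,\dx\,\dt
\geq 2c\int_0^T\!\!\int_\Rd |\nabla(u-v-w)|^2\,\dx\,\dt.
\end{align*}
Each of the bounds \eqref{RsurTenEqualVisc}, \eqref{RdtEqualVisc}, \eqref{RadvEqualVisc}, \eqref{RweightVolEqualVisc}, \eqref{GronwallAvisc}, and \eqref{GronwallAdt} contains a tunable term of the form $\delta\int_0^T\int_\Rd|\nabla(u-v-w)|^2\,\dx\,\dt$ (or equivalently $\delta\int|\Dsym(u-v-w)|^2$). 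Fixing $\delta$ to be a sufficiently small multiple of $c$ -- small enough that the finite sum of the six $\delta$-contributions is dominated by $c$ -- allows me to absorb every one of these gradient terms into the left-hand side, leaving only $c\int_0^T\int|\nabla(u-v-w)|^2$ on the right side of the estimate.

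After absorption, the right-hand side consists exclusively of terms of the shape $C\cdot\omega(t)\cdot E(t)$ and $C\cdot\omega(t)\cdot e(t)\sqrt{E(t)}$ together with a single contribution of the form $C\cdot e'(t)\cdot E(t)$ stemming from the line in \eqref{GronwallAdt} that is generated by differentiating the mollification scale in Proposition~\ref{PropositionInterfaceErrorHeightRegularized}. The logarithmic weight $\omega(t)=1+|\log e(t)|$ is contributed by \eqref{GronwallAsurTen}, \eqref{GronwallAdt}, and \eqref{GronwallAadv}, reflecting the logarithmic losses in the pointwise bound \eqref{EstimateLinftyw} on $\nabla w$ and in the $L^4$ Orlicz-Sobolev bound \eqref{aux2Adt} for $\bar h^\pm$. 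Collecting all prefactors and tracking the worst $r_c$-power, which comes from the $\|v\|_{W^{1,\infty}}^2(1+\|v\|_{W^{2,\infty}})^2/r_c^{22}$ contribution in \eqref{GronwallAdt}, produces a single constant of the structural form $\widetilde C r_c^{-22}$ with $\widetilde C$ depending only on $r_c$, $\|v\|_{L^\infty_tW^{3,\infty}_x}$, $\|\partial_tv\|_{L^\infty_tW^{1,\infty}_x}$, and the physical parameters. The argument therefore does not involve any genuinely new analysis; the main obstacle is purely bookkeeping, namely verifying that every gradient term on the right-hand side is of the absorbable form and that the six $\delta$-parameters can be chosen consistently -- but since there are only finitely many such terms and each comes with an independent $\delta$, this is automatic.
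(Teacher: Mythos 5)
Your proposal is correct and takes essentially the same route as the paper, which explicitly states that this proposition follows by combining Proposition~\ref{PropositionRelativeEntropyInequalityFull} with the bounds \eqref{RsurTenEqualVisc}, \eqref{RadvEqualVisc}, \eqref{RdtEqualVisc}, \eqref{RweightVolEqualVisc}, \eqref{GronwallAsurTen}, \eqref{GronwallAvisc}, \eqref{GronwallAdt}, \eqref{GronwallAadv}, and \eqref{GronwallAweightVol}; your additional remarks about Korn's inequality (which for solenoidal fields is in fact the identity $\int 2|\Dsym f|^2=\int|\nabla f|^2$), the absorption of the finitely many $\delta$-contributions, and the source of the logarithmic and $e'(t)$ weights are exactly the implicit bookkeeping. (Minor slip: after absorption the term $c\int_0^T\int_\Rd|\nabla(u-v-w)|^2$ sits on the left-hand side of \eqref{eqT1}, not the right.)
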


We have everything in place to to prove the main result of this work.

\begin{proof}[Proof of Theorem~\ref{weakStrongUniq}]
The proof of Theorem~\ref{weakStrongUniq} is based on the post-processed relative entropy inequality of Proposition~\ref{PropositionRelativeEntropyInequalityPostProcessed}. It amounts to nothing but a more technical version of the upper bound 
\begin{align*}
E(t)
\leq e^{e^{-C t} \log E(0)}
\end{align*}
valid for all solutions of the differential inequality $\frac{d}{dt} E(t)\leq C E(t) |\log E(t)|$. However, it is made more technical by the more complex right-hand side \eqref{PropositionRelativeEntropyInequalityPostProcessed} in the relative entropy inequality (which involves the anticipated upper bound $e(t)^2$) and the smallness assumption on the relative entropy $E[\chi_u,u,V|\chi_v,v](t)$ needed for the validity of the relative entropy inequality.

We start the proof with the precise choice of the function $e(t)$ as 
well as the necessary smallness assumptions on the initial relative
entropy. We then want to exploit the post-processed form of the
relative entropy inequality from Proposition~\ref{PropositionRelativeEntropyInequalityPostProcessed}
to compare $E[\chi_u,u,V|\chi_v,v](t)$ with $e(t)$. 

Let $C>0$ be the constant from Proposition~\ref{PropositionRelativeEntropyInequalityPostProcessed}
and choose $\delta>0$ such that $\delta<\frac{1}{6(C+1)}$.
Let $\varepsilon>0$ (to be chosen in a moment, but finally we will let $\varepsilon\to 0$) 
and consider the strictly increasing function
\begin{align}\label{definitionUpperBound}
e(t) := e^{\frac{1}{2}e^{-\frac{t}{\delta}}\log (E[\chi_u,u,V|\chi_v,v](0)+\varepsilon)}.
\end{align}
Note that $e^2(0) = E[\chi_u,u,V|\chi_v,v](0)+\varepsilon$
which strictly dominates the relative entropy at the initial time. 
To ensure the smallness of this function, let us choose $c>0$ small
enough such that whenever we have $E[\chi_u,u,V|\chi_v,v](0)<c$ and $\varepsilon<c$,
it holds that
\begin{align}\label{smallness1}
e(t)<\frac{1}{3C}\wedge r_c
\end{align}
for all $t\in [0,\Tmax)$. This is indeed possible since the condition in \eqref{smallness1}
is equivalent to $\frac{1}{2} \log(E[\chi_u,u,V|\chi_v,v](0)+\varepsilon)<e^{\frac{\Tmax}{\delta}}\log(\frac{1}{3C}\wedge r_c)$. 
For technical reasons to be seen later, we will also require $c>0$ be small enough such that
\begin{align}\label{smallness2}
e^{-\frac{\Tmax}{\delta}}\frac{1}{6\delta}
\big|\log (E[\chi_u,u,V|\chi_v,v](0)+\varepsilon)\big|>C
\end{align}
whenever $E[\chi_u,u,V|\chi_v,v](0)<c$ and $\varepsilon<c$. 
We proceed with some further computations. We start with
\begin{align}\label{timeEvolutionUpperBound}
\frac{\mathrm{d}}{\mathrm{d}t} e(t) 
=\frac{1}{2\delta}|\log (E[\chi_u,u,V|\chi_v,v](0)+\varepsilon)|e(t)e^{-\frac{t}{\delta}}
= \frac{1}{\delta}|\log e(t)|e(t).
\end{align}
This in particular entails
\begin{align}\nonumber\label{timeEvolutionSquare}
e^2(T) -e^2(\tau) &= \int_\tau^T \frac{\mathrm{d}}{\mathrm{d}t} e^2(t)\,\dt
\\&
=\frac{1}{\delta}|\log (E[\chi_u,u,V|\chi_v,v](0)+\varepsilon)|
\int_\tau^T  e^2(t)e^{-\frac{t}{\delta}}\,\dt.
\end{align}

After these preliminary considerations, let us consider the relative entropy inequality
from Proposition~\ref{PropositionRelativeEntropyInequalityFull}. 
Arguing similarly to the derivation of the relative entropy inequality in Proposition~\ref{PropositionRelativeEntropyInequalityFull} but using the energy dissipation inequality in its weaker form $E[\chi_u,u,V|\chi_v,v](T)\leq E[\chi_u,u,V|\chi_v,v](\tau)$ for a.\,e.\ $\tau\in [0,T]$, we may deduce (upon modifying the solution on a subset of $[0,\Tmax)$ of vanishing measure)
\begin{align}\label{limsupRelativeEntropy}
\limsup_{T\downarrow\tau}E[\chi_u,u,V|\chi_v,v](T)\leq E[\chi_u,u,V|\chi_v,v](\tau)
\end{align}
for all $\tau\in [0,\Tmax)$. Now, consider the set $\mathcal{T}\subset [0,\Tmax)$
which contains all $\tau\in [0,\Tmax)$ such that 
$\limsup_{T\downarrow\tau}E[\chi_u,u,V|\chi_v,v](T)>e^2(\tau)$. Note that $0\in\mathcal{T}$.
We define
\begin{align*}
T^* := \inf \mathcal{T}.
\end{align*}
Since $E[\chi_u,u,V|\chi_v,v](0)<e^2(0)$ and $e^2$ is strictly increasing, we deduce
by the same argument which established \eqref{limsupRelativeEntropy} that $T^*>0$.
Hence, we can apply Proposition~\ref{PropositionRelativeEntropyInequalityPostProcessed}
at least for times $T<T^*$ (with $\tau = 0$). However, by the same argument as before 
the relative entropy inequality from Proposition~\ref{PropositionRelativeEntropyInequalityFull}
shows that $E[\chi_u,u,V|\chi_v,v](T^*)\leq E[\chi_u,u,V|\chi_v,v](T)+C(T^*-T)$ for all $T<T^*$,
whereas $E[\chi_u,u,V|\chi_v,v](T)$ may be bounded by means of the post-processed relative entropy inequality.
Hence, we obtain using also \eqref{definitionUpperBound} and
\eqref{timeEvolutionUpperBound}
\begin{align}\label{eqT3}
E[\chi_u,u,V|\chi_v,v](T^*) 
&\leq E[\chi_u,u,V|\chi_v,v](0)
\\&~~~\nonumber
+ C\int_0^{T^*} e^2(t)\,\dt 
\\&~~~\nonumber
+ C\frac{1}{2\delta}\big|\log (E[\chi_u,u,V|\chi_v,v](0)+\varepsilon)\big|\int_0^{T^*} e^3(t)e^{-\frac{t}{\delta}}\,\dt
\\&~~~\nonumber
+ C\frac{1}{2}\big|\log (E[\chi_u,u,V|\chi_v,v](0)+\varepsilon)\big|\int_0^{T^*} e^2(t)e^{-\frac{t}{\delta}}\,\dt.
\end{align}
We compare this to the equation \eqref{timeEvolutionSquare} for $e^2(t)$ (with $\tau=0$ and $T=T^*$). 
Recall that $e^2(0)$ strictly dominates the relative entropy at the initial time.
Because of \eqref{smallness2}, the second term on the right hand side of \eqref{eqT3}
is dominated by one third of the right hand side of \eqref{timeEvolutionSquare}. 
Because of \eqref{smallness1} and the choice $\delta<\frac{1}{6(C+1)}$
the same is true for the other two terms on the right hand side of \eqref{eqT3}.
In particular, we obtain using also \eqref{limsupRelativeEntropy} 
\begin{align*}
\limsup_{T\downarrow T^*}E[\chi_u,u,V|\chi_v,v](T) - e^2(T^*) \leq
E[\chi_u,u,V|\chi_v,v](T^*) - e^2(T^*) < 0,
\end{align*}
which contradicts the defining property of $T^*$. This concludes the proof
since the asserted stability estimate as well as the weak-strong uniqueness 
principle is now a consequence of letting $\varepsilon\to 0$.
\end{proof}

\section{Derivation of the relative entropy inequality}
\label{sec:ProofRelativeEntropyInequalityFull}

\begin{proof}[Proof of Proposition~\ref{PropositionRelativeEntropyInequalityFull}] 
We start with the following observation. Since the phase-dependent density $\rho(\chi_v)$ 
depends linearly on the indicator function $\chi_v$ of the volume occupied by the first fluid,
it consequently satisfies
\begin{align}\label{weakDensity}\nonumber
\int_{\R^d}\rho(\chi_v(\cdot,T))&\varphi(\cdot,T)\,\dx -\int_{\R^d}\rho(\chi_v^0)\varphi(\cdot,0)\,\dx 
\\ &= \int_0^T\int_{\R^d} \rho(\chi_v)(\partial_t\varphi+(v\cdot\nabla)\varphi)\,\dx\,\dt
\end{align}
for almost every $T\in [0,\Tmax)$ and all $\varphi\in C_{cpt}^\infty(\R^d\times [0,\Tmax))$. By approximation, the equation holds for all $\varphi\in W^{1,\infty}(\R^d\times [0,\Tmax))$.
Testing this equation with $v\cdot\eta$, where $\eta\in C^\infty_{cpt}(\R^d\times [0,\Tmax);\R^d)$
is a smooth vector field, we then obtain
\begin{equation}
\begin{aligned}\label{weakDensityAux}
\int_{\R^d}&\rho(\chi_v(\cdot,T))v(\cdot,T)\cdot\eta(\cdot,T)\,\dx 
- \int_{\R^d}\rho(\chi_v^0)v_0\cdot\eta(\cdot,0)\,\dx 
\\ &= \int_0^T\int_{\R^d} \rho(\chi_v)(v\cdot\partial_t\eta+\eta\cdot\partial_t v)\,\dx\,\dt
\\ &~~~+\int_0^T\int_{\R^d} \rho(\chi_v)(\eta\cdot(v\cdot\nabla)v + v\cdot(v\cdot\nabla)\eta)\,\dx \,\dt
\end{aligned}
\end{equation}
for almost every $T\in [0,\Tmax)$. Note that the velocity field $v$ of a strong solution has
the required regularity to justify the preceding step. Next, we subtract from \eqref{weakDensityAux}
the equation for the momentum balance \eqref{weakNSb} of the strong solution evaluated
with a test function $\eta\in C^\infty_{cpt}(\R^d\times [0,\Tmax);\R^d)$ such that $\nabla\cdot\eta = 0$.
This shows that the velocity field $v$ of the strong solution satisfies
\begin{equation}
\begin{aligned}\label{equVelField}
0 &= \int_0^T \int_\Rd \rho(\chi_v)\eta\cdot(v\cdot\nabla)v\,\dx\,\dt
+\int_0^T\int_{\R^d} \mu(\chi_v)(\nabla v+\nabla v^T) : \nabla \eta \,\dx\,\dt
\\ &~~~+\int_0^T\int_{\R^d} \rho(\chi_v)\eta\cdot\partial_t v\,\dx\,\dt
-\sigma\int_0^T\int_{I_v(t)} \vec{H}\cdot \eta \,\mathrm{d}S \,\dt
\end{aligned}
\end{equation}
which holds for almost every $T\in [0,\Tmax)$ and all $\eta\in C^\infty_{cpt}(\R^d\times [0,\Tmax);\R^d)$
such that $\nabla\cdot\eta = 0$. The aim is now to test the latter equation with the field $u-v-w$.
To this end, we fix a radial mollifier $\phi\colon\R^d\to[0,\infty)$ such that $\phi$
is smooth, supported in the unit ball and $\int_\Rd \phi\,\dx = 1$. For $n\in\N$ we define 
$\phi_n(\cdot) := n^{d}\phi(n\,\cdot)$ as well as $u_n:=\phi_n\ast u$ and analogously $v_n$ and $w_n$.
We then test \eqref{equVelField} with the test function $u_n-v_n-w_n$ and let $n\to\infty$.
Since the traces of $u_n$, $v_n$ and $w_n$ on $I_v(t)$ converge pointwise almost everywhere to the respective traces of $u$, $v$ and $w$, we indeed
may pass to the limit in the surface tension term of \eqref{equVelField}. Hence, we obtain
the identity
\begin{align}\nonumber
-&\int_0^T\int_{\R^d} \mu(\chi_v)(\nabla v+\nabla v^T) : \nabla (u-v-w) \,\dx\,\dt
\\&\label{equVelFieldTested}
= \int_0^T \int_\Rd \rho(\chi_v)(u-v-w)\cdot(v\cdot\nabla)v\,\dx\,\dt
\\&~~~\nonumber
+\int_0^T\int_{\R^d} \rho(\chi_v)(u-v-w)\cdot\partial_t v\,\dx\,\dt 
\\&~~~\nonumber
-\sigma\int_0^T\int_{I_v(t)} \vec{H}\cdot (u-v-w) \,\mathrm{d}S \,\dt,
\end{align}
which holds true for almost every $T\in [0,\Tmax)$.

In the next step, we test the analogue of \eqref{weakDensity} for the phase-dependent
density $\rho(\chi_u)$ of the varifold solution with the test function $\frac{1}{2}|v+w|^2$
and obtain
\begin{equation}
\begin{aligned}\label{weakDensityAux3}
\int_{\R^d}&\frac{1}{2}\rho(\chi_u(\cdot,T))|v+w|^2(\cdot,T)\,\dx 
-\int_{\R^d}\frac{1}{2}\rho(\chi_u^0)|v_0+w(\cdot,0)|^2\,\dx 
\\&
= \int_0^T\int_{\R^d}\rho(\chi_u)(v+w)\cdot\partial_t(v+w)\,\dx\,\dt
\\&~~~
+\int_0^T\int_{\R^d} \rho(\chi_u)(v+w)\cdot(u\cdot\nabla)(v+w)\,\dx\,\dt
\end{aligned}
\end{equation}
for almost every $T\in [0,\Tmax)$. Recall also from the definition of a varifold
solution that we are equipped with the energy dissipation inequality
\begin{align}\label{EIdens2}
\nonumber
&\int_\Rd\frac{1}{2}\rho(\chi_u(\cdot,T)) |u(\cdot,T)|^2 \,\dx + \sigma |V_T|(\Rd\times\Sd)
\\& +\int_0^T \int_\Rd \frac{\mu(\chi_u)}{2}\big|\nabla u + \nabla u^T\big|^2 \,\dx\,\dt
\\&~~~~
\nonumber
\leq \int_\Rd\frac{1}{2}\rho(\chi_u^0) |u_0|^2 \,\dx + \sigma |\nabla \chi_u^0|(\Rd),
\end{align}
which holds for almost every $T\in [0,\Tmax)$.

Finally, we want to test the equation for the momentum balance \eqref{weakNS}
of the varifold solution with the test function $v+w$. Since the normal derivative
of the tangential velocity of a strong solution may feature a discontinuity at the interface, 
we have to proceed by an approximation argument, i.e., we use the mollified version $v_n+w_n$
as a test function. Note that $v_n$ resp.\ $w_n$ are elements of $L^\infty([0,\Tmax);C^0(\R^d))$.
Hence, we may indeed use $v_n+w_n$ as a test function in the surface tension term
of the equation for the momentum balance \eqref{weakNS} of the varifold solution.
However, it is not clear a priori why one may pass to the limit $n\rightarrow \infty$ in this term. 

To argue that this is actually possible, we choose a precise representative
for $\nabla v$ resp.\ $\nabla w$ on the interface $I_v(t)$. This is indeed
necessary also for the velocity field of the strong solution since
the normal derivative of the tangential component of $v$ may
feature a jump discontinuity at the interface. However, by the regularity
assumptions on $v$, see Definition~\ref{DefinitionStrongSolution} of a strong solution, and the
assumptions on the compensating vector field $w$, for almost every $t\in [0,\Tmax)$ every point $x\in \Rd$ is either a Lebesgue point of $\nabla v$ (respectively $\nabla w$) or there exist two half spaces $H_1$ and $H_2$ passing through $x$ such that $x$ is a Lebesgue point for both $\nabla v|_{H_1}$ and $\nabla v|_{H_2}$ (respectively $\nabla w|_{H_1}$ and $\nabla w|_{H_2}$). In particular, by the $L^\infty$ bounds on $\nabla v$ and $\nabla w$ the limit of the mollifications $\nabla v_n$ respectively $\nabla w_n$ exist at every point $x\in \Rd$ and we may define $\nabla v$ respectively $\nabla w$ at every point $x\in \Rd$ as this limit.

Recall then that we have chosen the mollifiers $\phi_n$ to be radially symmetric.
Hence, the approximating sequences $\nabla v_n$ resp.\ $\nabla w_n$ converge
pointwise everywhere to the precise representation as chosen before. Since
both limits are bounded, we may pass to the limit $n\to\infty$ in every term
appearing from testing the equation for the momentum balance \eqref{weakNS} of the varifold solution
with the test function $v_n+w_n$. This entails
\begin{align}\label{weakAgainstStrong}
&-\int_\Rd \rho(\chi_u(\cdot,T)) u(\cdot,T) \cdot (v+w)(\cdot,T) \,\dx
+ \int_\Rd \rho(\chi_u^0) u_0 \cdot (v+w)(\cdot,0) \,\dx 
\\&
\nonumber
-\int_0^T \int_\Rd \mu(\chi_u) (\nabla u+\nabla u^T) : \nabla (v+w) \,\dx\,\dt
\\\nonumber
&=-\int_0^T\int_{\R^d} \rho(\chi_u)u\cdot\partial_t (v+w)\,\dx\,\dt
-\int_0^T \int_\Rd \rho(\chi_u) u\cdot(u\cdot\nabla)(v+w) \,\dx\,\dt
\\&~~~~
\nonumber
+\sigma\int_0^T\int_{\R^d\times\Sb^{d-1}}
(\mathrm{Id}{-}s\otimes s): \nabla (v+w) \,\mathrm{d}V_t(x,s)\,\dt
\end{align}
for almost every $T\in [0,\Tmax)$.
The next step consists of summing \eqref{equVelFieldTested}, \eqref{weakDensityAux3}, 
\eqref{EIdens2} and \eqref{weakAgainstStrong}. We represent this sum as follows:
\begin{align}\label{velError}
&LHS_{kin}(T) + LHS_{visc} + LHS_{surEn}(T) 
\\&
\nonumber
\leq RHS_{kin}(0) + RHS_{surEn}(0) + RHS_{dt} 
+ RHS_{adv} + RHS_{surTen}, 
\end{align}
where each individual term is obtained in the following way. The terms related to
kinetic energy at time $T$ on the left hand side of \eqref{weakDensityAux3}, 
\eqref{EIdens2} and \eqref{weakAgainstStrong} in total yield the contribution
\begin{align}\label{velError1}
LHS_{kin}(T) = \int_{\Rd}\frac{1}{2}\rho(\chi_u(\cdot,T)) |u-v-w|^2(\cdot,T) \,\dx.
\end{align}
The same computation may be carried out for the initial kinetic energy terms
\begin{align}\label{velError2}
RHS_{kin}(0) = \int_\Rd \frac{1}{2}\rho(\chi_u^0) |u_0-v_0-w(\cdot,0)|^2 \,\dx.
\end{align}
Note that because of \eqref{RadonNikodym} it holds
\begin{align*}
\sigma|V_T|(\Rd\times\Sb^{d-1}) = 
\sigma|\nabla\chi_u(\cdot,T)|(\Rd) +
\sigma\int_\Rd 1-\theta_T\,\mathrm{d}|V_T|_{\Sb^{d-1}}.
\end{align*}
The terms in the energy dissipation inequality related to surface energy are
therefore given by
\begin{align}\label{velError3}
LHS_{surEn}(T) = \sigma |\nabla\chi_u(\cdot,T)|(\Rd) + \sigma\int_\Rd 1-\theta_T\,\mathrm{d}|V_T|_{\Sb^{d-1}}
\end{align}
as well as
\begin{align}\label{velError4}
RHS_{surEn}(0) = \sigma |\nabla \chi_u^0|(\Rd).
\end{align}
Moreover, collecting all advection terms on the right hand side of \eqref{equVelFieldTested}, \eqref{weakDensityAux3}, 
and \eqref{weakAgainstStrong} as well as adding zero gives the contribution
\begin{align}\nonumber
RHS_{adv} &= -\int_0^T\int_\Rd \rho(\chi_u) (u-v-w) \cdot (v\cdot\nabla) w\,\dx\,\dt
\\&~~~\nonumber
-\int_0^T\int_\Rd\big(\rho(\chi_u)-\rho(\chi_v)\big)
(u-v-w)\cdot(v\cdot\nabla)v\,\dx\,\dt 
\\&~~~\nonumber
-\int_0^T\int_\Rd \rho(\chi_u) (u-v-w) \cdot \big((u-v)\cdot\nabla\big)(v+w)\,\dx\,\dt
\\&\label{velError5}
=-\int_0^T\int_\Rd \rho(\chi_u) (u-v-w) \cdot (v\cdot\nabla) w\,\dx\,\dt
\\&~~~\nonumber
-\int_0^T\int_\Rd\big(\rho(\chi_u)-\rho(\chi_v)\big)
(u-v-w)\cdot(v\cdot\nabla)v\,\dx\,\dt 
\\&~~~\nonumber
-\int_0^T\int_\Rd \rho(\chi_u)(u-v-w)\cdot
\big((u-v-w)\cdot\nabla\big)v\,\dx\,\dt
\\&~~~\nonumber
-\int_0^T\int_\Rd \rho(\chi_u) (u-v-w)\cdot (w\cdot \nabla)(v+w) \,\dx\,\dt 
\\&~~~\nonumber
-\int_0^T\int_\Rd \rho(\chi_u) (u-v-w)\cdot \big((u-v-w)\cdot \nabla\big)w \,\dx\,\dt.
\end{align}
Next, we may rewrite those terms on the right hand side of \eqref{equVelFieldTested}, \eqref{weakDensityAux3}, 
and \eqref{weakAgainstStrong} which contain a time derivative as follows
\begin{align}\label{velError6}
RHS_{dt} = &-\int_0^T\int_\Rd \big(\rho(\chi_u)-\rho(\chi_v)\big)
(u-v-w)\cdot\partial_t v\,\dx\,\dt
\\&\nonumber
-\int_0^T\int_\Rd \rho(\chi_u)(u-v-w)\cdot\partial_t w\,\dx\,\dt.
\end{align}
Furthermore, the terms related to surface tension on the right hand side of \eqref{equVelFieldTested}
and \eqref{weakAgainstStrong} are given by
\begin{align}
\label{RHSsurTen}
RHS_{surTen} &= \sigma\int_0^T\int_{\R^d\times\Sb^{d-1}}
(\mathrm{Id}{-}s\otimes s): \nabla v \,\mathrm{d}V_t(x,s)\,\dt
-\sigma\int_0^T\int_{I_v(t)} \vec{H}\cdot (u{-}v) \,\mathrm{d}S \,\dt
\\&~~~
\nonumber
+\sigma\int_0^T\int_{\R^d\times\Sb^{d-1}}
(\mathrm{Id}{-}s\otimes s): \nabla w \,\mathrm{d}V_t(x,s)\,\dt
+\sigma\int_0^T\int_{I_v(t)} \vec{H}\cdot w\,\mathrm{d}S \,\dt.
\end{align}
We proceed by rewriting the surface tension terms. For the sake of brevity, let
us abbreviate from now on $\vec{n}_u=\frac{\nabla\chi_u}{|\nabla\chi_u|}$.
Using the incompressibility of $v$ and adding zero, we start by rewriting
\begin{align*}
\sigma&\int_0^T\int_{\R^d\times\Sb^{d-1}}
(\mathrm{Id}{-}s\otimes s): \nabla v \,\mathrm{d}V_t(x,s)\,\dt \\
&=\sigma\int_0^T\int_{\Rd} \vec{n}_u\cdot(\vec{n}_u\cdot\nabla)v\,\mathrm{d}|\nabla\chi_u|\,\dt
-\sigma\int_0^T\int_{\R^d\times\Sb^{d-1}}s\cdot(s\cdot\nabla) v \,\mathrm{d}V_t(x,s)\,\dt \\
&~~~-\sigma\int_0^T\int_{\Rd} \vec{n}_u\cdot(\vec{n}_u\cdot\nabla)v\,\mathrm{d}|\nabla\chi_u|\,\dt.
\end{align*}
Next, by means of the compatibility condition \eqref{varifoldComp} we can write
\begin{align*}
\sigma&\int_0^T\int_{I_v(t)} \vec{n}_u\cdot(\vec{n}_u\cdot\nabla)v\,\mathrm{d}S\,\dt
-\sigma\int_0^T\int_{\R^d\times\Sb^{d-1}}s\cdot(s\cdot\nabla) v \,\mathrm{d}V_t(x,s)\,\dt \\
&=-\sigma\int_0^T\int_{\R^d\times\Sb^{d-1}}(s{-}\xi)\cdot\big((s{-}\xi)\cdot\nabla\big) 
v \,\mathrm{d}V_t(x,s)\,\dt \\
&~~~-\sigma\int_0^T\int_{\R^d\times\Sb^{d-1}}\xi\cdot\big((s{-}\xi)\cdot\nabla\big) 
v \,\mathrm{d}V_t(x,s)\,\dt \\
&~~~+\sigma\int_0^T\int_{\R^d}\vec{n}_u\cdot\big((\vec{n}_u-\xi)\cdot\nabla\big)
v\,\mathrm{d}|\nabla\chi_u|\,\mathrm{d}t.
\end{align*}
Moreover, the compatibility condition \eqref{varifoldComp} also ensures that
\begin{align*}
-\sigma \int_0^T\int_{\R^d\times\Sb^{d-1}}\xi\cdot(s\cdot\nabla)v\,\mathrm{d}V_t(x,s)\,\mathrm{d}t
=-\sigma \int_0^T\int_{\R^d}\xi\cdot(\vec{n}_u\cdot\nabla)v\,\mathrm{d}|\nabla\chi_u|\,\mathrm{d}t,
\end{align*}
whereas it follows from \eqref{RadonNikodym}
\begin{align*}
&\sigma\int_0^T\int_{\R^d\times\Sb^{d-1}}\xi\cdot(\xi\cdot\nabla)v\,\mathrm{d}V_t(x,s)\,\mathrm{d}t
\\
&=\sigma\int_0^T\int_{\R^d}(1-\theta_t)\,
\xi\cdot(\xi\cdot\nabla)v\,\mathrm{d}|V_t|_{\Sb^{d-1}}(x)\,\dt 
+\sigma\int_0^T\int_{\R^d}\xi\cdot(\xi\cdot\nabla)v\,\mathrm{d}|\nabla\chi_u|\,\mathrm{d}t.
\end{align*}
Using that the divergence of $\xi$ equals the divergence of $\vec{n}_v(P_{I_v(t)}x)$ \textit{on}
the interface of the strong solution (i.\,e.\ $\vec{H}=-(\nabla \cdot \xi) \vec{n}_v$; see Definition~\ref{def:ExtNormal}, i.e., the cutoff
function does not contribute to the divergence \textit{on} the interface), 
that the latter quantity equals the scalar mean curvature (recall
that $\vec{n}_v=\frac{\nabla \chi_v}{|\nabla \chi_v|}$ points inward) as well as once more the incompressibility of the velocity fields 
$v$ resp.\ $u$ we may also rewrite
\begin{align*}
-\sigma\int_0^T\int_{I_v(t)} \vec{H}\cdot (u-v) \,\mathrm{d}S \,\dt
= -\sigma\int_0^T\int_\Rd\chi_v\big((u-v)\cdot\nabla\big)(\nabla\cdot\xi)\,\dx\,\dt.
\end{align*}
The preceding five identities together then imply that
\begin{align}\nonumber
&\sigma\int_0^T\int_{\R^d\times\Sb^{d-1}}
(\mathrm{Id}{-}s\otimes s): \nabla v \,\mathrm{d}V_t(x,s)\,\dt
-\sigma\int_0^T\int_{I_v(t)} \vec{H}\cdot (u{-}v) \,\mathrm{d}S \,\dt
\\&\label{surTenAux1}
=-\sigma\int_0^T\int_{\Rd} \vec{n}_u\cdot(\vec{n}_u\cdot\nabla)v\,\mathrm{d}|\nabla\chi_u|\,\dt 
\\&~~~
-\sigma\int_0^T\int_\Rd\chi_v\big((u-v)\cdot\nabla\big)(\nabla\cdot\xi)\,\dx\,\dt
\nonumber\\&~~~
-\sigma\int_0^T\int_{\R^d\times\Sb^{d-1}}(s{-}\xi)\cdot\big((s{-}\xi)\cdot\nabla\big) v \,\mathrm{d}V_t(x,s)\,\dt 
\nonumber\\&~~~
+\sigma\int_0^T\int_{\R^d}(1-\theta_t)\,
\xi\cdot(\xi\cdot\nabla)v\,\mathrm{d}|V_t|_{\Sb^{d-1}}(x)\,\dt 
\nonumber\\&~~~
+\sigma\int_0^T\int_{\R^d}(\vec{n}_u-\xi)\cdot\big((\vec{n}_u-\xi)\cdot\nabla\big)v
\,\mathrm{d}|\nabla\chi_u|\,\mathrm{d}t. \nonumber
\end{align}
Following the computation which led to \eqref{surTenAux1} we also obtain the identity
\begin{align*}
\sigma&\int_0^T\int_{\Rd\times\Sd}(\mathrm{Id}{-}s\otimes s):\nabla w 
\,\mathrm{d}V_t(x,s)\,\dt
\\&
= -\sigma\int_0^T\int_{\Rd} \vec{n}_u\cdot(\vec{n}_u\cdot\nabla)w\,\mathrm{d}|\nabla\chi_u|\,\dt \nonumber
\\&~~~
-\sigma\int_0^T\int_{\R^d\times\Sb^{d-1}}(s{-}\xi)
\cdot\big((s{-}\xi)\cdot\nabla\big) w \,\mathrm{d}V_t(x,s)\,\dt 
\\&~~~
+\sigma\int_0^T\int_{\R^d}(1-\theta_t)\,
\xi\cdot(\xi\cdot\nabla)w\,\mathrm{d}|V_t|_{\Sb^{d-1}}(x)\,\dt \nonumber
\\&~~~
+\sigma\int_0^T\int_{\R^d}(\vec{n}_u-\xi)\cdot\big((\vec{n}_u-\xi)\cdot\nabla\big)w
\,\mathrm{d}|\nabla\chi_u|\,\mathrm{d}t. \nonumber
\end{align*}
Using the fact that $w$ is divergence-free, we may also rewrite
\begin{align*}
&-\sigma\int_0^T\int_\Rd \vec{n}_u\cdot(\vec{n}_u\cdot\nabla) w \,\mathrm{d}|\nabla \chi_u|\,\dt
\\&
=-\sigma\int_0^T\int_\Rd \vec{n}_u \cdot\big((\vec{n}_u-\xi) \cdot \nabla\big) w \,\mathrm{d}|\nabla \chi_u|\,\dt
+\sigma\int_0^T\int_\Rd \chi_u \nabla \cdot \big((\xi\cdot \nabla) w\big) \,\dx\,\dt
\\&
=-\sigma\int_0^T\int_\Rd \xi \cdot \big((\vec{n}_u-\xi) \cdot \nabla\big) w \,\mathrm{d}|\nabla \chi_u|\,\dt
\\&~~~
-\sigma\int_0^T\int_\Rd (\vec{n}_u-\xi) \cdot \big((\vec{n}_u-\xi)\cdot\nabla\big) w \,\mathrm{d}|\nabla \chi_u|\,\dt
\\&~~~
+\sigma\int_0^T\int_\Rd \chi_u \nabla w :\nabla \xi^T \,\dx\,\dt.
\end{align*}
Appealing once more to the fact that $\xi=\vec{n}_v$ on the interface $I_v$
of the strong solution (see Definition~\ref{def:ExtNormal}) and $\nabla \cdot w=0$, we obtain
\begin{align*}
\sigma&\int_0^T\int_{I_v(t)} \vec{H} \cdot w \,\mathrm{d}S\,\dt \\
&=-\sigma\int_0^T\int_\Rd (\Id-\vec{n}_v\otimes \vec{n}_v): \nabla w \,\mathrm{d}S\,\dt
=\sigma\int_0^T\int_\Rd \vec{n}_v \cdot (\xi\cdot \nabla) w \,\mathrm{d}S\,\dt
\\&
=-\sigma\int_0^T\int_\Rd \chi_v \nabla \cdot\big( (\xi\cdot \nabla)w \big) \,\dx\,\dt
=-\sigma\int_0^T\int_\Rd \chi_v \nabla w : \nabla \xi^T \,\dx\,\dt.
\end{align*}
The last three identities together with \eqref{surTenAux1} and \eqref{RHSsurTen} in total finally yield the following representation
of the surface tension terms on the right hand side of \eqref{equVelFieldTested}
and \eqref{weakAgainstStrong}
\begin{align}\label{velError7}
RHS_{surTen} &=
-\sigma\int_0^T\int_{\Rd} \vec{n}_u\cdot(\vec{n}_u\cdot\nabla)v\,\mathrm{d}|\nabla\chi_u|\,\dt 
\\&~~~
+\sigma\int_0^T\int_{\R^d}(\vec{n}_u-\xi)\cdot\big((\vec{n}_u-\xi)\cdot\nabla\big)v
\,\mathrm{d}|\nabla\chi_u|\,\mathrm{d}t \nonumber
\\&~~~
-\sigma\int_0^T\int_\Rd\chi_v\big((u-v)\cdot\nabla\big)(\nabla\cdot\xi)\,\dx\,\dt
\nonumber\\&~~~
-\sigma\int_0^T\int_{\R^d\times\Sb^{d-1}}(s{-}\xi)\cdot\big((s{-}\xi)\cdot\nabla\big) v \,\mathrm{d}V_t(x,s)\,\dt 
\nonumber\\&~~~
+\sigma\int_0^T\int_{\R^d}(1-\theta_t)\,
\xi\cdot(\xi\cdot\nabla)v\,\mathrm{d}|V_t|_{\Sb^{d-1}}(x)\,\dt \nonumber
\\&~~~
-\sigma\int_0^T\int_{\R^d\times\Sb^{d-1}}(s{-}\xi)
\cdot\big((s{-}\xi)\cdot\nabla\big) w \,\mathrm{d}V_t(x,s)\,\dt \nonumber
\\&~~~
+\sigma\int_0^T\int_{\R^d}(1-\theta_t)\,
\xi\cdot(\xi\cdot\nabla)w\,\mathrm{d}|V_t|_{\Sb^{d-1}}(x)\,\dt \nonumber
\\&~~~
-\sigma\int_0^T\int_\Rd \xi \cdot \big((\vec{n}_u-\xi) \cdot \nabla\big) w \,\mathrm{d}|\nabla \chi_u|\,\dt \nonumber
\\&~~~
+\sigma\int_0^T\int_\Rd (\chi_u-\chi_v) \nabla w :\nabla \xi^T \,\dx\,\dt. \nonumber
\end{align}
It remains to collect the viscosity terms from the left hand side of \eqref{equVelFieldTested}, 
\eqref{EIdens2} and \eqref{weakAgainstStrong}. Adding also zero, we obtain
\begin{align}\nonumber
LHS_{visc} &=
\int_0^T\int_\Rd 2\big(\mu(\chi_u)-\mu(\chi_v)\big)\Dsym v:\Dsym (u-v-w)\,\dx\,\dt 
\\&~~~\nonumber
-\int_0^T\int_\Rd 2\mu(\chi_u)\Dsym v:\Dsym (u-v-w)\,\dx\,\dt 
\\&~~~\nonumber
+\int_0^T\int_\Rd 2\mu(\chi_u)\Dsym u:\Dsym u\,\dx\,\dt 
\\&~~~\nonumber
-\int_0^T\int_\Rd 2\mu(\chi_u)\Dsym u:\Dsym (v+w)\,\dx\,\dt 
\\&\label{velError8}
= \int_0^T\int_\Rd2\mu(\chi_u)|\Dsym (u-v-w)|^2\,\dx\,\dt
\\&~~~\nonumber
+\int_0^T\int_\Rd 2\big(\mu(\chi_u)-\mu(\chi_v)\big)\Dsym v:\Dsym (u-v-w)\,\dx\,\dt 
\\&~~~\nonumber
+\int_0^T\int_\Rd 2\mu(\chi_u)\Dsym w:\Dsym (u-v-w)\,\dx\,\dt .
\end{align}
In particular, as an intermediate summary we obtain the following bound making already use
of the notation of Proposition~\ref{PropositionRelativeEntropyInequalityFull}: Taking the bound
\eqref{velError} together with the identities from \eqref{velError1} to \eqref{velError6} as well
as \eqref{velError7} and \eqref{velError8} yields
\begin{align}\label{velErrorFinal}
\int_\Rd&\frac{1}{2}\rho(\chi_u(\cdot,T)) |u-v-w|^2(\cdot,T) \,\dx +
\int_0^T \int_\Rd 2\mu(\chi_u)|\Dsym (u - v - w)|^2 \,\dx\,\dt \nonumber\\
&+\sigma|\nabla\chi_u(\cdot,T)|(\Rd) +\sigma\int_\Rd 1-\theta_T\,\mathrm{d}|V_T|_{\Sb^{d-1}} \nonumber\\
&\leq \int_\Rd\frac{1}{2}\rho(\chi_u^0) |u_0-v_0-w(\cdot,0)|^2\,\dx
+\sigma |\nabla \chi_u^0|(\Rd) \\
&~~~ + R_{dt} + R_{visc} + R_{adv} + A_{visc} + A_{dt} + A_{adv} + A_{surTen} \nonumber
\\&~~~\nonumber-\sigma\int_0^T\int_{\Rd} \vec{n}_u\cdot(\vec{n}_u\cdot\nabla)v\,\mathrm{d}|\nabla\chi_u|\,\dt 
\\&~~~
+\sigma\int_0^T\int_{\R^d}(\vec{n}_u-\xi)\cdot\big((\vec{n}_u-\xi)\cdot\nabla\big)v
\,\mathrm{d}|\nabla\chi_u|\,\mathrm{d}t \nonumber
\\&~~~
-\sigma\int_0^T\int_\Rd\chi_v\big((u-v-w)\cdot\nabla\big)(\nabla\cdot\xi)\,\dx\,\dt
\nonumber\\&~~~
-\sigma\int_0^T\int_\Rd\chi_u\big(w\cdot\nabla\big)(\nabla\cdot\xi)\,\dx\,\dt
\nonumber\\&~~~
-\sigma\int_0^T\int_{\R^d\times\Sb^{d-1}}(s{-}\xi)\cdot\big((s{-}\xi)\cdot\nabla\big) v \,\mathrm{d}V_t(x,s)\,\dt 
\nonumber\\&~~~
+\sigma\int_0^T\int_{\R^d}(1-\theta_t)\,
\xi\cdot(\xi\cdot\nabla)v\,\mathrm{d}|V_t|_{\Sb^{d-1}}(x)\,\dt \nonumber.
\end{align}
The aim of the next step is to use $\sigma(\nabla\cdot\xi)$ (see Definition~\ref{def:ExtNormal}) 
as a test function in the transport equation \eqref{weakTransport} for the indicator function
$\chi_u$ of the varifold solution. For the sake of brevity, we will write again
$\vec{n}_u=\frac{\nabla\chi_u}{|\nabla\chi_u|}$. Plugging in $\sigma(\nabla\cdot\xi)$ and integrating by parts yields 
\begin{align*}
-\sigma&\int_{\Rd}\vec{n}_u(\cdot,T)\cdot\vec{\xi}(\cdot,T)\,\mathrm{d}|\nabla\chi_u(\cdot,T)|
		+\sigma\int_{\Rd}\vec{n}_u^0\cdot\vec{\xi}(\cdot,0)\,\mathrm{d}|\nabla\chi_u^0| \\
		&=-\sigma\int_0^T\int_{\Rd}\vec{n}_u\cdot\partial_t\vec{\xi}\,\mathrm{d}|\nabla\chi_u|\,\dt
    +\sigma\int_0^T\int_{\Rd}\chi_u(u\cdot\nabla)(\nabla\cdot\vec{\xi}\,)\,\dx\,\dt
\end{align*}
for almost every $T\in[0,\Tmax)$. Making use of the evolution equation 
\eqref{evolExtNormal} for $\xi$ and the fact that $\xi$ is supported
in the space-time domain $\{\dist(x,I_v(t))<r_c\}$, we get by adding zero
\begin{align}\label{chiCross}
&-\sigma\int_{\Rd}\vec{n}_u(\cdot,T)\cdot\vec{\xi}(\cdot,T)\,\mathrm{d}|\nabla\chi_u(\cdot,T)|
		+\sigma\int_{\Rd}\vec{n}_u^0\cdot\vec{\xi}(\cdot,0)\,\mathrm{d}|\nabla\chi_u^0| \\
		&=\sigma\int_0^T\int_\Rd\vec{n}_u\cdot
		\big(\big(\mathrm{Id}{-}\vec{n}_v(P_{I_v(t)}x)\otimes\vec{n}_v(P_{I_v(t)}x)\big)(\nabla v)^T \xi\big)
		\,\mathrm{d}|\nabla\chi_u|\,\dt \nonumber\\
		&~~~+\sigma\int_0^T\int_\Rd\vec{n}_u\cdot
		(v\cdot\nabla)\xi\,\mathrm{d}|\nabla\chi_u|\,\dt \nonumber\\
		&~~~+\sigma\int_0^T\int_\Rd\chi_u(u\cdot\nabla)(\nabla\cdot\vec{\xi}\,)\,\dx\,\dt \nonumber\\
		&~~~+\sigma\int_0^T\int_\Rd\vec{n}_u\cdot
		\big(\big(\mathrm{Id}{-}\vec{n}_v(P_{I_v(t)}x)\otimes\vec{n}_v(P_{I_v(t)}x)\big)
		(\nabla\bar{V}_{\vec{n}}-\nabla v)^T \xi\big)\,\mathrm{d}|\nabla\chi_u|\,\dt \nonumber\\
		&~~~+\sigma\int_0^T\int_\Rd\vec{n}_u\cdot
		\big((\bar{V}_{\vec{n}}-v)\cdot\nabla\big)\xi\,\mathrm{d}|\nabla\chi_u|\,\dt \nonumber
\end{align}
which holds for almost every $T\in[0,\Tmax)$. Next, we study the quantity
\begin{align}\label{chiCrossApprox}
RHS_{tilt} &:= 
		\sigma\int_0^T\int_{\Rd}\vec{n}_u\cdot
		(v\cdot\nabla)\xi\,\mathrm{d}|\nabla\chi_u|\,\dt \nonumber\\&~~~
		+\sigma\int_0^T\int_{\Rd}\chi_u(u\cdot\nabla)(\nabla\cdot\vec{\xi}\,)\,\dx\,\dt 
		\\&~~~
		+\sigma\int_0^T\int_{\Rd}\vec{n}_u\cdot
		\big(\big(\mathrm{Id}{-}\vec{n}_v(P_{I_v(t)}x)\otimes\vec{n}_v(P_{I_v(t)}x)\big)(\nabla v)^T\cdot\xi\big)
		\,\mathrm{d}|\nabla\chi_u|\,\dt. \nonumber
\end{align}
Due to the regularity of $v$ resp.\ $\xi$ as well as the incompressibility of the velocity
field $v$ we get
\begin{align}\label{vn1}
\sigma\int_0^T\int_{\Rd}\vec{n}_u\cdot(v\cdot\nabla)
\vec{\xi}\,\mathrm{d}|\nabla\chi_u|\,\dt 
&=-\sigma\int_0^T\int_{\Rd}\chi_u\nabla\cdot(v\cdot\nabla)
\vec{\xi}\,\mathrm{d}x\,\dt \nonumber\\
&=-\sigma\int_0^T\int_{\Rd}\chi_u
\nabla^2:v\otimes\xi\,\mathrm{d}x\,\dt \nonumber\\
&=-\sigma\int_0^T\int_{\Rd}
\chi_u\nabla\cdot\big((\xi\cdot\nabla)v\big)\,\mathrm{d}x\,\dt \nonumber\\
&~~~-\sigma\int_0^T\int_{\Rd}\chi_u
\nabla\cdot\big(v(\nabla\cdot\xi)\big)\,\mathrm{d}x\,\dt \nonumber\\
&=-\sigma\int_0^T\int_{\Rd}\chi_u
(v\cdot\nabla)(\nabla\cdot\vec{\xi}\,)\,\mathrm{d}x\,\dt \\
&~~~+\sigma\int_0^T\int_{\Rd}\vec{n}_u\cdot(\vec{\xi}\cdot\nabla)
v\,\mathrm{d}|\nabla\chi_u|\,\dt. \nonumber
\end{align}
Exploiting the fact that $\vec{\xi}(x)=\vec{n}_v(P_{I_v(t)}x) \zeta(x)$ and $\vec{n}_v(P_{I_v(t)}x)$
only differ by a scalar prefactor, namely the cut-off multiplier $\zeta(x)$ which one can shift around,
it turns out to be helpful to rewrite
\begin{align}\label{vn2}
&\sigma\int_0^T\int_{\Rd}\vec{n}_u\cdot
		\big(\big(\mathrm{Id}{-}\vec{n}_v(P_{I_v(t)}x)\otimes\vec{n}_v(P_{I_v(t)}x)\big)(\nabla v)^T\cdot\xi\big)
		\,\mathrm{d}|\nabla\chi_u|\,\dt\nonumber\\
&=\sigma\int_0^T\int_{\Rd}\xi\cdot
		\big((\mathrm{Id}{-}\vec{n}_v(P_{I_v(t)}x)\otimes\vec{n}_v(P_{I_v(t)}x))\vec{n}_u\cdot\nabla)v
		\,\mathrm{d}|\nabla\chi_u|\,\dt  \\
		&=\sigma\int_0^T\int_{\Rd}\xi\cdot
		\big(\big(\vec{n}_u-(\vec{n}_v(P_{I_v(t)}x)\cdot\vec{n}_u)\vec{n}_v(P_{I_v(t)}x)\big)\cdot\nabla\big)v
		\,\mathrm{d}|\nabla\chi_u|\,\dt \nonumber
		\\&\nonumber
		=\sigma\int_0^T\int_{\Rd}\xi\cdot\big((\vec{n}_u-\xi)\cdot\nabla\big)
		v\,\mathrm{d}|\nabla\chi_u|\,\dt
		\\&~~~\nonumber
-\sigma\int_0^T\int_{\Rd}
(\xi\cdot\vec{n}_u)\,\vec{n}_v(P_{I_v(t)}x)\cdot\big(\vec{n}_v(P_{I_v(t)}x)\cdot\nabla\big)v
\,\mathrm{d}|\nabla\chi_u|\,\dt
\\&~~~\nonumber
+\sigma\int_0^T\int_{\Rd}
\xi\cdot(\xi\cdot\nabla) v
\,\mathrm{d}|\nabla\chi_u|\,\dt.
\end{align}
Hence, by using \eqref{vn1} and \eqref{vn2} we obtain
\begin{align}\label{tiltError1}
RHS_{tilt} &= \sigma\int_0^T\int_{\Rd} \vec{n}_u\cdot(\vec{n}_u\cdot\nabla)v\,\mathrm{d}|\nabla\chi_u|\,\dt 
\\&~~~\nonumber
-\sigma\int_0^T\int_{\R^d}(\vec{n}_u-\xi)\cdot\big((\vec{n}_u-\xi)\cdot\nabla\big)v
\,\mathrm{d}|\nabla\chi_u|\,\mathrm{d}t 
\\&~~~\nonumber
+\sigma\int_0^T\int_\Rd\chi_u\big((u-v)\cdot\nabla\big)(\nabla\cdot\xi)\,\dx\,\dt 
\\&~~~\nonumber
-\sigma\int_0^T\int_{\Rd}
(\xi\cdot\vec{n}_u)\,\vec{n}_v(P_{I_v(t)}x)\cdot\big(\vec{n}_v(P_{I_v(t)}x)\cdot\nabla\big)v
\,\mathrm{d}|\nabla\chi_u|\,\dt
\\&~~~\nonumber
+\sigma\int_0^T\int_{\Rd}
\xi\cdot(\xi\cdot\nabla) v
\,\mathrm{d}|\nabla\chi_u|\,\dt.
\end{align} 
This in turn finally entails
\begin{align}\label{tiltErrorFinal}
&-\sigma\int_{\Rd}\vec{n}_u(\cdot,T)\cdot\vec{\xi}(\cdot,T)\,\mathrm{d}|\nabla\chi_u(\cdot,T)|
+\sigma\int_{\Rd}\vec{n}_u^0\cdot\vec{\xi}(\cdot,0)\,\mathrm{d}|\nabla\chi_u^0|
\\&\nonumber
=\sigma\int_0^T\int_{\Rd} \vec{n}_u\cdot(\vec{n}_u\cdot\nabla)v\,\mathrm{d}|\nabla\chi_u|\,\dt 
\\&~~~\nonumber
-\sigma\int_0^T\int_{\R^d}(\vec{n}_u-\xi)\cdot\big((\vec{n}_u-\xi)\cdot\nabla\big)v
\,\mathrm{d}|\nabla\chi_u|\,\mathrm{d}t 
\\&~~~\nonumber
+\sigma\int_0^T\int_\Rd\chi_u\big((u-v)\cdot\nabla\big)(\nabla\cdot\xi)\,\dx\,\dt 
\\&~~~\nonumber
-\sigma\int_0^T\int_{\Rd}
(\xi\cdot\vec{n}_u)\,\vec{n}_v(P_{I_v(t)}x)\cdot\big(\vec{n}_v(P_{I_v(t)}x)\cdot\nabla\big)v
-\xi\cdot(\xi\cdot\nabla) v
\,\mathrm{d}|\nabla\chi_u|\,\dt
\\&~~~
+\sigma\int_0^T\int_\Rd\vec{n}_u\cdot
\big(\big(\mathrm{Id}{-}\vec{n}_v(P_{I_v(t)}x)\otimes\vec{n}_v(P_{I_v(t)}x)\big)
(\nabla\bar{V}_{\vec{n}}{-}\nabla v)^T \xi\big)\,\mathrm{d}|\nabla\chi_u|\,\dt \nonumber
\\&~~~
+\sigma\int_0^T\int_\Rd\vec{n}_u\cdot
\big((\bar{V}_{\vec{n}}{-}v)\cdot\nabla\big)\xi\,\mathrm{d}|\nabla\chi_u|\,\dt \nonumber,
\end{align}
which holds for almost every $T\in [0,\Tmax)$. 

In a last step, we use the truncation of the identity $\beta$ from Proposition~\ref{PropositionRelativeEntropyInequalityFull} composed with the signed distance
to the interface of the strong solution as a test function in the transport 
equations \eqref{weakTransport} resp.\ \eqref{weakTransportB} for the indicator functions 
$\chi_v$ resp.\ $\chi_u$ of the two solutions. However, observe first that by
the precise choice of the weight function $\beta$ it holds
\begin{align*}
(\chi_u-\chi_v)\beta\Big(\frac{\sigdist(\cdot,I_v)}{r_c}\Big) =
|\chi_u-\chi_v|\Big|\beta\Big(\frac{\sigdist(\cdot,I_v)}{r_c}\Big)\Big|.
\end{align*}
Hence, when testing the equation \eqref{weakTransport} for the indicator function of the varifold
solution and then subtracting the corresponding result from testing the equation \eqref{weakTransportB}
for the indicator function of the strong solution, we obtain
\begin{align}
\nonumber
\int_{\R^d}&\big|\chi_u(\cdot,T)-\chi_v(\cdot,T)\big|\Big|\beta\Big(\frac{\sigdist(\cdot,I_v(T))}{r_c}\Big)\Big|\,\dx \\
&= \int_{\R^d}\big|\chi_u^0-\chi_v^0\big|\Big|\beta\Big(\frac{\sigdist(\cdot,I_v(0))}{r_c}\Big)\Big|\,\dx 
\label{weightDiff}\\
&~~~~+\int_0^T\int_{\R^d}\chi_u\Big(\partial_t\beta\Big(\frac{\sigdist(\cdot,I_v)}{r_c}\Big)
+(u\cdot\nabla)\beta\Big(\frac{\sigdist(\cdot,I_v)}{r_c}\Big)\Big)\,\dx\,\dt \nonumber\\
&~~~~-\int_0^T\int_{\R^d}\chi_v\Big(\partial_t\beta\Big(\frac{\sigdist(\cdot,I_v)}{r_c}\Big)
+(v\cdot\nabla)\beta\Big(\frac{\sigdist(\cdot,I_v)}{r_c}\Big)\Big)\,\dx\,\dt, \nonumber
\end{align}
which holds for almost every $T\in[0,\Tmax)$. Note that testing with the function $\beta(\frac{\sigdist(x,I_v(t))}{r_c})$
is admissible due to the bound $\chi_u,\chi_v\in L^\infty([0,\Tmax);L^1(\Rd))$ (recall that we assume $\chi_u,\chi_v \in L^\infty([0,\Tmax);\BV(\Rd))$ in our definition of solutions) and due to the fact that $\beta(\frac{\sigdist(x,I_v(t))}{r_c})$ is of class $C^1$. Indeed,
one first multiplies $\beta$ by a cutoff $\theta_{\tilde R}\in C_{cpt}^\infty(\R^d)$ on a scale $\tilde R$, i.e.\ 
$\theta\equiv 1$ on $\{x\in\R^d\colon |x|\leq \tilde R\}$, $\theta\equiv 0$ outside of $\{x\in\R^d\colon |x|\geq 2\tilde R\}$
and $\|\nabla\theta_R\|_{L^\infty(\R^d)}\leq C\tilde R^{-1}$ for some universal constant $C>0$.
Then, one can use $\theta_{\tilde R} \beta$ in the transport equations as test functions and 
pass to the limit $\tilde R\to\infty$ because of the integrability of $\chi_v$ and $\chi_u$.
From this, one obtains the above equation.

Since the weight $\beta$ vanishes at $r=0$, we may infer from the incompressibility of the 
velocity fields that
\begin{align*}
\int_0^T&\int_\Rd\chi_v\big((u{-}v)\cdot\nabla\big)
\beta\Big(\frac{\sigdist(\cdot,I_v)}{r_c}\Big)\,\dx\,\dt \\&
= -\int_0^T\int_{I_v(t)}\big(\vec{n}_v\cdot (u{-}v)\big)\beta(0)\,\mathrm{d}S\,\dt = 0.
\end{align*}
Hence, we can rewrite \eqref{weightDiff} as
\begin{align*}
&\int_{\R^d}\big|\chi_u(\cdot,T)-\chi_v(\cdot,T)\big|\Big|\beta\Big(\frac{\sigdist(\cdot,I_v(T))}{r_c}\Big)\Big|\,\dx \\
&~~=\int_{\R^d}\big|\chi_u^0-\chi_v^0\big|\Big|\beta\Big(\frac{\sigdist(\cdot,I_v(0))}{r_c}\Big)\Big|\,\dx \\
&~~~~+\int_0^T\int_{\R^d}(\chi_u{-}\chi_v)\Big(\partial_t\beta\Big(\frac{\sigdist(\cdot,I_v)}{r_c}\Big)
+\big((u{-}v)\cdot\nabla\big)\beta\Big(\frac{\sigdist(\cdot,I_v)}{r_c}\Big)\Big)\,\dx\,\dt \\
&~~~~+\int_0^T\int_\Rd(\chi_u{-}\chi_v)(v\cdot\nabla)\beta\Big(\frac{\sigdist(\cdot,I_v)}{r_c}\Big)\,\dx\,\dt
\end{align*}
for almost every $T\in [0,\Tmax)$. It remains to make use of the evolution equation
for $\beta$ composed with the signed distance function to the interface of the strong solution. But before
we do so, let us remark that because of \eqref{sdistAux4}
\begin{align*}
(v\cdot\nabla)\beta\Big(\frac{\sigdist(\cdot,I_v)}{r_c}\Big) = 
(V_{\vec{n}}\cdot\nabla)\beta\Big(\frac{\sigdist(\cdot,I_v)}{r_c}\Big),
\end{align*}
where the vector field $V_{\vec{n}}$ is the projection of the velocity field
$v$ of the strong solution onto the subspace spanned by the unit normal $\vec{n}_v(P_{I_v(t)}x)$: 
\begin{align*}
V_{\vec{n}}(x,t) := \big(v(x,t)\cdot\vec{n}_v(P_{I_v(t)}x,t)\big)\vec{n}_v(P_{I_v(t)}x,t)
\end{align*}
for all $(x,t)$ such that $\dist(x,I_v(t))<r_c$. Thus, using the evolution equation \eqref{transportWeight}
we finally obtain the identity
\begin{align}\label{weightVolErrorFinal}
&\int_{\R^d}\big|\chi_u(\cdot,T)-\chi_v(\cdot,T)\big|\Big|
\beta\Big(\frac{\sigdist(\cdot,I_v(T))}{r_c}\Big)\Big|\,\dx \nonumber\\
&~~=\int_{\R^d}\big|\chi_u^0-\chi_v^0\big|\Big|\beta\Big(\frac{\sigdist(\cdot,I_v(0))}{r_c}\Big)\Big|\,\dx \\
&~~~~+\int_0^T\int_{\R^d}(\chi_u{-}\chi_v)\big((u{-}v)\cdot\nabla\big)
\beta\Big(\frac{\sigdist(\cdot,I_v)}{r_c}\Big)\,\dx\,\dt \nonumber\\
&~~~~+\int_0^T\int_\Rd(\chi_u{-}\chi_v)\big((V_{\vec{n}}{-}\bar{V}_{\vec{n}})\cdot\nabla\big)
\beta\Big(\frac{\sigdist(\cdot,I_v)}{r_c}\Big)\,\dx\,\dt, \nonumber
\end{align}
which holds true for almost every $T\in [0,\Tmax)$. 

The asserted relative entropy inequality
now follows from a combination of the bounds \eqref{velErrorFinal}, \eqref{tiltErrorFinal}
as well as \eqref{weightVolErrorFinal}. This concludes the proof.
\end{proof}

\section{Appendix}

\begin{theorem}[Boundedness of singular integral operators of convolution type in $L^p$]
\label{SingularIntegralOperator}
Let $d\geq 2$, $p\in (1,\infty)$, and let $K:\mathbb{S}^{d-1}\to \R$ be a function of class $C^{1}$ 
with vanishing average. Let $f\in L^p(\Rd)$ and define
\begin{align*}
\mathcal{K}f(x):=\int_\Rd \frac{K\big(\frac{x-\tilde x}{|x-\tilde x|}\big)}{|x-\tilde x|^d} f(\tilde x) \,d\tilde x,
\end{align*}
where the integral is understood in the Cauchy principal value sense.
Then there exists a constant $C>0$ depending only on $d$, $p$, and $K$ such that
\begin{align*}
\|\mathcal{K}f\|_{L^p(\Rd)}\leq C \|f\|_{L^p(\Rd)}.
\end{align*}
\end{theorem}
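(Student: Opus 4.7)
The plan is to recognize this as the classical Calderón--Zygmund theorem for singular integrals with homogeneous kernels of degree $-d$ and vanishing mean on the sphere, and to prove it in the standard four-step pattern: verify the CZ axioms, establish $L^2$-boundedness, pass to weak $(1,1)$ via the CZ decomposition, and conclude by Marcinkiewicz interpolation and duality. The hypotheses ($K \in C^1(\mathbb{S}^{d-1})$, vanishing mean) are precisely those needed to make this machinery apply.

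First, I would verify that the kernel $k(x) := |x|^{-d} K(x/|x|)$ satisfies the standard Calderón--Zygmund conditions. The homogeneity of degree $-d$ together with $\|K\|_{L^\infty(\mathbb{S}^{d-1})} < \infty$ gives the size bound $|k(x)| \le C|x|^{-d}$, while the $C^1$ regularity of $K$ on $\mathbb{S}^{d-1}$ together with the chain rule gives the smoothness bound $|\nabla k(x)| \le C|x|^{-d-1}$, which implies the Hörmander condition
\begin{equation*}
\int_{|x| \ge 2|y|} |k(x-y) - k(x)| \, dx \le C \quad \text{for all } y \in \mathbb{R}^d.
\end{equation*}
The vanishing mean $\int_{\mathbb{S}^{d-1}} K \, d\mathcal{H}^{d-1} = 0$ ensures that the principal value is well defined as a tempered distribution.

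Second, I would establish $L^2$-boundedness via Plancherel. The standard computation (see e.g.\ Stein, \emph{Singular Integrals}) shows that the Fourier transform of $\mathrm{p.v.}\ k$ is a homogeneous function of degree $0$ given by $\widehat{k}(\xi) = m(\xi/|\xi|)$ with
\begin{equation*}
m(\eta) = \int_{\mathbb{S}^{d-1}} K(\omega)\Big[\tfrac{\pi i}{2}\mathrm{sgn}(\omega\cdot \eta) - \log|\omega\cdot\eta|\Big] d\mathcal{H}^{d-1}(\omega),
\end{equation*}
which is bounded by $C\|K\|_{C^1(\mathbb{S}^{d-1})}$; here the cancellation condition is essential so that the logarithmic singularity integrates against $K$ to a finite quantity. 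Plancherel then yields $\|\mathcal{K}f\|_{L^2} \le C\|f\|_{L^2}$.

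Third, I would run the Calderón--Zygmund decomposition at level $\lambda$ to extract the weak-$(1,1)$ estimate. Writing $f = g + \sum_j b_j$ with $g$ bounded by $C\lambda$ and $b_j$ supported in disjoint cubes $Q_j$ with $\int b_j = 0$ and $\sum_j |Q_j| \le C\lambda^{-1}\|f\|_{L^1}$, the good part is handled by $L^2$-boundedness and Chebyshev, while for the bad part one uses the zero-mean property together with the Hörmander condition to bound $\int_{\mathbb{R}^d \setminus Q_j^*} |\mathcal{K}b_j|\,dx \le C\|b_j\|_{L^1}$, where $Q_j^*$ is a fixed dilate. Summing gives the weak-$(1,1)$ estimate. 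Marcinkiewicz interpolation between $(1,1)$-weak and $(2,2)$-strong yields $L^p$-boundedness for $1 < p \le 2$, and the range $2 \le p < \infty$ follows by duality since the transpose operator is the singular integral with kernel $K(-\cdot)$, which satisfies the same hypotheses.

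The main technical obstacle is the $L^2$-estimate: it is the one step that genuinely uses the cancellation $\int_{\mathbb{S}^{d-1}} K = 0$ in a quantitative way, and without a clean computation of the Fourier multiplier one would have to resort to the rotation method or $T(1)$-type arguments. Given that the paper only invokes this result qualitatively (in the estimates of Step 2 and Step 5 of Proposition~\ref{PropositionCompensationFunction}), it would be most efficient to cite this as a classical result from harmonic analysis rather than reproduce the proof in full.
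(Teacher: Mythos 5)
The paper states Theorem~\ref{SingularIntegralOperator} in the appendix without proof, treating it as a classical result from harmonic analysis, so there is no in-paper argument to compare against. Your four-step sketch (verify the Calder\'on--Zygmund size and H\"ormander conditions, $L^2$-boundedness via the Fourier multiplier computation using the cancellation of $K$, weak-$(1,1)$ via the CZ decomposition, then Marcinkiewicz interpolation and duality) is the standard and correct proof, and your concluding remark that one should simply cite this result rather than reproduce it is exactly what the paper does.
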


We also state a non-trivial result from geometric measure theory on
properties of one-dimensional sections of Caccioppoli sets.

\begin{theorem}[{\cite[Theorem G]{Chlebik2005}}]\label{TheoG}
Consider a set $G$ of finite perimeter in $\R^d$, denote by 
$\vec{\nu}^{\,G}=(\nu^G_{x_1},\ldots,\nu^G_{x_{d-1}},\nu^G_{y})\in\Rd$
the associated measure theoretic inner unit normal vector field of 
the reduced boundary $\partial^*G$, and let $\chi^*_G$ be the
precise representative of the bounded variation function $\chi_G$. 
Then for Lebesgue almost every $x\in\R^{d-1}$ the one-dimensional sections 
$G_{x}:=\{y\in\R\colon (x,y)\in G\}$ satisfy the following properties:
\begin{itemize}\itemsep3pt
	\item[i)] $G_{x}$ is a set of finite perimeter in $\R$, 
						$\chi_G(x,\cdot)=\chi_G^*(x,\cdot)$ Lebesgue almost everywhere in $G_x$,
	\item[ii)] $(\partial^*G)_{x}=\partial^*G_{x}$, 
	\item[iii)] $\nu_y^G(x,y)\neq 0$ for all $y\in\R$ such that $(x,y)\in\partial^*G$, and
	\item[iv)] $\lim_{y\to y_0^+}\chi^*_G(x,y)=1$ and $\lim_{y\to y_0^-}\chi^*_G(x,y)=0$
							whenever $\nu^G_y(x,y_0)>0$, and vice versa if $\nu^G_y(x,y_0)<0$.
\end{itemize}
In particular, for every Lebesgue measurable set $M\subset\R^{d-1}$ there exists a Borel
measurable subset $M_G\subset M$ such that $\mathcal{L}^{d-1}(M\setminus M_G) = 0$
and the four properties stated above are satisfied for all $y\in M_G$.
\end{theorem}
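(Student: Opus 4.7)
My plan is to deduce this statement from Vol'pert's classical slicing theorem for BV functions, sharpened by blow-up arguments at reduced-boundary points. The existence of the full-measure set $M_G$ is really the assertion that a handful of $\mathcal{L}^{d-1}$-null exceptional sets can be selected simultaneously, so the argument will proceed by identifying each exceptional set, showing it is null, and then taking a countable union.

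First, I would obtain property (i) directly from Vol'pert's theorem applied to $f=\chi_G\in BV(\R^d)$: for $\mathcal{L}^{d-1}$-a.e.\ $x\in\R^{d-1}$, the vertical slice $\chi_G(x,\cdot)$ lies in $BV(\R)$, its distributional derivative is the vertical slice of the $y$-component of $D\chi_G$, and by Fubini the precise representative agrees with $\chi_G(x,\cdot)$ Lebesgue-a.e.\ in $y$. This also identifies $|D\chi_{G_x}|$ with the measure $|\nu_y^G|\,\mathcal{H}^{d-1}\mres\partial^*G$ sliced in $x$.

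For properties (ii)--(iv), the essential input is the blow-up characterization of the reduced boundary: at $\mathcal{H}^{d-1}$-a.e.\ $p\in\partial^*G$, the rescalings of $G$ converge in $L^1_{\mathrm{loc}}$ to the half-space $H_p=\{z:z\cdot\vec{\nu}^{\,G}(p)>0\}$. Fixing such a good point $p=(x,y_0)$ with $\nu_y^G(p)\neq 0$, I would intersect $H_p$ with the vertical line $\{x\}\times\R$: the slice is a genuine half-line, so $y_0\in\partial^*G_x$, $\nu_y^{G_x}(y_0)=\mathrm{sgn}\,\nu_y^G(p)$, and the one-sided limits claimed in (iv) are read off directly from the halfspace. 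Conversely, if $y_0\in\partial^*G_x$ but $\nu_y^G(p)=0$, the blow-up half-space $H_p$ is vertical, so its intersection with $\{x\}\times\R$ is all of $\R$ or empty, contradicting $y_0$ being a reduced boundary point of $G_x$; this forces $\nu_y^G\neq 0$ at every slice point and gives (iii). The inclusion $\partial^*G_x\subset(\partial^*G)_x$ comes from noting that a 1D jump of the precise representative at $y_0$ produces an atom in the vertical slice of $|D\chi_G|$, while the reverse inclusion is exactly what the blow-up computation provides at good points.

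The main obstacle will be packaging the various ``good'' null-exceptional sets (Vol'pert's null set in $x$, the $\mathcal{H}^{d-1}$-null set of non-blow-up points on $\partial^*G$, the Lebesgue-null set where the precise representative fails, and the set where the vertical line is not $|D\chi_G|$-tangent) into a single null set in $\R^{d-1}$, so that all four conclusions hold for \emph{every} $x$ in a Borel set $M_G\subset M$. The key technical tool here is the coarea/Fubini disintegration $\int_{\partial^*G}\varphi\,\mathrm{d}\mathcal{H}^{d-1}=\int_{\R^{d-1}}\sum_{y\in(\partial^*G)_x}\varphi(x,y)|\nu_y^G(x,y)|^{-1}\,\mathrm{d}x$, valid off the set $\{\nu_y^G=0\}$: projecting each exceptional subset of $\partial^*G$ to $\R^{d-1}$ through this formula shows its projection is $\mathcal{L}^{d-1}$-null, and the final Borel set $M_G$ is obtained by removing the countable union of these projected null sets from $M$.
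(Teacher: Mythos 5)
The paper does not prove this statement: it is imported verbatim with the citation \cite[Theorem G]{Chlebik2005}, so there is no in-paper argument to compare against. Judged on its own merits, your plan is the standard route and it works, but one step is formulated in a way that does not quite deliver what (iii) actually asserts. You try to prove (iii) by a ``contradiction'' at a single blow-up point: if $(x,y_0)\in\partial^*G$ with $\nu^G_y=0$, the blow-up is a vertical half-space, so $y_0$ is invisible to $G_x$. That shows only that a vertical normal prevents $y_0$ from lying in $\partial^*G_x$; it does not exclude the \emph{existence} of such points $(x,y_0)\in\partial^*G$ on a given slice. What (iii) requires is that for $\mathcal L^{d-1}$-a.e.\ $x$, the entire slice $(\partial^*G)_x$ avoids $N:=\partial^*G\cap\{\nu^G_y=0\}$. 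The clean argument here is the coarea formula for $(d{-}1)$-rectifiable sets applied to $N$: since $\int_N |\nu^G_y|\,\mathrm d\mathcal H^{d-1}=0$, the formula yields $\mathcal H^0(N_x)=0$, i.e.\ $N_x=\emptyset$, for $\mathcal L^{d-1}$-a.e.\ $x$. Your version of the coarea formula ``valid off $\{\nu^G_y=0\}$'' cannot be applied to $N$ as written because the integrand $|\nu^G_y|^{-1}$ is not defined there; one must use the formula with the Jacobian $|\nu^G_y|$ on the surface side (not its reciprocal under the slice sum) to see that the slices of $N$ vanish. Once that is fixed, the remaining structure of your proof -- Vol'pert slicing for (i) and the identification $D(\chi_G)_x=(D_y\chi_G)_x$, blow-up to a half-space at reduced-boundary points with $\nu^G_y\ne0$ for (ii) and (iv), and intersecting the countably many exceptional null sets to build $M_G$ -- is sound and matches the way this theorem is proved in the literature.
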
 

To bound the $L^4$-norm of the interface error heights $h^\pm$ in the 
case of a two-dimensional interface, we employ the following optimal 
Orlicz--Sobolev embedding.

\begin{theorem}[Optimal Orlicz-Sobolev embedding, {\cite[Theorem 1]{Cianchi}}]
\label{TheoremOptimalOrliczSobolev}
For every $d\geq 2$, there exists a constant $K$ depending only on $d$ such that the following holds true:
Let $A:[0,\infty)\rightarrow [0,\infty)$ be a convex function with $A(0)=0$, 
$A(t)\rightarrow \infty$ for $t\rightarrow \infty$, and
\begin{align*}
\int_0^1 \bigg(\frac{t}{A(t)}\bigg)^{1/(d-1)} \,dt <\infty.
\end{align*}
Define
\begin{align*}
H(r) := \Bigg(\int_0^r \bigg(\frac{t}{A(t)}\bigg)^{1/(d-1)} \,dt\Bigg)^{(d-1)/d}
\end{align*}
and
\begin{align*}
B(s):=A(H^{-1}(s)).
\end{align*}
Then for any weakly differentiable function $u$ decaying to $0$ at infinity in the 
sense $\{|u(x)|>s\}<\infty$ for all $s>0$, the following estimate holds true:
\begin{align}
\label{OrliczSobolevEmbedding}
\int_\Rd B\Bigg(\frac{|u(x)|}{K \big(\int_\Rd A(|\nabla u(x)|) \,dx\big)^{1/d}}\Bigg) \,dx \leq \int_\Rd A(|\nabla u(x)|) \,dx.
\end{align}
\end{theorem}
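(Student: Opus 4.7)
The plan is to prove this Orlicz--Sobolev embedding via a three-step reduction: (i) symmetrization to reduce to radial decreasing functions, (ii) passage to a one-dimensional inequality via polar coordinates, and (iii) a sharp one-dimensional Hardy-type inequality whose extremizers determine the Young function $B$ and the constant $K$.

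For step (i), let $u^*$ be the symmetric decreasing rearrangement of $|u|$; it is well-defined and decays to zero at infinity because of the hypothesis that $|\{|u|>s\}|<\infty$ for every $s>0$. Since $A$ is a Young function (convex, $A(0)=0$, non-decreasing), the P\'olya--Szeg\H{o} principle for Orlicz norms gives $\int_{\Rd} A(|\nabla u^*|)\,dx \leq \int_{\Rd} A(|\nabla u|)\,dx$, while equimeasurability of $u$ and $u^*$ preserves the left-hand side of \eqref{OrliczSobolevEmbedding}. Hence it suffices to establish the inequality for $u(x)=f(|x|)$ with $f\colon[0,\infty)\to[0,\infty)$ locally absolutely continuous, non-increasing, and vanishing at infinity.

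For step (ii), polar coordinates convert both sides to one-dimensional integrals: writing $\omega = \mathcal{H}^{d-1}(\Sd)$,
\begin{align*}
\int_{\Rd} A(|\nabla u|)\,dx &= \omega \int_0^\infty A(-f'(r))\,r^{d-1}\,dr, \\
\int_{\Rd} B\bigl(f(|x|)/C\bigr)\,dx &= \omega \int_0^\infty B(f(r)/C)\,r^{d-1}\,dr.
\end{align*}
The definition of $H$ is engineered so that, upon the substitution $\rho = (\omega/d)^{1/d}\,r$ which turns $r^{d-1}\,dr$ into the volume element $d(\rho^d/d)$, the derivative of $H$ satisfies $H'(\rho) = \tfrac{d-1}{d}\,H(\rho)^{-1/(d-1)}\bigl(\rho/A(\rho)\bigr)^{1/(d-1)}$. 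The resulting identity $B = A\circ H^{-1}$ is then the unique Young function compatible with the $d$-dimensional scaling of the problem.

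For step (iii), the core is the pointwise Hardy-type estimate
\[
H\bigl(f(r)\bigr) \;\leq\; c(d)\int_r^\infty \bigl(-f'(s)\bigr)\,\bigl(s/A(-f'(s))\bigr)^{1/(d-1)}\,ds,
\]
which one derives from the representation $f(r)=\int_r^\infty (-f'(s))\,ds$ via H\"older's inequality with a weight tailored to the defining formula for $H$. Applying $A$ to both sides (producing $B(f(r))$ on the left), raising to the appropriate power, integrating against $r^{d-1}\,dr$, and then exchanging orders of integration via Fubini/Minkowski reduces the right-hand side to a multiple of $\int_0^\infty A(-f'(s))\,s^{d-1}\,ds$. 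The choice $C = K(d)\bigl(\int_{\Rd} A(|\nabla u|)\,dx\bigr)^{1/d}$ with $K(d)$ absorbing the dimensional constants yields \eqref{OrliczSobolevEmbedding}.

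The principal obstacle is ensuring that $K$ depends only on $d$, not on $A$. This is delicate because a generic Hardy--Sobolev argument typically produces an $A$-dependent constant. Circumventing this requires the precise choice of $H$ as an antiderivative of $(t/A(t))^{1/(d-1)}$ raised to the appropriate exponent: this weight is exactly the one that saturates the Hardy inequality, so that composing with $A$ on both sides collapses the $A$-dependence entirely into the target Young function $B = A \circ H^{-1}$. The integrability assumption $\int_0^1 (t/A(t))^{1/(d-1)}\,dt<\infty$ is precisely the condition that makes $H$ finite and strictly increasing near the origin, so that $H^{-1}$---and hence $B$---is well-defined on all of $[0,\infty)$.
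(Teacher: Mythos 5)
A preliminary remark: the paper does not prove this statement at all --- it is quoted as Theorem 1 of Cianchi's paper and used as a black box in the Appendix, so there is no internal proof to compare against. Your overall strategy (symmetrization, reduction to radial profiles, a sharp one-dimensional inequality) is in fact the strategy of the cited source, and steps (i)--(ii) are sound: the P\'olya--Szeg\H{o} principle for convex integrands together with equimeasurability reduces the claim to nonincreasing radial profiles, and the passage to polar coordinates is routine.

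The gap is in step (iii), which is where all the difficulty lives, and it is a genuine one. First, the composition is backwards: since $B=A\circ H^{-1}$, one has $A=B\circ H$, so applying $A$ to a bound on $H(f(r))$ produces $A(H(f(r)))$, which is not $B(f(r))$; to produce $B(f(r)/C)$ you need an upper bound on $H^{-1}(f(r)/C)$, i.e.\ a lower bound of the form $f(r)/C\le H(\cdot)$ evaluated at the quantity you intend to feed into $A$. Second, the asserted pointwise Hardy inequality $H(f(r))\le c(d)\int_r^\infty(-f'(s))\,(s/A(-f'(s)))^{1/(d-1)}\,ds$ cannot hold with a constant depending only on $d$: take $A(t)=t^p$ with $1<p<d$, so that $H(t)=c\,t^{(d-p)/d}$, and replace $f$ by $\lambda f$. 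The left-hand side scales like $\lambda^{(d-p)/d}$ while the right-hand side scales like $\lambda^{(d-1-p)/(d-1)}$, and $(d-p)/d>(d-1-p)/(d-1)$ for every such $p$, so the inequality fails for $\lambda$ large. This is not cosmetic: because $B$ is not homogeneous, \eqref{OrliczSobolevEmbedding} cannot be obtained by integrating a homogeneous pointwise estimate. The normalization $K\big(\int_\Rd A(|\nabla u|)\,dx\big)^{1/d}$ must enter \emph{inside} the argument of $B$ before any integration, and the correct one-dimensional lemma is proved by a Jensen-type convexity argument with respect to the probability measure proportional to $A(|\nabla u^*|)\,r^{d-1}\,dr$ --- this is exactly the mechanism that makes $K$ independent of $A$. ``H\"older's inequality with a tailored weight'' does not supply that mechanism, so as written the central step of your argument would fail.
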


The application of the optimal Orlicz-Sobolev embedding to our setting is stated and proved next.

\begin{proposition}\label{OrliczHeight}
Let $T>0$ and $(I(t))_{t\in[0,T]}$ be a family of smoothly evolving surfaces in $\R^3$
in the sense of Definition~\ref{definition:domains}.
Consider $u\in L^\infty([0,T];\BV(I(t)))$ such that $|u|\leq 1$. Let $e\colon [0,T]\to (0,\infty)$
be a measurable function. We define
\begin{align*}
A_{e(t)}(s) :=
\begin{cases}
e(t) s & \text{for }s\leq e(t),
\\
s^2 & \text{for }e(t)\leq s\leq 1,
\\
2s-1 & \text{for }s\geq 1.
\end{cases}
\end{align*}
We also set $A_{e(t)}(Du(t)) := \int_{I(t)} A_{e(t)}(|\nabla u(t)|)\,\mathrm{d}S + |D^su(t)|(I(t))$.
Then the following estimate holds true
\begin{align}\label{boundOrliczHeight}
&\int_{I(t)}|u(x,t)|^4\,\mathrm{d}S 
\leq \frac{C}{r_c^{12}}\Big(1{+}\log\frac{1}{e(t)}\Big)
\\&~~~~~\nonumber\times
\Big(e(t)^4 + \frac{1}{e(t)^2}\big(\|u(t)\|_{L^2(I(t))}^6 {+} A_{e(t)}^3(Du(t))\big)
+\|u(t)\|_{L^2(I(t))}^4 + A_{e(t)}^2(Du(t))\Big)
\end{align}
for almost every $t\in [0,T]$ and a constant $C>0$.
\end{proposition}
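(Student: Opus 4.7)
The proof proceeds by applying Cianchi's optimal Orlicz--Sobolev embedding (Theorem~\ref{TheoremOptimalOrliczSobolev}) with the convex Young function $A = A_{e(t)}$, on the two-dimensional surface $I(t)$. Since that theorem is stated for functions on $\R^d$, I would first reduce to $\R^2$ by covering $I(t)$ with finitely many $C^3$-graph patches of size $\sim r_c$ (available uniformly in $t$ by Definition~\ref{definition:domains}) together with a subordinate partition of unity $\{\varphi_i\}$ satisfying $|\nabla\varphi_i|\lesssim r_c^{-1}$. Applying the theorem to each localized piece $u\varphi_i$ (after a convolution smoothing that reduces $u$ from $\BV$ to $W^{1,1}$, admissible by the convexity of $A_{e(t)}$ and Jensen's inequality), the Leibniz rule together with the geometric distortion of the charts and the number of patches $\lesssim r_c^{-2}$ produces the overall prefactor $r_c^{-12}$. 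The correction $u\nabla\varphi_i$ in the Leibniz rule is absorbed via Young's inequality into $r_c^{-2}\|u\|_{L^2(I(t))}^2$.

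The main computation is an explicit description of the associated functions $H = H_{e(t)}$ and $B = B_{e(t)} = A \circ H^{-1}$ of Theorem~\ref{TheoremOptimalOrliczSobolev} in dimension $d=2$. A direct calculation using the piecewise definition of $A_{e(t)}$ yields $H(r) = (r/e(t))^{1/2}$ for $r \leq e(t)$, $H(r) = (1 + \log(r/e(t)))^{1/2}$ for $e(t)\leq r \leq 1$, and an analogous expression for $r\geq 1$. Inverting gives $B_{e(t)}(s) = e(t)^2 s^2$ for $0\leq s \leq 1$ and $B_{e(t)}(s) = e(t)^2 e^{2(s^2-1)}$ in the Moser--Trudinger band $1 \leq s \leq (1 + \log(1/e(t)))^{1/2}$. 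Setting $\Lambda := A_{e(t)}(Du(t)) + C r_c^{-2}\|u(t)\|_{L^2(I(t))}^2$, Theorem~\ref{TheoremOptimalOrliczSobolev} then delivers, after reassembling the partition of unity,
\begin{equation*}
\int_{I(t)} B_{e(t)}\!\left(\frac{|u|}{K\Lambda^{1/2}}\right)\,\mathrm{d}S \leq \Lambda.
\end{equation*}

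To extract the stated $L^4$-bound, I would introduce $v := |u|/(K\Lambda^{1/2})$ and split $I(t)$ according to the magnitude of $v$. On the set $\{v \leq 1\}$, the identity $B_{e(t)}(v) = e(t)^2 v^2$ and the above estimate give $\int v^2 \,\mathrm{d}S \leq \Lambda/e(t)^2$; rescaling back and using the pointwise bound $|u|^2 \leq K^2\Lambda v^2$ valid on this set produces $\int_{\{v\leq 1\}}|u|^4\,\mathrm{d}S \leq K^4\Lambda^3/e(t)^2$, which after expanding $\Lambda^3$ matches the terms $e(t)^{-2}(\|u\|_{L^2}^6 + A^3)$. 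On the complementary set $\{v > 1\}$, super-exponential growth of $B_{e(t)}$ combined with the elementary inequality $v^4 \leq C(1+\log(1/e(t)))\,e^{2(v^2-1)}$ throughout the Moser--Trudinger band yields $\int_{\{v > 1\}}|u|^4\,\mathrm{d}S \leq C(1+\log(1/e(t)))K^4\Lambda^2$, contributing the $(1+\log(1/e(t)))(\|u\|_{L^2}^4 + A^2)$ part. Finally, the pointwise bound $|u|^4 \leq e(t)^2|u|^2$ on the residual region $\{|u| \leq e(t)\}$ combined with the trivial global bound $|u|\leq 1$ provides the leftover $e(t)^4$ contribution after a further Young's inequality.

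The main obstacle will be the careful bookkeeping needed in the reduction to $\R^2$ and in the passage from $\BV$ to the smooth approximant: because $A_{e(t)}$ is only convex and not uniformly doubling (its behavior changes character at scales $e(t)$ and $1$), one must verify that the convolution smoothing does not inflate $\int A_{e(t)}(|\nabla u|)$ by more than an absolute factor, which is ultimately a Jensen-inequality argument. A secondary technical point is to identify the correct Young-type splitting in each of the three regimes (small $e(t)$, small $\|u\|_{L^2}$, small $A(Du)$) so that the stated combination $e(t)^4 + e(t)^{-2}(\|u\|_{L^2}^6 + A^3) + \|u\|_{L^2}^4 + A^2$ genuinely majorizes the contributions from both the ``below-threshold'' and the ``Moser--Trudinger'' regions of $v$.
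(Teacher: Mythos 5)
Your proposal follows essentially the same route as the paper -- Cianchi's Orlicz--Sobolev embedding with $A = A_{e(t)}$, reduction to $\R^2$ via graph patches and a partition of unity $\{\varphi_i\}$, and a region splitting in $v := |u|/(K\Lambda^{1/2})$. The explicit computations of $H$ and $B$ are correct. However, there are two genuine gaps.

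First, the quantity $\Lambda$ in your display $\int_{I(t)} B_{e(t)}(|u|/(K\Lambda^{1/2}))\,\mathrm{d}S \leq \Lambda$ must include a contribution $\sim r_c^{-2}e(t)^2$. Theorem~\ref{TheoremOptimalOrliczSobolev} is applied to $\tilde u\tilde\varphi_i$, and its right-hand side $\int A_{e(t)}(|\nabla(\tilde u\tilde\varphi_i)|)\,\dx$ picks up the Leibniz-rule correction $u\nabla\varphi_i$. Since $A_{e(t)}(s)\sim e(t)s$ for $s\leq e(t)$, the sharp pointwise bound is $A_{e(t)}(s)\leq e(t)^2 + s^2$, which gives $\int A_{e(t)}(|u\nabla\varphi_i|)\,\dx \lesssim r_c^{-2}(e(t)^2 + \|u\|_{L^2}^2)$. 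Setting $\Lambda := A_{e(t)}(Du) + Cr_c^{-2}\|u\|_{L^2}^2$ without the $e(t)^2$ is not permitted: $B$ is increasing, so shrinking $\Lambda$ only enlarges the left-hand integrand and the inequality no longer follows from the theorem. Your ``residual region $\{|u|\leq e(t)\}$'' step cannot repair this, because the defect is not a missing piece of the domain of integration but an unjustified application of the embedding itself. With the correct $\Lambda$, the $e(t)^4$ term emerges naturally from expanding $\Lambda^2$ and $\Lambda^3/e(t)^2$; no residual region is needed.

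Second, the bound $\int_{\{v>1\}}|u|^4\,\mathrm{d}S \leq C(1+\log(1/e(t)))K^4\Lambda^2$ is not achievable as stated and conflates two regimes that must be treated differently. On the Moser--Trudinger band $1\leq v\leq\sqrt{1+\log(1/e(t))}$, where $B_{e(t)}(v) = e(t)^2\exp(2v^2-2)$, the correct conclusion is $\int_{\text{band}}|u|^4\,\mathrm{d}S \lesssim K^4\Lambda^3/e(t)^2$ (use $v^4\leq\exp(2v^2-2) = B_{e(t)}(v)/e(t)^2$), and $\Lambda^3/e(t)^2$ is not in general dominated by $(1+\log(1/e(t)))\Lambda^2$. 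The far region $v>\sqrt{1+\log(1/e(t))}$, where $B_{e(t)}$ degenerates to quadratic growth $B_{e(t)}(v)\geq v^2/(1+\log(1/e(t)))$, is not discussed in your proposal at all; there, the essential ingredient is the pointwise hypothesis $|u|\leq 1$, used via $|u|^4\leq|u|^2 = K^2\Lambda v^2$, which then delivers the $(1+\log(1/e(t)))K^2\Lambda^2$ contribution. Without $|u|\leq 1$ there is no way to control $v^4$ by $B_{e(t)}(v)$ in this regime, since $B_{e(t)}$ grows only quadratically there.
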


\begin{proof}
Let $U\subset\R^2$ be an open and bounded set and consider $u\in C_{cpt}^1(U)$ such that $\|u\|_{L^\infty}\leq 1$.
For the sake of brevity, let us suppress for the moment the dependence on the variable $t\in [0,T)$.
The idea is to apply the optimal Orlicz--Sobolev embedding provided by the preceding theorem
with respect to the convex function $A_{e}$. Observe first that $A_{e}$ indeed satisfies all
the assumptions. Moreover, since $d=2$ we compute
\begin{align*}
(H(r))^2 = \int_0^r \frac{s}{A_{e}(s)} \,\mathrm{d}s
=
\begin{cases}
\frac{r}{e} & \text{for }r\leq e,
\\
1+\log \frac{r}{e} & \text{for }e\leq r\leq 1,
\\
1+\log \frac{1}{e} + \frac{r-1}{2} + \frac{1}{4}\log (2r-1) & \text{for }r\geq 1.
\end{cases}
\end{align*}
As a consequence, we get
\begin{align*}
H^{-1}(y)
=
\begin{cases}
=e y^2 & \text{for }y\leq 1,
\\
=e \exp(y^2-1) & \text{for }1\leq y \leq \sqrt{1+\log \frac{1}{e}},
\\
\geq (y^2-1-\log \frac{1}{e}) +1 &\text{for }y\geq \sqrt{1+\log \frac{1}{e}},
\\
\leq 2(y^2-1-\log \frac{1}{e}) +1 &\text{for }y\geq \sqrt{1+\log \frac{1}{e}}.
\end{cases}
\end{align*}
This in turn entails
\begin{align}\label{FormulaB}
B(s) = A_{e}(H^{-1}(s)) =
\begin{cases}
=e^2 s^2 & \text{for }s\leq 1,
\\
=e^2 \exp(2s^2-2) & \text{for }1\leq s \leq \sqrt{1+\log \frac{1}{e}},
\\
\geq s^2-\log \frac{1}{e} &\text{for }s\geq \sqrt{1+\log \frac{1}{e}}.
\end{cases}
\end{align}
We then deduce from Theorem~\ref{TheoremOptimalOrliczSobolev}, $d=2$, $\|u\|_{L^\infty}\leq 1$, 
the bound $\exp(s^2)\geq \frac{1}{2} s^4$ for all $s\geq 0$ as well as the bound 
$s^2-\log \frac{1}{e}\geq \frac{s^2}{1+\log \frac{1}{e}}$ for all $s\geq \sqrt{1+\log \frac{1}{e}}$
\begin{align*}
&\int_U |u(x)|^4\,\dx \\ &= \int_{U\cap\big\{|u|\leq K \sqrt{A_{e}(Du)}\big\}} |u(x)|^4 \,\dx 
\\&~~~
+ \int_{U\cap\big\{K\sqrt{A_{e}(Du)}\leq |u| \leq K\sqrt{A_{e}(Du)}\sqrt{1+\log\frac{1}{e}}\big\}} |u(x)|^4 \,\dx 
\\&~~~
+ \int_{U\cap\big\{|u| \geq K\sqrt{A_{e}(Du)}\sqrt{1+\log\frac{1}{e}}\big\}} |u(x)|^4 \,\dx 
\\&
\leq K^4\frac{A_{e}^2(Du)}{e^2}\int_{U\cap\big\{|u|\leq K\sqrt{A_{e}(Du)}\big\}}
e^2\frac{|u(x)|^2}{K^2A_{e}(Du)}\,\dx 
\\&~~~
+ K^4\frac{A_{e}^2(Du)}{e^2}\int_{U\cap\big\{K\sqrt{A_{e}(Du)} 
\leq |u| \leq K\sqrt{A_{e}(Du)}\sqrt{1+\log\frac{1}{e}}\big\}}
e^2\frac{|u(x)|^4}{K^4A_{e}^2(Du)}\,\dx
\\&~~~
+ K^2\Big(1+\log\frac{1}{e}\Big)A_{e}(Du)\int_{U\cap\big\{|u| \geq K\sqrt{A_{e}(Du)}
\sqrt{1+\log\frac{1}{e}}\big\}}
\frac{|u(x)|^4}{K^2\big(1+\log\frac{1}{e}\big)A_{e}(Du)}\,\dx
\\&
\leq C\Big(1+\log\frac{1}{e}\Big)\Big(\frac{1}{e^2}A_{e}^3(Du) + A_{e}^2(Du)\Big),
\end{align*}
which is precisely what is claimed. Note that since $u$ is continuously differentiable, 
the singular part in the definition of $A_e(Du)$ vanishes.

In a next step, we want to extend to smooth functions $u$ on the manifold $I(t)$.
By assumption, we may cover $I(t)$ with a finite family of open sets of the form 
$U(x_i):=I(t)\cap B_{2r_c}(x_i)$, $x_i\in I(t)$, such that $U(x_i)$ can be represented
as the graph of a function $g\colon B_1(0)\subset\R^2\to\R$ with $|\nabla g|\leq 1$
and $|\nabla^2 g|\leq r_c^{-1}$. We fix a partition of unity $\{\varphi_i\}_i$ subordinate
to this finite cover of $I(t)$. Note that $|\nabla\varphi_i|\leq Cr_c^{-1}$.
Note also that the cardinality of the open cover is uniformly bounded in $t$.
Hence, we proceed with deriving the desired bound only for one $u\varphi$, where $\varphi=\varphi_i$
is supported in $U=U(x_i)$. Abbreviating $\tilde u = u\circ g$ and $\tilde\varphi = \varphi\circ g$,
we obtain from the previous step
\begin{align*}
\int_{U}|u\varphi|^4\,\dS &= \int_{B_1(0)} |(u\varphi)(g(x))|^4\sqrt{1+|\nabla g(x)|^2} \,\dx 
\\&
\leq \sqrt{2}C\Big(1+\log\frac{1}{e}\Big)\Big(\frac{1}{e^2}A_{e}^3\big(D(\tilde u\tilde\varphi)\big) 
+ A_{e}^2\big(D(\tilde u\tilde\varphi)\big)\Big).
\end{align*}
Using the bounds $A_e(t+\tilde{t})\leq CA_e(t)+ CA_e(\tilde{t})$ and 
$A_e(\lambda t) \leq C(\lambda+\lambda^2)A_e(t)$, which hold for all $\lambda >0$
and all $t,\tilde{t}\geq 0$, as well as the product and chain rule we compute
\begin{align*}
A_e(D(\tilde u\tilde \varphi)) \leq Cr_c^{-2}\int_{B_1(0)} A_e\big(|u|(g(x))\big)\,\dx
+C\int_{B_1(0)} A_e\big(|\nabla u|(g(x))\big)\,\dx.
\end{align*}
By definition of $A_e$ we can further estimate
\begin{align*}
\int_{B_1(0)} A_e\big(|u|(g(x))\big)\,\dx \leq Ce^2 + \int_{B_1(0)} |u|^2(g(x))\,\dx.
\end{align*}
Changing back to the local coordinates on the manifold $I(t)$ we deduce 
\begin{align}\label{step2}
&\int_{U}|u|^4\,\dS \leq \frac{C}{r_c^6}\Big(1{+}\log\frac{1}{e}\Big)
\\&\nonumber~~~~~~\times
\Big(e^4 + \frac{1}{e^2}\big(\|u\|_{L^2(I(t))}^6 {+} A_e^3(Du)\big)
+\|u\|_{L^2(I(t))}^4 + A_e^2(Du)\Big).
\end{align}
This yields the claim in the case of a smooth function $u\colon I(t)\to\R$.

In a last step, we extend this estimate by mollification to $u\in\BV(I(t))$
with $\|u\|_{L^\infty}\leq 1$. To this end, let $\theta\colon\mathbb{R}^+\rightarrow [0,1]$ 
be a smooth cutoff with $\theta(s)=1$ for $s\in [0,\frac{1}{4}]$ and 
$\theta(s)=0$ for $s\geq \frac{1}{2}$. We then define for each $n\in\N$
\begin{align*}
u_n(x,t):= \frac{\int_{I(t)} \theta(n|\tilde x-x|) u(\tilde x,t) 
\,\dS(\tilde x)}{\int_{I(t)} \theta(n|\tilde x-x|) \,\dS(\tilde x)}.
\end{align*}
Since the analogous bound to \eqref{EstimateMollifier} holds true,
we infer $\|u_n\|_{L^\infty}\leq 1$ as well as 
$\|u_n - u\|_{L^1(I(t))}\to 0$ as $n\to\infty$. In particular, we have
pointwise almost everywhere convergence at least for a subsequence.
This in turn implies by Lebesgue's dominated convergence theorem that
$\|u_n - u\|_{L^4(I(t))}\to 0$ as $n\to\infty$ at least for a subsequence.
Moreover, the exact same computation which led to \eqref{GGradEstimatehEplus} shows
\begin{align*}
A_{e(t)}(|\nabla u_n (x,t)|)
&\leq C \frac{\int_{I(t)} \theta (n|\tilde x - x|) \big(A_{e(t)}(|\nabla u(\tilde x,t)|) 
+ A_{e(t)}(r_c^{-1} |u(\tilde x,t)|)\big) \,\dS(\tilde x)}{\int_{I(t)} \theta (n|\tilde x - x|) \,\dS(\tilde x)}
\\&~~~
\nonumber
+C\frac{\int_{I(t)} \theta (n|\tilde x - x|) \,\mathrm{d}|D^s u|(\tilde x,t)}{\int_{I(t)} 
\theta (n|\tilde x - x|) \,\dS(\tilde x)}.
\end{align*}
Integrating this bound over the manifold and then using Fubini shows that
$$A_{e(t)}(Du_n(t))\leq Cr_c^{-2}A_{e(t)}(Du(t))$$ 
holds true uniformly over all $n\in\N$. By applying the bound \eqref{step2} from the
second step, we may conclude the proof.
\end{proof}

In the case where the interface $I_v$ is a curve in $\R^2$, a much more elementary argument
yields the following bound.

\begin{lemma}\label{OrliczHeight1D}
Let $T>0$ and let $(I(t))_{t\in[0,T]}$ be a family of smoothly evolving curves in $\R^2$
in the sense of Definition~\ref{definition:domains}.
Let $u\in L^\infty([0,T];\BV(I(t)))$ such that $|u|\leq 1$.
Consider the convex function
\begin{align*}
G(s) := 
\begin{cases}
s^2, & |s|\leq 1, \\
2s-1, & |s| > 1.
\end{cases}
\end{align*}
We also define $|Du(t)|_{G} := \int_{I(t)} G(|\nabla u(x,t)|)\,\mathrm{d}S + |D^su(t)|(\Gamma)$.
Then,
\begin{align}\label{boundOrliczHeight1D}
\int_{I(t)} |u(x,t)|^4\,\mathrm{d}S
\leq
\frac{C (1+\mathcal{H}^1(I(t)))^3}{r_c^{4}}
\big(|Du(t)|_{G}^2 +|Du(t)|_{G}^4 + ||u||_{L^2(I(t))}^4\big)
\end{align}
holds true for almost every $t\in [0,T]$ with some universal constant $C>0$.
\end{lemma}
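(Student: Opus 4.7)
The plan is to reduce the $L^4$-estimate on the one-dimensional manifold $I(t)$ to an $L^\infty$-bound via H\"older's inequality $\int_{I(t)}|u|^4\,\mathrm{d}S\leq \|u\|_{L^\infty(I(t))}^2\|u\|_{L^2(I(t))}^2$, and then to control $\|u\|_{L^\infty(I(t))}$ through the one-dimensional Sobolev--BV embedding $W^{1,1}\hookrightarrow L^\infty$ applied on each connected component of $I(t)$. Somewhat surprisingly, the pointwise hypothesis $|u|\leq 1$ appears not to be needed in the argument I have in mind.

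First I would establish that each connected component $\gamma$ of $I(t)$ is a closed $C^3$-curve of length $L_\gamma\geq 2\pi r_c$. The lower bound follows from the curvature estimate $|\vec{H}|\leq Cr_c^{-1}$, which is itself a consequence of the pointwise bound $|\nabla^2 g|\leq r_c^{-1}$ on the local graph representations in Definition~\ref{definition:domains}, together with the fact that the unit tangent along a closed planar curve has to rotate by a full angle of $2\pi$. On each such $\gamma$ I would then pick, by Chebyshev, an anchor point $x_0\in\gamma$ satisfying $|u(x_0)|^2\leq L_\gamma^{-1}\|u\|_{L^2(\gamma)}^2$, and apply the classical one-dimensional BV oscillation bound $|u(x)-u(x_0)|\leq |Du|(\gamma)$ for the precise representative of $u$. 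This yields $\|u\|_{L^\infty(\gamma)}^2\leq 2L_\gamma^{-1}\|u\|_{L^2(\gamma)}^2+2|Du|(\gamma)^2$; taking the maximum over components and using $L_\gamma\geq 2\pi r_c$ gives
\[
\|u\|_{L^\infty(I(t))}^2\leq \frac{C}{r_c}\|u\|_{L^2(I(t))}^2+C|Du|(I(t))^2.
\]

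Second, I would estimate $|Du|(I(t))$ in terms of $|Du|_G$ by splitting the absolutely continuous part of $Du$ according to whether $|\nabla u|\leq 1$ or $|\nabla u|>1$. On the former set one has $|\nabla u|^2=G(|\nabla u|)$, so Cauchy--Schwarz gives $\int_{\{|\nabla u|\leq 1\}}|\nabla u|\,\mathrm{d}S\leq \sqrt{\mathcal{H}^1(I(t))}\sqrt{|Du|_G}$. On the latter one has $G(s)=2s-1\geq s$, hence $\int_{\{|\nabla u|>1\}}|\nabla u|\,\mathrm{d}S\leq |Du|_G$; combined with the trivial estimate $|D^su|(I(t))\leq |Du|_G$ these produce $|Du|(I(t))^2\leq 2\mathcal{H}^1(I(t))|Du|_G+8|Du|_G^2$.

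Inserting this into the previous inequality, multiplying by $\|u\|_{L^2(I(t))}^2$ and applying Young's inequality to the cross terms ($\mathcal{H}^1|Du|_G\|u\|_{L^2}^2\leq \tfrac{1}{2}\mathcal{H}^1(I(t))^2|Du|_G^2+\tfrac{1}{2}\|u\|_{L^2}^4$ and $|Du|_G^2\|u\|_{L^2}^2\leq \tfrac{1}{2}|Du|_G^4+\tfrac{1}{2}\|u\|_{L^2}^4$) produces the asserted bound, after absorbing $r_c^{-1}$ into $Cr_c^{-4}$ using $r_c\leq 1/2$. The computation is essentially elementary; I expect the subtlest point to be the rigorous justification of the length lower bound $L_\gamma\geq 2\pi r_c$ from the local graph assumption of Definition~\ref{definition:domains}, together with care about working with the precise representative of $u$ when invoking the pointwise oscillation bound on each component.
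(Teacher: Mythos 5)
Your proposal is correct, and it takes a genuinely different route from the paper's proof. The paper fixes a partition of unity $\{\eta_i\}$ subordinate to graph charts of size $\sim r_c$, writes $\eta_i u$ as an integral of its derivative from a boundary point, splits $\nabla^{\tan}u$ into the truncation $\max\{\min\{\nabla^{\tan}u,1\},-1\}$ and the excess $(|\nabla^{\tan}u|-1)_+$ (which mirror the two branches of $G$), applies Cauchy--Schwarz to the truncated piece, and then raises to the fourth power and sums. You instead invoke H\"older in the form $\int|u|^4\leq\|u\|_{L^\infty}^2\|u\|_{L^2}^2$, control $\|u\|_{L^\infty}$ on each closed component by a Chebyshev anchor point plus the one-dimensional $\BV$-oscillation bound, and then compare $|Du|(I(t))$ with $|Du|_G$ by splitting the absolutely continuous part at threshold~$1$ (which is the same dichotomy as the paper's truncation, but applied to the total variation rather than inside an integral representation), finishing with Young's inequality. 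What each buys: your route makes the structure of the estimate transparent (an $L^\infty$--$L^2$ interpolation plus a one-dimensional embedding), it makes explicit that the hypothesis $|u|\leq 1$ is not used (which is also true, though less visibly, in the paper's argument), and it even gives the slightly better power $r_c^{-1}$ on the $\|u\|_{L^2}^4$ term before absorbing into $r_c^{-4}$; the paper's route avoids picking anchor points and the associated discussion of the precise representative, at the cost of a more computational bookkeeping of the partition of unity. Two small remarks: the length lower bound from the curvature estimate $|\vec{H}|\leq Cr_c^{-1}$ is $L_\gamma\geq 2\pi r_c/C$, not $2\pi r_c$ -- the constant $C$ from \eqref{BoundMeanCurvature} should be carried along, though this is immaterial once absorbed into the universal constant; and the paper actually obtains the component-length lower bound $cr_c$ directly from the graph condition in Definition~\ref{definition:domains} (each chart spans a ball of radius $2r_c$), which is a bit more elementary than the turning-number argument you sketch, though both are valid.
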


\begin{proof}
Fix $t>0$. First, observe that $I(t)$ essentially consists of a finite number of nonintersecting curves. By approximation, we may assume $u(t) \in W^{1,1}(I(t))$.

Let $\eta_i$ be a partition of unity on $I(t)$ with $|\nabla^{\tan} \eta_i(x)|\leq C r_c^{-1}$ such that the support of each $\eta_i$ is isometrically equivalent to a bounded interval (note that the Definition~\ref{definition:domains} implies a lower bound of $c r_c$ for the length of any connected component of $I(t)$) and such that at any point $x\in I(t)$ there are at most two $i$ with $\eta_i(x)>0$.

Treating by abuse of notation the function $\eta_i u$ as if defined on a real interval $I=(a,b)$, we then write
\begin{align*}
\eta_i(x) u(x) &= \int_a^x \eta_i(y) u'(y) + \eta_i'(y) u(y) \,\mathrm{d}y
\\&
=
\int_a^x \eta_i'(y) u(y) \,\mathrm{d}y
+\int_a^x \eta_i(y) \big(\max\big\{\min\{ u'(y) , 1 \}, -1 \big\} \big) \,\mathrm{d}y
\\&~~~~
+ \int_a^x \eta_i(y) \big(\big(u'(y)-1\big)_+ - \big(u'(y)-(-1)\big)_- \big) \,\mathrm{d}y.
\end{align*}
Hence, we may estimate using Jensen's inequality
\begin{align*}
\eta_i(x) |u(x)|&
\leq |I(t)|^{1/2} \bigg(\int_{I(t)} \eta_i |\max\big\{\min\{ |\nabla^{\tan} u| , 1 \}, -1 \big\}|^2\,\mathrm{d}S\bigg)^{1/2}
\\&~~~~
+\int_{I(t)}\eta_i\, (|\nabla^{\tan} u|-1)_+ \,\mathrm{d}S
+C r_c^{-1} \int_{I(t)\cap \supp \eta_i} |u| \,\mathrm{d}S
\end{align*}
for any $x\in I(t)$. Taking the fourth power, integrating over $x$, and summing over $i$, we deduce
\begin{align*}
\int_{I(t)} |u(x)|^4 \,\mathrm{d}S
&
\leq C|I(t)|^3 \bigg(\int_{I(t)} |\max\big\{\min\{ |\nabla^{\tan} u|, 1 \}, -1 \big\}|^2\,\mathrm{d}y\bigg)^2
\\&~~~~
+C|I(t)| \bigg(\int_{I(t)}  (|\nabla^{\tan} u|-1)_+ \,\mathrm{d}S\bigg)^4
\\&~~~~
+C r_c^{-4} |I(t)|^3 \bigg(\int_{I(t)} |u|^2 \,\mathrm{d}y\bigg)^2.
\end{align*}
From this we infer the desired estimate by approximation.
\end{proof}

\bibliographystyle{abbrv}
\bibliography{multiphase}

\end{document}